\documentclass[reqno]{amsart}

\usepackage{etoolbox}
\usepackage{fullpage,hyperref}
\usepackage{enumitem}

\patchcmd{\abstract}{\null\vfil}{}{}{}

\theoremstyle{plain}
\newtheorem{theorem}{Theorem}[section]
\newtheorem{proposition}[theorem]{Proposition}
\newtheorem{corollary}[theorem]{Corollary}
\newtheorem{lemma}[theorem]{Lemma}

\theoremstyle{definition}
\newtheorem{definition}[theorem]{Definition}

\theoremstyle{remark}
\newtheorem{remark}[theorem]{Remark}

\def\R{\mathbb R}

\def\Z{\mathbb Z}

\def\P{\mathbb P}
\def\X{\mathbb X}
\def\Q{\mathbb Q}
\def\bN{\mathbf N}
\def\PP{\mathcal P}
\def\cB{\mathcal B}
\def\md{\mathrm{d}}
\def\been{\begin{enumerate}}
\def\enen{\end{enumerate}}

\def\su{\subseteq}
\def\f{\frac}
\def\ff{\infty}
\def\E{\mathbb E}
\def\bet{\begin{theorem}}
\def\ent{\end{theorem}}
\def\bepr{\begin{proposition}}
\def\enpr{\end{proposition}}

\def\vp{\varphi}

\def\EE{\mathcal E}
\def\one{\mathbf{1}}
\def\ms{\mathsf}
\def\Var{\ms{Var}}
\def\Cov{\ms{Cov}}
\def\zz{\boldsymbol{z}}
\def\d{{\rm d}}
\def\es{\varnothing}
\def\sm{\setminus}
\def\dist{\operatorname{dist}}

\def\bel{\begin{lemma}}
\def\enl{\end{lemma}}
\def\bep{\begin{proof}}
\def\enp{\end{proof}}

\def\le{\leqslant}

\def\mc{\mathcal}
\usepackage{amsmath,amsfonts,amssymb,amsthm,mathrsfs,tikz,pgfplots,bm}
\pgfplotsset{compat=1.18}

\usepackage[foot]{amsaddr}
\usepackage{todonotes}
\usetikzlibrary{arrows.meta,calc,fillbetween}

\allowdisplaybreaks[4]

\title{Functional central limit theorems for Laguerre tessellations}
\author{Chinmoy Bhattacharjee}
\address{Department of Mathematics, Universit\"at Hamburg, Germany}
\author{Anna Gusakova}
\address{Institut f\"ur Mathematische Stochastik, Universit\"at M\"unster, Germany}
\author{Christian Hirsch}
\address{Department of Mathematics, Aarhus University, Aarhus, Denmark}
\email{hirsch@math.au.dk}
\subjclass[2020]{Primary 60D05, 60F17; Secondary 60G55}
\keywords{Poisson--Laguerre tessellations, extreme generators, Gaussian approximation, covariance asymptotics, functional central limit theorem}

\usepackage{environ}

\newif\ifdraft
\drafttrue

\NewEnviron{draftnote}{%
  \ifdraft
    \par\begingroup\color{red}\BODY\par\endgroup
  \fi
}

\begin{document}

\begin{abstract}
We study empirical processes of observable extreme generators in stationary Poisson--Laguerre tessellations, with each generator weighted either by one or by the volume of its cell. Under moment assumptions that allow heavy-tailed marks, we prove functional central limit theorems as the observation window grows, on compact intervals where the mark law has a bounded density. We also obtain covariance asymptotics with a surface-order error and quantitative univariate and multivariate Gaussian approximation.
\end{abstract}

\maketitle
\thispagestyle{empty}
\tableofcontents

\frenchspacing

%
%
\section{Introduction}
\label{sec:intro}

Laguerre tessellations are weighted analogues of Voronoi tessellations in which each generator carries a weight that determines its strength in the competition for space. They have been used to model foam structures \cite{Lautensack2008FoamStructures} and represent three-dimensional grain structures \cite{LyckegaardLauridsenLudwigFondaPoulsen2011Laguerre}. In the Poisson--Laguerre model, the generators form a homogeneous Poisson process with independent marks whose common probability law we denote by $\mathbb F$. This model was studied in \cite{zuyev}, and a broader class of Poisson--Laguerre tessellations was developed in \cite{GiWL25}. Inference for the mark law must account for generators with empty cells, which leave no directly observable contribution to the tessellation. The question addressed here is how empirical statistics based on observable generators fluctuate as the observation window grows.

Furthermore,  nonparametric estimators for the mark distribution  were introduced in \cite{vit} and their strong consistency was proved. Their constructions use observable generators and cell geometry; one of them is based on generators contained in their own cells. We call these generators \emph{observable extreme generators} and study two associated empirical statistics. The first counts observable extreme generators in an observation window whose marks do not exceed a prescribed level, and the second sums their cell volumes. Varying the level gives an unweighted and a volume-weighted empirical process.

We allow general real-valued mark distributions under the moment assumptions specified in our main results, including distributions with heavy tails. For growing convex observation windows, our contributions are as follows.
\begin{enumerate}
\item {In Theorem \ref{thm:func}}, we prove functional central limit theorems on compact intervals where the mark law has a bounded density. After centering and division by the square root of the window volume, each empirical process converges in the Skorokhod topology to a centered Gaussian process.
\item {In Theorem \ref{thm:var}}, we identify the limiting covariance kernels and establish covariance asymptotics with a surface-order error. The kernels describe the dependence between Gaussian fluctuations at different levels.
\item {In Theorem \ref{thm:univ} and Theorem \ref{thm:mult}}, we establish quantitative univariate and multivariate normal approximation. Under the stated nondegeneracy assumptions, the univariate bounds decay as the inverse square root of the window volume in both Wasserstein and Kolmogorov distance.
\end{enumerate}

The closest results on Gaussian approximation for Laguerre
tessellations were established in
\cite{BhattacharjeeGusakova2025GaussianApproximation},
where quantitative central limit theorems are proved for counts
of generators with nonempty cells in the $\beta$-, $\beta'$-,
and Gaussian-Voronoi models. Here, the observables select generators contained in their own cells and retain their marks as an indexing parameter. The resulting process limits describe fluctuations across an interval of mark levels for both counting and cell-volume statistics.

The main geometric difficulty is that a generator with a sufficiently negative mark can influence the tessellation far from its spatial location. The lower tail of the mark law therefore enters the estimates of dependence, and exponential localization cannot be assumed. For univariate Gaussian approximation, we use the region-stabilization framework of  \cite{BM22}, in which each contribution is determined by the points in a specified, possibly unbounded region. The same geometric estimates enter the multivariate Poisson normal-approximation bounds of  \cite{TT24}. Cell-volume weights require additional control because a generator's contribution depends on its entire cell. Following \cite{BhattacharjeeGusakova2025GaussianApproximation}, we use cover times, which record when the growing cell has reached its final shape, and establish tail and moment estimates that permit localization of these contributions.

To obtain convergence of the empirical processes, we control increments over short intervals of mark levels through covariance and fourth-cumulant bounds. Functional limit theorems for geometric statistics have also been established for Ripley's $K$-function \cite{BiscioSvane2022RipleyK}, persistent Betti numbers on cylindrical networks \cite{KrebsHirsch2022PersistentBetti}, iteration-stable tessellations \cite{SchreiberThale2013STIT}, and sparse random geometric graphs in high dimensions \cite{BonnetHirschRosenWillhalm2023SparseRGG}. In the present setting, the increment estimates must accommodate both the dependence created by the mark distribution and the unbounded weights given by cell volumes. The localization and moment estimates provide this control and allow the Gaussian approximation results to be extended to the full empirical processes.

The rest of the article is structured as follows. Section~\ref{sec:mod} introduces the model and states the main results. Section~\ref{sec:out} establishes the geometric and moment estimates, including bounds for cover times. Sections~\ref{sec:varlb} and~\ref{sec:var} prove the variance lower bounds and covariance asymptotics, respectively. Section~\ref{sec:univ_proof} proves the univariate and multivariate Gaussian approximation results, and Section~\ref{sec:tight} completes the functional limit theorems by proving tightness.

%
%
\section{Model definition and main results}
\label{sec:mod}
We study statistical fluctuations in Poisson–Laguerre tessellations, a weighted generalization of Voronoi tessellations in which each point of a Poisson process generates a cell according to a space–time competition rule.
Our focus is on extreme points, meaning those generating points that are contained in their own Laguerre cell. We now introduce the geometric framework, the observation model, and the empirical processes studied in this paper. Throughout, for sequences $(a_n)_{n\geq 1}$ and $(b_n)_{n\geq 1}$ with
$b_n>0$, we write $a_n=O(b_n)$ as $n\to\infty$ if there exist constants
$C<\infty$ and $n_0\in\mathbb N$ such that
\(
|a_n|\leq Cb_n
\text{ for all } n\geq n_0.
\)
Unless stated otherwise, the constants $C$ and $n_0$ may depend on fixed
parameters such as $d$, $\mathbb F$, $z$, and $I$, but not on $n$.

%
%
\subsection{Geometric framework}
We study the model of Poisson-Laguerre tessellations pioneered in \cite{zuyev}.
Throughout this paper, we write $x = (v,h)\in\R^{d + 1}$ with $v\in \R^d$ and $h\in\R$ (and similarly for $x_i$). Moreover, we assume that $d \ge 2$. The case $d = 1$ could also be considered using our approach but some case distinction would be needed for the rates. However, as $d=1$ is less interesting from the application perspective, we do not treat it here, thereby allowing for a more streamlined presentation.

For $\X:=\R^d\times\R$ and a locally finite counting measure $\mu$ on $\X$, write $\operatorname{supp}(\mu):=\{x\in\X:\mu(\{x\})>0\}$. In what follows we will identify $\mu$ with its support and write simply $x\in\mu$ instead of $x\in\operatorname{supp}(\mu)$. We denote the restriction of $\mu$ to a Borel set $A$ by $\mu|_A$ {(or $\mu\cap A$, while identifying $\mu$ with its support)} and its point count by $\mu(A)$. The symbol $0$ denotes the origin in a Euclidean space and the zero measure when used as a configuration.

The Laguerre tessellation is a generalization of the Voronoi tessellation, which allows us to account for different strengths of generator points. More precisely for a locally finite counting measure $\vp=\sum_i\delta_{x_i}$ of generator locations $v_i$ together with arrival times $h_i$, we define 
\begin{align}
\label{eq:cell}
 C(x; \vp) := \big\{u \in \R^d: L(x, u) \le \inf_{x' \in \vp }L(x', u)\big\},
\end{align}
to be the \emph{Laguerre cell} associated with $x \in \R^{d + 1}$ where 
$$L(x, u) := \|v - u\|^2 + h \quad \text{for } x = (v,h)$$
is the Laguerre distance.  Here, the mark $h$ acts as a vertical shift, so that points with smaller $h$ have a competitive advantage.

%
%
\subsection{Extreme points}
In contrast to the Voronoi case, a generator may have an empty Laguerre cell. Even among generators with nonempty cells, the generator location need not be contained in its own cell. We call a generator \(x=(v,h)\) extreme if \(v\in C(x;\varphi)\), and write
\begin{align}
\label{eq:ext}
\EE(\vp) := \{x \in \vp : v \in C(x; \vp)\}
\end{align}
to denote the subset of generators that are contained in their own Laguerre cell. In other words, a point
\(x=(v,h)\) belongs to \( \EE(\vp)\) precisely when it wins the Laguerre competition at its own spatial location
\(v\). Equivalently, for $x\in\vp$,
\[
x\in \EE(\vp)
\quad\Longleftrightarrow\quad
\vp(D(x))=0,
\]
where
\begin{equation}\label{eq:D}
D(x) := \bigl\{(v',h') \in \mathbb{R}^{d+1} : h - h' > \|v - v'\|^2 \bigr\},
\end{equation}
is an open region below the standard downward paraboloid with apex $(v,h)$ (hypograph).
Thus, \(x\) is extreme exactly when there is no competing point below the downward paraboloid with apex
at \(x\); any such point would beat \(x\) at its own location \(v\). For a test point $x$ that is not assumed to belong to $\varphi$, we write $x\in\EE(\varphi+\delta_x)$ explicitly. This event is equivalent to $\varphi(D(x))=0$. The cell $C(x;\varphi)$ is defined for every $x\in\X$ and satisfies $C(x;\varphi+\delta_x)=C(x;\varphi)$.

%
%
\subsection{Poisson input}
In this paper, we analyze statistical properties of the \emph{Poisson-Laguerre tessellation}, i.e., the Laguerre tessellation built on a marked homogeneous Poisson point process $\PP=\sum_i\delta_{X_i}$ on $\X$, where $X_i=(V_i,H_i)$. Here, the locations $\{V_i\}_i$ form a unit-intensity homogeneous Poisson point process on $\R^d$ that is independently marked by real-valued i.i.d.\ arrival times $\{H_i\}_i\su \R$ with some typical mark distribution $\mathbb F$. We write $H\sim\mathbb F$ for a typical mark.
Equivalently, $\mathcal{P}$ is a Poisson point process in $\R^{d+1}$ with intensity measure $\mathbb{Q}:=\lambda\otimes \mathbb F$, where $\lambda$ denotes the Lebesgue measure on $\R^d$.
Let $\kappa_d$ be the volume of the unit ball in $\R^d$.
Henceforth, since the measure $\Q$ is stationary with respect to the spatial coordinate $v$, we will use the notation 
\begin{equation}\label{eq:Qdef}
\Q(h) := {\Q}(D(v,h)) = \kappa_d \E[(h-H)_+^{d/2}]
\end{equation}
with $D$ defined at \eqref{eq:D}. For each fixed \(x\), the
boundary \(\partial D(x)\) has \(\Q\)-measure zero. The set of pairs
\((x,x')\) for which \(L(x',v)=L(x,v)\) or
\(L(x,v')=L(x',v')\) has \(\Q\otimes\Q\)-measure zero. Consequently,
the difference between strict and nonstrict versions of the inequalities in \eqref{eq:D} is irrelevant for most of our arguments.

It is clear from the definition of Laguerre cells \eqref{eq:cell} that the properties of the mark distribution $\mathbb F$ have a major impact on geometrical properties of the resulting Laguerre tessellation. In what follows, we use the notation $x_-:=\max (0,-x)$ and $x_+=\max(0,x)$. Henceforth, we always assume $\E[H_-^{d/2}] < \ff$, thus guaranteeing that $\PP$ indeed generates a tessellation. When $\mathbb F$ is a probability measure on $(-\infty,0]$ this was shown in \cite[Theorem 4.1]{zuyev}. When $\mathbb F$ is a probability measure on $\R$ this can be deduced from \cite[Theorem 3.3]{GiWL25}. It should be noted that in \cite{GiWL25} it was assumed that $\mathbb F$ has a density with respect to Lebesgue measure on $\mathbb{R}$, but the proof of Theorem 3.3 does not rely on this fact and works in the same way for general $\mathbb F$. The condition of \cite[Theorem 3.3]{GiWL25} in the general case reads as
\begin{equation}\label{eq:ConditionH}
\E[(t-H)_+^{d/2}]=\int_{-\infty}^t(t-h)^{d/2}\mathbb F(\md h)<\infty\qquad\text{ for all }t\in\R,
\end{equation}
which is equivalent to $\E[H_-^{d/2}]<\infty$ for probability measures $\mathbb F$. Indeed, \eqref{eq:ConditionH} implies $\E[H_-^{d/2}]<\infty$ by taking $t=0$. On the other hand if $\E[H_-^{d/2}]<\infty$ then for any $t\leq 0$ we have $\E[(t-H)_+^{d/2}]\leq \E[H_-^{d/2}]<\infty$ and by applying the inequality $(a+b)^{d/2}\leq 2^{d/2-1}(a^{d/2}+b^{d/2})$ for $a,b>0$ for $t > 0$  we get 
\begin{align*}
\E[(t-H)_+^{d/2}]&=\E[(t-H)_+^{d/2}\one\{H\ge 0\}]+\E[(t-H)_+^{d/2}\one\{H<0\}]\leq t^{d/2}+2^{d/2-1}t^{d/2}+2^{d/2-1}\E[H_-^{d/2}]<\infty.
\end{align*}

\subsection{Empirical functionals}
In a statistical context, it is difficult to estimate \(\mathbb F\) directly from tessellation data observed in a window
\(W_n=nW\), where \(W\subseteq\mathbb R^d\) is a compact convex body containing  the origin $0$ in its interior.

The reason is that some generators are not
observable at all, being completely covered by other cells. However, \cite{vit} showed that the distribution
\(\mathbb F\) is uniquely determined by a suitable transformation that can be estimated from observable geometric
information. 
The idea is to restrict the estimation to the set \eqref{eq:ext} of extreme generators contained in their own cells. 
For \(z\in\mathbb R\), let $W_n^z := W_n \times (-\infty,z]$. We define
\begin{align}
\label{eq:gn}
	G_n(z) := \sum_{\substack{X_i \in W_n^z\cap\PP}}\one\{X_i \in \EE(\PP)\}
\end{align}
which counts the extreme generators in the observation window \(W_n\) whose marks do not exceed \(z\). While \(G_n(z)\) is the most direct empirical functional, it may be sensitive to observational noise. Indeed,
under the unweighted statistic, extreme points with very small Laguerre cells contribute just as much as those
with large cells, even though the former are typically more susceptible to geometric perturbations. Following
\cite{vit}, we therefore also consider the volume-weighted version
\begin{align}
\label{eq:gnv}
	G_n^{\ms V}(z) :=\sum_{\substack{X_i \in W_n^z\cap \PP}}\one\{X_i \in \EE(\PP)\}|C(X_i; \PP)|,
\end{align}
where $|A|=\lambda(A)$ denotes the Lebesgue measure of a set $A\subset \mathbb{R}^d$.
This weighted
functional emphasizes geometrically stable extreme points by assigning larger weight to generators with larger
Laguerre cells. Thus, \(G_n(z)\) records the number of observable extreme generators up to level \(z\),
whereas \(G_n^{\ms V}(z)\) weights these generators by the size of the cells they produce.

\subsection{Main results}
We state the results in the order in which they are used later: first a fixed-level one-dimensional Gaussian approximation, then covariance asymptotics, then multivariate Gaussian approximation, and finally a functional central limit theorem. Throughout this subsection, moment assumptions on $H_-$ ensure existence of the tessellation and finiteness of the relevant moments; stronger assumptions are imposed only to obtain quantitative rates. Unless stated otherwise, the constants in the $O$-notations may depend on fixed parameters such as $d$, $\mathbb F$, $z$, and~$I$.

We begin with the fixed-level regime. For $z \in \R$, define
\[
\hat G_n(z) := \f{G_n(z)-\E[G_n(z)]}{\sqrt{\Var(G_n(z))}},
\qquad
\hat G_n^{\ms V}(z) := \f{G_n^{\ms V}(z)-\E[G_n^{\ms V}(z)]}{\sqrt{\Var(G_n^{\ms V}(z))}}.
\]

{To compare the distributions of $\hat G_n(z)$ and $\hat G_n^{\ms V}(z)$ with that of a standard normal random variable, we provide non-asymptotic bounds on distances of the form 
\begin{equation}\label{eq:dH}
	d_{\mathcal{H}}(\mu,\nu) := \sup_{h \in {\mathcal{H}}}|\E h(Y)-\E h(X)|,
\end{equation}
where $\mathcal{H}$ is a large enough class  of real-valued functions on $\R$ and $X \sim \mu$ and $Y \sim \nu$ are real-valued random variables. With a slight abuse of notation, we will write $d_{\mathcal{H}}(X,Y)$ for $d_{\mathcal{H}}(\mu,\nu)$. The \textit{Wasserstein distance} $d_W$ is obtained from \eqref{eq:dH} by taking $\mathcal{H} = \{h: |h(x)-h(y)| \le |x-y|\}$ as the class of all Lipschitz-1 functions, while the \textit{Kolmogorov} distance $d_K$ corresponds to $\mathcal{H} = \{\one\{\cdot \le x\}, x \in \R\}$.

For our multivariate results, we use the $d_3$ distance defined for random vectors $X, Y \in \R^d$ with finite second moments ($\E \|X\|^2, \E \|Y\|^2 <\infty$) as
$$
d_3(X,Y) := \sup_{h \in \mathcal{H}_d^3} |\E h(Y)-\E h(X)|. 
$$
Here $\mathcal{H}_d^3$ denotes the set of all $\mathcal{C}^3$ functions $h: \R^d \to \R$ with $\|h^{(2)}\|_\infty$ and $\|h^{(3)}\|_\infty$ bounded by 1, where $\|h^{(k)}\|_\infty, k =2,3,$ denotes the supremum of the absolute values of
all $k$-th partial derivatives. }

The first result gives presumably optimal rates for Gaussian approximation at each fixed-level $z \in \R$. { }
%
%
\bet[Univariate normal approximation]
\label{thm:univ}
Let \(z\in\mathbb R\) be such that \(\mathbb F((-\infty, z])>0\).
Let $d_*=\max\{d_W, d_K\}$.
\begin{enumerate}
\item Assume that $\E [H_-^{3d}]<\infty$. 
Then, 
$d_*\big(\hat G_n(z), N(0,1)\big) \in O\big({|W_n|^{-1/2}}\big).$
\item Assume that $\E[H_-^{m_0}]<\infty$ for some $m_0>11d/2$. 
Assume additionally that $\mathbb F$ is not a Dirac measure.
Then,
$d_*\big(\hat G^{\ms V}_n(z), N(0,1)\big) \in O\big({|W_n|^{-1/2}}\big).$
\end{enumerate}
\ent

Next, we consider the distributional convergence at multiple locations $z \in \R$. While for the univariate result we have normalized by the standard deviation, for the multivariate and functional results it will be more convenient to normalize by $\sqrt{|W_n|}$ instead; see Remark~\ref{rem:scal}. Accordingly, for $z \in \R$ we define
\[
\hat G_{n,*}(z) := \f{G_n(z)-\E[G_n(z)]}{\sqrt{|W_n|}},
\qquad
\hat G_{n,*}^{\ms V}(z) := \f{G_n^{\ms V}(z)-\E[G_n^{\ms V}(z)]}{\sqrt{|W_n|}}.
\]

As a first step, we describe the covariance asymptotics. 
The limit covariance admits a decomposition into a diagonal contribution, an off-diagonal covariance term,
and two correction terms accounting for the fact that the joint event uses a common underlying configuration. 
To that end, writing $\R^{d, z}:= \R^d \times(-\infty, z]$, we introduce the covariance kernels
\begin{align*}
    k(z, z') &:= \int_{-\infty}^{z \wedge z'} \P((0, h) \in \EE(\PP+\delta_{(0, h)})) \mathbb F(\d h) \\
    &\quad+ \int_{-\infty}^z \int_{\R^{d, z'}}
    \Cov\big(\one\{(0, h) \in \EE(\PP+\delta_{(0, h)})\}, \one\{x' \in \EE(\PP+\delta_{x'})\}\big)
    \Q(\d x')\mathbb F(\d h) \\
    &\quad- \int_{-\ff}^z \int_{\R^{d, z'}}
    \E\big[\one\big\{\PP(D(0, h)\cup D(x'))=0\big\}\big]
    \one\{L(x',0) < L((0,h),0)\}
    \Q(\d x')\mathbb F(\d h) \\
    &\quad- \int_{-\ff}^{z'} \int_{\R^{d,z}}
    \E\big[\one\big\{\PP(D(x)\cup D(0,h'))=0\big\}\big]
    \one\{L(x,0) < L((0,h'),0)\}
    \Q(\d x)\mathbb F(\d h').
\end{align*}
where the region $D(x)\subseteq \R^{d+1}$ is defined at \eqref{eq:D}. Similarly, for the volume-weighted case,  denote
$$A(x; \vp) := \one\{x\in \mc E(\vp+\delta_x)\}|C(x; \vp)|$$ 
for locally finite $\vp$. Note that we add here the point $x$ to the configuration $\vp$ in the definition  for notational convenience.
To ease notation, we also write $A_x := A(x; \PP)$, and define 
\begin{align*}
    k^{\ms V}(z, z')&:= \int_{-\infty}^{z \wedge z'} \E[A_{(0, h)}^2] \mathbb F(\d h) + \int_{-\infty}^z \int_{\R^{d, z'}}  \Cov\big(A_{(0, h)}, A_{x'} \big)  \Q(\d x')\mathbb F(\d h)  \\\\
	&\quad+ \int_{-\infty}^z \int_{\R^{d, z'}} \E\big[A\big((0, h); \PP+\delta_{x'}\big) A\big(x'; \PP+\delta_{(0,h)}\big) - A_{(0, h)}A_{x'} \big]   \Q(\d x')\mathbb F(\d h). \\
\end{align*}
As stated in Theorem \ref{thm:var} below, the kernels $k$ and $k^{\ms V}$ are finite under suitable moment assumptions on $H_-$. 
Now, we can state the covariance asymptotics with bounds on the rate of convergence.
%
%

\bet[Covariance asymptotics] 
\label{thm:var}
Let $z, z' \in \R$.
\begin{enumerate}
    \item  Assume that $\E[H_-^{3d/2}]<\infty$. 
    Then, $k(z,z') < \infty$ and
$$\Big|\f1{|W_n|} \Cov(G_n(z), G_n(z')) - k(z, z')\Big| \in O\!\big(|W_n|^{-1/d}\big).$$
Moreover, for any $z$ with $\mathbb F((-\infty,z])>0$, we have $k(z, z) > 0$.
    \item  Assume that $\E[H_-^{m_0}]<\infty$ for some $m_0 > 11d/2$. Then, $k^{\ms V}(z,z') < \infty$ and
$$\Big|\f1{|W_n|} \Cov(G_n^{\ms V}(z), G_n^{\ms V}(z')) - k^{\ms V}(z, z')\Big| \in O\!\big(|W_n|^{-1/d}\big).$$
Assume additionally that $\mathbb F$ is not a Dirac measure. If $\mathbb F((-\infty,z]) > 0$, then  $ k^{\ms V}(z,z) > 0$.
\end{enumerate}
\ent
While Theorem \ref{thm:var} implies that the kernels $k$ and $k^{\ms V}$ are positive semi-definite, we leave criteria for positive definiteness to future research. Next, we show multivariate normal approximation with the asserted asymptotic covariance kernels.

With the covariance kernels in hand, we obtain multivariate normal approximation for vectors indexed by finitely many levels. Below, for $\zz = (z_1, \dots, z_m) \in \R^m$, we denote $\Sigma(\zz): = (k(z_i,z_j))_{1 \le i,j \le m}$ and $\Sigma^{\ms V}(\zz): = (k^{\ms V}(z_i,z_j))_{1 \le i,j \le m}$.

%
%
\bet[Multivariate normal approximation] 
\label{thm:mult}
Let $\zz = (z_1, \dots, z_m) \in \R^m$. 
\begin{enumerate}
    \item  Assume that $\E[H_-^{3d}]<\infty$. Then,
$$d_3\big(\{\hat G_{n, *}(z)\}_{z \in \zz}, N\big(\boldsymbol{0}, \Sigma(\zz)\big)\big) \in O\big({|W_n|^{-1/2}}\big) + \sum_{i,j=1}^m
\left|
k(z_i,z_j)
-
\frac{\Cov\big(G_n(z_i),G_n(z_j)\big)}{|W_n|}
\right|. $$
    \item  Assume that $\E[H_-^{m_0}]<\infty$ with $m_0>11d/2$. Then,
$$d_3\big(\{\hat G_{n, *}^{\ms V}(z)\}_{z \in \zz}, N\big(\boldsymbol{0}, \Sigma^{\ms V}(\zz)\big)\big) = O\big( {|W_n|^{-1/2}}\big) + \sum_{i,j=1}^m
\left|
k^{\ms V}(z_i,z_j)
-
\frac{\Cov\big(G_n^{\ms V}(z_i),G_n^{\ms V}(z_j)\big)}{|W_n|}
\right|.$$
\item In particular, under the respective assumptions in (1) and (2),
$$
\max\{d_3\big(\{\hat G_{n, *}(z)\}_{z \in \zz}, N\big(\boldsymbol{0}, \Sigma(\zz)\big)\big), d_3\big(\{\hat G_{n, *}^{\ms V}(z)\}_{z \in \zz}, N\big(\boldsymbol{0}, \Sigma^{\ms V}(\zz)\big)\big)\} \in O(|W_n|^{-1/d}).
$$
\end{enumerate}
\ent

As is evident from the third assertion, the bottleneck for the rate of convergence in Theorem \ref{thm:mult} is the covariance convergence.  Loosely speaking, the covariance term decays at surface order, which is typical for random spatial models. We refer to \cite[Theorem 2]{TT24} for another example involving random geometric graphs.

Finally, we upgrade the finite-dimensional convergence to a functional limit theorem.
%
\bet[Functional CLT] 
\label{thm:func}
Let $I\su \R$ be a compact interval. Assume that $\mathbb F$ admits a density $f$ on $I$ satisfying $\|f\|_\infty<\infty$.
\begin{enumerate}
    \item Assume that $\E[H_-^{3d}]<\infty$. Let $Z(\cdot)$ be a centered Gaussian process on $I$ with covariance function $k(\cdot,\cdot)$. Then, in the Skorokhod topology, $\hat G_{n,*}(\cdot)\Rightarrow Z(\cdot)$.
    \item Assume that $\E[H_-^{m_0}]<\infty$ for some $m_0 > 11d/2$. Let $Z^{\ms V}(\cdot)$ be a centered Gaussian process on $I$ with covariance function $k^{\ms V}(\cdot,\cdot)$. Then, in the Skorokhod topology, $\hat G_{n,*}^{\ms V}(\cdot)\Rightarrow Z^{\ms V}(\cdot)$.
\end{enumerate}
\ent

\begin{remark}[Moment assumptions]
The moment assumptions become stronger when passing from the unweighted to
the volume-weighted setting. For the unweighted Gaussian approximation and
functional limit theorem, a moment of order \(3d\) is sufficient, whereas
the weighted results require higher moments because the cell volume depends
on the geometry of the entire stabilized cell. We make no claim that the
moment exponents appearing here are sharp. They are the exponents needed to
close the moment, covariance, and tightness estimates used in our proofs.
In particular, they should be viewed as sufficient conditions for the
present approach rather than as intrinsic thresholds of the model. The
covariance asymptotics in Theorem~2.2 provide the main input for the
multivariate and functional results.
\end{remark}

\begin{remark}[Normalization]
    \label{rem:scal}
    As mentioned above, for the multivariate and functional case, it is more convenient to consider the normalization of $G_n(z) -\E[G_n(z)]$ by $\sqrt{|W_n|}$ instead of $\sqrt{\Var(G_n(z))}$. This is because otherwise, the denominator would be $z$-dependent, making the required increment bounds substantially more cumbersome. Conversely, in the univariate case it makes sense to normalize by the standard deviation. This is because this normalization avoids additional error terms coming from the convergence of the variance. Overall, we believe that this trade-off justifies the different normalizations. Since $\mathrm{Var}(G_n(z)) \sim |W_n|\,k(z,z)$ by Theorem~\ref{thm:var}, the two normalizations differ only by the factor $\sqrt{k(z,z)}$ at each fixed $z$.

\end{remark}

\def\le{\leqslant}
\def\es{\varnothing}
\def\e{\varepsilon}
\def\sm{\setminus}
\def\mc{\mathcal}
\def\su{\subseteq}
\def\f{\frac}
\def\tf{\tfrac}
\def\be{\beta}
\def\vp{\varphi}
\def\k{\kappa}
\def\Cov{\ms{Cov}}
%
%

\def\mc{\mathcal}
\def\ge{\geqslant}
\section{Preliminary results}
\label{sec:out}

First, we introduce some notation used frequently henceforth. 
For $v\in\R^d$ let $B_r(v):=\{u\in\R^d:\|u-v\|\le r\}$ be the closed ball of radius $r\ge0$, $B_r^\circ(v):=\{u\in\R^d:\|u-v\|<r\}$ be the open ball of radius $r>0$, and $\partial B_r(v)$ be the corresponding sphere. We write $\operatorname{dist}(A,B):=\inf\{\|a-b\|:a\in A,b\in B\}$ and $\operatorname{dist}(v,B):=\operatorname{dist}(\{v\},B)$ and $S^{d-1}:=\partial B_1(0)$. We denote $(d-1)$-dimensional Hausdorff measure by $\mathcal H^{d-1}$. For $\rho>0$, write $W_n^{\ominus\rho}:=\{v\in W_n:\operatorname{dist}(v,\partial W_n)>\rho\}$ for its interior erosion and $W_n^\complement:=\R^d\setminus W_n$ for its spatial complement. 

\subsection{Coverage time}
\label{ss:coverage}
In contrast to $G_n(z)$, studying the volume-weighted case is substantially more involved. This is because the score function associated with $G_n^{\ms V}(z)$ depends on the volume of the Laguerre cell, which is possibly influenced by points outside the region $D(x)$ defined in \eqref{eq:D}. To address this issue, we will use the \textit{cover time} introduced in \cite{BhattacharjeeGusakova2025GaussianApproximation}, which quantifies how far one needs to look outside $D(x)$ to determine the Laguerre cell of $x$. We briefly recall its definition and main properties below.

Let \(\varphi\) be a locally finite counting measure on
\(\mathbb R^d\times\mathbb R\), and let \(x=(v,h)\in\mathbb R^d\times\mathbb R\).
For \(t\in\mathbb R\), write
\[
\varphi_{\le t}:=\varphi\big|_{\mathbb R^d\times(-\infty,t]}.
\]
{We may interpret the Laguerre tessellation as the result of a crystal growth process, where each generator $x$ produces a crystal (cell) $C(x;\varphi)$ according to the following rule: at time $h$ the crystal starts its growth from $v$ isotropically with speed $1/(2\sqrt{t})$. Each point in space belongs to the crystal that reaches it first.} Then the cell of \(x\) by time \(t\) is described by the following set
\begin{equation}\label{eq:truncated-cell-corrected}
C_t(x;\varphi)
:=
\begin{cases}
\Big\{u\in\mathbb R^d:
L(x,u)\le t,\quad
L(x,u)\le \displaystyle\inf_{y\in\varphi_{\le t}}L(y,u)
\Big\},&t\ge h,\\[0.4em]
\varnothing,&t<h.
\end{cases}
\end{equation}
The constraint \(L(x,u)\le t\) is essential. Without it, restricting only the
competitors to marks at most \(t\) generally produces a set larger than the full
Laguerre cell and does not describe the crystal grown by time \(t\).

\begin{lemma}[Geometry of the truncated cell]\label{lem:truncated-cell-geometry}
For every \(t\ge h\),
\begin{equation}\label{eq:truncated-cell-identities}
\begin{aligned}
C_t(x;\varphi)
&=
\Big\{u\in B_{\sqrt{t-h}}(v):
L(x,u)\le \inf_{y\in\varphi_{\le t}}L(y,u)\Big\}
=C(x;\varphi)\cap B_{\sqrt{t-h}}(v).
\end{aligned}
\end{equation}
Consequently, \(C_t(x;\varphi)\subseteq C_{t'}(x;\varphi)\) whenever
\(t\le t'\), and
\(
\bigcup_{t\in\mathbb R}C_t(x;\varphi)=C(x;\varphi).
\)
\end{lemma}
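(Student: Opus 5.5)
The first identity is a direct rewriting of the definition in \eqref{eq:truncated-cell-corrected}. Since $L(x,u)=\|v-u\|^2+h$, the condition $L(x,u)\le t$ holds exactly when $\|u-v\|\le\sqrt{t-h}$, that is, when $u\in B_{\sqrt{t-h}}(v)$. So for $t\ge h$ the two descriptions of $C_t(x;\varphi)$ coincide.

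For the second identity, $C_t(x;\varphi)=C(x;\varphi)\cap B_{\sqrt{t-h}}(v)$, I will prove the two inclusions separately. Fix $u\in B_{\sqrt{t-h}}(v)$, so that $L(x,u)\le t$.

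\emph{Inclusion of the intersection in the truncated cell.} Since $\varphi_{\le t}\subseteq\varphi$, we have $\inf_{y\in\varphi}L(y,u)\le\inf_{y\in\varphi_{\le t}}L(y,u)$. Hence if $u\in C(x;\varphi)$, then $u$ satisfies the defining inequality with respect to $\varphi_{\le t}$.

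\emph{Inclusion of the truncated cell in the intersection.} This is the key point: competitors with mark larger than $t$ cannot win at $u$. Any $y=(v',h')\in\varphi$ with $h'>t$ satisfies
\[
L(y,u)=\|v'-u\|^2+h'\ge h'>t\ge L(x,u).
\]
Therefore
\[
\inf_{y\in\varphi}L(y,u)=\min\Bigl\{\inf_{y\in\varphi_{\le t}}L(y,u),\ \inf_{y\in\varphi,\,h'>t}L(y,u)\Bigr\}.
\]
If $L(x,u)\le\inf_{\varphi_{\le t}}L(y,u)$, then $L(x,u)$ is at most both terms, so $u\in C(x;\varphi)$. The case where $\varphi_{\le t}$ is empty is covered by the convention $\inf\emptyset=+\infty$. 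Note that the constraint $L(x,u)\le t$ is used exactly in this step, as the remark before the lemma indicates.

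The consequences then follow from the identity. For $h\le t\le t'$ the balls $B_{\sqrt{t-h}}(v)$ are nested, which gives monotonicity; for $t<h$ we have $C_t=\varnothing$, which is trivially contained in everything. For the union, the balls $B_{\sqrt{t-h}}(v)$ exhaust $\mathbb R^d$ as $t\to\infty$, so $\bigcup_t C_t(x;\varphi)=C(x;\varphi)\cap\mathbb R^d=C(x;\varphi)$.

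No step is a genuine obstacle. The only subtlety is handling the empty infimum and the non-strict inequalities consistently, and the convention above takes care of both.
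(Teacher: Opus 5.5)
Your proposal is correct and follows essentially the same route as the paper. You rewrite $L(x,u)\le t$ as membership in $B_{\sqrt{t-h}}(v)$, observe that generators with mark above $t$ satisfy $L(y,u)>t\ge L(x,u)$ and so cannot beat $x$ there, and read off monotonicity and exhaustion from the nested balls. Your version just spells out the two inclusions that the paper compresses into one sentence.
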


\begin{proof}
The condition \(L(x,u)\le t\) is equivalent to
\(\|u-v\|\le\sqrt{t-h}\). Moreover, if \(y=(w,s)\) has \(s>t\), then
\[
L(y,u)\ge s>t\ge L(x,u).
\]
Thus, on \(\{L(x,u)\le t\}\), generators with mark larger than \(t\) cannot
beat \(x\) at $u$. Comparison with \(\varphi_{\le t}\) is therefore equivalent to
comparison with the full configuration \(\varphi\), which proves
\eqref{eq:truncated-cell-identities}. The monotonicity and exhaustion statements
follow immediately.
\end{proof}

The \emph{cover time} of \(x\) is defined by
\begin{equation}\label{eq:covertime}
T(x;\varphi)
:=
\begin{cases}
-\infty,&C(x;\varphi)=\varnothing,\\[0.2em]
\inf\big\{t\ge h:C_t(x;\varphi)=C(x;\varphi)\big\},
&C(x;\varphi)\ne\varnothing.
\end{cases}
\end{equation}
The infimum may equal \(+\infty\) for an unbounded cell. If the cell is nonempty,
then
\begin{equation}\label{eq:cover-time-radius-representation}
T(x;\varphi)
=
\sup_{u\in C(x;\varphi)}L(x,u)
=
h+\sup_{u\in C(x;\varphi)}\|u-v\|^2.
\end{equation}
In particular, for \(t\ge h\),
\begin{equation}\label{eq:cover-time-containment}
T(x;\varphi)\le t
\quad\Longleftrightarrow\quad
C(x;\varphi)\subseteq B_{\sqrt{t-h}}(v).
\end{equation}
Whenever \(T(x;\varphi)<\infty\),
\begin{equation}\label{eq:cell-at-cover-time}
C(x;\varphi)=C_{T(x;\varphi)}(x;\varphi),
\qquad
|C(x;\varphi)|
\le
\kappa_d\big(T(x;\varphi)-h\big)_+^{d/2}.
\end{equation}

\begin{proof}[Proof of \eqref{eq:cover-time-radius-representation}--\eqref{eq:cell-at-cover-time}]
By Lemma~\ref{lem:truncated-cell-geometry}, the equality
\(C_t(x;\varphi)=C(x;\varphi)\) is equivalent to
\(C(x;\varphi)\subseteq B_{\sqrt{t-h}}(v)\), which is in turn equivalent to
\(\sup_{u\in C(x;\varphi)}L(x,u)\le t\). Taking the smallest such \(t\)
gives \eqref{eq:cover-time-radius-representation} and
\eqref{eq:cover-time-containment}. The remaining assertions follow by putting
\(t=T(x;\varphi)\).
\end{proof}


\begin{lemma}[Boundary characterization of the cover time]
\label{lem:cover-time-boundary-characterization}
Assume that \(\varphi\) generates a Laguerre tessellation, that
\(x=(v,h)\in\varphi\), and that \(v\in\operatorname{int}C(x;\varphi)\).
Then, for every \(t\ge h\),
\begin{equation}\label{eq:cover-time-boundary-equivalence}
T(x;\varphi)>t
\quad\Longleftrightarrow\quad
\partial B_{\sqrt{t-h}}(v)
\not\subseteq
\bigcup_{y\in\operatorname{supp}(\varphi)\setminus\{x\}}C_t(y;\varphi).
\end{equation}
Hence, if \(T(x;\varphi)>t\), then there exists
\(w\in\partial B_{\sqrt{t-h}}(v)\) such that
\(
\varphi(D(w,t))=0,
\)
where $D$ is defined in \eqref{eq:D}.
\end{lemma}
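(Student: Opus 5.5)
We prove the two implications of \eqref{eq:cover-time-boundary-equivalence} separately and then derive the consequence. Throughout, set $r:=\sqrt{t-h}$. By \eqref{eq:cover-time-containment}, $T(x;\varphi)>t$ means exactly that $C(x;\varphi)\not\subseteq B_r(v)$.

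\emph{($\Leftarrow$).} Suppose some $w\in\partial B_r(v)$ lies in no $C_t(y;\varphi)$ with $y\neq x$. Since $\varphi$ generates a tessellation, $w$ lies in some cell $C(y;\varphi)$.
\begin{itemize}
\item Suppose first $w\in C(y;\varphi)$ for some $y=(w',s)\ne x$. Because $w\notin C_t(y;\varphi)=C(y;\varphi)\cap B_{\sqrt{t-s}}(w')$ (Lemma~\ref{lem:truncated-cell-geometry}), we get $L(y,w)>t$; for $s>t$ this holds trivially.
\item Otherwise $w\in C(x;\varphi)$ only.
\end{itemize}
In the first case, $L(x,w)=h+r^2=t<L(y,w)$, so $y$ does not minimize at $w$, which is a contradiction. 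Hence $w\in C(x;\varphi)$, and in fact $x$ strictly wins at $w$, with $L(y',w)>t=L(x,w)$ for all $y'\ne x$.

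Strict inequality persists in a neighbourhood of $w$; we need local finiteness here, since only finitely many $y'$ have $L(y',w)\le t+1$. So points $u$ slightly outside $B_r(v)$ near $w$ still belong to $C(x;\varphi)$. Therefore $C(x;\varphi)\not\subseteq B_r(v)$, i.e. $T(x;\varphi)>t$.

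\emph{($\Rightarrow$).} Suppose $T(x;\varphi)>t$, so there is $u\in C(x;\varphi)$ with $\|u-v\|>r$. Cells are convex (intersections of half-spaces), and $v\in\operatorname{int}C(x;\varphi)$. Hence the segment from $v$ to $u$ lies in $C(x;\varphi)$ and meets $\partial B_r(v)$ at a point $w$ lying strictly between $v$ and $u$. We may take $w$ in the relative interior of a segment from an interior point to a point of the convex set, hence $w\in\operatorname{int}C(x;\varphi)$.

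We claim $w\notin C_t(y;\varphi)$ for every $y\ne x$. Indeed, if $w\in C_t(y;\varphi)$ then $L(y,w)\le t$ and $L(y,w)\le L(x,w)=t$, so $y$ ties with $x$ at $w$. We must exclude this tie. Since $w$ is interior to $C(x;\varphi)$, a tie would force the bisector hyperplane $\{L(x,\cdot)=L(y,\cdot)\}$ to pass through $w$ while $C(x;\varphi)$ lies on one side of it, contradicting interiority. Thus $w$ is not covered, and the right-hand side of \eqref{eq:cover-time-boundary-equivalence} holds.

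\emph{Consequence.} Take $w$ as in the ($\Rightarrow$) step. For $y=(w',s)\in\varphi$ with $y\ne x$, suppose $(w',s)\in D(w,t)$, i.e. $t-s>\|w-w'\|^2$, so $L(y,w)<t=L(x,w)$. This contradicts $w\in C(x;\varphi)$. For $y=x$ we have $L(x,w)=t$, which is not strictly less, so $x\notin D(w,t)$. Hence $\varphi(D(w,t))=0$.

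The main obstacle is the tie/boundary bookkeeping: ensuring the chosen $w$ is in the interior of $C(x;\varphi)$ so that no competitor ties there. We handle this through convexity together with the assumption $v\in\operatorname{int}C(x;\varphi)$. A secondary point in ($\Leftarrow$) is the local-finiteness argument needed to push strictness of the inequality to a neighbourhood of $w$.
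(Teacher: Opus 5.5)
Your proof is correct and follows the paper's argument. For ($\Rightarrow$), you place the sphere point $w$ on the segment from $v$ to a far point of the convex cell, so $w\in\operatorname{int}C(x;\varphi)$ and $x$ is the unique minimiser at $w$. For ($\Leftarrow$), you show that an uncovered $w$ is won strictly by $x$, hence is interior, so the cell leaves $B_{\sqrt{t-h}}(v)$; the consequence $\varphi(D(w,t))=0$ then follows as in the paper.

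The only loose point is your justification that strictness persists near $w$. Finiteness of $\{y':L(y',w)\le t+1\}$ does not by itself control far-away competitors on a ball around $w$, because the affine differences $L(y',\cdot)-L(x,\cdot)$ have unbounded gradients. Use instead one of two facts: only finitely many cells of the tessellation meet a neighbourhood of $w$, or the enlarged region $\{y':\inf_{u\in B_\varepsilon(w)}L(y',u)\le t+1\}$ is covered by finitely many downward paraboloids, and so has finitely many points whenever every $D(\cdot,\cdot)$ does. The paper itself simply asserts this step.
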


\begin{proof}
Suppose first that \(T(x;\varphi)>t\). By
\eqref{eq:cover-time-containment}, there exists
\(u\in C(x;\varphi)\) with \(\|u-v\|>\sqrt{t-h}\). Since
\(v\in\operatorname{int}C(x;\varphi)\) and the cell is convex, the point
\[
w=v+\frac{\sqrt{t-h}}{\|u-v\|}(u-v)
\]
belongs to \(\operatorname{int}C(x;\varphi)\). Hence, \(x\) is the unique
minimizer of the Laguerre distance at \(w\), so \(w\) belongs to no other cell
at time \(t\). This proves the forward implication.

Conversely, suppose that a point
\(w\in\partial B_{\sqrt{t-h}}(v)\) belongs to no
\(C_t(y;\varphi)\), \(y\ne x\). Since the Laguerre cells cover
\(\mathbb R^d\), the point \(w\) must be won uniquely by \(x\), and hence
\(w\in\operatorname{int}C(x;\varphi)\). The cell therefore contains points
immediately beyond the sphere, so it is not contained in
\(B_{\sqrt{t-h}}(v)\). By \eqref{eq:cover-time-containment},
\(T(x;\varphi)>t\).
Finally, if \(w\) is not covered by any competing cell at time \(t\), then no
\(y\in\operatorname{supp}(\varphi)\setminus\{x\}\) satisfies \(L(y,w)\le t=L(x,w)\). 
\end{proof}

For the Poisson input used below, the spatial intensity is diffuse. Hence, for
fixed \(x=(v,h)\), on the event \(x\in\EE(\PP+\delta_x)\), the point \(v\)
lies in the interior of its cell almost surely. Thus,
Lemma~\ref{lem:cover-time-boundary-characterization} applies almost surely on
the event occurring in Proposition~\ref{prop:CoverTimetail}.

 The following proposition provides exponential tail bounds for the cover time, uniformly over the event that the point is extreme.

%
%
\begin{proposition}[Tail bound for the cover time]
  \label{prop:CoverTimetail}
    Let $x=(v,h)\in\R^d\times\R$. Then,
    \begin{itemize}
        \item[(i)] $  \sup_{t\le h} \P
        \big(x\in\EE(\PP+\delta_{x}), T(x;\PP)>t\big)\le   \exp(-\Q(h));$
    \item[(ii)] and for $t> h$, 
    \[
    \P\big(x\in\EE(\PP+\delta_{x}), T(x;\PP)>t\big)\le  C\inf_{0<a\le  \sqrt{t-h}}\Big\{(a^{-1}\sqrt{t-h})^{d-1}\exp\Big(-{1\over 2^d}\Q(t-4a^2)\Big)\Big\};
    \]
    \item[(iii)] in particular, for $t>h$,
    \[
    \P\big(x\in\EE(\PP+\delta_{x}), T(x;\PP)>t\big)\le  C\exp\Big(-{1\over 2^d}\Q\Big({h+t\over 2}\Big)\Big).
    \]
    \end{itemize}
    Here $C>0$ is a constant depending only on $d$.
\end{proposition}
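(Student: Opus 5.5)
Part (i) needs no geometry. The event $x\in\EE(\PP+\delta_x)$ is exactly $\PP(D(x))=0$, and since $\PP$ is Poisson with intensity $\Q$, its probability is $\exp(-\Q(D(x)))=\exp(-\Q(h))$ by \eqref{eq:Qdef}. Dropping the constraint on $T$ then gives (i) for every $t\le h$.

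For (ii), fix $t>h$ and set $r:=\sqrt{t-h}$. On $\{x\in\EE(\PP+\delta_x)\}$ the point $v$ lies a.s.\ in the interior of its cell. By Lemma~\ref{lem:cover-time-boundary-characterization}, $T(x;\PP)>t$ then yields some $w\in\partial B_r(v)$ with $\PP(D(w,t))=0$. This existential statement has to be discretised. Take $0<a\le r$ and an $a$-net $\{w_1,\dots,w_N\}$ of the sphere $\partial B_r(v)$ with $N\le C(r/a)^{d-1}$.

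If $\|w-w_j\|\le a$, I want a fixed region that sits inside $D(w,t)$ for every such $w$. The natural candidate is $D(w_j,t-a^2)$. Indeed, if $(v',h')$ satisfies $t-a^2-h'>\|w_j-v'\|^2$, then
\[
\|w-v'\|^2\le 2\|w_j-v'\|^2+2a^2< 2(t-a^2-h')+2a^2.
\]
This is not quite $t-h'$, so the factor $2$ has to be handled. I therefore shrink further and use the paraboloid of half the width. Consider the region
\[
\{(v',h'):\ t-4a^2-h'>2\|w_j-v'\|^2\}.
\]
For points in it, $\|w-v'\|^2\le 2\|w_j-v'\|^2+2a^2<t-h'-2a^2<t-h'$, so the region is contained in $D(w,t)$.

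Its $\Q$-measure is $\kappa_d\E[((t-4a^2-H)_+/2)^{d/2}]=2^{-d/2}\Q(t-4a^2)$, which is even better than the $2^{-d}$ claimed. A union bound over the net then gives
\[
\P\big(x\in\EE(\PP+\delta_x),\,T(x;\PP)>t\big)\le \sum_{j} \exp\big(-2^{-d/2}\Q(t-4a^2)\big)\le C(r/a)^{d-1}\exp\big(-2^{-d}\Q(t-4a^2)\big).
\]
Taking the infimum over $a\in(0,r]$ proves (ii).

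For (iii), choose $a=r/\sqrt8$, so that $(r/a)^{d-1}$ is a constant and $t-4a^2=t-(t-h)/2=(h+t)/2$.

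The main obstacle is the discretisation step: one must make sure the fixed region is contained in every $D(w,t)$ with $w$ near $w_j$. Beyond that, the plan only checks constants, and it discards the extremality event after using it to place $v$ in the interior of its cell.
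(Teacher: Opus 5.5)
Your proposal is correct and follows the paper's proof: the same reduction via Lemma~\ref{lem:cover-time-boundary-characterization}, an $a$-net of the sphere combined with a union bound, and the same choice $a=\sqrt{t-h}/(2\sqrt2)$ for (iii). The only difference is the fixed inner region. The paper uses $K(a,u)=\{(v',s): s\le t-a^2,\ \|v'-u\|<\sqrt{t-s}-a\}$ and bounds its measure below by restricting to $s\le t-4a^2$. Your region, obtained from $\|w-v'\|^2\le 2\|w_j-v'\|^2+2a^2$, has measure exactly $2^{-d/2}\Q(t-4a^2)$, which gives a slightly better constant.
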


Choosing $a$ proportional to $\sqrt{t-h}$ balances the polynomial prefactor and the exponential decay.

\begin{remark}[Estimate for $\Q(h)$]\label{rem:Qlb}
For any $x_0$ such that $\mathbb F((-\infty,x_0])>0$, and any $h\ge x_0$, we have
\begin{align*}
\Q(h)&\ge\kappa_d\int_{-\infty}^{x_0}((h-x_0)+(x_0-s))^{d/2}\mathbb F(\md s)\ge \kappa_d \mathbb F((-\infty,x_0])(h-x_0)^{d/2}.
\end{align*}
Note also that without loss of generality we may assume $x_0 \ge 0$. 
\end{remark}
\begin{proof}[Proof of Proposition \ref{prop:CoverTimetail}]
Without loss of generality, let \(v=0\) and write \(x=(0,h)\). For \(t\le h\),
\[
\{x\in\EE(\PP+\delta_{x}),\ T(x;\PP)>t\}\subseteq\{x\in\EE(\PP+\delta_{x})\},
\]
and therefore
\(
\P\big(x\in\EE(\PP+\delta_{x}),\ T(x;\PP)>t\big)
\le \P\big(x\in\EE(\PP+\delta_{x})\big)=e^{-\Q(h)}.
\)
This proves (i).

Now let \(t>h\). By Lemma~\ref{lem:cover-time-boundary-characterization},
\begin{equation}\label{eq:cover-time-to-empty-paraboloid}
\{x\in\EE(\PP+\delta_{x}),\ T(x;\PP)>t\}
\subseteq
\Big\{\exists w\in\partial B_{\sqrt{t-h}}(0):
\PP(D(w,t))=0\Big\}.
\end{equation}
Fix \(0<a\le\sqrt{t-h}\), and let \(\mathcal N_a\) be an \(a\)-net of
\(\partial B_{\sqrt{t-h}}(0)\) of cardinality at most $C\big(a^{-1}\sqrt{t-h}\big)^{d-1}$. For \(u\in\mathcal N_a\), define
\[
K(a,u):=\Big\{(v',s):s\le t-a^2,\quad
\|v'-u\|<\sqrt{t-s}-a\Big\}.
\]
If \(w\in B_{a}(u)\), then \(K(a,u)\subseteq D(w,t)\). Each spatial boundary has zero Lebesgue measure, so the strict inequality in the definition of $K(a,u)$ leaves its $\Q$-measure unchanged, even when $\mathbb F$ has atoms. Hence, the event on
the right-hand side of \eqref{eq:cover-time-to-empty-paraboloid} implies that
\(\PP(K(a,u))=0\) for at least one \(u\in\mathcal N_a\). Moreover,
\(
\Q(K(a,u))=\kappa_d\int_{-\infty}^{t-a^2}
\big(\sqrt{t-s}-a\big)^d\mathbb F(\md s).
\)
For \(s\le t-4a^2\), one has \(a\le\frac12\sqrt{t-s}\), and therefore
\[
\Q(K(a,u))\ge 2^{-d}\kappa_d
\int_{-\infty}^{t-4a^2}(t-s)^{d/2}\mathbb F(\md s)
\ge 2^{-d}\Q(t-4a^2).
\]
Consequently,
\(
\P\big(\PP(K(a,u))=0\big)
\le \exp\big(-2^{-d}\Q(t-4a^2)\big).
\)
The union bound now yields
\[
\P\big(x\in\EE(\PP+\delta_{x}),\ T(x;\PP)>t\big)
\le C\big(a^{-1}\sqrt{t-h}\big)^{d-1}
\exp\big(-2^{-d}\Q(t-4a^2)\big),
\]
which proves (ii). For (iii), choose
\(
a=\sqrt{t-h}/(2\sqrt2).
\)
Then, \(t-4a^2=(t+h)/2\), while the net-cardinality factor is bounded by a
constant depending only on \(d\). This proves (iii).
\end{proof}

The exponential tail bounds yield moment estimates for the excess cover time.
The following corollary records the estimates used later in the analysis of the volume-weighted score.
\begin{corollary}[Moment bound for the cover time]\label{lem:cover-time-moment-integrability}
Fix $z\in\R$ and assume $\mathbb F((-\infty,z])>0$.
Then for any $p>0$, there exists $C\in (0,\infty)$, depending only on $d,\mathbb F, z,p$, such that for all $h\le z$,
\[
\E\big[\one\{(0,h)\in\EE(\PP+\delta_{(0,h)})\}\,(T((0,h);\PP)-h)_+^{p}\big]
\le C\big(1+(z-h)_+^{p}\big).
\]
Moreover, under the moment assumption $\E[H_-^{p}]<\infty$, we have
\[
\int_{-\infty}^{z}\E\big[\one\{(0,h)\in\EE(\PP+\delta_{(0,h)})\}\,(T((0,h);\PP)-h)_+^{p}\big]\,\mathbb F(\d h)<\infty.
\]
\end{corollary}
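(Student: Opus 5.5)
We plan to apply the layer-cake formula to the excess cover time and then use the tail bounds of Proposition~\ref{prop:CoverTimetail}. Write $x=(0,h)$ with $h\le z$ and $E_x:=\{x\in\EE(\PP+\delta_x)\}$. Then
\[
\E\big[\one_{E_x}(T(x;\PP)-h)_+^{p}\big]=\int_0^\infty p s^{p-1}\,\P\big(E_x,\,T(x;\PP)>h+s\big)\,\md s .
\]
We split the integral at $s_0:=2(z-h)_+ + 2(x_0-z)_+ +2$, where $x_0\ge 0$ is a level with $\mathbb F((-\infty,x_0])>0$. Since $\mathbb F((-\infty,z])>0$, we may simply take $x_0:=\max(z,0)$, a constant depending only on $\mathbb F$ and $z$. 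On $[0,s_0]$ we bound the probability by $1$, which gives a contribution of at most $s_0^p\le C(1+(z-h)_+^p)$.

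For $s>s_0$ we use Proposition~\ref{prop:CoverTimetail}(iii) with $t=h+s$:
\[
\P(E_x,T>h+s)\le C\exp\big(-2^{-d}\Q(h+s/2)\big).
\]
Because $s>s_0$, we have $h+s/2\ge x_0+ (s/2-s_0/2)+ \dots$. More precisely, $h+s/2 - x_0 \ge (s-s_0)/2$, since $h+s_0/2\ge h+(z-h)+(x_0-z)_+\ge x_0$. Remark~\ref{rem:Qlb} then gives
\[
\Q(h+s/2)\ge c\,((s-s_0)/2)^{d/2}
\]
with $c=\kappa_d\mathbb F((-\infty,x_0])>0$. Hence the tail part is at most
\[
C\int_{s_0}^\infty s^{p-1}e^{-c'(s-s_0)^{d/2}}\,\md s\le C' s_0^{p-1}+C'' \le C(1+(z-h)_+^{p}),
\]
after substituting $s=s_0+r$ and using $(s_0+r)^{p-1}\le C(s_0^{p-1}+r^{p-1})$ for $p\ge 1$ (for $p<1$ we use $s^{p-1}\le r^{p-1}$). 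Since $s_0^{p-1}\le 1+s_0^p$, the first claim follows. The constants depend only on $d,\mathbb F,z,p$.

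For the integrated statement, the first bound gives
\[
\int_{-\infty}^z C\big(1+(z-h)^p\big)\,\mathbb F(\md h)\le C\big(1+2^{p}(z_+^p+\E[H_-^p])\big)<\infty,
\]
using $(z-h)_+\le z_+ + h_-$ and the inequality $(a+b)^p\le 2^{p}(a^p+b^p)$.

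The only delicate step is obtaining a lower bound on $\Q(h+s/2)$ that is uniform in $h\le z$, including very negative $h$. This is why the split point $s_0$ is chosen to grow with $(z-h)_+$: beyond $s_0$ the argument $h+s/2$ exceeds $x_0$ by an amount proportional to $s-s_0$, and Remark~\ref{rem:Qlb} then applies uniformly. The price of this choice is exactly the polynomial term $(z-h)_+^p$ in the bound.
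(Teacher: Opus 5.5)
Your proposal is correct and follows essentially the same argument as the paper: the layer-cake formula, a split of the $s$-integral at a point of order $(z-h)_+$, and Proposition~\ref{prop:CoverTimetail}(iii) combined with the polynomial lower bound of Remark~\ref{rem:Qlb} beyond that point; for the integrated statement you then use $(z-H)_+\le z_+ + H_-$. The only difference is cosmetic: the paper splits at $A=2(z-h)_+$ and applies the remark directly with $x_0=z$, so your extra offset $2(x_0-z)_+ + 2$ and the choice $x_0=\max(z,0)$ are harmless but unnecessary.
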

\begin{proof}
Set
$
X := \one\{x\in\EE(\PP+\delta_{x})\}\,(T(x;\PP)-h)_+ ,
$
where $x=(0,h)$. Then, for all $s>0$,
\[
\mathbb P(X>s)
=\mathbb P\big(x\in\EE(\PP+\delta_{x}),\,T(x;\PP)>h+s\big).
\]
Using the representation,
$
\mathbb E[X^p]=p\int_0^\infty s^{p-1}\mathbb P(X>s)\,\mathrm ds,
$
we split the integral at $A:=2(z-h)_+$. First,
\[
p\int_0^A s^{p-1}\mathbb P(X>s)\,\mathrm ds
\le p\int_0^A s^{p-1}\,\mathrm ds
= A^p
\le C(z-h)_+^p .
\]
Next, let $s>A \ge 0$, so that $t:=h+s>h$. By Proposition~\ref{prop:CoverTimetail}(iii),
\[
\mathbb P(X>s)
\le C_0\exp\Big(-\tfrac{1}{2^d}\Q\big(\tfrac{h+t}{2}\big)\Big)
= C_0\exp\Big(-\tfrac{1}{2^d}\Q(h+s/2)\Big).
\]
Moreover,
$
\Q(u)\ge \kappa_d \mathbb F((-\infty,z])(u-z)_+^{d/2}.
$
For $s>A$ we have $h+s/2\ge z$, hence
$
\Q(h+s/2)\ge c (s-A)^{d/2}
$
for some $c=c(d,\mathbb F,z)>0$. Consequently,
$
\mathbb P(X>s)\le C_0\exp\big(-c(s-A)^{d/2}\big).
$
Therefore,
\[
\int_A^\infty s^{p-1}\mathbb P(X>s)\,\mathrm ds
\le C\int_0^\infty (u+A)^{p-1}e^{-cu^{d/2}}\,\mathrm du
\le C'(1+A^p),
\]
where the last inequality follows since $\int_0^\infty u^q e^{-cu^{d/2}}\,\mathrm du <\infty$
for all $q\ge0$. Thus,
$
\mathbb E[X^p]\le C\big(1+(z-h)_+^p\big).
$
Finally,
\(
\int_{-\infty}^z \mathbb E[X^p]\,\mathbb F(\mathrm dh)
\le
 C\big(1+\mathbb E[(z-H)_+^p]\big).
\)
Since $(z-H)_+\le |z|+H_-$, the assumption $\mathbb E[H_-^p]<\infty$ implies finiteness of
the right-hand side.
\end{proof}
In particular, this shows that the amount of configuration needed to determine the full cell is sufficiently
integrable under the stated moment condition on the mark distribution, which is the key input needed later for
the weighted process.
We note a variant that will be used directly in the univariate CLT proof in Section~\ref{sec:univ_proof}.
\begin{lemma}
	\label{lem:a2}
	Let $\mathbb F((-\infty,z])>0$ and fix $p>0$. Then for $x = (v,h)\in \R^{d+1}$ with $h\le z$,
	$$
	\E [\one\{x\in\EE(\PP+\delta_{x})\} |C(x; \PP)|^{4+p}] \le C \bigl(1+(z-h)^{(4+p)d/2}\bigr)
	$$
	for a constant $C \in (0,\infty)$ depending only on $z,\mathbb F,d,p$.
\end{lemma}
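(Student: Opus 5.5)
The plan is to reduce the volume moment to a moment of the excess cover time, using \eqref{eq:cell-at-cover-time}, and then invoke Corollary~\ref{lem:cover-time-moment-integrability} with $p' := (4+p)d/2$. By stationarity of $\PP$ in the spatial coordinate, the quantity $\E[\one\{x\in\EE(\PP+\delta_x)\}|C(x;\PP)|^{4+p}]$ does not depend on $v$, so we may take $x=(0,h)$.

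On the event $\{x\in\EE(\PP+\delta_x)\}$ the cell is nonempty, since it contains $v$. I then distinguish two cases. If $T(x;\PP)<\infty$, then \eqref{eq:cell-at-cover-time} gives
\[
|C(x;\PP)|^{4+p}\le \kappa_d^{4+p}\,(T(x;\PP)-h)_+^{(4+p)d/2}.
\]
If instead $T(x;\PP)=+\infty$, then the right-hand side equals $+\infty$, so the inequality holds trivially as an inequality in $[0,\infty]$. This case is in any event null: Proposition~\ref{prop:CoverTimetail}(iii) combined with Remark~\ref{rem:Qlb} (using $\mathbb F((-\infty,z])>0$) shows that $\P(x\in\EE(\PP+\delta_x),\,T(x;\PP)>t)\to0$ as $t\to\infty$.

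Multiplying by the indicator and taking expectations gives
\[
\E\big[\one\{x\in\EE(\PP+\delta_x)\}|C(x;\PP)|^{4+p}\big]\le \kappa_d^{4+p}\,\E\big[\one\{x\in\EE(\PP+\delta_x)\}(T(x;\PP)-h)_+^{p'}\big].
\]
Corollary~\ref{lem:cover-time-moment-integrability}, applied with exponent $p'>0$ and any $h\le z$, bounds the right-hand side by $C(1+(z-h)_+^{p'})$. Since $h\le z$, we have $(z-h)_+=z-h$, which yields exactly $C(1+(z-h)^{(4+p)d/2})$. The resulting constant depends only on $d,\mathbb F,z,p$, because $p'$ is determined by $d$ and $p$.

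There is no real obstacle here; the only point to check is measurability and the handling of the infinite cover-time case, and the tail bound disposes of the latter.
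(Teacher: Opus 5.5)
Your proposal is correct and follows the paper's own proof. Like the paper, you bound the volume on the extremality event by $\kappa_d(T(x;\PP)-h)_+^{d/2}$ via \eqref{eq:cell-at-cover-time} and then apply Corollary~\ref{lem:cover-time-moment-integrability} with exponent $(4+p)d/2$. Your reduction to $v=0$ by stationarity and your treatment of the null event $\{T(x;\PP)=+\infty\}$ just spell out what the paper leaves implicit.
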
 

\begin{proof}
On the event \(x\in\EE(\PP+\delta_x)\),
\eqref{eq:cell-at-cover-time} gives
\(
|C(x;\PP+\delta_x)|
\le \kappa_d\big(T(x;\PP+\delta_x)-h\big)^{d/2}.
\)
Therefore, by Corollary~\ref{lem:cover-time-moment-integrability},
\begin{align*}
\E\Big[\one\{x\in\EE(\PP+\delta_x)\}
|C(x;\PP+\delta_x)|^{4+p}\Big]
&\le \kappa_d^{4+p}
\E\Big[\one\{x\in\EE(\PP+\delta_x)\}
\big(T(x;\PP+\delta_x)-h\big)^{(4+p)d/2}\Big]\\
&\le C\big(1+(z-h)_+^{(4+p)d/2}\big).
\end{align*}
\end{proof}

%
%
    \section{Variance lower bound}
\label{sec:varlb}

    In this section, we establish the lower bounds for the variance of the functionals $G_n(z)$ and $G^{\ms V}_n(z)$.  More precisely we show the following propositions.

    \begin{proposition}\label{prop:VarianceBound}
For any $z\in\mathbb{R}$ with $\mathbb F((-\infty,z])> 0$ and any $n\in\mathbb{N}$ we have $\Var(G_n(z))\ge C(\mathbb F,z)|W_n|$, where $C(\mathbb F,z)\in (0,\infty)$.
\end{proposition}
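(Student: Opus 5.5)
The plan is a conditional-variance argument: fix a family of disjoint spatial cubes inside $W_n$, condition on the configuration outside a local "test region" in each cube, and show that each cube contributes a uniformly positive amount of conditional variance. Since the contributions from different cubes add up, this gives $\Var(G_n(z))\ge c\,|W_n|$.

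First I fix $x_0\le z$ with $\mathbb F((-\infty,x_0])>0$; without loss of generality $x_0=z$. I take a large constant $\rho$, to be chosen later, and a maximal collection of disjoint cubes $Q_1,\dots,Q_{K_n}\subseteq W_n$ of side $\rho$, separated by $\rho$. Since $W$ is a convex body containing $0$ in its interior, $K_n\ge c\rho^{-d}|W_n|$ for all $n$. For small $n$ with $K_n=0$ I handle the bound separately: $\Var(G_n(z))>0$ for every $n$, and only finitely many such $n$ occur.

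Let $Q_i'$ denote the centred subcube of side $\rho/2$ in $Q_i$. Let $\mathcal F$ be the $\sigma$-field generated by $\PP$ restricted to the complement of $\bigcup_i (Q_i'\times(-\infty,z])$. By the law of total variance,
$$\Var(G_n(z))\ge \E\big[\Var(G_n(z)\mid\mathcal F)\big].$$
Given $\mathcal F$, the restrictions $\PP|_{Q_i'\times(-\infty,z]}$ are independent. However, $G_n(z)$ is not additive over $i$, since a point in one cube can kill a point in another through a deep paraboloid. To obtain additivity I restrict to a good event $\Omega_i$ that depends only on $\PP$ outside $Q_i'\times(-\infty,z]$ and has probability at least $1/2$. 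On $\Omega_i$ the following hold:

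(a) there are points with marks in $[z-1,z]$ forming a dense "shield" in $Q_i\setminus Q_i'$ and around it, so that any point of $Q_i'\times(-\infty,z]$ with mark above $z-M$ is dominated, or irrelevant, outside $Q_i$;

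(b) no point of $\PP$ outside has a paraboloid $D$ reaching into $Q_i'\times[z-M,z]$.

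Under (b), by Remark~\ref{rem:Qlb} and the assumption $\E[H_-^{d/2}]<\infty$, the probability that a far point has a paraboloid reaching the cube is small uniformly in $\rho$; this is the Poisson void probability of $\bigcup_{u\in Q_i'}D(u,z-M)$ intersected with the far region.

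Rather than an exact additive decomposition, I would realise the conditional variance through a single binary perturbation in each cube:
\begin{itemize}
\item $E_i^0$: no points in $Q_i'\times(-\infty,z]$;
\item $E_i^1$: exactly one point, located in a tiny ball around the centre of $Q_i'$, with mark in $[z-1,z]$.
\end{itemize}
Both events have probability bounded below by a constant depending on $\rho$ and $\mathbb F$. On $\Omega_i$, switching from $E_i^0$ to $E_i^1$ changes $G_n(z)$ by a deterministic amount. The new point is extreme, because (a) and (b) give an empty $D$ region, and it kills at most boundedly many shield points, which are determined by $\mathcal F$. The effect is local to $Q_i$, and by (b) it does not change extremality of points outside. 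The conditional law of $G_n(z)$ given $\mathcal F$ and the other cubes therefore puts mass at two values differing by a nonzero integer.

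To make this rigorous I would use the inequality $\Var(Y)\ge \frac14\min\{\P(A),\P(B)\}\,\delta^2$ for two events $A,B$ on which $Y$ differs by at least $\delta$. Combined with the Efron--Stein-type superadditivity of conditional variance over independent blocks (conditioning successively on all but one block), this yields $\Var(G_n(z))\ge c\,K_n$.

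The main obstacle is the deterministic claim that the increment is nonzero and unaffected by the other cubes. One difficulty is that the increment could be zero, for instance if the new point kills exactly one shield point. To avoid this, I choose the shield marks in $[z-1/2,z]$ and place them at distance greater than $1$ from the centre, so that a new point with mark in $[z-1,z-3/4]$ has a paraboloid that misses all shield points; the increment is then exactly $+1$. The other difficulty is long-range influence from very negative marks inside neighbouring cubes. This is excluded by conditioning on $E_j^0\cup E_j^1$ for all $j$, under which the only marks in $Q_j'$ are at least $z-1$, so their paraboloids have radius at most $\sqrt{M}$ at level $z-M$, which is smaller than the separation $\rho$.
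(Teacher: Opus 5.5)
There are two gaps: the summation over cubes rests on an inequality that goes the wrong way, and the construction needs $\mathbb F$-mass in mark windows that need not carry any.

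\textbf{Summation over cubes.} The inequality you call Efron--Stein-type superadditivity runs the wrong way. For a functional $F$ of independent blocks $Y_1,\dots,Y_K$, Efron--Stein gives $\Var(F)\le\sum_i\E[\Var(F\mid Y_{-i})]$, and the reverse can fail. For example, with two independent fair bits and $F=Y_1+Y_2 \bmod 2$, one has $\Var(F)=1/4$, while each $\E[\Var(F\mid Y_{-i})]$ equals $1/4$.

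So a uniform single-cube lower bound does not add up to $cK_n$ unless the statistic is exactly additive across blocks, and you explicitly chose to bypass additivity. Your good events do not supply it either:
\begin{itemize}
\item $\Omega_i$ depends on $\PP$ inside the other blocks $Q_j'\times(-\infty,z]$, which are not $\mathcal F$-measurable.
\item ``Conditioning on $E_j^0\cup E_j^1$ for all $j$'' as an event has probability exponentially small in $K_n$.
\end{itemize}

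What works is to condition on the $\sigma$-field $\mathcal G$ generated by three things: $\PP$ off the blocks, the indicators of $E_j^0\cup E_j^1$, and the full contents of the blocks where that indicator is zero. Given $\mathcal G$, the good blocks are independent. A test point $y_j$ with mark $h_{y_j}$ can only destroy extremality of points of $W_n^z$ within distance $\sqrt{z-h_{y_j}}$, which for $\rho$ large lie in its own block. Hence
\[
G_n(z)=R+\sum_{j\ \mathrm{good}}\one_{E_j^1}\one\{y_j\in\EE(\PP)\},
\]
with $R$ $\mathcal G$-measurable. On the $\mathcal G$-measurable event that no revealed point lies in $\bigcup_y D(y)$, the union taken over the test region, each summand is a nondegenerate Bernoulli variable. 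This union has finite $\Q$-measure, so that event has probability bounded below. This is the mechanism the paper uses for Proposition~\ref{prop:VarianceBoundVolume}.

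\textbf{Mark windows.} ``Without loss of generality $x_0=z$'' is not a legitimate reduction, because $z$ is the level of the statistic. Your construction needs $\mathbb F$-mass both in $[z-1/2,z]$ for the shield and in $[z-1,z-3/4]$ for the test point. This fails for $\mathbb F=\delta_{z-10}$. It fails for every Dirac law even after relocating the windows, since two disjoint windows of positive mass are needed. Yet the statement holds there: all points are extreme and $G_n(z)$ is Poisson.

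The shield is superfluous for the unweighted count, which involves only emptiness of the regions $D(x)$, not cells. Moreover, (a) and (b) do not give $\PP(D(y))=0$ for the test point; what is needed is the void event for $\bigcup_y D(y)$ outside the block. It suffices to take test marks in $[s-\varepsilon,s]$, where $s$ is the top of the support of $\mathbb F$ on $(-\infty,z]$, and $\rho$ large compared to $\sqrt{z-s+\varepsilon}$.

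\textbf{Comparison with the paper.} The paper avoids all conditioning and locality. It uses the first-chaos bound
\[
\Var(G_n(z))\ge\int(\E[D_xG_n(z)])^2\,\Q(dx),
\]
with the same choice of $s$. For $x=(v,h)$ with $h\in(s-\varepsilon,s]$, the expected number of extreme points destroyed by inserting $x$ is at most $\kappa_d\varepsilon^{d/2}$. Meanwhile $x$ itself is extreme with probability at least $e^{-\Q(s)}$. This yields the bound for every $n$ in a few lines.
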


\begin{proposition}\label{prop:VarianceBoundVolume}
Assume that $\mathbb F$ is not a Dirac measure. Then, for any $z$ with $\mathbb F((-\infty,z])> 0$ and $n>n_0(\mathbb F,z,W)$ we have $\Var(G^{\ms V}_n(z))\ge C(\mathbb F,z, W)|W_n|$ with some $C(\mathbb F,z,W)\in (0,\infty)$.
\end{proposition}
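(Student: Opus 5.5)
Both propositions are variance lower bounds, and I would prove them the standard way: condition on a sigma-field that leaves many independent, localized sources of randomness, and use
\[
\Var(F)\ge \E[\Var(F\mid\mathcal G)].
\]
The plan is as follows.

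\textbf{Partition and local configurations.} Fix $z$ with $\mathbb F((-\infty,z])>0$ and pick $x_0\le z$ with $p_0:=\mathbb F((-\infty,x_0])>0$. Choose a large constant $R$ and partition $W_n$ into disjoint cubes $Q_1,\dots,Q_{N}$ of side $R$ whose $R$-neighbourhoods lie in $W_n$. Since $W$ contains $0$ in its interior, $N\ge c|W_n|$ for $n$ large; small $n$ are absorbed into the constant, since $\Var(G_n(z))>0$ trivially for every $n$.

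For each cube $Q_j$ with centre $c_j$, consider the following local event $E_j$ on the points of $\PP$ lying spatially in $Q_j^{+}$ (the cube enlarged by a fixed margin) and having marks at most some level $t_0$.
\begin{itemize}
\item There is a ``shield'' of points with marks in $(-\infty,x_0]$ on a fine grid near $\partial Q_j$.
\item Inside the shield, either zero or exactly one point $(v,h)$ with $v$ near $c_j$ and $h\le x_0$ is present.
\end{itemize}
The shield should make the following hold.
\begin{enumerate}
\item Extremality of points outside $Q_j$ and inside $Q_j$ is determined independently of what happens in the interior of $Q_j$, up to a controllable error.
\item The central point, when present, is extreme, because no shield point lies in $D(v,h)$ once the shield marks are not much below $h$.
\end{enumerate}

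\textbf{Conditioning.} Let $\mathcal G$ be generated by everything except the Poisson points in the central balls. Conditionally on $\mathcal G$, on the shield event the count $G_n(z)$ changes by exactly $+1$ according to whether the central point is present. Here extremality of all other points is unchanged: any point whose paraboloid $D$ could contain the central point is itself excluded by the shield, and extremality is determined by emptiness of $D$. Hence
\[
\Var(G_n(z)\mid\mathcal G)\ge \sum_j \one_{\text{shield}_j}\, q(1-q),
\]
with $q\in(0,1)$ the probability of a central point. This uses conditional independence of the Poisson points in disjoint central balls. Taking expectations gives
\[
\Var(G_n(z))\ge c\,N\,\P(\text{shield})\ge C(\mathbb F,z)|W_n|.
\]

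\textbf{Main obstacle: unbounded influence.} The hardest part is that points with very negative marks anywhere can destroy the local picture. To handle this, I would also require in $\mathcal G$-measurable form that no point outside $Q_j^{+}$ has $D$ reaching the central ball. By the Poisson void probability and the moment assumption $\E[H_-^{d/2}]<\infty$ (through finiteness of $\Q$), this has probability bounded below uniformly in $j$. The key computation is a lower bound on $\P(\PP(D(c_j,x_0))=0)=e^{-\Q(x_0)}>0$.

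\textbf{The volume-weighted case.} For Proposition~\ref{prop:VarianceBoundVolume} adding an extreme point also changes the cells of its neighbours. The total volume is conserved, however: adding the central point transfers area from neighbouring cells to its own. If all neighbouring generators are themselves extreme and lie in $W_n$, then $G_n^{\ms V}$ is unchanged. Therefore I would use a different perturbation: two configurations of the central points whose effect differs because a neighbour becomes non-extreme, or because some mass leaves or enters the set of extreme cells with mark $\le z$. The non-Dirac assumption supplies two distinct mark values $h_1<h_2$ in the support. A central point with mark $h_1$ that swallows (makes non-extreme) a shield point of mark $h_2\le z$ changes $G_n^{\ms V}$ by a quantity bounded away from $0$. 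Alternatively, if $h_2>z$, a central point of mark $h_2$ does not count while taking volume from counted cells.

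Producing this nonzero, deterministic-sign change on a positive-probability local event is where I expect the real work. The restriction $n>n_0$ reflects the need for complete cubes inside $W_n$.
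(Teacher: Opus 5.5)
Your frame (condition on a $\mathcal G$ that leaves order $|W_n|$ shielded, conditionally independent local sources of randomness, then use $\Var(F)\ge\E[\Var(F\mid\mathcal G)]$) is the paper's. You also correctly identify volume conservation as the obstruction and the non-Dirac hypothesis as its cure. But the proposition consists precisely of the step you postpone: you specify no local event, you give no argument that the perturbation alters $G_n^{\ms V}(z)$ only through finitely many cells in a shielded bounded region, and you give no lower bound on the conditional variance uniform in the $\mathcal G$-measurable surroundings.

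The mechanism you sketch for two marks $h_1<h_2\le z$ also fails as stated. Suppose the inserted central point $c$ is counted, and so are all generators it takes volume from. Then $|C(c;\PP)|$ equals their total loss, so the increment of $G_n^{\ms V}(z)$ is exactly $-\sum_x|C(x;\PP+\delta_c)|$, summed over the neighbours that $c$ makes non-extreme. Only their \emph{residual} cells count, and a fully swallowed neighbour contributes $0$. You would therefore have to construct configurations in which the demoted neighbour keeps a cell of volume bounded below. Your case $h_2>z$, with increment $-|C(c;\PP)|$, is fine in principle but needs $\mathbb F((z,\infty))>0$.

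Moreover, for the weighted score it is not enough that the shield keeps paraboloids $D(\cdot)$ clear. Cells depend on points outside $D(x)$, so the shield must confine every altered cell to a bounded region and exclude all outside competitors there. The central region must also be bounded in mark: your central point with arbitrary $h\le x_0$ could have a very negative mark and overpower any fixed shield.

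The paper sidesteps signs entirely. On $\mathcal A_y$ there is exactly one point in each of the disjoint boxes $K_0(y)=B_\delta(y)\times[a-\delta^2,a]$ and $K_i(y)$, $1\le i\le k$, and no other point in $K(y)$. The boxes $K_i(y)$ sit at marks near $b=a+26\delta^2$; the non-Dirac assumption supplies the two levels $a<b$ with $a\le z$. Because $K_0(y)\subseteq D(V_i,H_i)$, the surrounding generators are never extreme, hence never counted. Their balls at level $a+48\delta^2$ cover $B_{8\delta}(y)$, which confines the central cell to that ball and prevents any generator outside $K(y)$ from winning there. Thus moving the central generator within $K_0(y)$ changes $G_n^{\ms V}(z)$ by exactly the change of its own cell volume, and conservation no longer interferes.

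After additionally conditioning on the central mark (so atomic $\mathbb F$ is allowed), the location is uniform on $B_\delta(y)$. Nondegeneracy then reduces to non-constancy of $v\mapsto|C((v,h);\cdot)|$. The paper proves this by a radial volume formula and a strictly positive Laplacian at $v=0$ for the reference configuration, with continuity and compactness giving uniformity over the perturbed surroundings. Your presence/absence route might be completed along comparable lines, but as written the decisive construction and nondegeneracy estimate are absent.
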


\subsection{Proof of Proposition \ref{prop:VarianceBound}} In this proof we use the notation $G_n(z;\PP)$ in order to stress the dependence of $G_n(z)$ on $\PP$ and the fact that $G_n(z;\PP)$ is a Poisson functional. We will use the following first-chaos projection inequality for Poisson functionals \cite{LPbook}
\begin{equation}\label{eq:VarianceFockSpace}
\Var(G_n(z))\ge\int_{\R^{d+1}}(\E[D_xG_n(z;\PP)])^2\Q(\d x),
\end{equation}
where
\[
D_xG_n(z;\PP):=G_n(z;\PP+\delta_x)-G_n(z;\PP),
\]
is the add-one cost operator.

Further we note that adding a point $x$ to the process $\PP$ influences the value of $G_n(z)$ in two ways. First, if $x$ is itself an extreme point, which is contained in $W_n^z$, then the value of $G_n(z)$ increases by $1$. At the same time $x$ may destroy extremality of some already existing points, which will decrease the value of $G_n(z)$. Since $x\in \EE(\PP+\delta_x)$ is equivalent to $\PP(D(x))=0$ and $x$ destroys the extremality of a point $X_i\in \EE(\PP)$ if and only if $x\in D(X_i)$ we arrive at
\begin{align*}
 D_xG_n(z;\PP)&= \one\{x\in W_n^z\}\one \{\PP(D(x))=0\}-\sum_{X_i\in\PP\cap W_n^z}\one\{X_i\in\EE(\PP)\}\one\{x\in D(X_i)\}.
\end{align*}
Writing $x=(v,h)$ and applying Mecke's formula we get
\begin{equation}\label{eq:ExpDx}
\E[D_xG_n(z;\PP)]=\one\{x\in W_n^z\}e^{-\Q(h)}-\int_{-\infty}^ze^{-\Q(t)}\int_{W_n}\one\{\|v-w\|^2<t-h\}\d w \mathbb F(\d t),
\end{equation}
where $\Q(h)$ is defined by \eqref{eq:Qdef} and $\Q(h)<\infty$ for any $h\in\R$ due to \eqref{eq:ConditionH}.

Let now $s=\inf\{t\leq z\colon \mathbb F((t,z])=0\}$. Since $\mathbb F((-\infty,z])>0$ we have that $s$ is finite and for any $\varepsilon>0$ it holds $\mathbb F((s-\varepsilon,s])>0$. Using $\mathbb F((s,z])=0$ for $h\in (s-\varepsilon,s]$ and $v\in\R^d$ we obtain
\(
    \int_{-\infty}^ze^{-\Q(t)}\int_{W_n}\one\{\|v-w\|^2<t-h\}\d w \mathbb F(\d t)\leq  \kappa_d\int_{h}^s(t-h)^{d/2}\mathbb F(\d t)\leq  \kappa_d\varepsilon^{d/2},
\)
and, moreover, $e^{-\Q(h)}\leq e^{\Q(s)}$, since $\Q$ is increasing. Thus, choosing $\varepsilon$ sufficiently small, such that $\kappa_d\varepsilon^{d/2}\leq {1\over 2}e^{-\Q(s)}$, we note that
\eqref{eq:ExpDx} gives for $h\in (s-\varepsilon,s]$ and $v\in W_n$, that 
\(
\E[D_xG_n(z;\PP)]\ge {1\over 2}e^{-\Q(s)}.
\)
 Combining this with \eqref{eq:VarianceFockSpace} we finally obtain
for some $C(\mathbb F,z)>0$ that
\begin{align*}
    \Var(G_n(z))\ge {1\over 4}e^{-2\Q(s-\varepsilon)}\mathbb F((s-\varepsilon,s])|W_n|=C(\mathbb F,z)|W_n|.
\end{align*}

\subsection{Proof of Proposition \ref{prop:VarianceBoundVolume}}

We obtain the variance lower bound by conditioning on sigma algebra $\mathcal{G}$, which is constructed so that only the central
generators in suitable local configurations remain random.
A local lemma isolates their cells and gives a uniform positive conditional
variance; we deduce the proposition from that lemma before proving it.
For the conditional variance bound, we first reduce to a deterministic
estimate for spatial motion at a fixed height and then prove that estimate
by a radial formula and continuity.
Recall the interior erosion $W_n^{\ominus\rho}$ defined in Section~\ref{sec:mod}.
These interior windows allow the local configurations to fit inside $W_n$.

  \begin{figure}
      
  \centering

  \begin{tikzpicture}[x=0.8cm,y=0.8cm,>=Latex]

\def\d{0.4}
\def\xL{-3}
\def\xR{-3+44*\d+2*\d}
\def\yTop{49*\d*\d}
\def\tmax{7.2*\d} 

\draw[black,line width=0.45pt,dashed] (\xL,\yTop-22*\d*\d) -- (\xR,\yTop-22*\d*\d);
\node[black,anchor=west] at (\xR,\yTop-22*\d*\d+0.1) {$b$};
\draw[black,line width=0.45pt,dashed] (\xL,\yTop-23*\d*\d) -- (\xR,\yTop-23*\d*\d);
\node[black,anchor=west] at (\xR,\yTop-23*\d*\d-0.02) {$b-\varepsilon\delta^2$};

\draw[red,line width=0.45pt,dashed] (\xL,\yTop-48*\d*\d) -- (\xR,\yTop-48*\d*\d);
\node[red,anchor=west] at (\xR,\yTop-48*\d*\d+0.1) {$a$};

\draw[red,line width=0.45pt,dashed] (\xL,\yTop-49*\d*\d) -- (\xR,\yTop-49*\d*\d);
\node[red,anchor=west] at (\xR,\yTop-49*\d*\d-0.02) {$a-\delta^2$};

\draw[black,thick] (\xL,\yTop) -- (\xR,\yTop);

\begin{scope}[blue!75!black,line width=1.4pt]

  \draw (\xL,\yTop) -- (\xL+11*\d,\yTop);

  \draw plot[domain=0:\tmax,samples=80] ({\xL+13*\d-\x}, {\yTop-\x*\x});
  \draw (\xL+13*\d,\yTop) -- (\xL+33*\d,\yTop);
  \draw plot[domain=0:\tmax,samples=80] ({\xL+33*\d+\x}, {\yTop-\x*\x});

    \draw (\xL+35*\d,\yTop) -- (\xL+46*\d,\yTop);

  \foreach \x in {\xL+11*\d,\xL+13*\d,\xL+33*\d,\xL+35*\d} {
    \draw[line width=1.4pt] (\x,\yTop-0.05) -- (\x,\yTop+0.12);
  }
\end{scope}

\begin{scope}[blue!75!black, dashed, line width=1.2pt]

  \draw plot[domain=0:\tmax,samples=80] ({\xL+11*\d+\x},  {\yTop-\x*\x});
    \draw plot[domain=0:\tmax,samples=80] ({\xL+35*\d-\x}, {\yTop-\x*\x});
\end{scope}

\fill[black] (\xL+\d,\yTop) circle (1.4pt);

\fill[black] (\xL+23*\d,\yTop) circle (1.4pt);

\fill[black] (\xL+45*\d,\yTop) circle (1.4pt);

\node[anchor=west] at (\xR,\yTop+0.05) {$a+48\delta^2$};

\draw[gray!70,dashed,line width=0.35pt] (\xL+19.5*\d,\yTop) -- (\xL+19.5*\d,\yTop-50*\d*\d);
\draw[gray!70,dashed,line width=0.35pt] (\xL+26.5*\d,\yTop) -- (\xL+26.5*\d,\yTop-50*\d*\d);

\draw[blue!75!black,<->,thick] (\xL+13*\d,\yTop+0.7) -- node[above] {$10\delta$} (\xL+23*\d,\yTop+0.7);
\node at (\xL+23*\d,\yTop+0.3) {$y$};

\draw[red,thick] (\xL+22*\d,\yTop-49*\d*\d) rectangle (\xL+24*\d,\yTop-48*\d*\d);
\node[red] at (\xL+22*\d+0.4,\yTop-51*\d*\d) {$K_0(y)$};

\def\boxY{\yTop - 23*\d*\d}
\def\boxW{0.4*\d}
\def\boxH{\d*\d}

\draw[black,thick] (\xL+19.5*\d-\boxW/2,\boxY) rectangle ++(\boxW,\boxH);
\node at (\xL+19.5*\d+0.5*\boxW,\yTop - 25*\d*\d) {$K_1(y)$};

\fill[black] (\xL+19.5*\d,\yTop) circle (1.4pt);
\node[anchor=south] at (\xL+19.5*\d,\yTop+0.08) {$w_1(y)$};

\draw[black,thick] (\xL+26.5*\d-\boxW/2,\boxY) rectangle ++(\boxW,\boxH);
\node at (\xL+26.5*\d+0.5*\boxW,\yTop - 25*\d*\d) {$K_2(y)$};
\fill[black] (\xL+26.5*\d,\yTop) circle (1.4pt);
\node[anchor=south] at (\xL+26.5*\d,\yTop+0.08) {$w_2(y)$};

\draw[black,->,thick] (\xL+23*\d,0.95) -- node[below] {$3.5\delta$} (\xL+26.5*\d,0.95);
\fill[black] (\xL+23*\d,0.95) circle (1.1pt);

\end{tikzpicture}

\caption{Illustration of the construction around point $y\in (22\delta)\Z^d\cap W_n^{\ominus11\delta}$ in dimension $2$.}\label{fig:Construction3}
\end{figure}
  
The local configuration is illustrated in Figure~\ref{fig:Construction3}.
Since $\mathbb F$ is not a Dirac measure and $\mathbb F((-\infty,z])>0$, choose $a<b$ with
$a\le z$ such that $\mathbb F((a-t,a])>0$ and $\mathbb F((b-t,b])>0$ for every
$t\in(0,(b-a)/2)$.
Set $\delta:=\sqrt{(b-a)/26}$ and
$\Lambda_n:=(22\delta)\Z^d\cap W_n^{\ominus11\delta}$, which is nonempty for
sufficiently large $n$.
For $y\in\Lambda_n$, define
\[
K(y):=\{(v,h)\in\R^{d+1}:h\le a+48\delta^2,
\ \|v-y\|\le10\delta+\sqrt{a+48\delta^2-h}\}.
\]
Choose distinct points $u_1,\ldots,u_k\in\partial B_{3.5}(0)$ such that
the balls $B^\circ_{4.65}(349u_i/350)$ cover $B_{8}(0)$.
Such a finite collection exists by compactness: for each
$\theta\in S^{d-1}$, the ball centered at $3.49\theta$ with
radius $4.65$ contains $8\theta$ in its interior.
Each ball also contains the origin, so a finite cover of the sphere
extends to the ball by convexity.
Put $w_i(y):=y+\delta u_i$ and
$w_i':=349w_i(y)/350+y/350$.
For a fixed $y$, we have
\begin{equation}\label{eq:Condition1}
L :=\{v\in\R^d:5\delta\le\|v-y\|\le8\delta\} \subset B_{8\delta}(y)
\subseteq\bigcup_{i=1}^kB^\circ_{4.65\delta}(w_i').
\end{equation}
The collection $(u_i)_{i=1}^k$ is fixed independently of $y$ and $n$.
For sufficiently small $q\in(0,0.01)$ and $\varepsilon\in(0,1]$, define
\(K_0(y):=B_{\delta}(y)\times[a-\delta^2,a]\subseteq K(y)\), and  for \(1\le i\le k\),
\[
\begin{aligned}
K_i(y)
:=B_{q\delta}(w_i(y))\times[b-\varepsilon\delta^2,b]
=B_{q\delta}(w_i(y))
\times[a+(26-\varepsilon)\delta^2,a+26\delta^2]
\subseteq K(y).
\end{aligned}
\]
Choose $q$ small enough that these boxes are pairwise disjoint.
The choice of $b$ ensures
$\mathbb F([b-\varepsilon\delta^2,b])>0$ for every such $\varepsilon$.
Let
\[
\mathcal{A}_y
:=
\Big\{
\PP\big(K_i(y)\big)=1 \text{ for all } i=0,\ldots,k
\Big\}
\;\cap\;
\Big\{
\PP\big(K(y)\setminus\bigcup_{i=0}^{k}K_i(y)\big)=0
\Big\}.
\]
Since the boxes $K_i(y)$, $0\le i\le k$, are pairwise disjoint, independence of Poisson counts gives
    \begin{equation}\label{eq:18.01.26}
        \P(\mathcal{A}_y)=\exp\big(-\Q(K(y))\big)\prod_{i=0}^{k}\Q(K_i(y))=\tilde c_1(z,\mathbb F)>0,
    \end{equation}
    where $\Q(K_i(y))\in (0,\infty)$ for all $0\le  i\le  k$ as follows from the choice of $a$, $b$, $\delta$, $q$ and $\varepsilon$, and 
\begin{align*}
\Q(K(y))&\le 10^d\cdot 2^{d-1}\kappa_d\delta^d+2^{d-1}\kappa_d\E[(a+48\delta^2-H)_+^{d/2}]<\infty,
\end{align*}
as follows from \eqref{eq:ConditionH}.
To construct conditioning sigma algebra $\mathcal G$, put
$U:=(\bigcup_{y\in\Lambda_n}K_0(y))^c$ and define
\[
\mathcal H:=\sigma(\PP|_U,(\PP(K_0(y)))_{y\in\Lambda_n}).
\]
For $y\ne y'\in\Lambda_n$, we have
$K(y)\cap K_0(y')=\varnothing$: at central heights, the spatial radius
of $K(y)$ is at most $17\delta$, whereas every point of $K_0(y')$
has spatial distance at least $21\delta$ from $y$.
Thus, the events $\mathcal A_y$ are $\mathcal H$-measurable.
Define $\mathcal G$ by additionally revealing the configuration inside
$K_0(y)$ for those $y$ for which $\mathcal A_y$ fails.

On $\mathcal A_y$, write $(V_i(y),H_i(y))$ for the unique point of
$\PP$ in $K_i(y)$, $0\le i\le k$, and suppress $y$ when it is fixed.
Set $X_y:=|C((V_0,H_0);\PP)|$ on $\mathcal A_y$ and $X_y:=0$ otherwise.
Let $c_2(\mathbb F,z)>0$ denote a constant to be chosen below.

\begin{figure}
    \centering
\begin{tikzpicture}[x=0.8cm,y=0.8cm,>=Latex]

\def\d{0.4}
\def\xL{-3}
\def\xR{-3+34*\d}
\def\yTop{49*\d*\d}
\def\tmax{7.2*\d} 

\def\boxY{\yTop - 17*\d*\d}
\def\boxW{0.4*\d}
\def\boxH{\d*\d}


\pgfmathsetmacro{\aZero}{\xL+16*\d+0.7*\d}
\pgfmathsetmacro{\aOne}{\xL + 13.5*\d-\boxW/2 + 0.5*\boxW}
\pgfmathsetmacro{\aTwo}{\xL + 20.5*\d-\boxW/2 + 0.5*\boxW}
\pgfmathsetmacro{\bApexZero}{\yTop-49*\d*\d+0.5*\d*\d}
\pgfmathsetmacro{\bApexOne}{\boxY + 0.5*\boxH}
\pgfmathsetmacro{\bApexTwo}{\boxY + 0.2*\boxH}

\pgfmathsetmacro{\xIntLeft}{(\aZero*\aZero-\aOne*\aOne+\bApexZero-\bApexOne)/(2*(\aZero-\aOne))}
\pgfmathsetmacro{\yIntLeft}{(\xIntLeft-\aZero)^2 + \bApexZero}

\pgfmathsetmacro{\xIntRight}{(\aZero*\aZero-\aTwo*\aTwo+\bApexZero-\bApexTwo)/(2*(\aZero-\aTwo))}
\pgfmathsetmacro{\yIntRight}{(\xIntRight-\aZero)^2 + \bApexZero}

\begin{scope}
  \fill[blue!30]
    plot[domain=\aZero-7*\d:\aZero+7*\d,samples=250,variable=\xx]
      ({\xx}, {(\xx-\aZero)*(\xx-\aZero) + \bApexZero})
    -- cycle;
    
  \draw[blue!80!black,line width=0.5pt]
    plot[domain=\aZero-7*\d:\aZero+7*\d,samples=250,variable=\xx]
      ({\xx}, {(\xx-\aZero)*(\xx-\aZero) + \bApexZero});
    \end{scope}

\begin{scope}
  \fill[orange!30]
    plot[domain=\aOne-4.2*\d:\aOne+4.2*\d,samples=250,variable=\xx]
      ({\xx}, {(\xx-\aOne)*(\xx-\aOne) + \bApexOne})
    -- cycle;

  \fill[orange!30]
    plot[domain=\aTwo
    +4.2*\d:\aTwo-4.2*\d,samples=250,variable=\xx]
      ({\xx}, {(\xx-\aTwo)*(\xx-\aTwo) + \bApexTwo})
    -- cycle;

  \draw[orange!80!black,line width=0.5pt]
    plot[domain=\aOne-4.2*\d:\aOne+4.2*\d,samples=250,variable=\xx]
      ({\xx}, {(\xx-\aOne)*(\xx-\aOne) + \bApexOne});

  \draw[orange!80!black,line width=0.5pt]
    plot[domain=\aTwo-4.2*\d:\aTwo+4.2*\d,samples=250,variable=\xx]
      ({\xx}, {(\xx-\aTwo)*(\xx-\aTwo) + \bApexTwo});
\end{scope}

\draw[gray!80,densely dashed] (\xIntLeft,-10*\d*\d) -- (\xIntLeft,\yIntLeft);
\draw[gray!80,densely dashed] (\xIntRight,-10*\d*\d) -- (\xIntRight,\yIntRight);

\fill[black] (\xIntLeft,\yIntLeft) circle (1.2pt);
\fill[black] (\xIntRight,\yIntRight) circle (1.2pt);

\draw[blue!75!black,<->,thick] (\xIntLeft,-9*\d*\d) -- node[above] {$C((V_0,H_0);\PP)$} (\xIntRight,-9*\d*\d);

\node[black] at (\aZero,\bApexZero+3.2*\d*\d) {{\tiny $(V_0,H_0)$}};
\fill[blue] (\aZero,\bApexZero) circle (1.4pt);
\node[black] at (\aOne,\bApexOne+3.2*\d*\d) {{\tiny $(V_1,H_1)$}};
\fill[orange] (\aOne,\bApexOne) circle (1.4pt);
\node[black] at (\aTwo,\bApexTwo+3.2*\d*\d) {{\tiny$(V_2,H_2)$}};
\fill[orange] (\aTwo,\bApexTwo) circle (1.4pt);

\draw[black,thick] (\xL,\yTop) -- (\xR,\yTop);

\begin{scope}[blue!75!black,line width=1.4pt]
  \draw plot[domain=0:\tmax,samples=80] ({\xL+7*\d-\x}, {\yTop-\x*\x});
  \draw (\xL+7*\d,\yTop) -- (\xL+27*\d,\yTop);
  \draw plot[domain=0:\tmax,samples=80] ({\xL+27*\d+\x}, {\yTop-\x*\x});

  \foreach \x in {\xL+7*\d,\xL+27*\d} {
    \draw[line width=1.4pt] (\x,\yTop-0.05) -- (\x,\yTop+0.12);
  }
\end{scope}

\fill[black] (\xL+17*\d,\yTop) circle (1.4pt);
\node at (\xL+17*\d,\yTop+0.3) {$y$};

\draw[red,thick] (\xL+16*\d,\yTop-49*\d*\d) rectangle (\xL+18*\d,\yTop-48*\d*\d);
\node[red] at (\xL+16*\d+0.4,\yTop-51*\d*\d) {$K_0(y)$};

\draw[black,thick] (\xL+13.5*\d-\boxW/2,\boxY) rectangle ++(\boxW,\boxH);
\node at (\xL+13.5*\d+0.5*\boxW,\yTop - 19*\d*\d) {$K_1(y)$};

\fill[black] (\xL+13.5*\d,\yTop) circle (1.4pt);
\node[anchor=south] at (\xL+13.5*\d,\yTop+0.08) {$w_1(y)$};

\draw[black,thick] (\xL+20.5*\d-\boxW/2,\boxY) rectangle ++(\boxW,\boxH);
\node at (\xL+20.5*\d+0.5*\boxW,\yTop - 19*\d*\d) {$K_2(y)$};
\fill[black] (\xL+20.5*\d,\yTop) circle (1.4pt);
\node[anchor=south] at (\xL+20.5*\d,\yTop+0.08) {$w_2(y)$};

\end{tikzpicture}

\caption{Illustration of the cell $C((V_0,H_0);\PP)$. We have $(V_0,H_0)\in \mathcal{E}(\PP)$, while $(V_1,H_1), (V_2,H_2)\not\in \mathcal{E}(\PP)$ and, hence, the corresponding cells do not count for $G_n^{\rm V}(z)$.}\label{fig:Construction4}
\end{figure}

The following lemma separates the geometric isolation of the central cell
from its conditional variance.
The assertions also control the other terms in the weighted
functional when a central generator is varied.

\begin{lemma}\label{lem:local_volume_configuration}
For sufficiently small $q$ and $\varepsilon$, there is a choice of
$c_2(\mathbb F,z)$ such that the following assertions hold almost surely.
\begin{itemize}
\item[(a)] On $\mathcal A_y$, $(V_0,H_0)$ is extreme and
$(V_i,H_i)$, $1\le i\le k$, are not extreme.
\item[(b)] On $\mathcal A_y$, the central cell is contained in
$B_{8\delta}(y)$ and is determined by its own generator and the
surrounding generators $(V_i,H_i)$, $1\le i\le k$.
Its only possible neighbors are $C((V_i,H_i);\PP)$, $1\le i\le k$.
Assertions (a) and (b) remain valid when the central generator is
replaced by any point of $K_0(y)$, with all other generators fixed.
\item[(c)] For every $y\in\Lambda_n$,
\[
\one\{\mathcal A_y\}\Var(X_y\mid\mathcal G)
\ge\one\{\mathcal A_y\}c_2(\mathbb F,z).
\]
\item[(d)] For distinct $y,y'\in\Lambda_n$, the variables $X_y$ and
$X_{y'}$ are independent conditionally on $\mathcal G$.
\end{itemize}
\end{lemma}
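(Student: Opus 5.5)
The plan is to prove (a) and (b) by explicit comparison of Laguerre distances on $\mathcal A_y$, and then derive (c) and (d) from the conditional structure of $\mathcal G$. We work on $\mathcal A_y$ and write $x_0=(V_0,H_0)$ with $V_0\in B_\delta(y)$ and $H_0\in[a-\delta^2,a]$, and $x_i=(V_i,H_i)$ with $V_i\in B_{q\delta}(w_i)$ and $H_i\in[b-\varepsilon\delta^2,b]$, where $b=a+26\delta^2$.

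\textbf{Steps (a) and (b).} These are geometric and hold for any $x_0$ placed in $K_0(y)$.

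First, we show that $x_0$ is extreme. Any point of $D(x_0)$ has mark below $a$ and spatial distance below $\sqrt{a-h'}$ from $V_0$, so $D(x_0)\subseteq K(y)$. On $\mathcal A_y$ the set $K(y)$ contains only the $x_i$, all with marks $\ge a-\delta^2$, and none of them lies in $D(x_0)$. Hence $\PP(D(x_0))=0$.

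Second, we show that each $x_i$ with $i\ge1$ is not extreme. We have $L(x_0,V_i)\le(4.5\delta+q\delta)^2+a<a+21\delta^2<H_i$ for small $q$ and $\varepsilon$, so $x_0\in D(x_i)$.

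Third, we show the cell is local. Fix $u$ with $\|u-y\|\ge 8\delta$, or more precisely $u\in L$ or beyond. By \eqref{eq:Condition1}, some $i$ has $\|u-w_i'\|<4.65\delta$. We then compare $L(x_i,u)$ with $L(x_0,u)$. The shrink factor $349/350$ is presumably chosen so that this comparison closes with the numerical constants $26$ and $4.65$. The comparison shows $x_0$ loses on the annulus $L$, so by convexity $C(x_0;\PP)\subseteq B_{8\delta}(y)$.

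Fourth, we show that only the $x_i$ matter. A point $y''$ outside $K(y)$ can beat $x_0$ at some $u\in B_{8\delta}(y)$ only if $L(y'',u)<a+64\delta^2$-ish. One checks that this forces $y''\in K(y)$, using the radius $10\delta+\sqrt{a+48\delta^2-h}$ together with $\|u-y\|\le8\delta$ and the bound $L(x_0,u)\le a+81\delta^2$. If that bound is too crude, we first bound $L(x_0,u)$ on the cell by $L(x_i,u)\le b+O(\delta^2)$. This bookkeeping is the main obstacle. It likely requires tracking $\max_i L(x_i,u)$ over $u\in B_{8\delta}(y)$ rather than using the crude bound, and the final inequality must fit under $a+48\delta^2+(\ldots)$.

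\textbf{Step (c).} Conditionally on $\mathcal G$ and on $\mathcal A_y$, the point $x_0$ is uniform with density proportional to $\Q$ restricted to $K_0(y)$, and it is independent of all other points, which are $\mathcal G$-measurable. By (b), $X_y=g(x_0)$ for a deterministic function $g$ of $x_0$ and $(x_i)_{i\ge1}$. It therefore suffices to show that $g$ is nonconstant with a uniform gap. Fix the height $h_0$ and vary the location $v\in B_\delta(y)$. The cell is the intersection of the half-spaces $\{u:\|u-v\|^2+h_0\le\|u-V_i\|^2+H_i\}$. Writing its volume via the radial formula $|C|=\frac1d\int_{S^{d-1}}\rho(\theta)^d\,d\theta$, we show that $g(v+\eta e,h_0)-g(v,h_0)$ is bounded below for small displacements $\eta$ in a direction $e$ toward some $w_i$. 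Moving $v$ shifts the bisector hyperplanes unequally, because $\|V_i-v\|$ changes at different rates for different $i$.

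A cleaner route is to pick two disjoint subsets $S_1,S_2\subseteq B_\delta(y)$ of positive measure on which $g$ differs by at least $\eta_0>0$, uniformly over all admissible $(x_i)$. This uniformity comes from continuity of $g$ in all arguments on a compact parameter set and from the fact that $g$ is nonconstant for every configuration. Nonconstancy can be checked via the derivative of the volume, given by a surface integral of the face normals against the displacement, which is nonzero for a displacement toward a face. Then $\Var(X_y\mid\mathcal G)\ge c\,\eta_0^2\min\Q(S_j)^2/\Q(K_0)^2=:c_2$.

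\textbf{Step (d).} Conditionally on $\mathcal G$, the central points $x_0(y)$ for $y$ with $\mathcal A_y$ are independent, by Poisson independence over the disjoint sets $K_0(y)$ given their counts. Each $X_y$ is a function of its own $x_0(y)$ and of $\mathcal G$-measurable data. By (b), $X_y$ involves no other central point, since $K(y)\cap K_0(y')=\varnothing$. Finally, $X_y=0$ when $\mathcal A_y$ fails, which gives the conditional independence.
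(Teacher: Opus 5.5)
Your arguments for (a) and (d) are correct and agree with the paper's. However, (c) has a genuine gap, and (b) is left open at exactly the inequality that decides it.

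\textbf{Part (c).} Everything rests on the assertion that $v\mapsto g(v,h_0)$ is nonconstant for \emph{every} admissible surrounding configuration. You justify this by claiming that the derivative of the volume is nonzero for a displacement toward a face. That claim is false. Moving the generator by $\eta e$ changes the volume at rate $\sum_i\|V_i-v\|^{-1}\int_{F_i}e\cdot(u-v)\,\mathcal H^{d-1}(du)$, and facets on the far side contribute with the opposite sign. For instance, if the points $u_i$ are chosen centrally symmetric (the construction permits this) and the heights are equal, then $C(-v)=-C(v)$ and the entire gradient vanishes at the centre. So your first-order argument gives neither nonconstancy nor a uniform gap. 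It also never uses that $q$ and $\varepsilon$ may be taken small, which is precisely the freedom the lemma grants.

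The missing idea is a second-order computation at a single reference configuration, followed by a perturbation argument.
\begin{itemize}
\item Rescale so that $y=0$ and $a=0$, with space scaled by $\delta$ and marks by $\delta^2$. Place the surrounding generators exactly at $u_i$ with mark $26$.
\item Since $\|u_i\|=3.5$ for all $i$, all bisector constants coincide. The radial formula then gives $f_h(v)=\frac{(\beta_h-\|v\|^2)^d}{d2^d}\int_{S^{d-1}}(s(\theta)-v\cdot\theta)^{-d}\,\sigma(d\theta)$, where $s(\theta)=\max_iu_i\cdot\theta\in(1,3.5]$ and $\beta_h=3.5^2+26-h$.
\item Differentiating twice yields $\Delta_vf_h(0)\ge d f_h(0)\bigl((d+1)/3.5^2-2d/\beta_h\bigr)>0$. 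Hence $f_h$ is nonconstant on $B_1(0)$, and its variance under the uniform law is positive for each $h\in[-1,0]$.
\item This variance is jointly continuous in the central height and the surrounding generators (by dominated convergence, since all cells lie in $B_8(0)$). Compactness then gives a uniform lower bound for all configurations within $q$ and $\varepsilon$ of the reference one, once $q,\varepsilon$ are small.
\item Conditioning additionally on $H_0$ and applying the law of total variance yields $c_2$, including when $\mathbb F$ is atomic.
\end{itemize}

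\textbf{Part (b).} Your claim that $x_0$ loses on the whole annulus $L$ is wrong. Take $V_0=y$, $H_0=a$ and $\varepsilon<1$. Then every $u$ with $\|u-y\|=5\delta$ satisfies $L(x_0,u)=a+25\delta^2<a+(26-\varepsilon)\delta^2\le L(x_i,u)$ for all $i\ge1$. So no surrounding generator beats $x_0$ anywhere on the inner sphere of $L$.

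The comparison must instead be made on the sphere $\partial B_{\sqrt{a+48\delta^2-H_0}}(V_0)\subseteq L$, where $L(x_0,\cdot)=a+48\delta^2$. The bookkeeping you flag is then closed by one inequality. Since $\|V_i-w_i'\|<0.02\delta$, the covering \eqref{eq:Condition1} gives
$\min_{i\ge1}L(x_i,u)<(4.67\delta)^2+b<a+48\delta^2$ for \emph{every} $u\in B_{8\delta}(y)$, because $4.67^2<22$. This inequality does two things:
\begin{itemize}
\item It makes $x_0$ lose on that sphere, so the cell lies in $B_{8\delta}(y)$ by convexity.
\item Any competitor $(v'',h'')$ with Laguerre distance below $a+48\delta^2$ at a point of $B_{8\delta}(y)$ has $h''\le a+48\delta^2$ and $\|v''-y\|\le 8\delta+\sqrt{a+48\delta^2-h''}$. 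Hence it lies in $K(y)$, which on $\mathcal A_y$ contains no points other than $x_0,\ldots,x_k$.
\end{itemize}
As you suspected, your crude bound $a+81\delta^2$ cannot achieve this.
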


As the proof of Lemma \ref{lem:local_volume_configuration} is technical, we defer it to Appendix \ref{apx:varlb}. We now show how to conclude the proof of Proposition 
\ref{prop:VarianceBoundVolume}.

\begin{proof}[Proof of Proposition~\ref{prop:VarianceBoundVolume}]
If the variance is infinite, the lower bound is immediate, so assume
$G_n^{\ms V}(z)$ is square integrable.
Choose $q$ and $\varepsilon$ as in
Lemma~\ref{lem:local_volume_configuration}, and put
$Z_y:=X_y\one\{\mathcal A_y\}$ and
$R:=G_n^{\ms V}(z)-\sum_{y\in\Lambda_n}Z_y$.
Thus,
\begin{equation}\label{eq:Decomposition}
G_n^{\ms V}(z)=R+\sum_{y\in\Lambda_n}Z_y.
\end{equation}

We first verify that $R$ is $\mathcal G$-measurable.
On $\mathcal A_y$, the covering in \eqref{eq:Condition1} and
$\|V_i-w_i'\|<0.02\delta$ imply that the surrounding generators
attain Laguerre distance less than $a+48\delta^2$ throughout
$B_{8\delta}(y)$.
Every generator outside $K(y)$ has Laguerre distance greater than
$a+48\delta^2$ there, by the definition of $K(y)$.
Consequently, only the central generator and its fixed surrounding
generators can win inside this ball.
By Lemma~\ref{lem:local_volume_configuration}(a)--(b), moving the
central generator within $K_0(y)$ affects no cells outside the ball,
and the surrounding generators remain nonextreme.
The central generator lies in $W_n^z$, so its entire contribution is exactly $X_y$.
Subtracting the central contributions therefore leaves a quantity
determined by $\mathcal G$.

The events $\mathcal A_y$ are $\mathcal G$-measurable, and
Lemma~\ref{lem:local_volume_configuration}(d) makes the $Z_y$
pairwise conditionally independent.
The conditional variance decomposition gives
\begin{equation}\label{eq:TotalVariance}
\Var(G_n^{\ms V}(z))
=\E[\Var(G_n^{\ms V}(z)\mid\mathcal G)]
+\Var(\E[G_n^{\ms V}(z)\mid\mathcal G])
\ge\E[\Var(G_n^{\ms V}(z)\mid\mathcal G)].
\end{equation}
Using \eqref{eq:Decomposition}, part (c), and \eqref{eq:18.01.26},
we obtain
\begin{equation}\label{eq:VarianceViaDecomposition}
\begin{aligned}
\Var(G_n^{\ms V}(z))
\ge\sum_{y\in\Lambda_n}\E[\Var(Z_y\mid\mathcal G)]
\ge c_2(\mathbb F,z)\sum_{y\in\Lambda_n}\P(\mathcal A_y)
=c_2(\mathbb F,z)\tilde c_1(z,\mathbb F)|\Lambda_n|.
\end{aligned}
\end{equation}
For sufficiently large $n$, the lattice count satisfies
$|\Lambda_n|\ge c_3(W,z,\mathbb F)|W_n|$.
Thus, the claimed bound holds with
$C(\mathbb F,z,W):=c_2(\mathbb F,z)\tilde c_1(z,\mathbb F)c_3(W,z,\mathbb F)>0$.
\end{proof}

%
%
\section{Covariance asymptotics: proof of Theorem~\ref{thm:var}}
\label{sec:var}

This section identifies the limiting covariance kernels of \(G_n(z)\) and
\(G_n^{\ms V}(z)\), which are used in the multivariate and functional limit
theorems of Sections~\ref{sec:mult}--\ref{sec:tight_V}. 
In each case, Mecke's formula decomposes the covariance into
a diagonal term and a term involving a sum over pairs of
distinct points. The asymptotics of the diagonal term follow
from stationarity.
For the second term, we bound the stationary kernel and its first spatial
moment. A common translation estimate then shows that the part crossing the
boundary of \(W_n\) has surface order growth. For each statistic, we first
derive the covariance asymptotics from the stated kernel bounds and then
prove those bounds using the translation estimate. By symmetry, it suffices to consider \(z\le z'\).

\subsection{Unweighted case}
\label{ss:cov}
We first treat the extremality indicators at levels \(z\le z'\).
The Mecke formula gives
\begin{align*}
\Cov\big(G_n(z), G_n(z')\big) =& \int_{W_n^z}\P\big(x \in \EE(\PP+\delta_{x})\big) \Q(dx) \\
	&+\E[\sum_{\substack{ i \ne j: X_i \in W_n^z, X_j \in W_n^{z'} }}\one\{\{X_i, X_j\} \su \EE(\PP)\}] -\E[G_n(z)]\E[G_n(z')] .
\end{align*}
By stationarity, the first summand equals $|W_n|\int_{-\ff}^z\P\big((0, h) \in \EE(\PP+\delta_{(0, h)})\big) \mathbb F(dh)$. Hence, it remains to bound the second. This will be the task in the remainder of this section.

We recall from Section \ref{sec:mod} that $X_i \in \EE(\PP)$ is equivalent to $\PP(D(X_i))=0$.
For the term involving two distinct points, both points are inserted into the
same configuration \(\PP+\delta_x+\delta_{x'}\), whereas the product of the
expectations uses two separate insertions. The difference consists of the
covariance of the two extremality indicators and two corrections for direct
interaction: \(x\) may destroy the extremality of \(x'\) at \(v'\), and
\(x'\) may destroy the extremality of \(x\) at \(v\). More precisely,
\begin{align*}
	&\E[\sum_{\substack{i\ne j: X_i \in W_n^z, X_j \in W_n^{z'}}}&&\one\{\{X_i, X_j\} \su \EE(\PP)\}] - \E[G_n(z)]\E[G_n(z')]\\
&\quad=\int_{W_n^z}\int_{W_n^{z'}} &&\E[\one\{\PP(D(x))=0 \}(1 - \one\{L(x, v) > L(x', v) \}) \\
&&&\times \one\{\PP(D(x'))=0 \} (1 - \one\{L(x', v') > L(x, v') \})]\Q(dx) \Q(dx')\\
&&&\quad\quad-\E[G_n(z)]\E[G_n(z')]\\
&\quad= \int_{W_n^z}\int_{W_n^{z'}}&&\Cov\big(\one\{\PP(D(x))=0 \}, \one\{\PP(D(x'))=0 \}\big)\Q(dx) \Q(dx')\\
	&\qquad- \int_{W_n^z}\int_{W_n^{z'}}&&\E\big(\one\{\PP(D(x))=\PP(D(x'))=0 \}\big) \one\{L(x', v) < L(x, v)\}\Q(dx) \Q(dx')\\
	&\qquad- \int_{W_n^z}\int_{W_n^{z'}}&&\E\big(\one\{\PP(D(x))=\PP(D(x'))=0 \}\big) \one\{L(x, v') < L(x', v')\}\Q(dx) \Q(dx').
\end{align*}
The first integral is the covariance of the two extremality indicators. The
remaining two integrals account for direct interaction between the inserted
points. Two propositions establish absolute integrability of these terms
and show that replacing the second spatial window by \(\mathbb R^d\) creates
an error of surface order. Write \(x=(v,h)\), \(x'=(v',h')\), and
\(W_n^{\complement,z'}:=(\mathbb R^d\setminus W_n)\times(-\infty,z']\),
and let \(m_0>0\) denote the order of the assumed mark moment.
The first proposition controls the regions in which one inserted generator
can destroy the extremality of the other.

\begin{proposition}[Interaction between the inserted points]\label{lem:t1}
Assume that $\E[H_-^{m_0}]<\infty$ for some
$m_0\ge(d+1)/2$. Then the following statements hold.
\medskip

(i) (\emph{Absolute integrability})
\begin{align*}
	\int_{-\ff}^z\int_{\R^{d,z'}}  \one\{L(x',0)<L((0,h),0)\}\Q(dx')\mathbb F(dh) < \ff;
\end{align*}

(ii) (\emph{Decay})
As $n\to\infty$,
\[
\int_{W_n^z}\int_{W_n^{\complement,z'}}
\one\{L(x',v)<L(x,v)\}\,\Q(dx')\Q(dx)
=O(|W_n|^{1-1/d}).
\]
\end{proposition}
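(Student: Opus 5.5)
The plan is to compute the inner $\Q(dx')$-integral explicitly. Then (i) reduces to a moment condition, and (ii) reduces to the same computation plus a distance-to-boundary argument.

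For $x=(v,h)$ and $x'=(v',h')$, the condition $L(x',v)<L(x,v)$ reads $\|v'-v\|^2<h-h'$. Fixing $h'$, the admissible $v'$ form a ball of radius $\sqrt{(h-h')_+}$ around $v$. Hence
\[
\int_{\R^{d,z'}}\one\{L(x',0)<L((0,h),0)\}\,\Q(dx')=\kappa_d\int_{-\infty}^{z'}(h-h')_+^{d/2}\,\mathbb F(dh')\le \Q(h).
\]
By Remark~\ref{rem:Qlb}-type elementary bounds together with $(a+b)^{d/2}\le 2^{d/2-1}(a^{d/2}+b^{d/2})$, as in the verification of \eqref{eq:ConditionH}, we obtain for $h\le z$
\[
\Q(h)\le C\big(1+h_+^{d/2}+\E[H_-^{d/2}]\big)\le C(1+|z|^{d/2}).
\]
This bound is uniform in $h\le z$, so integrating against $\mathbb F(dh)$ over $(-\infty,z]$ gives a finite quantity. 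This already proves (i), and it only uses $\E[H_-^{d/2}]<\infty$.

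For (ii), only those $x'$ with $v'\notin W_n$ contribute. So for $x=(v,h)$ with $v\in W_n$, the inner integral is
\[
\kappa_d\int_{-\infty}^{z'}\frac{|B_{\sqrt{(h-h')_+}}(v)\setminus W_n|}{\kappa_d}\,\mathbb F(dh'),
\]
which vanishes unless $\sqrt{h-h'}>\dist(v,\partial W_n)=:r$. The integrand is therefore bounded by $\kappa_d(h-h')_+^{d/2}\one\{h-h'>r^2\}$. Integrating over $v\in W_n$ and using the coarea formula for the convex set $W_n$, $|\{v\in W_n:\dist(v,\partial W_n)\le r\}|\le \mathcal H^{d-1}(\partial W_n)\,r= C|W_n|^{1-1/d}r$, we get
\[
\int_{W_n}\one\{\dist(v,\partial W_n)<\sqrt{s}\}\,dv\le C|W_n|^{1-1/d}\sqrt{s},\qquad s:=h-h'.
\]
The whole expression in (ii) is then at most
\[
C|W_n|^{1-1/d}\int_{-\infty}^{z}\int_{-\infty}^{z'}(h-h')_+^{(d+1)/2}\,\mathbb F(dh')\,\mathbb F(dh).
\]

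It remains to check that this double integral is finite. Since $(h-h')_+\le (z-h')_+\le |z|+H'_-$, it is bounded by $C(1+\E[H_-^{(d+1)/2}])$, which is finite precisely under the hypothesis $m_0\ge(d+1)/2$. This explains the exponent in the statement.

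The main point needing care is the boundary-layer estimate, namely the bound $|W_n\setminus W_n^{\ominus\rho}|\le \rho\,\mathcal H^{d-1}(\partial W_n)$ for all $\rho>0$, including $\rho$ exceeding the inradius of $W_n$. I would use the Steiner formula or monotonicity of surface area for the convex bodies $W_n^{\ominus t}$, together with $\mathcal H^{d-1}(\partial W_n)=n^{d-1}\mathcal H^{d-1}(\partial W)=C|W_n|^{1-1/d}$. For large $\rho$ the bound remains valid, so no truncation is needed. Atoms of $\mathbb F$ cause no issue, since strict versus nonstrict inequalities differ only on $\Q$-null sets.
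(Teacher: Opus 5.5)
Your proof is correct and uses the same moments as the paper: $\E[H_-^{d/2}]$ for (i) and $\E[H_-^{(d+1)/2}]$ for (ii). Part (i) is the paper's computation, since both arrive at $\kappa_d\E[(z-H)_+^{d/2}]$.

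For (ii) you take a different geometric route. The paper first integrates out $h'$ to get the translation-invariant kernel $j_z(u)=\P(\|u\|\le\sqrt{(z-H)_+})$. It then applies Lemma~\ref{lem:window-translation}, namely $|W_n|-|W_n\cap(W_n-u)|\le C_Wn^{d-1}\|u\|$, which it proves by slicing $W_n$ into chords parallel to $u$ and bounding the projection onto $u^\perp$. The boundary term is then $C_Wn^{d-1}\int\|u\|j_z(u)\,du$.

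You instead keep the marks fixed and note that $x$ can interact across the boundary only if $\dist(v,\partial W_n)<\sqrt{h-h'}$. You then bound the volume of that inner layer by $\sqrt{h-h'}\,\mathcal H^{d-1}(\partial W_n)$. This makes the origin of the surface-order error transparent and gives the explicit constant $\mathcal H^{d-1}(\partial W)$.

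The cost is slightly heavier convex geometry. The Steiner formula concerns outer parallel bodies and does not directly give the inner-layer bound. The clean justification is the coarea formula for $\dist(\cdot,\partial W_n)$, whose gradient has norm one a.e., combined with monotonicity of surface area under inclusion, $\mathcal H^{d-1}(\partial W_n^{\ominus t})\le\mathcal H^{d-1}(\partial W_n)$. You should state this rather than cite Steiner.

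The paper's slicing lemma is more elementary and is stated for an arbitrary kernel $K(v'-v)$ with finite first moment. That is why it can be reused verbatim for the overlap kernel in Proposition~\ref{lem:t2} and for the weighted kernel $(1+\|u\|)^{-\rho}$.

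The two tools are in fact interchangeable. Since $W_n\setminus(W_n-u)\subseteq\{v\in W_n:\dist(v,\partial W_n)\le\|u\|\}$, your layer estimate implies the paper's translation bound with $C_Wn^{d-1}=\mathcal H^{d-1}(\partial W_n)$.
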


The second proposition treats the covariance of the void indicators.
Their determining paraboloids overlap, and the intensity of that overlap
controls both the stationary integral and its boundary contribution.
\begin{proposition}[Covariance term]\label{lem:t2}
Assume that $\E\![H_-^{\,m_0}]<\infty$ for some $m_0\ge 3d/2$. Then the following assertions hold.
\begin{enumerate}
  \item  (\emph{Absolute integrability})
\[
\!\!\int_{-\infty}^{z}\!\!\int_{\R^{d,z'}}\!\!
|\Cov\big(\one\{\PP(D(0, h))=0\},\one\{\PP(D(x'))=0\}\big)|\,
\Q(d x')\mathbb F(dh)\;<\;\infty.
\]
\item (\emph{Decay}) As $n\to\infty$,
\[
\int_{W_n^z}\!\!\int_{W_n^{\complement,z'}}
|\Cov\big(\one\{\PP(D(x))=0\},\one\{\PP(D(x'))=0\}\big)|\,
\Q(dx')\,\Q(dx)
= O\!\Big(|W_n|^{\,1-1/d}\Big).
\]
\end{enumerate}
\end{proposition}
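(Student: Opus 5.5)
The plan is to compute the covariance of the two void indicators explicitly and reduce both assertions to a radial moment integral of a function of the spatial displacement. For fixed $x=(v,h)$ and $x'=(v',h')$, the Poisson property gives
\[
\Cov\big(\one\{\PP(D(x))=0\},\one\{\PP(D(x'))=0\}\big)
=e^{-\Q(h)-\Q(h')}\big(e^{\Q(D(x)\cap D(x'))}-1\big)\ge 0 .
\]
Write $\Q_\cap:=\Q(D(x)\cap D(x'))$. Since $\Q_\cap\le\min\{\Q(h),\Q(h')\}$ and $e^{s}-1\le se^{s}$, we get
\[
|\Cov|\le \Q_\cap\, e^{-\max\{\Q(h),\Q(h')\}}\le \Q_\cap .
\]
So it suffices to control $\Q_\cap$ as a function of $r:=\|v-v'\|$.

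\textbf{Geometric step.} For any $(w,s)\in D(x)\cap D(x')$, adding the two defining inequalities and using the parallelogram identity
\[
\|v-w\|^2+\|v'-w\|^2=2\|w-m\|^2+r^2/2,\qquad m:=(v+v')/2,
\]
gives $s<g-\|w-m\|^2$, where $g:=(h+h')/2-r^2/4$. Hence $D(x)\cap D(x')\subseteq D(m,g)$, and therefore
\[
\Q_\cap\le \Q(g)=\kappa_d\E\big[(g-H)_+^{d/2}\big].
\]
This bound depends on $v'$ only through $r$, and it vanishes once $r^2/4>(h+h')/2-H$ for the relevant $H$.

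\textbf{Integrability.} Set $A:=(h+h')/2-H$. By Fubini (taking the expectation over $H$ outside) and polar coordinates, the substitution $r=2\sqrt{A}\,\rho$ gives, for $k\in\{0,1\}$,
\[
\int_{\R^d}\|u\|^k\,\big(A-\|u\|^2/4\big)_+^{d/2}\,du=c_{d,k}\,A_+^{\,d+k/2}.
\]
Since $h\le z$ and $h'\le z'$, we have $A_+\le |z|+|z'|+H_-$. For the stationary integral in (1) I will bound $e^{-\max\{\Q(h),\Q(h')\}}\le 1$. Integrating over $h'$ and $h$ against the probability measure $\mathbb F$ then leaves at most $C(1+\E[H_-^{d}])$. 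This is finite under the assumption, since $m_0\ge 3d/2\ge d$.

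\textbf{Decay.} The same computation with the weight $\|u\|$ (the case $k=1$) shows that the stationary kernel has a finite first spatial moment, bounded by $C(1+\E[H_-^{d+1/2}])$. Again $d+1/2\le 3d/2$ because $d\ge 2$. Write $f(u):=\int\!\!\int \kappa_d\E[(g-H)_+^{d/2}]\,\mathbb F(dh')\,\mathbb F(dh)$ with $r=\|u\|$. Then the integral in (2) is at most
\[
\int_{\R^d}f(u)\,\big|\{v\in W_n: v+u\notin W_n\}\big|\,du\le \mathcal H^{d-1}(\partial W_n)\int_{\R^d}\|u\|f(u)\,du .
\]
Here the last step is the standard estimate $|W_n\setminus(W_n-u)|\le\|u\|\,\mathcal H^{d-1}(\partial W_n)$ for convex bodies. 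Since $\mathcal H^{d-1}(\partial W_n)=n^{d-1}\mathcal H^{d-1}(\partial W)=O(|W_n|^{1-1/d})$, this yields assertion (2).

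\textbf{Main obstacle.} I expect the main obstacle to be making the containment $D(x)\cap D(x')\subseteq D(m,g)$ and the subsequent Fubini/radial computation rigorous when $H$ has heavy lower tails. In that case $g$ can be large relative to typical marks, so the exponential factor cannot be used for decay and all integrability must come from the moments of $H_-$. If a cruder bound is needed, for instance to handle $h,h'$ separately, I would fall back on $\Q_\cap\le\min\{\Q(h),\Q(h')\}$ and Remark~\ref{rem:Qlb}. Even so, the moment requirement should stay at most $3d/2$.
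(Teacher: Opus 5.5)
Your proof is correct and follows the paper's architecture. You bound the nonnegative covariance by $\Q(D(x)\cap D(x'))$, reduce to a kernel in the displacement $v'-v$ whose zeroth and first moments are finite under $\E[H_-^{d}]<\infty$ and $\E[H_-^{d+1/2}]<\infty$, and then apply the convex-window translation estimate $|W_n\setminus(W_n-u)|\le C\|u\|n^{d-1}$.

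The only difference is the geometric device. The paper raises both marks to $\bar z=z\vee z'$ and uses $K_{\bar z}(u)=\E[|B_R\cap(B_R+u)|]$ together with $\int|B_R\cap(B_R+u)|\,du=|B_R|^2$. Your midpoint containment $D(x)\cap D(x')\subseteq D(m,g)$ reaches the same moment requirements through an explicit radial integral. It is fully rigorous, and Tonelli applies because every integrand is nonnegative, so the obstacle you anticipated does not arise.
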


\begin{proof}[Proof of Theorem~\ref{thm:var}(1)]
The assumption $\E[H_-^{3d/2}]<\infty$ implies both moment
hypotheses in Propositions~\ref{lem:t1} and~\ref{lem:t2}. The covariance expansion above
consists of its diagonal term and three off-diagonal integrals. The
expectation, multiplying each direct-interaction indicator, is at most 1,
so Proposition~\ref{lem:t1}, also with the two points interchanged, proves
absolute integrability of both contributions.
Proposition~\ref{lem:t2} gives the same conclusion for the covariance of the void
indicators. Replacing the second spatial window by \(\mathbb R^d\)
therefore changes the sum by \(O(|W_n|^{1-1/d})\). Spatial stationarity
identifies the resulting expression, including the exact diagonal term,
as \(|W_n|k(z,z')\). Dividing by \(|W_n|\) proves the asserted covariance
asymptotics. For the positivity assertion, assume additionally that \(\mathbb F((-\infty,z])>0\).
The hypotheses of Proposition~\ref{prop:VarianceBound} are then satisfied,
so it gives
\(k(z,z)=\lim_{n\to\infty}|W_n|^{-1}\Var(G_n(z))\ge C(\mathbb F,z)>0\).
\end{proof}

\subsubsection{Proof of Proposition~\ref{lem:t1}}
\label{sssec:t2}
The indicator for direct interaction is increasing in the mark $h$ of the
generator whose extremality is tested. Replacing $h$ by \(z\)
therefore bounds the interaction region by a ball with a random radius.
We first prove the translation estimate that converts its first spatial
moment into a boundary bound, then prove the proposition by radial
integration. 

For \(u\in\mathbb R^d\), put
\(b_n(u):=|W_n|-|W_n\cap(W_n-u)|\). Let
\(K:\mathbb R^d\to[0,\infty)\) be measurable with
\(\int_{\mathbb R^d}(1+\|u\|)K(u)\,du<\infty\).
The following lemma converts this integrability into a boundary estimate
that applies to each covariance contribution.

\begin{lemma}[Translation bound for the observation window]
\label{lem:window-translation}
There exists \(C_W<\infty\), depending only on \(d\) and \(W\), such that
\(0\le b_n(u)\le C_Wn^{d-1}\|u\|\) for every \(n\ge1\) and
\(u\in\mathbb R^d\). Consequently,
\[
 \int_{W_n}\int_{\mathbb R^d\setminus W_n}K(v'-v)\,dv'\,dv
 =\int_{\mathbb R^d}b_n(u)K(u)\,du
 \le C_Wn^{d-1}\int_{\mathbb R^d}\|u\|K(u)\,du.
\]
\end{lemma}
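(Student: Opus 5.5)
The lemma has two parts: a bound on $b_n(u)$, and the integral identity that follows from it. I would handle the identity first, since it is only a change of variables.

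\emph{Integral identity.} Substitute $v'=v+u$ and apply Tonelli, which is legitimate because $K\ge 0$. This gives
\[
\int_{W_n}\int_{\mathbb R^d\setminus W_n}K(v'-v)\,dv'\,dv=\int_{\mathbb R^d}K(u)\,\lambda\big(\{v\in W_n: v+u\notin W_n\}\big)\,du .
\]
The set $\{v\in W_n: v+u\in W_n\}$ equals $W_n\cap(W_n-u)$, so the inner measure is exactly $|W_n|-|W_n\cap(W_n-u)|=b_n(u)$. Nonnegativity of $b_n(u)$ is clear. The final inequality then follows by inserting the pointwise bound on $b_n$ and pulling out the factor $C_Wn^{d-1}$.

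\emph{Pointwise bound.} The real content is $b_n(u)\le C_Wn^{d-1}\|u\|$. By scaling, $W_n=nW$, so $b_n(u)=n^d\big(|W|-|W\cap(W-u/n)|\big)=n^d b_1(u/n)$. It therefore suffices to prove $b_1(w)\le C_W\|w\|$ for all $w\in\mathbb R^d$, since then $b_n(u)\le n^dC_W\|u\|/n=C_Wn^{d-1}\|u\|$.

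For $b_1$, note that $W\setminus(W-w)$ consists of the points $v\in W$ with $v+w\notin W$. For such $v$, the segment from $v$ to $v+w$ leaves $W$, so $\operatorname{dist}(v,\partial W)\le\|w\|$. Hence
\[
W\setminus(W-w)\subseteq\{v\in W:\operatorname{dist}(v,\partial W)\le\|w\|\}=W\setminus W^{\ominus\|w\|}.
\]
For a convex body, the inner parallel set has volume
\[
|W\setminus W^{\ominus r}|\le r\,\mathcal H^{d-1}(\partial W).
\]
One way to see this is the coarea formula applied to the 1-Lipschitz function $\operatorname{dist}(\cdot,\partial W)$ on $W$: its level sets are boundaries of the convex sets $W^{\ominus s}\subseteq W$, and their surface area is at most $\mathcal H^{d-1}(\partial W)$ by monotonicity of surface area under inclusion of convex bodies.

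This yields $b_1(w)\le\mathcal H^{d-1}(\partial W)\|w\|$. The bound is trivially also valid for large $\|w\|$, where in any case $b_1\le|W|$. So one may take $C_W=\mathcal H^{d-1}(\partial W)$.

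\emph{Main obstacle.} The only step needing care is the surface-area monotonicity used in the inner parallel set estimate. I would cite it as a standard fact for convex bodies: the cone-volume or projection argument, or monotonicity of intrinsic volumes. This avoids proving it from scratch.
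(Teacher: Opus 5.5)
Your proof is correct. The Tonelli change of variables giving $\int_{\mathbb R^d}b_n(u)K(u)\,du$ is the same as in the paper; the two arguments differ in how they bound $b_n(u)$ pointwise.

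The paper slices $W_n$ along lines parallel to $e=u/\|u\|$. By convexity each slice is an interval of some length $\ell$, and translating it by $u$ removes exactly $\min\{\ell,\|u\|\}$ from its overlap with the original interval. Fubini then gives $b_n(u)\le\|u\|\,\mathcal H^{d-1}$ of the projection of $W_n$ onto $e^\perp$, and this is $O(n^{d-1})$ because $W$ lies in a fixed ball.

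You instead place $W\setminus(W-w)$ inside the inner boundary band $\{v\in W:\operatorname{dist}(v,\partial W)\le\|w\|\}$. You then bound the band's volume by $\|w\|\,\mathcal H^{d-1}(\partial W)$, using the coarea formula and monotonicity of surface area for nested convex bodies.

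The paper's route is more elementary: it needs only Fubini and the interval structure of slices. It also gives a direction-dependent constant, the projection volume, which never exceeds half of your $\mathcal H^{d-1}(\partial W)$. Your route uses heavier tools, but its inclusion step is purely topological. Convexity enters only through the band-volume estimate, so your argument would carry over to any window whose inner boundary bands have volume $O(r)$.
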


\begin{proof}
For \(u=0\), the bound for \(b_n(u)\) is immediate. Otherwise, put
\(e=u/\|u\|\) and slice \(W_n\) along lines parallel to \(e\).
Convexity makes every nonempty slice an interval. If its length is
\(\ell\), translating it by \(u\) removes length
\(\min\{\ell,\|u\|\}\) from its intersection with the original interval.
Thus, \(b_n(u)\) is at most \(\|u\|\) times the
\((d-1)\)-dimensional volume of the orthogonal projection of \(W_n\)
onto \(e^\perp\). Since \(W\) is compact it is contained in a fixed ball and, hence, the volume of this projection
is bounded by \(C_Wn^{d-1}\), uniformly in \(e\). Finally, substitute
\(u=v'-v\) in the boundary integral. For fixed \(u\), the set of admissible
\(v\)'s is \(W_n\setminus(W_n-u)\), whose volume is \(b_n(u)\).
Tonelli's theorem and the bound just proved give the assertion.
\end{proof}

\begin{proof}[Proof of Proposition~\ref{lem:t1}]
Put \(R=\sqrt{(z-H)_+}\) and
\(j_z(u)=\P(\|u\|\le R)\), for \(u\in\mathbb R^d\).
For \(h\le z\), integrating the direct-interaction indicator over the
second mark, and enlarging its range to \(\mathbb R\), gives at most
\(j_z(v'-v)\). Tonelli's theorem and radial integration yield
\(
 \int_{\mathbb R^d}j_z(u)\,du=\kappa_d\E[R^d],
 \) and \(
 \int_{\mathbb R^d}\|u\|j_z(u)\,du
 =\frac{d\kappa_d}{d+1}\E[R^{d+1}].
\)
Both expectations are finite under the stated moment assumption. The first
identity proves absolute integrability, since the remaining mark integral
has mass at most one. Lemma~\ref{lem:window-translation}, applied to
\(K=j_z\), bounds the boundary contribution by
\(C_Wn^{d-1}\E[R^{d+1}]\). Since \(|W_n|=n^d|W|\), this is
\(O(|W_n|^{1-1/d})\).
\end{proof}

\subsubsection{Proof of Proposition~\ref{lem:t2}}
\label{sssec:t1}
The covariance of two Poisson void indicators is controlled by the intensity
of the intersection of their determining regions. Raising both generator
marks to a common upper level gives a spatial kernel independent of the two
marks. We compute its integral and first spatial moment, then apply the
translation estimate. For \(\ell\in\mathbb R\) and
\(u\in\mathbb R^d\), define
\(K_\ell(u):=\Q(D(0,\ell)\cap D(u,\ell))\).

Put \(\bar z=z\vee z'\), and let \(x=(v,h)\), \(x'=(v',h')\),
with \(h\le z\) and \(h'\le z'\). The Poisson void probabilities give
\begin{equation}\label{eq:extremality-overlap-bound}
\begin{aligned}
 \Cov\big(\one\{\PP(D(x))=0\},
                    \one\{\PP(D(x'))=0\}\big)
 &=e^{-\Q(D(x)\cup D(x'))}
      \big(1-e^{-\Q(D(x)\cap D(x'))}\big)
 \le K_{\bar z}(v'-v).
\end{aligned}
\end{equation}
For \(R=\sqrt{(\bar z-H)_+}\), let \(B_R\) be the ball of radius
\(R\) centered at the origin. The horizontal sections of the paraboloids
show that \(K_{\bar z}(u)=\E[|B_R\cap(B_R+u)|]\). Hence, Tonelli's
theorem gives
\begin{equation}\label{eq:overlap-kernel-moments}
\begin{aligned}
 \int_{\mathbb R^d}K_{\bar z}(u)\,du
 &=\kappa_d^2\E[R^{2d}],\\
 \int_{\mathbb R^d}\|u\|K_{\bar z}(u)\,du
 &=\E\int_{B_R}\int_{B_R}\|v-w\|\,dv\,dw
 \le2\kappa_d^2\E[R^{2d+1}].
\end{aligned}
\end{equation}
The moment assumption implies finiteness of both expectations, because
\((2d+1)/2\le3d/2\). The first identity and
\eqref{eq:extremality-overlap-bound} prove absolute integrability after
integrating the two marks. Lemma~\ref{lem:window-translation}, applied to
\(K=K_{\bar z}\), bounds the boundary integral by
\(C_Wn^{d-1}\E[R^{2d+1}]\). This proves that the boundary contribution
has surface order, together with the asserted normalized bound.

\subsection{Volume-weighted covariance}
\label{ss:covv}

We now consider the statistic weighted by cell volume. We again expand the
covariance by the Mecke formula, but the term involving two distinct points
requires control of the random cells. We first state a bound for the
stationary kernel, derive the covariance asymptotics from it, and then prove
the bound by localization and resampling. Throughout this subsection, write
\(A_x=A(x;\PP)=\one\{x\in\EE(\PP+\delta_x)\}\,|C(x;\PP)|\).
For \(B\subset\mathbb R^d\), we use the shorthand
\(B^z=B\times(-\infty,z]\). The contribution from two inserted points
is described by the kernel
\[
 \Delta^{\ms V}(x,x')
 :=\E[A(x;\PP+\delta_{x'})A(x';\PP+\delta_x)]
   -\E[A_x]\E[A_{x'}].
\]
By definition, \(G_n^{\ms V}(z)=\sum_{X_i\in W_n^z}A_{X_i}\).
The Mecke formula then gives the decomposition
\[
\begin{aligned}
 \Cov\big(G_n^{\ms V}(z),G_n^{\ms V}(z')\big)
 &=D_n^{\ms V}(z,z')+R_n^{\ms V}(z,z'),\\
 D_n^{\ms V}(z,z')&:=\int_{W_n^{z\wedge z'}}\E[A_x^2]\,\Q(dx),\\
 R_n^{\ms V}(z,z')&:=\iint_{W_n^z\times W_n^{z'}}
                  \Delta^{\ms V}(x,x')\,\Q(dx')\,\Q(dx).
\end{aligned}
\]
To estimate the term involving two distinct points, we decompose
\(
        \Delta^{\ms V}(x,x')=T_1^{\ms V}(x,x')-T_2^{\ms V}(x,x'),
\)
where
\(
        T_1^{\ms V}(x,x'):=\Cov(A_x,A_{x'})
\)
and
\[
        T_2^{\ms V}(x,x')
        :=
        \E[
        A_xA_{x'}
        -
        A(x;\PP+\delta_{x'})A(x';\PP+\delta_x)
        ].
\]
Here \(T_1^{\ms V}\) is the covariance of the scores defined by separate
insertions, while \(T_2^{\ms V}\) corrects for the interaction between the two
inserted points.
For \(x=(v,h)\) and \(t\in\mathbb R\), define
\begin{align}
    \label{eq:dvxt}
D^{\ms V}(x,t)
:=
\{
(w,s)\in\X:
s\le t,\
\|w-v\|\le \sqrt{(t-h)_+}+\sqrt{(t-s)_+}
\}.
\end{align}
We write \(T_x:=T(x;\PP)\) and
\(
        D_x^{\ms V}:=D^{\ms V}(x,T_x).
\)
On the event \(x\in \EE(\PP+\delta_x)\), the score \(A_x\) is determined
by the configuration inside \(D_x^{\ms V}\).

Assume throughout this subsection that
\(\E[H_-^{m_0}]<\infty\) for some \(m_0>11d/2\). Choose
\(\rho>d+1\) sufficiently close to \(d+1\). Every mark moment used
below has order at most \(m_0\). Constants may depend on \(d,\mathbb F,z,z'\),
\(m_0\), and the auxiliary exponents, but not on \(n,x,x'\). For \(x=(v,h)\) and
\(x'=(v',h')\), write \(r=\|v-v'\|\). For each
\(\ell\in\mathbb R\), define
\(M_\ell(h)=1+(\ell-h)_+^{m_0}\).

The following proposition bounds the stationary kernel by a product of
integrable mark factors and a spatial kernel with finite first moment.
It supplies both the absolute integrability and the boundary estimate
needed for the covariance asymptotics.
\begin{proposition}[Bounds for overlap and boundary terms]
\label{lem:weighted-lower-tail-boundary}
Assume that \(\E[H_-^{m_0}]<\infty\) for some \(m_0>11d/2\).  Then, for
fixed \(z,z'\in\mathbb R\), there exists \(C<\infty\) such that, for
\(h\le z\) and \(h'\le z'\),
\[
        |\Delta^{\ms V}(x,x')|
        \le
        C\,M_z(h)M_{z'}(h')\,(1+r)^{-\rho}.
\]
In particular,
\[
\frac1{|W_n|}
\int_{W_n^z}
\int_{(\mathbb R^d\setminus W_n)^{z'}}
|\Delta^{\ms V}(x,x')|\,\Q(dx')\,\Q(dx)
=
O(|W_n|^{-1/d}),
\]
and the stationary kernel is absolutely integrable:
\[
\int_{-\infty}^{z}
\int_{\mathbb R^d\times(-\infty,z')}
|\Delta^{\ms V}((0,h),(v',h'))|\,dv'\,\mathbb F(dh')\,\mathbb F(dh)<\infty .
\]
\end{proposition}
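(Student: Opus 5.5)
The pointwise bound carries the whole argument; the two consequences follow from it much as in the unweighted case.

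\emph{Consequences.} Integrate the pointwise bound over the marks. The factors \(M_z(h)\) and \(M_{z'}(h')\) are integrable against \(\mathbb F\) because \(\E[H_-^{m_0}]<\infty\). The spatial kernel \(K(u)=(1+\|u\|)^{-\rho}\) satisfies \(\int(1+\|u\|)K(u)\,du<\infty\) since \(\rho>d+1\). Absolute integrability is then immediate. The boundary estimate follows from Lemma~\ref{lem:window-translation} applied to \(K\), which gives a bound \(Cn^{d-1}=O(|W_n|^{1-1/d})\).

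\emph{Localization events.} For the pointwise bound I would work separately with \(T_1^{\ms V}\) and \(T_2^{\ms V}\), using the localization set \(D^{\ms V}(x,t)\) from \eqref{eq:dvxt}. On \(\{x\in\EE(\PP+\delta_x)\}\cap\{T_x\le t\}\), the score \(A_x\) is a function of \(\PP\cap D^{\ms V}(x,t)\). Fix levels \(t=\ell(h,r)\) and \(t'=\ell(h',r)\) of the form
\[
t=\max\{z,h\}+c\,r^2,
\]
with \(c\) small enough that the spatial projections of \(D^{\ms V}(x,t)\) and \(D^{\ms V}(x',t')\) have radius about \(2\sqrt{c}\,r+\sqrt{(t-s)_+}\). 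Their overlap is then confined to points \((w,s)\) with \(s\) very negative, namely \(\sqrt{t-s}\gtrsim r\). Define the good event \(E\) as follows:
\begin{itemize}
\item \(T_x\le t\) and \(T_{x'}\le t'\);
\item \(\PP\) has no point in \(D^{\ms V}(x,t)\cap D^{\ms V}(x',t')\);
\item \(x\notin D(x')\) and \(x'\notin D(x)\), and \(x'\) lies outside \(D^{\ms V}(x,t)\) and vice versa.
\end{itemize}
The last condition ensures that the inserted points do not affect each other's scores. On \(E\), the two scores are computed from disjoint Poisson regions, so we can resample the overlap. Concretely, replace \(\PP\) on one region by an independent copy \(\PP'\) to decouple the scores. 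Then \(T_1^{\ms V}\) equals a covariance of independent variables, hence zero, up to terms carrying \(\one_{E^c}\). Likewise, the integrand of \(T_2^{\ms V}\) vanishes on \(E\), because \(A(x;\PP+\delta_{x'})=A_x\) there.

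\emph{Error terms.} Everything reduces to bounds of the form \(\E[A_x^2A_{x'}^2]^{1/2}\P(E^c)^{1/2}\), by Cauchy--Schwarz or Hölder, and similarly for the resampled versions. For the moment factors, Lemma~\ref{lem:a2} with Hölder gives \(\E[A_x^4]^{1/4}\le C(1+(z-h)_+^{d/2})\); the added point \(x'\) only shrinks cells, so the same bound holds with the insertion. The probability \(\P(E^c)\) has three parts:
\begin{itemize}
\item Proposition~\ref{prop:CoverTimetail}(iii) combined with Remark~\ref{rem:Qlb} gives \(\P(\text{extreme},T_x>t)\le C\exp(-c\,r^d)\).
\item The probability of a point in the overlap is at most \(\Q\) of the overlap. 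That set lies below height \(\bar z-c'r^2\), so its mass is at most \(C\E[(\bar z-H)_+^{d/2}\one\{(\bar z-H)_+\ge c'r^2\}]\le C\,\E[(\bar z-H)_+^{m}]\,r^{-2(m-d/2)}\).
\item The direct-interaction indicators contribute \(\one\{r^2<(z-h')_+\}\), which after Markov is at most \((z-h')_+^{k}r^{-2k}\).
\end{itemize}
These deterministic tail indicators are absorbed into \(M_{z'}(h')\), at a cost of moments of order about \(\rho/2\) on that mark.

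\emph{Main obstacle.} The hard step is the overlap probability. It needs a decay of order \(r^{-2\rho}\) after the square root from Cauchy--Schwarz. That requires a moment of order roughly \(d/2+\rho\approx 3d/2+1\) under the square root, plus a fourth-moment score factor of order \(2d\) from the cell volumes. The budget must fit in \(m_0>11d/2\), and I expect the exponent bookkeeping to be delicate. One option is to apply Hölder with exponents close to \(1\) on \(\P(E^c)\), so less decay is lost than with a square root. If the margin is still tight, a fallback is to let \(t\) depend on \(h\) and \(h'\) so that a heavy lower tail of one mark is charged to \(M\) rather than to the spatial decay.
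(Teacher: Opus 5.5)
Your architecture is sound and close to the paper's. You truncate at a level of order $r^2$ above the mark, resample on the truncation region, and reduce everything to three error terms: a cover-time tail, the $\Q$-mass of a low-mark overlap, and a direct-hit indicator absorbed into the mark factors. The two consequences then follow from Lemma~\ref{lem:window-translation} exactly as you say.

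However, one step fails as written. Since $h\le z$, your level is $t=\max\{z,h\}+cr^2=z+cr^2$. It is anchored at $z$, not at the mark, so the term $\sqrt{t-h}=\sqrt{z-h+cr^2}$ in the spatial radius of $D^{\ms V}(x,t)$ is not $O(\sqrt c\,r)$ uniformly in $h\le z$. Suppose $z-h\gg r^2$.
\begin{itemize}
\item Every section of $D^{\ms V}(x,t)$ at a mark $s\le t$ contains $B_{\sqrt{z-h}}(v)\ni v'$. Hence $D^{\ms V}(x,t)\cap D^{\ms V}(x',t')$ contains the cylinder $B_{\sqrt c\,r}(v')\times(-\infty,t\wedge t']$, whose $\Q$-mass is $\gtrsim r^d$. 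So the overlap is not confined below $\bar z-c'r^2$, and the probability of a point in it is close to one.
\item Proposition~\ref{prop:CoverTimetail}(iii) only yields $C\exp(-2^{-d}\Q((h+t)/2))$. This degenerates to a constant as $h\to-\infty$, so your claimed bound $C\exp(-cr^d)$ is not uniform in $h$.
\end{itemize}
The pointwise bound must hold for all $h\le z$ with only the polynomial weight $M_z(h)$. In this regime both error terms are useless.

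The repair is short, and you have two options.
\begin{itemize}
\item Treat $(z-h)_+\ge cr^2/2$ (and symmetrically $h'$) separately. Use the trivial bound $|\Delta^{\ms V}(x,x')|\le C(1+(z-h)_+^{d/2})(1+(z'-h')_+^{d/2})$ together with $1\le C(z-h)_+^{\rho/2}(1+r)^{-\rho}$. The resulting factor has order $(d+\rho)/2<m_0$ and is absorbed into $M_z(h)$.
\item Anchor the level at the mark, as the paper does: $t=h+r^2/256$. The overlap then lies below $h\vee h'-c'r^2$ uniformly. The cover-time tail is no longer uniformly exponential, but Markov's inequality and Corollary~\ref{lem:cover-time-moment-integrability} give $\P(A_x>0,\,T_x-h>r^2/256)\le C_p(1+(z-h)_+^p)r^{-2p}$ for every $p$. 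This costs no moment of $H_-$ pointwise.
\end{itemize}

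The exponent bookkeeping you flag as the main obstacle is not tight. After Cauchy--Schwarz, the overlap only needs $\E[H_-^m]<\infty$ for some $m>\rho+d/2\approx 3d/2+1$, which is the condition $q>\rho+d/2$ of Lemma~\ref{lem:weighted-deterministic-overlap}. That is well below $11d/2$.

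With this fix, your route differs from the paper's only in packaging.
\begin{itemize}
\item The paper truncates $A_x$ alone. It controls $\P(A_{x'}\ne A_{x'}^{(x,t)})$ through the random region $D^{\ms V}_{x'}$ (Lemma~\ref{lem:weighted-deterministic-resampling}(iii)) and then splits on $S_{x'}\gtrless r^2/256$. This produces the same two error terms.
\item Your symmetric truncation bounds the change probability directly by $2\P(A_{x'}>0,T_{x'}>t')+2\Q(D^{\ms V}(x,t)\cap D^{\ms V}(x',t'))$. This works because $A_{x'}^{[t']}$ is measurable with respect to $\PP$ restricted to $D^{\ms V}(x',t')$. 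It avoids the inductive insertion argument and the factor $\Q(B_r)\sim r^d$ the paper carries on the cover-time tail.
\end{itemize}
Your treatment of $T_2^{\ms V}$ via Lemma~\ref{lem:weighted-deterministic-resampling}(ii) is the same as the paper's.
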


\begin{proof}[Proof of Theorem~\ref{thm:var}(2)]
If \(\mathbb F((-\infty,z])=0\) or \(\mathbb F((-\infty,z'])=0\), the corresponding statistic vanishes
almost surely and every term in its covariance kernel is zero. We may
therefore assume \(\mathbb F((-\infty,z]),\mathbb F((-\infty,z'])>0\). Put \(u=z\wedge z'\).
The moment assumption is exactly that of
Proposition~\ref{lem:weighted-lower-tail-boundary}. By Lyapunov's
inequality and Lemma~\ref{lem:a2} with \(p=1\),
\(\E[A_{(0,h)}^2]\le\E[A_{(0,h)}^5]^{2/5}
\le C(1+(u-h)_+^d)\) for \(h\le u\). Hence, the diagonal
mark integral is finite. Spatial stationarity gives
\(D_n^{\ms V}(z,z')=|W_n|\int_{-\infty}^{u}\E[A_{(0,h)}^2]\mathbb F(dh)\)
without a boundary error.

Proposition~\ref{lem:weighted-lower-tail-boundary} makes the stationary
integral of \(\Delta^{\ms V}\) absolutely convergent. It also shows that
replacing the second spatial window in \(R_n^{\ms V}(z,z')\) by
\(\mathbb R^d\) changes that term by \(O(|W_n|^{1-1/d})\).
Stationarity then identifies the sum of the diagonal and extended terms
as \(|W_n|k^{\ms V}(z,z')\). Thus,
\[
 |W_n|^{-1}\Cov\big(G_n^{\ms V}(z),G_n^{\ms V}(z')\big)
 =k^{\ms V}(z,z')+O(|W_n|^{-1/d}).
\]
For the positivity assertion, assume additionally that \(\mathbb F\) is not a
Dirac measure and that \(\mathbb F((-\infty,z])>0\). Then
Proposition~\ref{prop:VarianceBoundVolume} gives
\(k^{\ms V}(z,z)=\lim_{n\to\infty}|W_n|^{-1}\Var(G_n^{\ms V}(z))
\ge C(\mathbb F,z)>0\) whenever \(\mathbb F((-\infty,z])>0\).
\end{proof}

\subsubsection{Proof of Proposition~\ref{lem:weighted-lower-tail-boundary}}
We compare each score with a version determined by a fixed region and
resample the Poisson process in that region. The first auxiliary lemma
controls the truncation error and expresses the remaining covariance
through the event that resampling changes the second score. The second
lemma bounds this event and the interaction between inserted generators
by estimating the corresponding geometric overlaps. We state these inputs,
prove the proposition from them, and defer their proofs to
Appendix~\ref{apx:out}.

For \(x=(v,h)\) and \(t\ge h\), define
\begin{equation}\label{eq:deterministic-local-score}
 A_x^{[t]}
 :=
 \one\{\PP(D(x))=0\}\,|C_t(x;\PP)|.
\end{equation}
Recall that \(C_t\) is the truncated cell from
\eqref{eq:truncated-cell-corrected}.
For \(t>h\), put \(B=D^{\ms V}(x,t)\). Given a locally finite
configuration \(\mu\) and an additional point \(y\in\X\), let
\(\mathscr D_x(\mu)=D^{\ms V}(x,T(x;\mu))\) if \(A(x;\mu)>0\), and
let \(\mathscr D_x(\mu)=\varnothing\) otherwise. Let \(\PP'\) be an
independent copy of \(\PP\), and define
\(\PP_x^{[t]}=\PP|_{B^{\complement}}+\PP'|_B\) and
\(A_{x'}^{(x,t)}=A(x';\PP_x^{[t]})\). We write
\(\EE_{x'}=\{A_{x'}>0\}\) and use \(D_{x'}^{\ms V}\) on this event.
The parameters \(p,a>0\) below denote a moment order and a power in the
truncation bound, respectively. The first lemma supplies the localization
and resampling identities used in the covariance estimate.

\begin{lemma}[Localization and resampling on a fixed region]
\label{lem:weighted-deterministic-resampling}
For \(t>h\), the following assertions hold.
\begin{enumerate}
\item[\emph{(i)}]
The score \(A_x^{[t]}\) is measurable with respect to
\(\sigma(\PP|_B)\)  with
\(A_x^{[t]}=A_x\) on \(\{T_x\le t\}\).
For every \(p,a>0\), every fixed \(z\in\mathbb R\), and every \(h\le z\),
\begin{equation}\label{eq:deterministic-truncation-error}
 \E[|A_x-A_x^{[t]}|^p]
 \le
 C(t-h)^{-a}
 \big(1+(z-h)_+^{pd/2+a}\big).
\end{equation}

\item[\emph{(ii)}]
For every locally finite \(\mu\) and every
\(y\notin\mathscr D_x(\mu)\),
\(
 A(x;\mu+\delta_y)=A(x;\mu).
\)
When \(A(x;\mu)>0\), the cell and cover time are unchanged as well.

\item[\emph{(iii)}]
If \(\Q(B)<\infty\), then \(\PP_x^{[t]}\) has the same law
as \(\PP\), and
\begin{equation}\label{eq:deterministic-resampling-identity}
 \Cov(A_x^{[t]},A_{x'})
 =\E[
 (A_x^{[t]}-\E[A_x^{[t]}])
 \{A_{x'}-A_{x'}^{(x,t)}\}
 ].
\end{equation}
Furthermore,
\begin{equation}\label{eq:deterministic-resampling-change-probability}
 \P\big(A_{x'}\ne A_{x'}^{(x,t)}\big)
 \le
 2\,\E[
 \one\{\EE_{x'}\}\,
 \Q(B\cap D_{x'}^{\ms V})
 ].
\end{equation}
\end{enumerate}
\end{lemma}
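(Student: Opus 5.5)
The three assertions are largely independent, and I would prove them in order.

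\emph{Part (i).} Measurability comes from the geometry of $B=D^{\ms V}(x,t)$. By Lemma~\ref{lem:truncated-cell-geometry}, $C_t(x;\PP)=C(x;\PP)\cap B_{\sqrt{t-h}}(v)$. It depends only on the points $y=(w,s)$ with $s\le t$ that can beat $x$ at some $u\in B_{\sqrt{t-h}}(v)$. Such a point satisfies $\|u-w\|^2+s\le L(x,u)\le t$, hence $\|w-v\|\le\sqrt{t-h}+\sqrt{t-s}$, so $y\in B$. The event $\{\PP(D(x))=0\}$ is also determined by $\PP|_B$, because $D(x)\subseteq B$: a point $(w,s)\in D(x)$ has $\|w-v\|^2<h-s\le t-s$. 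On $\{T_x\le t\}$, \eqref{eq:cover-time-containment} gives $C_t=C$, and therefore $A_x^{[t]}=A_x$.

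For the truncation bound, note that the difference vanishes off $\{x\in\EE(\PP+\delta_x),T_x>t\}$ and is at most $|C(x;\PP)|\le\kappa_d(T_x-h)^{d/2}$ there. Hence
\[
\E|A_x-A_x^{[t]}|^p\le \kappa_d^p\,\E\big[\one\{x\in\EE\}(T_x-h)^{pd/2}\one\{T_x>t\}\big].
\]
Using $\one\{T_x>t\}\le (T_x-h)^a(t-h)^{-a}$, this is at most
\[
(t-h)^{-a}\,\E\big[\one\{x\in\EE\}(T_x-h)_+^{pd/2+a}\big],
\]
which is bounded by $C(t-h)^{-a}(1+(z-h)_+^{pd/2+a})$ by Corollary~\ref{lem:cover-time-moment-integrability}.

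\emph{Part (ii).} Suppose $A(x;\mu)>0$ and $y=(w,s)\notin\mathscr D_x(\mu)$, and put $T=T(x;\mu)$. I would show that $y$ beats $x$ nowhere in $C(x;\mu)\subseteq B_{\sqrt{T-h}}(v)$. If $s>T$, then $L(y,u)>T\ge L(x,u)$. If $s\le T$, then $\|w-v\|>\sqrt{T-h}+\sqrt{T-s}$, so for every $u$ in the ball $\|u-w\|>\sqrt{T-s}$, and hence $L(y,u)>T\ge L(x,u)$. It follows that adding $y$ leaves the cell unchanged, and with it the cover time and volume. Extremality is also preserved, because $D(x)\subseteq D^{\ms V}(x,T)$. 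Off null ties the inequalities can be taken strict.

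The case $A(x;\mu)=0$ is the delicate one, since then $\mathscr D_x=\varnothing$ and adding any $y$ must keep $A=0$. If $x$ is not extreme, adding points keeps it non-extreme. If $x$ is extreme with $|C|=0$, adding points only shrinks the cell, so the volume stays $0$. This is the point I expect to need the most care.

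\emph{Part (iii).} $\PP|_{B^\complement}$ and $\PP'|_B$ are independent Poisson processes with the correct intensities, so $\PP_x^{[t]}\overset{d}{=}\PP$. By part (i), $A_x^{[t]}$ is $\sigma(\PP|_B)$-measurable. The variable $A_{x'}^{(x,t)}$ is independent of $\PP|_B$ and has the law of $A_{x'}$. Therefore
\[
\E\big[(A_x^{[t]}-\E A_x^{[t]})A_{x'}^{(x,t)}\big]=0,
\]
and subtracting this from the covariance gives the identity \eqref{eq:deterministic-resampling-identity}.

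For the change probability, $A_{x'}\ne A_{x'}^{(x,t)}$ requires the two configurations, which differ only inside $B$, to be distinguished by $A_{x'}$. I would pass from $\PP$ to $\PP_x^{[t]}$ through the intermediate configuration $\PP|_{B^\complement}$, removing points and then adding points. By part (ii), applied to the intermediate configuration, a change forces a point of $\PP|_B$ or of $\PP'|_B$ to lie in $B\cap\mathscr D_{x'}$, where $\mathscr D_{x'}$ is computed from the configuration in which that point is absent. Using the Mecke formula for each of the two processes, and the fact that by part (ii) the region is unchanged by the added point, each term is bounded by $\E[\one\{\EE_{x'}\}\Q(B\cap D^{\ms V}_{x'})]$. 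This yields the factor $2$.

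The main obstacle is making this removal step rigorous with a single witness point. The fix I would try first is to take the point of $\PP|_B$ in $\mathscr D_{x'}(\PP|_{B^\complement})$ and argue monotonicity: if no point of $B$ lies in that region, then by part (ii) iterated over finitely many points (using local finiteness and $\Q(B)<\infty$) the score is the same for both configurations.
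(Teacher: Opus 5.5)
Your parts (i) and (ii), and the covariance identity \eqref{eq:deterministic-resampling-identity}, are correct and match the paper's argument. The gap is in the change-probability bound \eqref{eq:deterministic-resampling-change-probability}, and it comes from routing through the intermediate configuration $R:=\PP|_{B^{\complement}}$. Write $U:=\PP|_B$ and $U':=\PP'|_B$.

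There are two ways to read your step, and neither closes.

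\emph{Region computed from $R$ (your proposed fix).} Iterating (ii) does give $\{A(x';R+U)\ne A(x';R)\}\subseteq\{U(B\cap\mathscr D_{x'}(R))\ge1\}$. But the resulting bound is $\E[\Q(B\cap\mathscr D_{x'}(R))]$, and $R$ does not have the law of $\PP$. Removing points can only create extremality, enlarge the cell and raise the cover time, so $\mathscr D_{x'}(R)\supseteq\mathscr D_{x'}(\PP)$ pointwise. This quantity therefore dominates $\E[\one\{\EE_{x'}\}\Q(B\cap D_{x'}^{\ms V})]$ rather than being dominated by it. Moreover, the cover-time tails of Proposition~\ref{prop:CoverTimetail}, which Lemma~\ref{lem:weighted-deterministic-overlap} needs afterwards, are not available for $R$.

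\emph{Region computed from $\PP-\delta_y$.} Here the Mecke formula would return $\E[\Q(B\cap\mathscr D_{x'}(\PP))]$, but the inclusion you need is false. Invariance under removing any single point does not imply invariance under removing several. Suppose $x'$ is non-extreme in $\PP$ only because of two points $y_1,y_2$ of $U$ in $D(x')$. Then $\mathscr D_{x'}(\PP-\delta_y)=\varnothing$ for every $y\in U$, since the other point still destroys extremality. Yet $A(x';R)>0=A(x';\PP)$ is possible. On the further event $U'=0$, which has probability $e^{-\Q(B)}>0$, the two scores differ and there is no witness in either process.

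The missing idea is to meet in the superposition $R+U+U'$ instead of in $R$. Then both legs are insertions into configurations that have the law of $\PP$. Iterate (ii) over the finitely many points of $U'$ (resp.\ $U$); the score and the region stay fixed after each insertion. Hence $A(x';R+U)=A(x';R+U+U')$ unless $U'(B\cap\mathscr D_{x'}(R+U))\ge1$. Likewise $A(x';R+U')=A(x';R+U+U')$ unless $U(B\cap\mathscr D_{x'}(R+U'))\ge1$. Now $U'$ is independent of $R+U=\PP$, and $U$ is independent of $R+U'\overset{d}{=}\PP$. Conditioning on the base configuration therefore bounds each probability by its Poisson mean, which equals $\E[\one\{\EE_{x'}\}\Q(B\cap D_{x'}^{\ms V})]$. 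This gives the factor $2$ with no Mecke formula, and it is exactly how the paper proceeds.
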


The next lemma provides the two geometric estimates used in the proof. The
first controls the overlap between the fixed resampling region and the region
determining the second score. The second controls the event that one inserted
point lies in the region determining the other score. For \(r\ge1\),
put \(t_r=h+r^2/256\) and \(B_r=D^{\ms V}(x,t_r)\). The exponents
\(q,m>0\) in this lemma specify the required mark moments.

\begin{lemma}[Overlap and interaction between inserted points]
\label{lem:weighted-deterministic-overlap}
If \(\rho>d+1\), \(q>\rho+d/2\), and
\(\E[H_-^q]<\infty\), then, for \(h\le z\),
\(h'\le z'\), and \(r\ge1\),
\begin{equation}\label{eq:deterministic-overlap-final}
 \E\big[
 \one\{\EE_{x'}\}\,
 \Q(B_r\cap D_{x'}^{\ms V})
 \big]
 \le
 C\big(1+(z'-h')_+^q\big)(1+r)^{-2\rho}.
\end{equation}
Moreover, if \(m>\rho\) and \(\E[H_-^m]<\infty\), then
\begin{equation}\label{eq:point-hit-final}
 \P\big(A_x>0,\ x'\in D_x^{\ms V}\big)
 \le
 C\big(1+(z-h)_+^m\big)
       \big(1+(z'-h')_+^m\big)
       (1+r)^{-2\rho}.
\end{equation}
The analogous estimate with \(x\) and \(x'\) interchanged also holds.
\end{lemma}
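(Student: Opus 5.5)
The plan is to derive both estimates from the triangle inequality on the defining conditions of \(D^{\ms V}\), combined with Markov's inequality and the cover-time moments of Corollary~\ref{lem:cover-time-moment-integrability}. Throughout write \(T'=T_{x'}\), \(\bar z=z\vee z'\), and \(r=\|v-v'\|\ge1\). Since the mark factors \(1+(z-h)_+^{m}\) and \(1+(z'-h')_+^{m}\) are at least one, it suffices to obtain decay \(r^{-2\rho}\) in each case, with constants multiplied by at most one such factor.

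\emph{Overlap estimate \eqref{eq:deterministic-overlap-final}.} Let \((w,s)\in B_r\cap D_{x'}^{\ms V}\). Then \(s\le t_r\wedge T'\), and the triangle inequality gives
\[
r\le \|w-v\|+\|w-v'\|\le \tfrac r{16}+\sqrt{t_r-s}+\sqrt{(T'-h')_+}+\sqrt{(T'-s)_+}.
\]
I split according to whether \(T'-h'\ge r^2/256\).

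On the event \(\{T'-h'< r^2/256\}\), the inequality above forces \(t_r-s\ge cr^2\) or \(T'-s\ge cr^2\). In both cases, using \(h\le z\) and \(h'\le z'\), we get \(s\le \bar z-c'r^2\) for some \(c'>0\), after shrinking constants. For such \(s\), the spatial radius of \(B_r\) at height \(s\) is at most \(r/16+\sqrt{t_r-s}\le C\sqrt{\bar z-s}\). Hence
\[
\Q(B_r\cap D^{\ms V}_{x'})\le C\,\E\big[(\bar z-H)_+^{d/2}\one\{\bar z-H\ge c'r^2\}\big]\le C r^{-2(q-d/2)}\E[(\bar z-H)_+^{q}],
\]
and this is \(O(r^{-2\rho})\) because \(q>\rho+d/2\) and \(\E[H_-^q]<\infty\).

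On the event \(\{T'-h'\ge r^2/256\}\), I drop \(B_r\) and use the crude bound
\[
\Q(D^{\ms V}(x',T'))\le C\big(1+(T'-h')^{d/2}+(z'-h')_+^{d/2}\big),
\]
which follows from the radius bound \(\sqrt{T'-h'}+\sqrt{(T'-s)_+}\) and the argument below \eqref{eq:ConditionH}. Multiplying by \(\one\{T'-h'\ge r^2/256\}\le (256)^{q-d/2}(T'-h')^{q-d/2}r^{-2(q-d/2)}\) and applying Corollary~\ref{lem:cover-time-moment-integrability} with \(p=q\) and \(p=q-d/2\) bounds this contribution by
\[
C\big(1+(z'-h')_+^q\big)r^{-2(q-d/2)}.
\]
The stray cross term \((z'-h')_+^{d/2}(T'-h')^{q-d/2}\) is handled with Young's inequality.

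\emph{Point-hit estimate \eqref{eq:point-hit-final}.} The event \(x'\in D^{\ms V}(x,T_x)\) requires \(h'\le T_x\) and
\[
r\le\sqrt{T_x-h}+\sqrt{T_x-h'}.
\]
Hence \(T_x-h\ge r^2/16\) or \(T_x-h'\ge r^2/16\).

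In the first case, Markov's inequality together with Corollary~\ref{lem:cover-time-moment-integrability} gives
\[
\P\big(A_x>0,\ T_x-h\ge r^2/16\big)\le C\big(1+(z-h)_+^m\big)r^{-2m}.
\]

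In the second case, either \(T_x-h\ge r^2/32\), which reduces to the first case, or \(h-h'\ge r^2/32\). In the latter situation \((z'-h')_+\ge r^2/32-|z-z'|\), which is deterministic. For \(r\) large this yields \(1\le C(z'-h')_+^m r^{-2m}\), and bounded \(r\) is absorbed into the constant.

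Since \(m>\rho\), both cases give \(r^{-2\rho}\). The interchanged estimate follows by symmetry, because the argument used only \(h\le z\) and \(h'\le z'\).

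The main obstacle I expect is the first case of the overlap estimate. There one must make sure that very negative marks \(h\) do not inflate \(B_r\): its radius involves \(\sqrt{t_r-s}\), and \(t_r\) moves with \(h\). I would handle this by always bounding \(t_r\le z+r^2/256\), so that all constraints are expressed relative to \(\bar z\) rather than \(h\).
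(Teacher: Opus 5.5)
Your proof is correct and follows essentially the same route as the paper. For the overlap bound you split on whether the cover time of \(x'\) exceeds \(r^2/256\), confining the overlap to marks of order \(-r^2\) in the small-cover-time case; for the point-hit bound you use the dichotomy between a large cover time of \(x\) and a large mark gap \(h-h'\). The one minor deviation is in the large-cover-time branch of the overlap estimate: you bound the random measure \(\Q(D^{\ms V}_{x'})\) through cover-time moments of order \(q\), whereas the paper combines the deterministic bound \(\Q(B_r)\le C(1+r^d)\) with Markov's inequality, and both give the rate \(r^{-2(q-d/2)}\).
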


\begin{proof}[Proof of Proposition~\ref{lem:weighted-lower-tail-boundary}]
With \(\rho>d+1\) as fixed above, choose \(m\) and \(q\) so that
\(\rho<m<m_0\) and \(d+\rho<q<m_0\).
These choices are possible under \(m_0>11d/2\). In particular,
\(q>\rho+d/2\), so the overlap estimate applies, while \(m>\rho\)
allows us to use the estimate for mutual interaction.
Every polynomial factor below has degree strictly smaller than
\(m_0\) and is therefore bounded by a constant times the corresponding
factor \(M_z(h)\) or \(M_{z'}(h')\).
For \(r\le1\), the desired estimate follows directly from Cauchy's inequality
and Lemma~\ref{lem:a2}, after increasing the constant.  We therefore assume \(r>1\)
and put \(t_r=h+r^2/256\).

We first treat \(T_1^{\ms V}(x,x')=\Cov(A_x,A_{x'})\). Adding and
subtracting \(A_x^{[t_r]}\) splits this covariance into
\(\Cov(A_x-A_x^{[t_r]},A_{x'})\) and
\(\Cov(A_x^{[t_r]},A_{x'})\), which we bound separately.
By \eqref{eq:deterministic-truncation-error} with \(p=2\),
\(a=\rho\), and \(t_r-h=r^2/256\), we have
\(
 \|A_x-A_x^{[t_r]}\|_2
 \le
 C(1+r)^{-\rho}
 \big(1+(z-h)_+^{(d+\rho)/2}\big).
\)
Moreover, Lyapunov's inequality and Lemma~\ref{lem:a2} with \(p=1\) give
\(
 \|A_{x'}\|_2
 \le
 C\big(1+(z'-h')_+^{d/2}\big).
\)
Consequently,
\begin{equation}\label{eq:T1-truncation-part}
\begin{aligned}
 |\Cov(A_x-A_x^{[t_r]},A_{x'})|
 &\le
 C(1+r)^{-\rho}
 \big(1+(z-h)_+^{(d+\rho)/2}\big)
 \big(1+(z'-h')_+^{d/2}\big).
\end{aligned}
\end{equation}
For the local part, first note that \(\Q(B_r)<\infty\). Indeed, 
\(
 \Q(B_r)\le C(1+r^d+\E[H_-^{d/2}])<\infty.
\)
Thus, Lemma~\ref{lem:weighted-deterministic-resampling}(iii) applies. Put
\(U_x:=A_x^{[t_r]}-\E[A_x^{[t_r]}]\) and
\(E_{x'}:=\{A_{x'}\ne A_{x'}^{(x,t_r)}\}\). The resampling identity bounds the absolute covariance by
\(\E[|U_x|(A_{x'}+A_{x'}^{(x,t_r)})\one\{E_{x'}\}]\),
since the scores are nonnegative. H\"older's inequality with exponents
\(4,4,2\) therefore yields
\(
 |\Cov(A_x^{[t_r]},A_{x'})|
 \le
 \|U_x\|_4
 (\|A_{x'}\|_4+\|A_{x'}^{(x,t_r)}\|_4)
 \P(E_{x'})^{1/2}.
\)
Here \(\|U_x\|_4\le2\|A_x\|_4\), because
\(0\le A_x^{[t_r]}\le A_x\), while
\(A_{x'}^{(x,t_r)}\) has the same distribution as \(A_{x'}\). Therefore,
by Lemma~\ref{lem:a2}, \eqref{eq:deterministic-resampling-change-probability}, and
\eqref{eq:deterministic-overlap-final},
\begin{equation}\label{eq:T1-local-part}
 |\Cov(A_x^{[t_r]},A_{x'})|
 \le
 C(1+r)^{-\rho}
 \big(1+(z-h)_+^{d/2}\big)
 \big(1+(z'-h')_+^{(d+q)/2}\big).
\end{equation}
We next treat \(T_2^{\ms V}\). Write
\(
 A_x^{x'}:=A(x;\PP+\delta_{x'}),\) and \(
 A_{x'}^x:=A(x';\PP+\delta_x),
\)
and let \(E_x=\{A_x>0,\ x'\in D_x^{\ms V}\}\) and
\(E_{x'}=\{A_{x'}>0,\ x\in D_{x'}^{\ms V}\}\).
Insertion can only shrink a cell and destroy extremality, so
\(0\le A_x^{x'}\le A_x\) and \(0\le A_{x'}^x\le A_{x'}\). By
Lemma~\ref{lem:weighted-deterministic-resampling}(ii),
\(A_x-A_x^{x'}\) vanishes outside \(E_x\), and
\(A_{x'}-A_{x'}^x\) vanishes outside \(E_{x'}\). The nonnegative difference
\(A_xA_{x'}-A_x^{x'}A_{x'}^x\) equals the sum of
\((A_x-A_x^{x'})A_{x'}\) and
\(A_x^{x'}(A_{x'}-A_{x'}^x)\). It is therefore bounded by
\(A_xA_{x'}(\one\{E_x\}+\one\{E_{x'}\})\).
Applying H\"older's inequality with exponents \(4,4,2\) to the two
events bounds \(T_2^{\ms V}\) by \(\|A_x\|_4\|A_{x'}\|_4\)
times \(\P(E_x)^{1/2}+\P(E_{x'})^{1/2}\).
Lemma~\ref{lem:a2} and the two interaction estimates in
\eqref{eq:point-hit-final} therefore imply
\(
 |T_2^{\ms V}(x,x')|
 \le C M_z(h)M_{z'}(h')(1+r)^{-\rho}.
\)
The last bound, together with \eqref{eq:T1-truncation-part} and
\eqref{eq:T1-local-part}, proves the asserted pointwise bound for
\(\Delta^{\ms V}=T_1^{\ms V}-T_2^{\ms V}\).
The mark factors are integrable under \(\E[H_-^{m_0}]<\infty\).
Moreover, \(\rho>d+1\) implies
\(\int_{\mathbb R^d}(1+\|u\|)(1+\|u\|)^{-\rho}\,du<\infty\).
Integrating the pointwise bound over the two marks and applying
Lemma~\ref{lem:window-translation} with
\(K(u)=(1+\|u\|)^{-\rho}\) therefore gives
\[
 \int_{W_n^z}\int_{(\mathbb R^d\setminus W_n)^{z'}}
 |\Delta^{\ms V}(x,x')|\,\Q(dx')\,\Q(dx)
 \le Cn^{d-1}
       \int_{\mathbb R^d}\|u\|(1+\|u\|)^{-\rho}\,du
 \le C|W_n|^{1-1/d}.
\]
Dividing by \(|W_n|\) proves the normalized boundary bound. Integrability
of \(K\), together with the same mark bounds, proves absolute integrability
of the stationary kernel.
\end{proof}

%
%
\section{Proofs of Theorems \ref{thm:univ} and \ref{thm:mult} --univariate and multivariate CLT} 
\label{sec:univ_proof}

\subsection{Proof of Theorem \ref{thm:univ} via region stabilization}
\label{sec:univ_region}
To prove Theorem \ref{thm:univ}, we interpret the Poisson functional \(\hat G_n(z)\), \(z\in\mathbb R\), as a sum
of region-stabilizing score functions in the sense of \cite{BM22}. Under appropriate assumptions, the
corresponding Poisson functional satisfies a quantitative central limit theorem.

The usual approach via exponentially decaying stabilization radii is not available in the present setting.
Indeed, because of the unboundedness of \(D(v,h)\), and since adding a new point inside \(D(v,h)\) may turn
\((v,h)\) from an extreme point into a non-extreme point, the associated stabilization radius is almost surely
infinite. We therefore work instead in a region-stabilization framework. To apply the theorem from
\cite{BM22}, we first specify the general setting and notation.

Let \(\lambda\) denote the Lebesgue measure on \(\mathbb R^{d}\), and let \(\Q \equiv \Q_{\mathbb F} = \lambda \otimes
\mathbb F\). Let \(\PP\) be a Poisson point process on \(\mathbb R^{d+1}\) with intensity measure \(\Q\). Naturally, the
possible configurations of \(\PP\) lie in the set \(\bN\) consisting of all simple and locally finite counting
measures on \(\mathbb R^{d+1}\), equipped with a suitable \(\sigma\)-algebra \(\mathcal N\).

Consider the Poisson functional of the form 
\begin{equation}\label{eq:scoresum}
	H(\PP)= \sum_{x=(v,h) \in \PP} \xi(x;\PP),
\end{equation}
represented as a sum of \textit{local contributions} $\xi: \R^{d+1} \times \bN \to \R$, the so-called measurable score functions, from each point $x \in \PP$.

For the application to Theorem~\ref{thm:univ}, the abstract score \(\xi\) will be instantiated by the concrete
score functions associated with \(\hat G_n(z)\) and \(\hat G_n^V(z)\). In the unweighted case, the relevant
stabilization region is the paraboloid \(D(x) \cup \{x\}\), while in the weighted case one must additionally control the
cover time introduced in Section~3. The assumptions below are standard in the theory of region-stabilizing
Poisson functionals: \((A0)\) and \((A1)\) encode locality, while \((A2)\) is the corresponding uniform moment
condition required for quantitative normal approximation. Thus, the main task is to verify these assumptions
for the Laguerre score functions and then invoke the quantitative CLT from \cite{BM22}.

We will write $0$ for the empty point process, while for $\vp_1, \vp_2 \in \bN$, we write $\vp_1 \le \vp_2$ when $\vp_2$ dominates $\vp_1$. Assume the following:
\begin{enumerate}
\item[(A0)] If $\xi(x; \vp_1)=\xi(x; \vp_2)$ for some $\vp_1,\vp_2 \in \mathbf{N}$
with $0\neq \vp_1\le \vp_2$, then
\begin{equation}
	\label{eq:ximon}
	\xi(x; \vp_1)=\xi(x; \vp') \quad 
	\text{for all} \, \vp'\in\mathbf{N} \; \text{ with } \; \vp_1\le \vp'\le \vp_2.
\end{equation}
	\item[(A1)] \textit{Stabilization region:} There exists a map $R:\big\{(x;\vp)\in\R^{d+1}\times \mathbf{N}:x \in\vp\big\} \to \cB$, where $\cB$ is the Borel $\sigma$-algebra on $\R^{d+1}$, such that
	\begin{enumerate}
		\item[(A1.1)] 
		\begin{equation}
			\label{eq:1}
			\{\vp \in \mathbf{N} : y \in R(x;\vp+\delta_x)\}\in \mathscr{N} \quad \forall \, x,y\in\R^{d+1},
		\end{equation}
		 and
		\begin{equation}
			\label{eq:2}
			\P\big(y \in R(x; \PP+\delta_x)\big)\, \text{ and } \,
			\P\big(\{y_1, y_2\}\su  R(x; \PP+\delta_x)\big)
		\end{equation}
		are Lebesgue measurable functions of $(x;y) \in (\R^{d+1})^2$ and
		$(x;y_1,y_2) \in (\R^{d+1})^3$, respectively.
		\item[(A1.2)]
		$$
		R(x;\vp_1) \supseteq R(x;\vp_2),
		\quad \vp_1 \le \vp_2,\; \forall x \in\vp_1,
		$$ 
		\item[(A1.3)] for all $\vp\in\mathbf{N}$, $x\in\vp$ and $y \notin R(x;\vp)$, 
        $$\vp_{R(x;\vp)} \neq 0 \implies \vp_{R(x;\vp +\delta_y)} \neq 0.
        $$
		\item[(A1.4)] for all $\vp\in\mathbf{N}$ and $x\in\vp$,
		\begin{displaymath}
			\xi\big(x;\vp\big)
			=\xi\big(x;\vp_{R(x;\vp)}\big).
		\end{displaymath}
	\end{enumerate}
\end{enumerate}
If (A1) holds with a nontrivial function
$R$, then we call the function $\xi$ a
\emph{region-stabilizing} score function with region of stabilization given by $R$.

\begin{enumerate}
	\item[(A2)] \textit{$L^{4+p}$-norm:} Assume that there exists a $p >0$ and a measurable function $M_p:\X \to (0,\infty)$
	such that, for all $\vp\in\mathbf{N}$ with $\vp(\R^{d+1}) \le 6+\lceil p/2 \rceil$,
	\begin{displaymath}
		\Big\|\xi \big(x; \PP+\delta_x+\vp\big)\Big\|_{4+p}
		\le M_p(x), \; \forall x \in \R^{d+1}.
	\end{displaymath}
\end{enumerate}

Let $r: \R^{d+1} \times \R^{d+1} \to [0,\infty]$ be a measurable function
with
\begin{equation}
	\label{eq:Rs}
	\P\big(y \in R(x; \PP +\delta_x)\big)	\le  e^{-r(x;y)}, \quad x, y \in \R^{d+1}.
\end{equation}
For
$y_1,y_2 \in \R^{d+1}$, denote
\begin{equation}
	\label{eq:g2s}
	q(y_1,y_2):=\int_{\R^{d+1}} \P\big\{\{y_1,y_2\} 
	\su R\big(x; \PP +\delta_x\big)\big\} \;\Q(dx).
\end{equation}
For $p >0$ as in (A2) and $\zeta:=\frac{p}
    {(8+2p)(4+\lceil p/2\rceil)}$, let
\begin{align}
	\label{eq:g}
	g(y) :=&\int_{\R^{d+1}} e^{-\zeta r(x; y)} \;\Q(dx), \quad h(y) = \int_{\R^{d+1}} M_p(x)^{4+p/2}e^{-\zeta r(x; y)} \;\Q(dx), \text{ and }\nonumber\\
	&S(y) := \max(M_p(y)^2,M_p(y)^4)+ \tilde h(y)\big(1+g(y)^4\big), \quad y\in\R^{d+1},
\end{align}
where $\tilde h(y) = \max (h(y)^{2/(4+p/2)}, h(y)^{4/(4+p/2)})$.
For
$\alpha>0$, let
\begin{equation}
	\label{eq:fa}
	f_\alpha(y):=f_\alpha^{(1)}(y)+f_\alpha^{(2)}(y)+f_\alpha^{(3)}(y),
	\quad y \in\R^{d+1},
\end{equation}
where 
\begin{align}
	\label{eq:fal}
	f_\alpha^{(1)}(y)&:=\int_{\R^{d+1}} S(x) e^{- \alpha r(x;y)}
	\;\Q(dx), \notag \\
	f_\alpha^{(2)} (y)&:=\int_{\R^{d+1}} S(x) e^{- \alpha r(y,x)} 
	\;\Q(dx), \nonumber\\
	f_\alpha^{(3)}(y)&:=\int_{\R^{d+1}} S(x) q(x;y)^\alpha \;\Q(dx).
\end{align}
Finally, define the function
\begin{equation}
	\label{eq:p}
	\kappa(x):= \P\big(\xi(x; \PP+\delta_x) \ne 0\big),\quad
    x\in\R^{d+1}. 
\end{equation}

For an integrable function $f : \R^{d+1} \to \R$, denote
\(\Q f:= \Q(f) :=\int_{\R^{d+1}} f(x)\, \Q(dx).\) 
We will use the following quantitative bound for Gaussian approximation from \cite{BM22}.

\begin{theorem}[\cite{BM22}, Theorem 2.1]\label{thm:KolBd}
	Assume that $\xi$ satisfies conditions (A0) -- (A2) and
	let $H$ be as in \eqref{eq:scoresum}. Then, for $p$ as in (A2) and
	$\tau:=p /(32+4 p)$, there exists a constant $C \in (0,\infty)$ depending only on $p$ such that
    $$
    \Var(H) \le C \; \Q ((\kappa+g)^{2\tau}S).
    $$
    Moreover,
	\begin{align*}
		d_{W}\big(\frac{H-\E H}{\sqrt{\operatorname{Var} H}},  N\big) 
		&\le C \Bigg[\frac{\sqrt{\Q f_\tau^2}}{\operatorname{Var} H}
		+\frac{\Q ((\kappa+g)^{2\tau}S)}{(\operatorname{Var} H)^{3/2}}\Bigg],
	\end{align*}
	and
	\begin{multline*}
		d_{K}\big(\frac{H-\E H}{\sqrt{\operatorname{Var} H}},
		N\big) 
		\le C \Bigg[\frac{\sqrt{\Q f_\tau^2}
			+ \sqrt{\Q f_{2\tau}}}{\operatorname{Var} H}
		+\frac{\sqrt{\Q ((\kappa+g)^{2\tau}S)}}{\operatorname{Var} H}
		\\
		+\frac{\Q ((\kappa+g)^{2\tau} S)}{(\operatorname{Var} H)^{3/2}} +\frac{(\Q ((\kappa+g)^{2\tau} S))^{5/4}
			+ (\Q ((\kappa+g)^{2\tau} S))^{3/2}}{(\operatorname{Var} H)^2}\Bigg],
	\end{multline*}
	where $N$ is a standard normal random variable and
	$C \in (0,\infty)$ is a constant depending only on $p$.
\end{theorem}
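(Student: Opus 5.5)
The plan is to derive Theorem~\ref{thm:KolBd} from the second-order Poincar\'e inequalities for Poisson functionals of Last, Peccati and Schulte. These bound $d_W$ and $d_K$ between $(H-\E H)/\sqrt{\Var H}$ and $N$ by three integrals $\gamma_1,\gamma_2,\gamma_3$ (plus further terms $\gamma_4,\gamma_5,\gamma_6$ in the Kolmogorov case). The integrals are built from the moments
\[
\E|D_xH|^{4+p},\qquad \P(D^2_{x,y}H\neq 0),\qquad \E|D^2_{x,y}H|^{4+p},
\]
where $D_xH=H(\PP+\delta_x)-H(\PP)$ and $D^2_{x,y}H=D_y D_x H$. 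The variance upper bound will come from the Poincar\'e inequality $\Var(H)\le\int\E[(D_xH)^2]\,\Q(dx)$. So everything reduces to pointwise estimates of first and second add-one costs in terms of $M_p$, $r$, $q$ and $\kappa$.

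\textbf{Step 1: first-order cost.} I write
\[
D_xH=\xi(x;\PP+\delta_x)+\sum_{z\in\PP}\big(\xi(z;\PP+\delta_x)-\xi(z;\PP)\big).
\]
By (A1.4) together with the monotonicity (A1.2), a summand can be nonzero only if $x\in R(z;\PP+\delta_z)$, up to the modification needed when $\xi$ vanishes. Condition (A0) and (A1.3) handle configurations where the region becomes empty, so that the difference is indeed controlled by this indicator. Applying H\"older's inequality with exponent $4+p$, then (A2) and the Mecke formula, I expect
\[
\|D_xH\|_{4+p'}\lesssim M_p(x)+\int M_p(z)\,e^{-\zeta' r(z;x)}\,\Q(dz),
\]
for suitable slightly smaller exponents $p'$ and $\zeta'$. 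This is where the functions $g$, $h$ and $S$ arise. The count bound $\vp(\R^{d+1})\le 6+\lceil p/2\rceil$ in (A2) is exactly what lets the extra points produced by Mecke's formula (up to order $4+p/2$) be absorbed into $M_p$.

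\textbf{Step 2: second-order cost.} I decompose $D^2_{x,y}H$ in the same way. It is nonzero only if $y\in R(x;\PP+\delta_x)$, or $x\in R(y;\PP+\delta_y)$, or both $x$ and $y$ lie in $R(z;\PP+\delta_z)$ for some $z\in\PP$. This yields
\[
\P(D^2_{x,y}H\ne0)\lesssim e^{-r(x;y)}+e^{-r(y;x)}+q(x,y),
\]
and, combining this with the moment bounds via H\"older's inequality, I obtain
\[
\|D^2_{x,y}H\|_{4+p'}\,\P(D^2_{x,y}H\neq0)^{\tau}\lesssim \big(S(x)S(y)\big)^{1/2}\big(e^{-\tau r(x;y)}+e^{-\tau r(y;x)}+q(x,y)^\tau\big).
\]

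\textbf{Step 3: assembly.} Inserting these bounds into $\gamma_1,\dots,\gamma_6$ and using Cauchy--Schwarz in the inner integrals produces $\Q f_\tau^2$ and $\Q f_{2\tau}$. The terms involving $D_x$ alone produce $\Q((\kappa+g)^{2\tau}S)$: there the factor $\P(D_xH\ne0)^{2\tau}$ is bounded by $(\kappa(x)+g(x))^{2\tau}$, which follows from the support analysis of Step 1.

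I expect the main obstacle to be the bookkeeping of H\"older exponents. The choices $\zeta=p/((8+2p)(4+\lceil p/2\rceil))$ and $\tau=p/(32+4p)$ have to be matched so that every power of the indicator probabilities stays integrable against the $L^{4+p}$ moments. A second delicate point is justifying the support claims for scores that vanish, which is where (A0) and (A1.3) enter. Since this is \cite[Theorem 2.1]{BM22}, I would ultimately cite it, checking only that our score functions fit the hypotheses.
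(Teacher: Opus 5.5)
Your outline follows the same route as \cite{BM22}, on which the paper relies: second-order Poincar\'e inequalities, with first- and second-order add-one costs controlled through $r$, $q$ and $\kappa$. There is, however, a gap exactly where this statement goes beyond the cited one. \cite[Theorem 2.1]{BM22} is stated and proved only for $p\in(0,1]$. Here $p>0$ is arbitrary, with $\zeta=p/((8+2p)(4+\lceil p/2\rceil))$, and the paper later uses $p=12$. So your fallback of citing \cite{BM22} and only checking the hypotheses does not yield the statement. The paper's proof consists precisely in checking that the argument of \cite{BM22} survives for all $p>0$. The only change is in \cite[Lemma 5.5]{BM22} (see Lemma~\ref{lem:BM55ext}), i.e.\ in your Step 1, and your Step 1 does not supply this.

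Concretely, a bound of the form $\|D_yH\|_{4+p'}\lesssim M_p(y)+\int M_p(x)e^{-\zeta' r(x;y)}\,\Q(dx)$ is not available. For a random sum $\sum_{x\in\PP}D_y\xi(x;\PP)$ of rare contributions, the $(4+p/2)$-th moment is of the order of the integrated probability, not of its $(4+p/2)$-th power. A Minkowski-type estimate linear in such an integral therefore fails when $M_p(y)$ and the integral are small. Moreover, unspecified ``slightly smaller'' $p'$, $\zeta'$ would alter $g$, $h$, $S$, $f_\alpha$ and would not give the stated bound.

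What is needed is the following, with $r_p=4+p/2$ and $k_p=4+\lceil p/2\rceil$.
\begin{enumerate}
\item Bound $|\sum_{x\in\PP}D_y\xi(x;\PP)|^{r_p}$ by $N^{k_p-1}\sum_{x\in\PP}|D_y\xi(x;\PP)|^{r_p}$, where $N$ counts the nonzero summands.
\item Expand this into sums over ordered tuples of $i\le k_p$ distinct points.
\item Apply the $i$-fold Mecke formula. This is what consumes the allowance of $6+\lceil p/2\rceil$ extra points in (A2); the relevant count is $4+\lceil p/2\rceil$, not $4+p/2$.
\item Apply H\"older with the $L^{4+p}$-norm of the score, leaving the exponent $p/(2(4+p))$ to be shared among the $i$ indicator probabilities.
\end{enumerate}
Each probability then carries the exponent $p/(2i(4+p))\ge\zeta$ and is bounded by $e^{-r(x_j;y)}$, so the $i$-th term is $O(h(y)g(y)^{i-1})$. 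Hence $\E|D_yH|^{r_p}\lesssim M_p(y)^{r_p}+h(y)(1+g(y)^{k_p-1})\lesssim S(y)^{r_p/4}$, because $k_p-1<r_p$. This is the origin of both the factor $4+\lceil p/2\rceil$ in $\zeta$ and the specific form of $S$.

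Once this is in place, your Steps 2--3 go through as in \cite{BM22}. Note that there $\tau=p/(32+4p)$ comes from interpolating fourth moments, $\|Y\|_4\le\|Y\|_{4+p/2}\P(Y\neq0)^{\tau}$. This is because only $(4+p/2)$-th moments of the add-one costs are available, not $(4+p)$-th moments as your outline assumes.
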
 

{In \cite[Theorem 2.1]{BM22}, the parameter $p$ in (A2) is taken to be in the interval $(0, 1]$. However, the arguments
for its proof remain valid for any choice of $p > 0$, as stated in Theorem \ref{thm:KolBd} above. The only
changes occur in the proof of \cite[Lemma 5.5]{BM22}: the expansion of
the $(4+p/2)$-moment now contains terms involving up to
$4+\lceil p/2\rceil$ distinct points, which leads to the updated choice of
\(
    \zeta
\) in the definition preceding \eqref{eq:g}; see Lemma \ref{lem:BM55ext} in Appendix \ref{apx:univ} for further details.
In this paper, especially in the proof of Lemma \ref{lem:fmombdV} below in the volume weighted case, 
we do make use of larger values of $p$ to minimize our moment assumption in Theorem \ref{thm:univ}(2).}

For Theorem \ref{thm:mult}, we use the multivariate quantitative CLT in \cite[Theorem 1]{TT24} along with our region of stabilization and associated bounds.

\smallskip

\noindent \underline{\em Unweighted case.} We will use Theorem \ref{thm:KolBd} to prove the result. Throughout this section, we fix some $z \in \R$. Recall that throughout this paper we write $x_i = (v_i,h_i) \in \R^{d+1}$.
Note that for $z \in \R$, the function $G_n(z)$ can be expressed as a sum of score functions as in \eqref{eq:scoresum} with 
\begin{equation}\label{eq:xiz}
\xi_n^{(z)} (x; \vp): = \one \big\{x \in W_n^z \cap  \EE(\vp)\big\}, \qquad x \in \R^{d+1},
\end{equation}
where  $\vp \in \mathbf{N}$ with $x \in \vp$, while we define $\xi_n^{(z)}(x; 0)=0$. Note that $\xi_n^{(z)}$ trivially satisfies \eqref{eq:ximon}. For $\vp \in \bN$ and $x \in \vp$, we can define the region of stabilization for $\xi_n^{(z)}$ as
\begin{equation}\label{eq:RoS}
	R_n^{(z)}(x; \vp): = \begin{cases}
		D(x) \cup \{x\},\,& v \in W_n, h \le z,\, D(x) \cap \vp=\emptyset\\
		\emptyset,\, & \text{otherwise},
	\end{cases}
\end{equation}
where  we recall that 
$D(v,h):=\big\{(v',h') \in \R^{d+1}: h-h' > \|v - v'\|^2\big\}.
$
 This necessitates the use of Theorem \ref{thm:KolBd}. It is straightforward to see that $R_n^{(z)}$ satisfies conditions (A1.1) - (A1.4) in Theorem \ref{thm:KolBd}. Also, condition (A2) is satisfied for any $p>0$ and $M_p \equiv 1$. 
 To prove the assertion, we will now derive suitable bounds on the terms appearing in Theorem \ref{thm:KolBd}. More precisely, in Section \ref{ssec:prelim_bounds}, we will derive suitable bounds on the functions $r_n$, $g^{(z)}$, $f_\alpha^{(i)}$, $i=1,2,3$ appearing in Theorem \ref{thm:KolBd} for appropriate choices of $\alpha \in (0,1)$ and $\zeta>0$. The assertion then follows using Theorem \ref{thm:KolBd} and the variance lower bound in Proposition \ref{prop:VarianceBound}. 

%
%
\subsubsection{Preliminary bounds}
\label{ssec:prelim_bounds}
We begin by recalling that
$
\Q(h_1) = {\Q}(D(x_1)) = \kappa_d \int_{-\infty}^{h_1} (h_1-h)^{d/2} \, F(dh),
$
and $\Q(h_1) \le \Q(h_2)$ for $h_1 \le h_2$.
Further, set \(r^{(z)}(x,x)=0\) for $x \in \R^{d+1}$. For $x_1 \neq x_2$, we have
\begin{align*}
	&\P(x_2 \in R_n^{(z)}(x_1, \PP + \delta_{x_1})) 
	\le \one\big\{x_1 \in W_n^z, x_2 \in D(x_1)\big\} \, \P\big( \PP\cap D(x_1)=\es \big)\\
	&\le \one\big\{x_1 \in W_n^z, x_2 \in D(x_1)\big\} \exp\big(-{\Q}(h_1)\big).
\end{align*}
Thus, for $x_1 \neq x_2$, we can take
\begin{equation}\label{eq:rz}
r^{(z)}(x_1,x_2) \equiv r_n(x_1,x_2) = \begin{cases}
		{\Q}(h_1),\,& v_1 \in W_n, h_1 \le z, x_2 \in D(x_1)\\
		\infty,\, & \text{otherwise}.
	\end{cases}
\end{equation}

The following result provides the necessary moment bounds to obtain the asserted bound in Theorem \ref{thm:univ}(1) via Theorem \ref{thm:KolBd}.

\begin{lemma}\label{lem:fmombd} For $\alpha \in (0,1)$, $f_\alpha^{(j)}$, $j=1,2,3$ as in \eqref{eq:fal}, and $\kappa, g, S$ as in \eqref{eq:p} and \eqref{eq:g} corresponding to the score in \eqref{eq:xiz},
$$
\mathbb{Q}(f_{\alpha}^{(j)} )^i \le C|W_n|, \qquad i=1,2 \text{ and } j=1,2,3,
$$
and
\(
\mathbb{Q}(\kappa^\alpha S), \mathbb{Q}(g^\alpha S) \le C |W_n|
\)
for some constant $C \in (0,\infty)$ independent of $n$.
\end{lemma}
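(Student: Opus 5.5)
The plan is to bound everything by explicit polynomials in the mark of the argument and then integrate those polynomials against $\Q$ using the moment assumption on $H_-$. Since $M_p\equiv1$, the function $h$ in \eqref{eq:g} coincides with $g$. Hence $S=1+\tilde g(1+g^4)$, where $\tilde g:=\max\{g^{2/(4+p/2)},g^{4/(4+p/2)}\}$, so $1\le S\le C(1+g)^{4+\eta}$ with $\eta=4/(4+p/2)$ as small as we like for $p$ large. Everything therefore reduces to pointwise bounds on $g$ and $q$, which follow from \eqref{eq:rz}. I read $\Q(f^{(j)}_\alpha)^i$ as $\Q\big((f^{(j)}_\alpha)^i\big)$.

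\emph{Step 1 (bound on $g$).} For $y=(w,s)$, \eqref{eq:rz} gives
\[
g(y)=\int \one\{v\in W_n,\ h\le z,\ y\in D(x)\}\,e^{-\zeta\Q(h)}\,\Q(dx).
\]
The condition $y\in D(x)$ means $s<h$ and $\|v-w\|^2<h-s$. Bounding $e^{-\zeta\Q(h)}\le1$, we get $g(y)\le \kappa_d(z-s)_+^{d/2}$. Moreover $g(y)=0$ unless $\dist(w,W_n)<\sqrt{(z-s)_+}$. By Steiner's formula for the convex body $W_n$, the spatial volume of this set is at most $C|W_n|(1+(z-s)_+^{d/2})$ for $n\ge1$. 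The same two facts, a pointwise polynomial bound together with a support of volume $C|W_n|\,\mathrm{poly}(z-s)$, will be the template for every other term. Consequently $S(y)\le C(1+(z-s)_+^{(4+\eta)d/2})$, and $S\equiv1$ off that support.

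\emph{Step 2 ($\kappa$, $g^\alpha S$, $f^{(1)}$, $f^{(2)}$).} We have $\kappa(x)\le \one\{x\in W_n^z\}e^{-\Q(h)}$, so
\[
\Q(\kappa^\alpha S)\le|W_n|\int_{-\infty}^z S\text{-bound}\,\mathbb F(dh),
\]
and the integral is finite under the $3d$-moment. The term $\Q(g^\alpha S)$ is handled by the Step~1 support bound. For $f^{(1)}_\alpha(y)$, the factor $e^{-\alpha r(x;y)}$ again forces $x\in W_n^z$, $y\in D(x)$, so
\[
f^{(1)}_\alpha(y)\le C\int_s^z (h-s)^{d/2}\,(1+(z-h)^{a})\,\mathbb F(dh)\le C(1+(z-s)_+)^{d/2+a},
\]
with the same spatial support as $g$. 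For $f^{(2)}_\alpha(y)$ the roles are reversed: it vanishes unless $y\in W_n^z$, and then
\[
f^{(2)}_\alpha(y)\le \int_{D(y)}S\,d\Q\le C\,\E\big[(s-H)_+^{d/2}(1+(z-H)_+^{a})\big].
\]
Squaring and integrating over $y\in W_n^z$ yields $C|W_n|$.

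\emph{Step 3 ($f^{(3)}$, the main obstacle).} Bound
\[
q(x,y)\le\int\one\{u\in W_n^z,\ x,y\in D(u)\}\,\Q(du)\le C(z-h_x\vee h_y)_+^{d/2},
\]
and note that $q(x,y)$ vanishes unless both $v_x,v_y$ lie within $\sqrt{(z-h_x\wedge h_y)_+}$ of a common point of $W_n$. Then $\Q((f^{(3)}_\alpha)^2)$ becomes a triple integral over $y,x,x'$ in which the spatial variables are confined to balls of radius $\sqrt{(z-\cdot)_+}$ around $W_n$. This produces $|W_n|$ times a polynomial in $(z-h_y)_+$, $(z-h_x)_+$, $(z-h_{x'})_+$, each of degree roughly $d/2$ (the ball volume), plus $d/2$ (the bound on $q$), plus the degree of $S$. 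The expected difficulty is purely bookkeeping: keeping every individual mark exponent at most $3d$. I would first try to use the decay $e^{-\Q(h_u)}$, via Remark~\ref{rem:Qlb}, to absorb large heights $h_u$. I would also take $p$ large so that $\eta$ is negligible, and apply Cauchy--Schwarz to separate the $S(x)$ and $S(x')$ factors, so that each mark carries only its own polynomial.
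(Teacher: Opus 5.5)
Your preliminary reductions are sound: $h=g$ since $M_p\equiv1$, $S\le C(1+g)^{4+\eta}$, and $g(y)\le\kappa_d(z-s)_+^{d/2}$ with $g=0$ off $\{\dist(w,W_n)^2<(z-s)_+\}$. But the lemma is a moment-bookkeeping statement under the budget $\E[H_-^{3d}]<\infty$, and your bookkeeping does not close.

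In Step~2 you replace $(z-h)^a$ by $(z-s)^a$ inside $\int_s^z(h-s)^{d/2}(1+(z-h)^a)\,\mathbb F(dh)$. With $a=(4+\eta)d/2$ this gives a pointwise bound on $f^{(1)}_\alpha$ of degree $(5+\eta)d/2$ in $(z-s)_+$. Squaring and multiplying by your Steiner volume $C|W_n|(1+(z-s)_+^{d/2})$, your estimate for $\Q\big((f^{(1)}_\alpha)^2\big)$ requires $\E[H_-^{(11/2+\eta)d}]<\infty$, and even $\Q f^{(1)}_\alpha$ needs more than $3d$ moments. The weight of $S$ has to stay on the integrated mark: $\int_s^z(h-s)^{d/2}(1+(z-h)_+^a)\,\mathbb F(dh)\le(z-s)_+^{d/2}\,\E[1+(z-H)_+^a]$. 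Then $f^{(1)}_\alpha(y)\le C(z-s)_+^{d/2}$, and only $3d/2$ moments are spent afterwards. This is what the paper does; it also uses the trivial bound $f^{(1)}_\alpha\le C|W_n|$ for $v_2\in W_{2n}^c$ together with the tail $\mathbb F((-\infty,z-c\|v_2\|^2])$.

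The same loss appears in your treatment of $\Q(g^\alpha S)$. Support volume times pointwise bound costs $(5+\alpha+\eta)d/2$ moments, which exceeds $3d$ once $\alpha+\eta>1$, so the claim for all $\alpha\in(0,1)$ is not reached. The paper instead uses $g\le f^{(1)}_\zeta\le C|W_n|$ in the far region.

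Step~3 is where the substance lies, and it is left as a plan whose suggested accounting fails. Charging $h_x$ with the ball volume, the $q$-factor and $\deg S$ gives $(5+\alpha+\eta)d/2>3d$ for $\alpha$ near $1$. The remedies you propose do not help. Since $h_u\le z$, one has $e^{-\Q(h_u)}\ge e^{-\Q(z)}$, so Remark~\ref{rem:Qlb} yields no decay as $h_x\to-\infty$. Cauchy--Schwarz between $S(x)$ and $S(x')$ is also beside the point: for fixed $y$ the square of $f^{(3)}_\alpha(y)$ factorizes, so you should simply bound $f^{(3)}_\alpha$ pointwise.

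The missing idea is an asymmetric split. The $v_x$-integral runs over a ball of radius $2\sqrt{(z-h_x\wedge h_y)_+}$, while $q(x,y)\le\kappa_d(z-h_x\vee h_y)_+^{d/2}$. On $\{h_x\le h_y\}$ the ball volume is charged to $h_x$ and $q^\alpha$ to $h_y$; on $\{h_x>h_y\}$ it is the reverse. Then $h_x$ never carries degree more than $3d$ (with your $S$-bound, at most $(5+\eta)d/2$), and you obtain
\[
f^{(3)}_\alpha(y)\le C\,\one\{\dist(v_y,W_n)^2\le(z-h_y)_+\}\big(1+(z-h_y)_+^{d/2}\big).
\]
From this, $\Q f^{(3)}_\alpha$ and $\Q\big((f^{(3)}_\alpha)^2\big)$ are $O(|W_n|)$ using $3d/2$ moments. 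This is exactly the paper's computation with $u(h_1\vee h_2)^\alpha$ and $(z-h_1\wedge h_2)^{d/2}$.
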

\begin{proof}
    See Appendix \ref{apx:univprf}.
\end{proof}

\begin{proof}[Proof of Theorem \ref{thm:univ}(1)]
    The result follows immediately from an application of Theorem \ref{thm:KolBd} upon using the bounds obtained in Lemma \ref{lem:fmombd} and the Variance lower bound in Proposition \ref{prop:VarianceBound}.
\end{proof}

%
%
\noindent \underline{\em Volume weighted case.}
\label{subsec:univ_proof}
As in Section \ref{sec:univ_proof}, we fix some $z \in \R$. The function $G_n^{ms V}(z)$ can be expressed as a sum of score functions as in \eqref{eq:scoresum} with 
\begin{equation}\label{eq:xizV}
	\xi_n^{(z, \ms V)} (x; \vp): = \one \big\{x \in W_n^z \cap \EE(\vp )\big\} |C(x;\vp)|, \qquad \vp \in \mathbf{N}, x \in \vp,
\end{equation}
while we define $\xi_n^{(z, \ms V)}(x; 0)=0$. As before, $\xi_n^{(z, \ms V)}$ trivially satisfies \eqref{eq:ximon}. For the volume weighted sum, we need to use an updated region of stabilization compared to the one defined at \eqref{eq:RoS}.  For $\vp \in \bN$ and $x=(v,h) \in \vp$,  letting $T=T(x;\vp)$ be the cover time defined at \eqref{eq:covertime}, define the region of stabilization for $\xi_n^{(z, \ms V)}$ as
\begin{equation}
	R_n^{(z, \ms V)}(x; \vp): = \begin{cases}
		D^{\ms V}(x;  \vp)\,& \text{if} \quad T(x;  \vp)>h, x \in \EE(\vp)\cap W_n^z,\\
		\emptyset,\, & \text{otherwise},
	\end{cases}
\end{equation}
where for $(v,h)$ with $T>h$, using the notation \eqref{eq:dvxt}, we set
$$D^{\ms V}(x;  \vp):= D^{\ms V}(x;  T(x;\vp)).$$
Recall that, to ease notation, we write $T(x, \mathcal{P})$ instead of  $T(x, \mathcal{P}+ \delta_{x})$.
It is straightforward to see that $R_n$ satisfies conditions (A1.1) - (A1.4) in Theorem \ref{thm:KolBd}. To verify condition (A2), we can invoke Lemma \ref{lem:a2}, confirming (A2) for any $p>0$.

Thus, one can take $M_p(x) \asymp 1+ h_-^{d/2}$ for any $p>0$. To prove the assertion for Theorem \ref{thm:univ}(2), we will now bound the terms in Theorem \ref{thm:KolBd}. The assertion will then follow from Theorem \ref{thm:KolBd} and the variance lower bound in Proposition \ref{prop:VarianceBoundVolume}.

%
%
\subsubsection{Preliminary bounds}Write $x_1,x_2,x_3 \in \R^{d+1}$ as $x_1 = (v_1,h_1), x_2 = (v_2,h_2), x_3=(v_3,h_3)$. We first bound $\P(x_1 \in R_n^{(z,\ms V)}(x_2, \mathcal{P}))$. Note that, in this case we must have $x_2 \in \EE(\mathcal{P}+ \delta_{x_2})$ and $ \emptyset \neq R_n^{(z,\ms V)}(x_2, \mathcal{P}+ \delta_{x_2})) \ni x_1$, which yields
$$
\|v_1-v_2\|\leq 2(\sqrt{T(x_2, \mathcal{P})-h_1}\vee \sqrt{T(x_2, \mathcal{P})-h_2})=2\sqrt{T(x_2, \mathcal{P})-(h_1 \wedge h_2)}.
$$ 
This in turn implies that 
$
T(x_2, \mathcal{P}) \ge \|v_1-v_2\|^2/4+(h_1 \wedge h_2).
$
Since we trivially also have $T(x_2, \mathcal{P}) > h_1 \vee h_2$ in this case, we obtain for $x_1,x_2 \in \R^{d+1}$ that
\begin{align}\label{eq:tailbd}
	&\P(x_2 \in R_n^{(z,\ms V)}(x_1, \mathcal{P}))\nonumber\\
	&\le \one(x_1 \in W_n^z)\P\big(x_1 \in \EE(\mathcal{P}), T(x_1, \mathcal{P})>h_1 \vee h_2, 
	T(x_1, \mathcal{P})>\frac{\|v_1 - v_2\|^2}{4}+ {h_1\wedge h_2}\big)\nonumber\\
	&\le \one(x_1 \in W_n^z)\P\big(x_1 \in \EE(\mathcal{P}), T(x_1, \mathcal{P})>h_1 \vee h_2, 
	T(x_1, \mathcal{P}) >\frac{\|v_1 - v_2\|^2}{8}+ \frac{h_1}{2}+\frac{h_2}{2}\big)\nonumber\\
	&\le C \one(x_1 \in W_n^z) \exp \big(-\frac{1}{2^d}\Q\big(\frac{h_1 + h_1\vee h_2 \vee \big(\frac{\|v_1 - v_2\|^2}{8}+ \frac{h_1}{2}+\frac{h_2}{2}\big)}{2}\big)\big)\nonumber\\
	& \le C \one(x_1 \in W_n^z) \exp \big(-\frac{1}{2^d}\Q\big(\frac{\|v_1 - v_2\|^2}{16}+ \frac{3h_1}{4}+\frac{h_2}{4}\big)\big) = : e^{-r^{(z, \ms V)}(x_1;x_2)},
\end{align}
where the penultimate step is due to Proposition \ref{prop:CoverTimetail} and where we write $h_{1, -}$ instead of $(h_{1})_-$.
\begin{lemma}\label{lem:Ia}
    Let $\E[H_-^{md}]<\infty$ for some $m>0$. Then for any $\alpha, a,b,c>0, t \in[0, (m-1/2)d]$ and $x_2=(v_2,h_2)$,
    $$
    I_\alpha(x_2) := \int_{\R^{d+1}} (1+h_{1,-}^t) \exp\big(-\alpha \Q\big(a\|v_1 - v_2\|^2+ bh_1+ch_2\big)\big) \mathbb{Q}(dx_1) \in O(h_{2,-}^{d/2}).
    $$
\end{lemma}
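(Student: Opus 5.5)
We integrate out the spatial variable first and then the mark. Fix \(x_2=(v_2,h_2)\) and write \(\tau:=b h_1+ch_2\). After the substitution \(u=v_1-v_2\), the integral becomes
\[
I_\alpha(x_2)=\int_{\R}(1+h_{1,-}^t)\int_{\R^d}\exp\big(-\alpha\Q(a\|u\|^2+\tau)\big)\,\md u\,\mathbb F(\md h_1).
\]
The inner integral is handled with the lower bound of Remark~\ref{rem:Qlb}. Fix \(x_0\ge0\) with \(\mathbb F((-\infty,x_0])>0\). For \(s\ge x_0\) we have \(\Q(s)\ge c(s-x_0)^{d/2}\), and for \(s<x_0\) we use only \(\Q\ge0\).

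Split \(\R^d\) according to whether \(a\|u\|^2+\tau<2x_0+2\tau_-\) or not, where \(\tau_-=(bh_1+ch_2)_-\).

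On the first region, the integrand is at most \(1\). The volume of this region is at most \(C(x_0+\tau_-)^{d/2}\le C\big(1+h_{1,-}^{d/2}+h_{2,-}^{d/2}\big)\).

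On the complement, \(a\|u\|^2+\tau-x_0\ge\tfrac12 a\|u\|^2\). Indeed, \(a\|u\|^2\ge 2x_0+2\tau_- -\tau\), which gives the claim after rearrangement. Hence the integrand there is at most \(\exp(-c'\|u\|^d)\), which is integrable with integral \(O(1)\).

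Together the two regions give
\[
\int_{\R^d}\exp\big(-\alpha\Q(a\|u\|^2+\tau)\big)\,\md u\le C\big(1+h_{1,-}^{d/2}+h_{2,-}^{d/2}\big).
\]

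It remains to integrate the mark against \((1+h_{1,-}^t)\). This produces the terms
\[
\E\big[(1+H_-^t)(1+H_-^{d/2})\big]+h_{2,-}^{d/2}\,\E\big[1+H_-^t\big].
\]
Every moment here has order at most \(t+d/2\le md\), so both expectations are finite under \(\E[H_-^{md}]<\infty\). This is exactly where the constraint \(t\le(m-1/2)d\) enters. The bound is therefore \(C(1+h_{2,-}^{d/2})\), which is the claimed \(O(h_{2,-}^{d/2})\), read as \(O(1+h_{2,-}^{d/2})\) as \(h_2\) varies.

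The main obstacle is the inner bound when \(\tau\) is very negative. There the exponent gives no decay on a ball of radius of order \(\sqrt{\tau_-}\), so we can only count the volume of that ball. We must also check that the splitting threshold keeps the growth polynomial of order \(d/2\) in \(h_{1,-}\) and \(h_{2,-}\), and that the exponent is not worse. Positive \(h_2\) needs no separate treatment, because monotonicity of \(\Q\) only improves the bound.
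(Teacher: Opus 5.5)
Your proof is correct and follows essentially the same route as the paper: split the spatial integral at the level where the lower bound of Remark~\ref{rem:Qlb} takes effect, bound the remaining ball by its volume $O(1+h_{1,-}^{d/2}+h_{2,-}^{d/2})$, and then integrate over the mark using a moment of order $t+d/2\le md$. The only difference is technical. The paper splits exactly at $x_0$ and treats the decaying region by a radial change of variables, which yields a lower-order $O(1+h_{2,-}^{d/2-1})$ term; your enlarged threshold $2x_0+2\tau_-$ instead gives the $\tau$-independent majorant $\exp(-c'\|u\|^d)$ on that region and so avoids the computation.
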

\begin{proof}
    See Appendix \ref{apx:univprfV}.
\end{proof}

Let $g^{(z, \ms V)}, h^{(z, \ms V)}, S^{(z, \ms V)}, \kappa^{(z, \ms V)}$ and $f_\alpha^{(i)}, i \in \{1,2,3\}$ be defined as in \eqref{eq:g}, \eqref{eq:p} and \eqref{eq:fa}, respectively. In particular, noting 
\begin{align*}
	g^{(z, \ms V)}(x_2) :=\int_{\R^{d+1}} e^{-\zeta r^{(z,\ms V)}(x_1;x_2)} \;\Q(dx_1) &\le \int_{W_n^z} \exp\big(-\frac{\zeta}{2^d} \Q\big(\frac{\|v_1 - v_2\|^2}{16}+ \frac{3h_1}{4}+\frac{h_2}{4}\big)\big) \mathbb{Q}(dx_1),
\end{align*}
we have from Lemma \ref{lem:Ia} with $t=0, \alpha=\zeta/2^d, a=1/16, b=3/4, c=1/4$ that $g^{(z, \ms V)}(x_2)   \in O(h_{2,-}^{d/2})$. Similarly, since
\begin{align*}
	h^{(z, \ms V)}(x_2) &\le \int_{W_n^z} (1+h_{1,-}^{(4+p/2)d/2})\exp\big(-\frac{\zeta}{2^d} \Q\big(\frac{\|v_1 - v_2\|^2}{16}+ \frac{3h_1}{4}+\frac{h_2}{4}\big)\big) \mathbb{Q}(dx_1),
\end{align*}
Lemma \ref{lem:Ia} yields $h^{(z, \ms V)}(x_2)   \in O(h_{2,-}^{d/2})$ for any $p \le 4m-10$ given $\E H_-^{md}<\infty$. The following lemma records the moment bounds necessary to apply Theorem \ref{thm:KolBd}. Recall that by our assumption, we have
$\E H_-^{m d}<\infty$ for some $m>11/2$. For the remainder of this section, we fix \(p=12\), which satisfies \(12=p<4m-10\) for $m>11/2$; consequently, we have
\(
    \tau=p/(32+4p)=3/20.
\)

\begin{lemma}\label{lem:fmombdV} For $\alpha \in (0,1)$,  we have
\(
\mathbb{Q}(f_{\alpha}^{(j)} )^i \le C|W_n|, \; i,j=1,2,
\)
while
\begin{equation}\label{eq:f3bds}
\mathbb{Q}(f_{2\tau}^{(3)}), \Q\big((f_\tau^{(3)})^2\big) \le C|W_n|,
\end{equation}
for some constant $C \in (0,\infty)$ independent of $n$.
Additionally,
$$
\mathbb{Q}((\kappa^{(z,\ms V)})^{2\tau} S^{(z,\ms V)}), \mathbb{Q}((g^{(z,\ms V)})^{2\tau} S^{(z,\ms V)}) \le C' |W_n|
$$
for some constant $C' \in (0,\infty)$ independent of $n$.
\end{lemma}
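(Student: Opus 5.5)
The plan is to bound every quantity through the two preliminary estimates already recorded for the volume-weighted score. These are the tail function $e^{-r^{(z,\ms V)}(x_1;x_2)}$ from \eqref{eq:tailbd}, which is nonzero only when $x_1\in W_n^z$, and Lemma~\ref{lem:Ia}. With $p=12$ we have $M_p(x)\asymp 1+h_-^{d/2}$, and $g^{(z,\ms V)}, h^{(z,\ms V)}\in O(h_-^{d/2})$. Since $\tilde h\le C(1+h^{4/(4+p/2)})$, this gives the pointwise bound
\[
S^{(z,\ms V)}(y)\le C\big(1+h_{y,-}^{2d}\big)\big(1+h_{y,-}^{2d}\big)\le C\big(1+h_{y,-}^{4d}\big).
\]
The exponent can be sharpened by tracking $\tilde h$ more carefully, but any exponent $t\le (m-1/2)d$ is admissible in Lemma~\ref{lem:Ia}. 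Moreover, $\kappa^{(z,\ms V)}(x)\le \one\{x\in W_n^z\}e^{-\Q(h)}$. Hence
\[
\Q\big((\kappa^{(z,\ms V)})^{2\tau}S^{(z,\ms V)}\big)\le C|W_n|\int_{-\infty}^z(1+h_-^{4d})\,\mathbb F(\d h),
\]
which is finite since $m>11/2$.

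For $\Q((g^{(z,\ms V)})^{2\tau}S)$ there is one subtlety. The bound $g=O(h_-^{d/2})$ alone is not integrable over the whole space, so we also need decay of $g(x_2)$ when $v_2$ lies far from $W_n$ or $h_2$ is large. Remark~\ref{rem:Qlb} gives $\Q(u)\ge c(u-x_0)_+^{d/2}$, and hence
\[
g(x_2)\le C(1+h_{2,-}^{d/2})\exp\big(-c\,\zeta(\dist(v_2,W_n)^2/16+h_2/4-C')_+^{d/2}\big),
\]
after integrating out $x_1\in W_n^z$. Raising this to the power $2\tau$ keeps the super-polynomial decay in $\dist(v_2,W_n)$ and in $h_{2,+}$. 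Integrating over $v_2$ then gives $|W_n|$ plus a surface term $O(|W_n|^{1-1/d})$, and integrating over $h_2$ gives a finite mark moment.

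Next I would treat $f_\alpha^{(1)}$ and $f_\alpha^{(2)}$, with $i=1,2$. For $f^{(1)}_\alpha(y)=\int S(x)e^{-\alpha r(x;y)}\Q(\d x)$, the bound on $S$ turns the integral into $I_{\alpha/2^d}(y)$ from Lemma~\ref{lem:Ia} with $t=4d$ (restricted to $x\in W_n^z$), so $f^{(1)}_\alpha(y)\le C(1+h_{y,-}^{d/2})$. The same decay in $\dist(v_y,W_n)$ and $h_{y,+}$ as for $g$ is available. Squaring gives a factor $1+h_-^{d}$, and integrating yields $\Q((f_\alpha^{(1)})^i)\le C|W_n|$. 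For $f_\alpha^{(2)}(y)=\int S(x)e^{-\alpha r(y;x)}\Q(\d x)$, the indicator forces $y\in W_n^z$. The integrand decays in $\|v_x-v_y\|$ and in $h_{x,+}$ through $\Q(\cdot)$, and the polynomial growth $h_{x,-}^{4d}$ is integrable against $\mathbb F$. So $f^{(2)}_\alpha(y)\le C(1+h_{y,-}^{d/2})\one\{y\in W_n^z\}$, and both moments are $O(|W_n|)$.

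The main obstacle is $f^{(3)}$, which involves $q(x,y)=\int\P(\{x,y\}\subseteq R(w;\PP+\delta_w))\Q(\d w)$. Here I would use Cauchy--Schwarz, or simply the minimum of the two single-point probabilities:
\[
\P(\{x,y\}\subseteq R(w))\le \min\{e^{-r(w;x)},e^{-r(w;y)}\}\le e^{-r(w;x)/2-r(w;y)/2}.
\]
The exponent $\Q(\|v_w-v_x\|^2/16+\cdots)+\Q(\|v_w-v_y\|^2/16+\cdots)$ dominates $\Q$ evaluated at roughly $\|v_x-v_y\|^2/64$ plus mark terms, by monotonicity of $\Q$ and the triangle inequality. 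This yields
\[
q(x,y)\le C(1+h_{x,-}^{d/2})\,e^{-c\Q(\|v_x-v_y\|^2/64+\cdots)}.
\]
Raising to the power $\alpha\in\{\tau,2\tau\}$ and integrating against $S(x)\Q(\d x)$ reduces again to Lemma~\ref{lem:Ia}, now with polynomial weight $t=4d+\alpha d/2$. This is where the requirement $p=12<4m-10$, equivalently $m>11/2$, should be used: it keeps $t\le(m-1/2)d$. Squaring $f_\tau^{(3)}$ and integrating in $y$ follows as before, with localization to $W_n$ coming from the factor $\one\{w\in W_n^z\}$ inside $q$.
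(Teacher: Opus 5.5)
Your overall structure matches the paper's: the tail function from \eqref{eq:tailbd}, pointwise bounds via Lemma~\ref{lem:Ia}, and a separate treatment of the far field. The gap is that the far-field decay you assert fails for heavy-tailed marks, and that is exactly where this lemma is delicate.

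\textbf{The terms $g$ and $f^{(1)}_\alpha$.} Here the argument of $\Q$ is $\|v_1-v_2\|^2/16+3h_1/4+h_2/4$, and a very negative mark of the integration variable cancels the spatial term. Take $v_2\in W_{2n}^c$ and $K$ large. On $\{v_1\in W_n,\ h_{1,-}\ge K\|v_2\|^2\}$ the integrand is of order one. Hence $g(x_2)\gtrsim |W_n|\,\P(H_-\ge K\|v_2\|^2)$, which is only polynomial in $\|v_2\|$. So your bound $g(x_2)\le C(1+h_{2,-}^{d/2})\exp(-c(\operatorname{dist}(v_2,W_n)^2/16+h_2/4-C')_+^{d/2})$ is false, and the same goes for $f^{(1)}_\alpha$. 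This part is repairable. Split off that region and bound it by $|W_n|$ times a Markov tail from $\E[H_-^{md}]$; this is the paper's Case 3(a). Then check that the resulting power of $\|v_2\|$, raised to $i$ or to $2\tau$, is integrable over $W_{2n}^c$. That is a moment computation, not a surface term.

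\textbf{The term $f^{(3)}$.} Here the same issue is the whole difficulty, and your argument does not handle it.
\begin{itemize}
\item Using $\min\{e^{-r(w;x)},e^{-r(w;y)}\}$ is fine. But the resulting integrand is of order one when $h_w\lesssim-\max\{\|v_w-v_x\|^2,\|v_w-v_y\|^2\}$. After integrating over $w$ you therefore get only a polynomial tail $(1+A)^{-(m-1/2)d}$, governed by $\E[H_-^{md}]$, not $e^{-c\Q(\|v_x-v_y\|^2/64+\cdots)}$. So $q^\alpha$ is not of the form Lemma~\ref{lem:Ia} handles.
\item You also need quantitative decay of $q(x_1,x_2)$ in $\operatorname{dist}(v_2,W_n)$. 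Your estimate is uniform in $v_2$, so $\Q f^{(3)}_\alpha$ would diverge. Mentioning localization through $w\in W_n^z$ is not enough; it has to be extracted.
\end{itemize}
The missing step is Lemma~\ref{lem:qbound}, which builds $\operatorname{dist}(v_2,W_n)^2/128$ into $A$. This gives
\[
q^\alpha\lesssim(1+h_{1,-}^{md\alpha}+h_{2,-}^{md\alpha})(1+c_0\|v_1-v_2\|^2+D)^{-\lambda_\alpha},\qquad \lambda_\alpha=(m-\tfrac12)d\alpha .
\]
Integrating out $v_1$ costs $d/2$ and leaves $(1+D)^{-(\lambda_\alpha-d/2)}$. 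Square integrability of $f^{(3)}_\tau$ over $W_{2n}^c$ then requires $\lambda_\tau>3d/4$. With $\tau=3/20$ this is exactly $m>11/2$. That is where the moment threshold (and the choice $p=12$) really enters. It is not about keeping $t\le(m-\tfrac12)d$ in Lemma~\ref{lem:Ia}; your $t=4d+\alpha d/2$ would only need $m\ge 4.65$.

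\textbf{The bound on $S$.} Your bound $S\lesssim 1+h_-^{4d}$ is valid but too costly here. If $S\lesssim 1+h_-^{sd}$, the $h_1$-integral in this step needs $\E[H_-^{(s+m\alpha)d}]<\infty$. At $\alpha=2\tau$ that means $s\le 0.7m$, which fails for $s=4$ when $m$ is close to $11/2$. Use $S\lesssim 1+h_-^{5d/2}$ instead.
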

\begin{proof}
    See Appendix \ref{apx:univprfV}.
\end{proof}

\begin{proof}[Proof of Theorem \ref{thm:univ}(2)]
    An application of Theorem \ref{thm:KolBd} yields the result upon using Lemma \ref{lem:fmombdV} and Proposition \ref{prop:VarianceBoundVolume}.
\end{proof}

%
%

\subsection{Proof of Theorem \ref{thm:mult} -- multivariate CLT}
\label{sec:mult}
In this section, we prove the multivariate normal approximation in Theorem \ref{thm:mult} for the vector \((\hat G_{n,*}(z_i))_{i \in [m]}\) in \(d_3\)-distance. The proof combines the multivariate Malliavin--Stein bound from \cite[Theorem 1]{TT24} with the moment and stabilization estimates established in Section \ref{sec:univ_proof}.

Fix \(z_1,\ldots,z_m \in [a,b]\). For each \(i \in [m]\), let \(\xi_{n,i}\) denote either the score \(\xi_n^{(z_i)}\) from \eqref{eq:xiz} or the score \(\xi_n^{(z_i,\ms V)}\) from \eqref{eq:xizV}. Likewise, let \(r^{(z_i)}, q^{(z_i)}, S^{(z_i)}\) denote the corresponding stabilization quantities introduced in \eqref{eq:RoS}, \eqref{eq:rz} and \eqref{eq:fa}, and define \(f_{\alpha,i}\) as in \eqref{eq:fa}, but with \(r,q,S\) replaced by \(r^{(z_i)}, q^{(z_i)}, S^{(z_i)}\). We apply \cite[Theorem 1]{TT24} with \((\mathbb X,\lambda)=(\R^{d+1},\Q)\) and with
\[
F = (F_1,\ldots,F_m), \qquad F_i := \hat G_{n,*}(z_i), \qquad i \in [m].
\]
By construction, \(\E F_i = 0\) for all \(i \in [m]\).

For completeness, recall that for a Poisson functional \(F(\mathcal P)\) and \(x,y \in \R^{d+1}\), the add-one and iterated add-one operators are given by
\[
D_x F(\mathcal P) := F(\mathcal P+\delta_x)-F(\mathcal P), \qquad D^{(2)}_{x,y}F(\mathcal P) := D_x(D_yF(\mathcal P)).
\]
We say that $F \in \operatorname{dom} D$ if $\E [F^2] <\infty$ and $\int_{\R^{d+1}} \E (D_x F)^2 \Q(d x)<\infty$.
\begin{theorem}[\cite{TT24}, Theorem 1]\label{thm:multclt}
Let $\Q = \lambda \otimes \mathbb{F}$ be a \(\sigma\)-finite measure on $\R^{d+1}$ and \(\mathcal P\) be a Poisson process on $\R^{d+1}$ with intensity measure $\Q$ as in Theorem \ref{thm:mult}. 
For $m \ge 1$, let $F=(F_1,\ldots,F_m)$ be an $\R^m$-valued random vector such that \(F_i \in \operatorname{dom} D\) and \(\E F_i=0\) for all \(i \in [m]\). Let \(\Sigma=(\sigma_{ij})_{i,j \in [m]}\) be a symmetric positive semidefinite matrix and let \(X \sim \mathcal N(0,\Sigma)\). Then for all \(q_1,q_2 \in [1,2]\),
\[
d_3(F,X) \le \frac{1}{2}\big(\zeta_1+\zeta_2^{(q_1)}+\zeta_3^{(q_1)}\big)+\zeta_4^{(q_2)},
\]
where
\begin{align}
\zeta_1 &:= \sum_{i,j=1}^m \big|\sigma_{ij}-\Cov(F_i,F_j)\big|, \label{eq:zetas_mult_1}\\
\zeta_2^{(q_1)}
&:= 2^{2/q_1-1}\sum_{i,j=1}^m
\Bigg(
\int_{\R^{d+1}}
\Bigg(
\int_{\R^{d+1}}
\E\big[|D^{(2)}_{x,y}F_i|^{2q_1}\big]^{1/(2q_1)}
\E\big[|D^{(2)}_{x,y}F_j|^{2q_1}\big]^{1/(2q_1)}
\,\Q(\md x)
\Bigg)^{q_1}
\Q(\md y)
\Bigg)^{1/q_1}, \label{eq:zetas_mult_2}\\
\zeta_3^{(q_1)}
&:= 2^{2/q_1}\sum_{i,j=1}^m
\Bigg(
\int_{\R^{d+1}}
\Bigg(
\int_{\R^{d+1}}
\E\big[|D_xF_i|^{2q_1}\big]^{1/(2q_1)}
\E\big[|D^{(2)}_{x,y}F_j|^{2q_1}\big]^{1/(2q_1)}
\,\Q(\md x)
\Bigg)^{q_1}
\Q(\md y)
\Bigg)^{1/q_1}, \label{eq:zetas_mult_3}\\
\zeta_4^{(q_2)}
&:= m^{q_2-1}\sum_{i,j=1}^m
\int_{\R^{d+1}}
\E\big[|D_xF_i|^{q_2+1}\big]^{1/(q_2+1)}
\E\big[|D_xF_j|^{q_2+1}\big]^{1-1/(q_2+1)}
\,\Q(\md x). \label{eq:zetas_mult_4}
\end{align}
\end{theorem}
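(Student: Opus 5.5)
This statement is the multivariate second-order Poincaré inequality of \cite[Theorem 1]{TT24}, which the paper invokes rather than proves. If I were to establish it, I would combine Stein's method for the $d_3$ distance with Malliavin calculus on Poisson space. The plan is to bound $|\E h(F)-\E h(X)|$ for $h\in\mathcal H_m^3$ through the Stein equation $\operatorname{tr}(\Sigma\nabla^2 f)(w)-\langle w,\nabla f(w)\rangle=h(w)-\E h(X)$. Its smart-path (Mehler) solution $f_h$ satisfies $\|f_h^{(2)}\|_\infty\le\frac12\|h^{(2)}\|_\infty$ and $\|f_h^{(3)}\|_\infty\le\frac13\|h^{(3)}\|_\infty$; these two derivative bounds are the only properties of $f_h$ I will use, and they are where the constants $\frac12$ and the powers of $2$ originate.

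\emph{Step 1 (integration by parts).} Since $F_i\in\operatorname{dom}D$ and $\E F_i=0$, write $F_i=\delta(-DL^{-1}F_i)$ with the Ornstein--Uhlenbeck generator $L$. This gives
\[
\E\big[F_i\,\partial_if_h(F)\big]=\E\int_{\R^{d+1}}D_x\big(\partial_if_h(F)\big)\,\big(-D_xL^{-1}F_i\big)\,\Q(dx).
\]
\emph{Step 2 (Taylor expansion).} Expand the add-one cost to second order:
\[
D_x\partial_if_h(F)=\sum_j\partial_{ij}f_h(F)\,D_xF_j+R_{i,x},\qquad |R_{i,x}|\le\tfrac12\|f_h^{(3)}\|_\infty\Big(\sum_j|D_xF_j|\Big)^2 .
\]
The first-order term yields $\E\sum_{i,j}\partial_{ij}f_h(F)\,\Gamma_{ij}$, where $\Gamma_{ij}:=\int D_xF_j\,(-D_xL^{-1}F_i)\,\Q(dx)$ and $\E\Gamma_{ij}=\Cov(F_i,F_j)$. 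Comparing with $\operatorname{tr}(\Sigma\nabla^2f_h)$ produces the covariance mismatch, which is exactly $\zeta_1$, plus the fluctuation term $\E\sum_{i,j}|\Gamma_{ij}-\E\Gamma_{ij}|$.

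\emph{Step 3 (remainder).} Bound $R_{i,x}$ using $|D_xL^{-1}F_i|$ together with the contraction estimate $\E|D_xL^{-1}F_i|^{r}\le\E|D_xF_i|^{r}$ coming from Mehler's formula. Hölder's inequality with exponents $q_2+1$ and $(q_2+1)/q_2$, followed by the elementary inequality $(\sum_j a_j)^{q_2}\le m^{q_2-1}\sum_j a_j^{q_2}$, gives $\zeta_4^{(q_2)}$. Interpolating the quadratic Taylor bound with the cruder first-order bound from $\|f_h^{(2)}\|_\infty$ is what allows $q_2\in[1,2]$ rather than only $q_2=2$.

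\emph{Step 4 (main obstacle: the fluctuation of $\Gamma_{ij}$).} I would bound $\E|\Gamma_{ij}-\E\Gamma_{ij}|\le\sqrt{\Var\Gamma_{ij}}$. The variance is controlled by the Poincaré inequality $\Var\Gamma\le\int\E(D_y\Gamma)^2\,\Q(dy)$, or more precisely by its $L^{q_1}$ analogue (the $p$-Poincaré inequality of Last--Peccati--Schulte type), which is what lets $q_1\in[1,2]$ enter. Computing $D_y\Gamma_{ij}$ by the product rule for add-one costs produces three kinds of terms:
\begin{itemize}
\item $D^{(2)}_{x,y}F_j\,D_xL^{-1}F_i$,
\item $D_xF_j\,D^{(2)}_{x,y}L^{-1}F_i$,
\item $D^{(2)}_{x,y}F_j\,D^{(2)}_{x,y}L^{-1}F_i$.
\end{itemize}
The iterated operators $D^{(2)}L^{-1}$ are again handled by Mehler's formula, giving $\E|D^{(2)}_{x,y}L^{-1}F|^{r}\le\E|D^{(2)}_{x,y}F|^{r}$ up to constants. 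Minkowski's inequality in $x$ and Hölder's inequality in $\omega$ with exponent $2q_1$ then yield precisely the double integrals $\zeta_2^{(q_1)}$ (from products of two second-order differences) and $\zeta_3^{(q_1)}$ (from first-order times second-order differences), with the prefactors $2^{2/q_1-1}$ and $2^{2/q_1}$ coming from $(a+b)^{q_1}\le 2^{q_1-1}(a^{q_1}+b^{q_1})$.

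The delicate part is justifying the $q_1$-Poincaré bound and the Mehler contraction for fractional moments. The domain conditions and measurability needed there are where I would follow \cite{TT24} most closely rather than reproving them.
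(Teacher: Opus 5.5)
Your sketch is correct and follows the standard Malliavin--Stein route of \cite{TT24}, which the paper imports without proof. The ingredients are the same: the Mehler solution of the Stein equation, integration by parts against $-DL^{-1}F_i$, an interpolated Taylor remainder giving $\zeta_4^{(q_2)}$, and the $q_1$-Poincaré inequality together with Mehler contraction for $DL^{-1}$ and $D^{(2)}L^{-1}$ applied to $\Gamma_{ij}$.

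One bookkeeping correction concerns the constants. The prefactor $2^{2/q_1-1}=(2^{2-q_1})^{1/q_1}$ is the root of the constant in $\E|G-\E G|^{q_1}\le 2^{2-q_1}\int\E|D_yG|^{q_1}\,\Q(\md y)$. The extra factor $2$ in $\zeta_3^{(q_1)}$ arises because the two cross terms $D^{(2)}_{x,y}F_j\,D_xL^{-1}F_i$ and $D_xF_j\,D^{(2)}_{x,y}L^{-1}F_i$ give the same total after summing over ordered pairs $(i,j)$. To obtain exactly these constants you must split the three terms with Minkowski's inequality in $L^{q_1}(\Q(\md y))$. Using $(a+b)^{q_1}\le 2^{q_1-1}(a^{q_1}+b^{q_1})$ instead would cost an additional factor $2^{1-1/q_1}$.
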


We take \(q_1=q_2=2\).  The following result provides estimates on the above terms for both our models.

\begin{lemma}\label{lem:zetabds}
    For both the unweighted and volume-weighted models, the terms $\zeta_1, \zeta_2^{(2)}, \zeta_3^{(2)}$ and $\zeta_4^{(2)}$ satisfy
    $$
    \zeta_i^{(2)} = O(|W_n|^{-1/2}) \quad i=2,3,4 \quad \text{and} \quad \zeta_1 = O(|W_n|^{-1/d}),
    $$
    where the constants are independent of $n$.
\end{lemma}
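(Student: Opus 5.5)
The bound for \(\zeta_1\) needs no new work. With \(\sigma_{ij}=k(z_i,z_j)\) (resp.\ \(k^{\ms V}\)) and \(F_i=\hat G_{n,*}(z_i)\), we have \(\Cov(F_i,F_j)=|W_n|^{-1}\Cov(G_n(z_i),G_n(z_j))\). Theorem~\ref{thm:var} then gives each summand as \(O(|W_n|^{-1/d})\), and \(m\) is fixed. The remaining three terms will be reduced to the stabilization bounds already obtained for Theorem~\ref{thm:univ}.

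\textbf{Add-one costs.} We write \(D_xF_i=|W_n|^{-1/2}D_xG\), so each \(\zeta_j^{(2)}\), \(j=2,3,4\), carries a prefactor \(|W_n|^{-1}\) or \(|W_n|^{-3/2}\) relative to the unnormalized integrals. It therefore suffices to show that the unnormalized integrals are \(O(|W_n|)\); this is where \(q_1=q_2=2\) is used. For the first-order cost \(D_xG\), I would use the decomposition from the proof of Proposition~\ref{prop:VarianceBound}: the new point's own score, minus the scores of points \(x_i\) with \(x\in R(x_i;\PP)\), whose extremality is destroyed (weighted case: whose cells shrink). The region-stabilization structure of \cite{BM22} bounds \(\E|D_xG|^{k}\), for \(k\le 4\), by \(C(M_p(x)+\) an integral of \(S(x_1)\,e^{-c\,r(x_1;x)}\) over \(x_1)\), i.e.\ by moments of \(g\) and \(h\). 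For the second-order cost, \(D^{(2)}_{x,y}G\) is nonzero only if \(y\in R(x;\cdot)\), \(x\in R(y;\cdot)\), or both lie in \(R(x_1;\cdot)\) for some \(x_1\in\PP\). Hence, via H\"older with the \((4+p)\)-moment (A2), I would bound \(\E|D^{(2)}_{x,y}G|^4\) by
\[
C\big(e^{-c\,r(x;y)}+e^{-c\,r(y;x)}+q(x,y)^{c}\big)\,\big(\text{moment factors in }M_p\big).
\]
This is exactly the content of \cite[Lemma 5.5]{BM22} in its extended form, Lemma~\ref{lem:BM55ext}.

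\textbf{Reduction to \(f_\alpha\).} Substituting these bounds makes \(\zeta_2^{(2)}\) and \(\zeta_3^{(2)}\) integrals of squares of the functions \(f^{(1)}_\alpha,f^{(2)}_\alpha,f^{(3)}_\alpha\) from \eqref{eq:fal}, with \(\alpha\) of order \(\tau\) and \(r,q,S\) replaced by \(r^{(z_i)},q^{(z_i)},S^{(z_i)}\). The mixed products over \(i\ne j\) are handled by Cauchy--Schwarz, and \(\zeta_4^{(2)}\) becomes \(\Q(S)\)-type integrals. The bounds \(\Q(f_\alpha^{(j)})^2\le C|W_n|\) and \(\Q((\kappa+g)^{2\tau}S)\le C|W_n|\) from Lemma~\ref{lem:fmombd} (unweighted) and Lemma~\ref{lem:fmombdV} (weighted, with \(p=12\)) then give \(\zeta_2^{(2)}\) and \(\zeta_3^{(2)}\) of order \(|W_n|^{-1}\cdot|W_n|^{1/2}=|W_n|^{-1/2}\). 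For \(\zeta_4^{(2)}\), the normalization gives \(|W_n|^{-3/2}\) times the integral, and the integral is at most \(\int \E|D_xG_i|^3\,\Q(dx)\le C\,\Q(S\cdot(1+g)^{3})=O(|W_n|)\), so \(\zeta_4^{(2)}=O(|W_n|^{-1/2})\).

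\textbf{Main obstacle.} The difficult step is the weighted case. There, \(r,q,S\) depend on \(z_i\), and the \(L^{4}\) bounds on \(D^{(2)}\) must be uniform across the different levels. Since all levels lie in the compact interval \([a,b]\), I would dominate every level-\(z_i\) quantity by its level-\(b\) counterpart. This works because \(r^{(z,\ms V)}\) is decreasing in \(z\) only through the indicator \(x_1\in W_n^z\), and the tail bound \eqref{eq:tailbd} does not depend on \(z\). With this domination, a single application of Lemma~\ref{lem:fmombdV} at \(z=b\) controls all the cross terms. The remaining point to check is that the exponent \(q_1=2\) stays within the \(4+p\) moments available, which holds since \(p=12\).
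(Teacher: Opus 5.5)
Your handling of $\zeta_1$, $\zeta_2^{(2)}$ and $\zeta_3^{(2)}$ matches the paper's argument. Like the paper, you interpolate $\|D^{(2)}_{x,y}\|_4$ against the probability that it is nonzero, bound the moment factor by Lemma~\ref{lem:BM55ext}, and reduce to $\Q f_{\tau,i}^2$. Dominating every level-$z_i$ quantity by its level-$b$ counterpart is a valid substitute for the paper's use of constants uniform in $z_i\in[a,b]$ together with $2(S^{(z_i)}S^{(z_j)})^{1/4}\lesssim S$.

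The step for $\zeta_4^{(2)}$, however, fails as written. You bound $\int\E|D_xG_i|^3\,\Q(dx)$ by $C\,\Q\big(S(1+g)^3\big)$ and assert that this is $O(|W_n|)$. But $S\ge\max(M_p^2,M_p^4)$ is bounded below by a positive constant; in the unweighted case $M_p\equiv1$, so $S\ge1$. Since $\Q=\lambda\otimes\mathbb F$ has infinite total mass, $\Q(S(1+g)^3)=\infty$. No pointwise moment bound can give a finite integral over all of $\R^{d+1}$.

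What is missing is the localization of the first-order cost. $D_xG_i\neq0$ requires either that $x\in W_n^{z_i}$ contributes its own score or that $x$ falls in the stabilization region of some point of $W_n^{z_i}$. Hence $\P(D_xG_i\neq0)\le\kappa^{(z_i)}(x)+g^{(z_i)}(x)$, and H\"older's inequality with the $(4+p/2)$-moment from Lemma~\ref{lem:BM55ext} gives
\[
\E\big|D_x\hat G_{n,*}(z_i)\big|^3\lesssim|W_n|^{-3/2}\,S^{(z_i)}(x)^{3/4}\,\min\big\{1,\kappa^{(z_i)}(x)+g^{(z_i)}(x)\big\}^{4\tau}.
\]
Only with this factor do the bounds $\Q((\kappa+g)^{2\tau}S)=O(|W_n|)$ from \eqref{eq:kappabd}, \eqref{eq:gbd} and \eqref{eq:gbdV} give $\zeta_4^{(2)}=O(|W_n|^{-1/2})$. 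You list those bounds among your inputs but never actually use them. This is how the paper argues.

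A smaller point concerns the exponent in the H\"older step for $D^{(2)}_{x,y}$. It must use the $(4+p/2)$-moment of the add-one cost supplied by Lemma~\ref{lem:BM55ext}. Assumption (A2) bounds only the score itself, not $D_xG$, and the paper's estimates give no $(4+p)$-moment of $D_xG$ under $m_0>11d/2$. The resulting exponent is exactly $\tau=p/(32+4p)$. This matters in the weighted case, because Lemma~\ref{lem:fmombdV} controls $\Q\big((f^{(3)}_\alpha)^2\big)$ only at $\alpha=\tau$, so ``$\alpha$ of order $\tau$'' is not precise enough there.
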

\begin{proof}
    See Appendix \ref{apx:mult}.
\end{proof}

\begin{proof}[Proof of Theorem \ref{thm:mult}]
Combining the bounds on $\zeta_1, \zeta_2^{(2)}, \zeta_3^{(2)}, \zeta_4^{(2)}$ obtained in Lemma \ref{lem:zetabds} immediately yields the assertions (1)--(3).
\end{proof}

\def\diam{\ms{diam}}
\def\De{\Delta}
\section{Tightness: proof of Theorem~\ref{thm:func}}
\label{sec:tight}

\subsection{Unweighted case}
\label{sec:func_proof}

In this section, we prove Theorem~\ref{thm:func}(1). Let
\(J=[a,b]\subset\mathbb R\) be the compact interval from the theorem. If
\(a=b\), the assertion reduces to the one-dimensional convergence from
Theorem~\ref{thm:mult}. Hence, assume \(a<b\). 
For \(u\in[0,1]\), define
\(
 \phi(u):=a+(b-a)u,
\)
\(
 Y_n(u):=\hat G_{n,*}(\phi(u)),
 \) and \(
 Y(u):=Z(\phi(u)).
\)
Composition with the increasing homeomorphism \(\phi\) is a homeomorphism
between \(D(J)\) and \(D([0,1])\), both equipped with the Skorokhod
\(J_1\)-topology. The affine reparametrization is used only when applying
the criterion for tightness in the path space. All geometric, localization, variance, and
cumulant estimates below are formulated for the original mark interval
\(J\).

For \(s<t\) in \(J\), write
\(
 I:=(s,t],\) \( |I|:=t-s,\) and \(
 \mathbb F(I)=\mathbb F((s,t]),
\)
and define
\[
 G_n(I):=G_n(t)-G_n(s)
 =
 \sum_{X_i\in\PP|_{W_n\times(s,t]}}
 \one\{X_i\in\EE(\PP)\}.
\]
Furthermore,
\(
 \tilde G_n(I)
 :=
 G_n(I)-\E[G_n(I)]
 =
 \tilde G_n(t)-\tilde G_n(s),
 \) and \(
 \tilde G_n(z):=G_n(z)-\E[G_n(z)].
\)
Since \(\mathbb F\) has a bounded density on \(J\), we have
\(
 \mathbb F(I)\le \|f\|_\infty |I|.
\)
In particular, \(\mathbb F(\{t\})=0\) for every fixed \(t\in J\), so the convention
\(I=(s,t]\) is consistent with the càdlàg version of the process. All
constants below may depend on \(J\) and \(\|f\|_\infty\), but are uniform in
\(n\) and \(I\subseteq J\).
Put
\(
 a_n:=|W_n|^{-2/3}.
\)
The estimate needed for intervals above the cutoff is
\begin{equation}
\label{eq:chent_gn}
\E\big[\tilde G_n(I)^4\big]
 \le C|W_n|^2|I|^{5/4},
 \qquad I\subseteq J,\quad |I|\ge a_n.
\end{equation}
Writing \(c_4(\cdot)\) for the fourth cumulant, we use
\(
 \E\big[\tilde G_n(I)^4\big]
 =3\Var(G_n(I))^2+c_4(G_n(I)).
\)
Thus, the proof reduces to bounds for the variance and cumulant,
followed by Davydov's reduction for intervals shorter than \(a_n\).

We record how \eqref{eq:chent_gn} matches Billingsley's criterion.
Theorem~13.5 in \cite{billingsley} applies to \(Y_n\) in \(D([0,1])\) with
the Skorokhod \(J_1\)-topology. Besides finite-dimensional convergence at
continuity points of the limit and the endpoint condition
\(
 Y(1)-Y(1-\delta)\xrightarrow[\delta\downarrow0]{\mathcal D}0,
\)
it requires a nondecreasing continuous control function \(R\) and constants
\(\beta_B\ge0\), \(\alpha_B>1/2\) such that, for
\(0\le r\le s\le t\le1\) and \(\lambda>0\),
\[
 \P\!\left(
 |Y_n(s)-Y_n(r)|\wedge|Y_n(t)-Y_n(s)|\ge\lambda
 \right)
 \le
 \lambda^{-4\beta_B}\big(R(t)-R(r)\big)^{2\alpha_B}.
\]
For adjacent original mark intervals, Cauchy's inequality and
\eqref{eq:chent_gn} give
\[
\begin{aligned}
\E\Big[
 |\hat G_{n,*}(s)-\hat G_{n,*}(r)|^2
 |\hat G_{n,*}(t)-\hat G_{n,*}(s)|^2
 \Big]\le
 C(s-r)^{5/8}(t-s)^{5/8}
 \le C(t-r)^{5/4}.
\end{aligned}
\]
After the affine reparametrization, Markov's inequality gives Billingsley's
bound with
\(
 \beta_B=1,\) \( \alpha_B=\frac58\),
and  \(R(u)=C_0u\)
for a sufficiently large constant \(C_0\). Since \eqref{eq:chent_gn} is
proved only above the cutoff \(a_n\), shorter increments are handled by
Davydov's theorem.

\begin{proposition}[Davydov reduction]
\label{pr:dav}
It is enough to establish \eqref{eq:chent_gn} for intervals
\(I=(s,t]\subseteq J\) with \(|I|\ge a_n\), together with
\(
	 \sup_{\substack{I\subseteq J\\ |I|\le a_n}}
	{\E[G_n(I)]} \in o(\sqrt{|W_n|})
\).
\end{proposition}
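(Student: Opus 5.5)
The reduction rests on a monotone-coupling (Davydov-type) argument for processes that are differences of nondecreasing processes and their continuous compensators. Fix the grid $t_k:=a+k a_n$, $k=0,\dots,K_n$ with $K_n=\lceil (b-a)/a_n\rceil$, and compare $Y_n$ with the piecewise-constant process $Y_n^{\mathrm{grid}}$ that agrees with $Y_n$ at grid points. For grid intervals $I=(t_{k-1},t_k]$ of length $a_n$ we have \eqref{eq:chent_gn}. For every triple of grid points, the Billingsley moment condition with $\beta_B=1$, $\alpha_B=5/8>1/2$ then follows as displayed before the proposition. By the chaining version of Billingsley's theorem (Theorem 13.5 together with the remark that the moment bound need only hold on a grid of mesh $a_n$ when the oscillation inside cells is controlled separately; this is exactly the content of Davydov's criterion), tightness of $Y_n$ reduces to showing that
\[
\sup_{k}\ \sup_{s\in(t_{k-1},t_k]}|\hat G_{n,*}(s)-\hat G_{n,*}(t_{k-1})|\xrightarrow{\ \P\ }0 .
\]
Finite-dimensional convergence comes from Theorem~\ref{thm:mult}(3). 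The endpoint condition holds because $k(\cdot,\cdot)$ is continuous: $k(b,b)-2k(b-\delta,b)+k(b-\delta,b-\delta)\to0$, which follows from dominated convergence in the explicit kernel using $\mathbb F$-absolute continuity on $J$ and the integrability from Propositions~\ref{lem:t1} and~\ref{lem:t2}.

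The key step is the within-cell oscillation, where monotonicity is used. For $s\in(t_{k-1},t_k]$, the statistic $G_n(s)$ is nondecreasing in $s$ (it sums indicators over marks in $(-\infty,s]$, and extremality of a given point does not depend on $s$). Hence
\[
\tilde G_n(s)-\tilde G_n(t_{k-1})\le G_n((t_{k-1},t_k])-\E G_n((t_{k-1},s])\le \tilde G_n((t_{k-1},t_k])+\E G_n((t_{k-1},t_k]),
\]
and symmetrically $\ge -\E G_n((t_{k-1},t_k])$. The oscillation over the cell is therefore bounded by $|\tilde G_n(I_k)|+2\E G_n(I_k)$ with $I_k=(t_{k-1},t_k]$. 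The deterministic part is $o(\sqrt{|W_n|})$ uniformly by the second hypothesis. For the random part, a union bound with the fourth moment \eqref{eq:chent_gn} gives
\[
\P\big(\max_k|\tilde G_n(I_k)|>\varepsilon\sqrt{|W_n|}\big)\le K_n\,\varepsilon^{-4}C a_n^{5/4}\le C\varepsilon^{-4}a_n^{1/4}\to0 .
\]
The main obstacle, and the point where I would be most careful, is this union bound: it requires the exponent $5/4>1$ in \eqref{eq:chent_gn}, which is why the cutoff must be polynomial ($a_n=|W_n|^{-2/3}$) and why the moment bound is needed only down to scale $a_n$.

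Combining these, the modulus $w''$ of $Y_n$ is bounded by that of the grid process, controlled via Billingsley, plus twice the maximal cell oscillation, which tends to zero in probability. This gives tightness in $D([0,1])$, and the functional CLT then follows as claimed.
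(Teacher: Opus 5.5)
Your argument is correct and uses the same ingredients as the paper's proof: the bound \eqref{eq:chent_gn} on an $a_n$-grid, monotonicity of $t\mapsto G_n(t)$ and of its mean, and smallness of the mean increments over grid cells. The difference is in packaging.

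The paper uses \cite[Theorem~2]{davydov} as a black box for $X_n(t)=(N_n(t)-\E N_n(t))/\sqrt{|W_n|}$ and only checks its hypotheses:
\begin{enumerate}
\item the increment bound for $|t-u|\ge a_n$;
\item a uniform bound on $\E|X_n(t)|^4$, obtained from monotonicity and Lemma~\ref{lem:exp};
\item the decomposition $X_n=X_n^\circ-X_n^*$ into nondecreasing parts with $\max_k(X_n^*(t_{k+1})-X_n^*(t_k))\to0$.
\end{enumerate}
It then adds back $\hat G_{n,*}(a)$ in the completion of the proof.

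You instead reprove the content of Davydov's criterion. You apply Billingsley's Theorem~13.5 to the grid process, use the sandwich $|\tilde G_n(s)-\tilde G_n(t_{k-1})|\le|\tilde G_n(I_k)|+\E G_n(I_k)$, take the union bound $K_n a_n^{5/4}\to0$, and finish with a converging-together argument. This is self-contained, shows why an exponent $5/4>1$ is needed at the cutoff scale, and avoids the separate uniform fourth-moment bound. The paper's version is shorter.

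Two details to tidy:
\begin{enumerate}
\item Your last grid point $t_{K_n}$ may exceed $b$, where \eqref{eq:chent_gn} is not available. Merge the last two cells so the final cell is $(t_{K-1},b]\subseteq J$ with length in $[a_n,2a_n)$. Its mean increment is still $o(\sqrt{|W_n|})$, by splitting it into two intervals of length at most $a_n$.
\item For Billingsley's endpoint condition, it is simpler to follow the paper and use $\E[(Z(t)-Z(s))^2]=\lim_n\Var(G_n((s,t]))/|W_n|\le C|t-s|$ from Proposition~\ref{pr:var}, rather than dominated convergence in the explicit kernel.
\end{enumerate}
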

The proof is given in Section~\ref{ss:dav}, where the hypotheses of
\cite[Theorem~2]{davydov} are checked explicitly.

\begin{proposition}[Variance bound for \(\tilde G_n\)]
\label{pr:var}
Assume \(\E[H_-^{d(1+2\delta)}]<\infty\) for some
\(\delta\ge1/2\), and assume that \(\mathbb F\) admits a bounded density on \(J\).
Then
\(
 \sup_{n\ge1}\sup_{I\subseteq J}
 |W_n|^{-1}|I|^{-1}\Var(\tilde G_n(I))<\infty.
\)
\end{proposition}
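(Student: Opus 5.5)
We expand the variance of the increment \(G_n(I)=\sum_{X_i\in W_n\times I}\one\{X_i\in\EE(\PP)\}\) with the Mecke formula, exactly as in the covariance computation of Section~\ref{ss:cov}, but with both marks restricted to the short interval \(I\) instead of to half-lines:
\[
\Var(G_n(I))=\int_{W_n\times I}e^{-\Q(h)}\,\Q(dx)+\int_{W_n\times I}\int_{W_n\times I}\Big[\Cov\big(\one\{\PP(D(x))=0\},\one\{\PP(D(x'))=0\}\big)-\text{(two interaction terms)}\Big]\Q(dx')\Q(dx).
\]
The diagonal term is at most \(|W_n|\,\mathbb F(I)\le\|f\|_\infty|W_n||I|\). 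For the off-diagonal terms we bound the absolute value of the integrand and take the outer integral over \(x=(v,h)\in W_n\times I\) without any cancellation. This outer integral produces the factor \(|W_n|\,\mathbb F(I)\le C|W_n||I|\), provided the inner integral over \(x'\in\R^d\times I\) (the window for \(x'\) can be dropped, since everything is nonnegative) is bounded uniformly in \(h\in J\). So we never need a factor \(|I|^2\); one factor \(|I|\) is enough.

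For the uniform bound on the inner integrals we reuse the proofs of Propositions~\ref{lem:t1} and~\ref{lem:t2} with upper level \(b=\sup J\).

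For the covariance term, \eqref{eq:extremality-overlap-bound} gives the bound \(K_b(v'-v)\), and \(\int K_b=\kappa_d^2\E[(b-H)_+^{d}]\). This is finite as soon as \(\E[H_-^d]<\infty\).

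For the interaction term \(\one\{L(x',v)<L(x,v)\}\), integrating over \(v'\) gives \(\kappa_d(h-h')_+^{d/2}\le\kappa_d(b-h')_+^{d/2}\), and integrating over \(h'\in I\subseteq\R\) gives something bounded by \(\E[(b-H)_+^{d/2}]\). The symmetric interaction term is handled the same way after swapping the roles of \(x\) and \(x'\). In that case the outer factor \(\mathbb F(I)\) sits on the variable \(x'\), which is legitimate because the double integral is symmetric in the domain \((W_n\times I)^2\).

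The assumption \(\E[H_-^{d(1+2\delta)}]\) with \(\delta\ge1/2\) gives at least \(2d\) moments, which is more than needed. Combining the three contributions yields \(\Var(G_n(I))\le C|W_n||I|\) uniformly in \(n\) and in \(I\subseteq J\).

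The step that needs care is the covariance term. One must make sure that the bound \(K_b(v'-v)\) is used only after bounding the mark of \(x'\) by \(b\), and not by integrating \(h'\) over a half-line weighted against something non-integrable. This is fine because \(K_b\) does not depend on \(h'\) and \(\mathbb F(I)\le1\). Beyond that, no boundary (surface-order) analysis is needed, since only an upper bound is claimed. If a sharper dependence on the moment parameter \(\delta\) were intended (e.g.\ a Hölder split of \(\mathbb F\) against \((b-H)_+\)), the same structure applies: Hölder's inequality with exponent \(1+2\delta\) on the mark integral, with the bounded density supplying the \(|I|\) factor.
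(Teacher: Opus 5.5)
Your proof is correct and follows essentially the same route as the paper. You restrict the Mecke expansion of Section~\ref{ss:cov} to $I$, bound the diagonal by $|W_n|\mathbb F(I)$, and control the covariance of the void indicators by the overlap kernel $K_b$ with $\int K_b=\kappa_d^2\E[(b-H)_+^d]<\infty$. The only difference is that the paper simply discards the two direct-interaction terms, which enter with a negative sign, since only an upper bound is needed. Your separate absolute-value estimate of those terms is therefore valid but unnecessary.
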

The proof is given in Section~\ref{ss:var}.

\begin{proposition}[Cumulant bound]
\label{pr:cum}
Assume \(\E[H_-^{3d}]<\infty\). Then
\(
 \sup_{n\ge1}\sup_{I\subseteq J}
 |W_n|^{-1}|c_4(G_n(I))|<\infty.
\)
\end{proposition}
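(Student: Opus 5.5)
We bound the fourth cumulant by expanding it through Mecke's formula into integrals of joint cumulants of the extremality indicators. Since $G_n(I)=\sum_{X_i\in W_n\times I}\xi(X_i;\PP)$ with $\xi(x;\vp)=\one\{\vp(D(x))=0\}$, the factorial moment expansion gives $c_4(G_n(I))$ as a finite sum over set partitions of $\{1,2,3,4\}$. Each block corresponds to a single inserted point, so there are $k\in\{1,2,3,4\}$ distinct points $x_1,\dots,x_k\in W_n\times I$. Each term is an integral over $(W_n\times I)^k$ of a \emph{mixed} cumulant-type expression. Because all factors are bounded by $1$, the $k=1$ term is at most $\Q(W_n\times I)\le|W_n|$. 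The goal is therefore to show that, for $k\ge2$, the integrand after fixing $x_1$ is integrable in $x_2,\dots,x_k$ with a bound uniform in $x_1$ (up to an $\mathbb F$-integrable mark factor). Integrating $x_1$ over $W_n\times I$ then yields $O(|W_n|)$.

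The key structural input is that these cumulants vanish unless the points are \emph{connected}. Consider the graph on $\{x_1,\dots,x_k\}$ with an edge $x_i\sim x_j$ when either $D(x_i)\cap D(x_j)\neq\varnothing$ or one point lies in the other's paraboloid, i.e.\ a direct interaction as in the proof of Theorem~\ref{thm:var}(1). If this graph splits into two components, then under $\PP+\sum\delta_{x_j}$ the two groups of indicators are functionals of $\PP$ on disjoint regions and do not interact through the inserted points. They are therefore independent, and the joint cumulant vanishes. For a connected configuration I bound the integrand by a tree of pairwise kernels, in the manner of standard cluster-expansion bounds for stabilizing functionals. The relevant kernels are $K_{\bar z}(v_i-v_j)=\Q(D(0,\bar z)\cap D(v_i-v_j,\bar z))$ from the proof of Proposition~\ref{lem:t2}, where $\bar z=b$ since $I\subseteq J=[a,b]$, together with the interaction kernel $j_{b}$ from the proof of Proposition~\ref{lem:t1}. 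Concretely, cumulant terms are bounded by sums over spanning trees of products $\prod_{\text{edges}}\kappa(v_i-v_j)$, where $\kappa:=K_b+j_b$ and the relevant bound is
\[
 |\mathrm{cum}|\le C\sum_{\text{trees}}\prod_{(i,j)}\kappa(v_i-v_j).
\]
To justify this I would use the void-probability form $e^{-\Q(\bigcup_{i\in S}D(x_i))}$ and inclusion–exclusion. One can also go through the positive-association / overlap bound $1-e^{-\Q(D(x_i)\cap D(x_j))}\le K_b$ edge by edge, as in \eqref{eq:extremality-overlap-bound}.

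Integrating the tree bound leaf by leaf requires only $\int_{\R^d}\kappa(u)\,du<\infty$, and this in turn needs $\E[R^{2d}]$ with $R=\sqrt{(b-H)_+}$, i.e.\ $\E[H_-^d]<\infty$. However, a subtlety arises because the kernel was obtained by raising marks to $b$, which makes the bound independent of the marks of the points. This is fine, since marks are integrated against $\mathbb F$ with total mass at most one. Hence each $k$-term is bounded by $C|W_n|\,(\int\kappa)^{k-1}$, uniformly in $I\subseteq J$.

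The main obstacle is making the tree bound rigorous for the fourth-order cumulant with moment assumption only $\E[H_-^{3d}]$. The issue is that, at each connecting step, products of overlaps of up to three paraboloids appear, together with the factors involving higher moments of $R$. Concretely, when expanding $e^{-\Q(\cup D)}$ differences one gets factors like $\Q(D(x_1)\cap D(x_2))\,\Q(D(x_2)\cap D(x_3))$ whose integrals over positions involve $\E[R^{2d}]$ applied repeatedly to a common random radius. A star configuration with three leaves attached to one centre might need $\E[R^{6d}]$-type moments, i.e.\ $\E[H_-^{3d}]$, and I expect this to be exactly where the assumption is used. I would first try to bound each edge factor by $\min(1,\Q(D\cap D'))$ and apply Hölder over the mark of the centre vertex to reach the $3d$ threshold.
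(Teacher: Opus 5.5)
Your reduction is sound. Because the scores are indicators, Mecke's formula gives exactly $c_4(G_n(I))=\sum_{\pi}\int m^{\mathrm T}(x_1,\dots,x_{|\pi|})\,\Q^{|\pi|}(dx)$, summed over set partitions $\pi$ of $\{1,2,3,4\}$. Here $m(x_S)$ is the probability that all $x_i$, $i\in S$, are extreme in $\PP+\sum_{j\in S}\delta_{x_j}$ (times window indicators), and $m^{\mathrm T}$ is its Ursell function, which does vanish on disconnected configurations.

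\textbf{The gap.} The whole proof rests on the tree bound $|m^{\mathrm T}|\le C\sum_{\mathrm{trees}}\prod\kappa(v_i-v_j)$ with $\kappa=K_b+j_b$, and this bound is false. By inclusion--exclusion, $e^{-\Q(\bigcup_{i\in S}D(x_i))}$ contains genuine three- and four-body factors. Write $a_T:=\Q(\bigcap_{i\in T}D(x_i))$ and assume no direct interactions. Then
$m^{\mathrm T}(x_1,x_2,x_3)=e^{-a_1-a_2-a_3}\bigl(e^{a_{12}+a_{13}+a_{23}-a_{123}}-e^{a_{12}}-e^{a_{13}}-e^{a_{23}}+2\bigr)$,
whose leading term is $-e^{-a_1-a_2-a_3}a_{123}$; everything else is quadratic in the overlaps, and the prefactor is at least $e^{-3\Q(b)}$.

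\textbf{Counterexample.} Take inserted marks equal to $b$ (with $I=(s,b]$), locations $v_1=-Le_1$, $v_2=0$, $v_3=Le_1$ with $L^2>b-a$, and $R_b:=\sqrt{(b-H)_+}$.
\begin{itemize}
\item The triple overlap satisfies $a_{123}\ge 2^{-d}\kappa_d\E[R_b^d\one\{R_b\ge 2L\}]$.
\item Every product of two of your edge kernels is at most $C(\E[R_b^d\one\{R_b\ge L/2\}])^2$.
\item If $\P(R_b>t)\asymp t^{-\beta}$ with $\beta>6d$ (compatible with $\E[H_-^{3d}]<\infty$), these are of order $L^{d-\beta}$ and $L^{2(d-\beta)}$ respectively.
\end{itemize}
So no pairwise tree bound of this kind holds, and the claim that only $\E[R^{2d}]$ is needed has no basis. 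Two smaller points. First, $j_b$ is a mark average, not a pointwise bound; since inserted marks lie in $J$, the correct direct-interaction kernel is $\one\{\|u\|^2<b-a\}$. Second, stars do not compound moments, because $K_b$ is deterministic; the extra moments come from the many-body overlaps, not from stars.

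\textbf{How to repair it.} Use the multi-body exponential formula
$m^{\mathrm T}(x_S)=\prod_{i\in S}w_i\sum_{\Gamma}\prod_{T\in\Gamma}\phi_T$,
summed over connected hypergraphs $\Gamma$ on $S$ with hyperedges of size at least two. Here $w_i=\one\{x_i\in W_n\times I\}e^{-a_i}$, $\phi_T=e^{(-1)^{|T|}a_T}-1$ for $|T|\ge3$, and $\phi_{\{i,j\}}=\one\{x_i\notin D(x_j),\,x_j\notin D(x_i)\}e^{a_{ij}}-1$. Since inserted marks are at most $b$, you get $|\phi_T|\le Ca_T$ and $|\phi_{\{i,j\}}|\le C(a_{ij}+\one\{\|v_i-v_j\|^2<b-a\})$. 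Integrate hyperedge by hyperedge using $\int\!\cdots\!\int\Q\bigl(\bigcap_{i\in T}D((v_i,b))\bigr)\prod_{i\in T\setminus\{i_0\}}dv_i=\kappa_d^{|T|}\E[R_b^{|T|d}]$, which is finite for $|T|\le 4$. Alternatively, keep pairwise trees but with kernel $K_b^{1/3}+\one\{\|u\|^2<b-a\}$, via $a_T\le\min_{\{i,j\}\subseteq T}K_b(v_i-v_j)$. That kernel is integrable because $K_b(u)\le C\|u\|^{-5d}$ under $\E[H_-^{3d}]$.

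\textbf{Comparison with the paper.} With this step supplied, your route is a legitimate alternative to the paper's, which never bounds $m^{\mathrm T}$ pointwise. The paper instead:
\begin{itemize}
\item decomposes $G_n(I)$ into unit-cube block counts;
\item splits each block cumulant across the cut attaining $\rho(\mathbf i)$ (Lemma~\ref{lem:fourth-semicluster-split});
\item bounds covariances of block products by resampling on $A_\alpha\cap A_\beta$, using the overlap decay $\|z_i-z_j\|^{-5d}$ (Lemmas~\ref{lem:det-ext-overlap} and~\ref{covBoundLem});
\item sums with the lattice shell count of Lemma~\ref{lem:labelled-quadruple-shell-count}.
\end{itemize}
The overlap-decay step is where $\E[H_-^{3d}]$ is used.
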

The proof is given in Section~\ref{ss:cum}.

\begin{proof}[Proof of \eqref{eq:chent_gn}]
Let \(I\subseteq J\) satisfy \(|I|\ge a_n\). The identity relating the fourth moment and the fourth cumulant gives
\(
 \E[\tilde G_n(I)^4]
 =3\Var(G_n(I))^2+c_4(G_n(I)),
\)
and therefore
\(
 \E[\tilde G_n(I)^4]
 \le3\Var(G_n(I))^2+|c_4(G_n(I))|.
\)
By Proposition~\ref{pr:var}, we have
\(
 3\Var(G_n(I))^2
 \le C|W_n|^2|I|^2
 \le C_J|W_n|^2|I|^{5/4},
\)
because \(|I|^{3/4}\le |J|^{3/4}\). By Proposition~\ref{pr:cum}, we have
\(
 |c_4(G_n(I))|\le C|W_n|.
\)
The cutoff gives
\(
 |W_n|^2|I|^{5/4}
 \ge |W_n|^2\bigl(|W_n|^{-2/3}\bigr)^{5/4}
 =|W_n|^{7/6}.
\)
Thus, \(|W_n|\le |W_n|^2|I|^{5/4}\) for all sufficiently large \(n\). This
proves \eqref{eq:chent_gn}.
\end{proof}

\subsubsection{Davydov's criterion}
\label{ss:dav}
We prove Proposition~\ref{pr:dav}. For \(t\in J\), define
\[
 N_n(t):=G_n(t)-G_n(a)=G_n((a,t]),
 \qquad
 X_n(t):=\frac{N_n(t)-\E[N_n(t)]}{\sqrt{|W_n|}}.
\]
By Theorem~\ref{thm:mult}, the finite marginals of \(X_n\)
converge to those of
\(
 X(t):=Z(t)-Z(a).
\)
We apply \cite[Theorem~2]{davydov} with
\(
 \alpha_D:=4,\) and \( \beta_D:=\frac54,
\)
and cutoff \(a_n=|W_n|^{-2/3}\).

If \(|t-u|\ge a_n\), then \eqref{eq:chent_gn} gives
\(
 \E|X_n(t)-X_n(u)|^4\le C|t-u|^{5/4}.
\)
We next obtain a uniform bound on the fourth moment at a fixed parameter. If \(t-a\ge a_n\),
then
\(
 \E|X_n(t)|^4\le C|t-a|^{5/4}\le C_J.
\)
If \(0\le t-a<a_n\), monotonicity of \(N_n\) gives
\(
 |N_n(t)-\E[N_n(t)]|
 \le N_n(a+a_n)+\E[N_n(a+a_n)],
\)
where \(a+a_n\) is replaced by \(b\) when it exceeds \(b\). Writing
\(\mu_n:=\E[N_n((a+a_n)\wedge b)]\), we obtain
\[
\begin{aligned}
 \E|X_n(t)|^4
 \le C\E|X_n((a+a_n)\wedge b)|^4
 +C({\mu_n}/{\sqrt{|W_n|}})^4
 \le C_Ja_n^{5/4}+C|W_n|^2a_n^4\le C_J.
\end{aligned}
\]
Here Lemma~\ref{lem:exp} gives \(\mu_n\le C|W_n|a_n\).
Finally, write
\[
 X_n=X_n^\circ-X_n^*:=
\frac{N_n(t)}{\sqrt{|W_n|}}
-
\frac{\E[N_n(t)]}{\sqrt{|W_n|}}.
\]
Both components are nondecreasing. On the grid
\(
 t_k:=(a+ka_n)\wedge b,
\)
Lemma~\ref{lem:exp} gives
\[
\begin{aligned}
 \max_k\bigl(X_n^*(t_{k+1})-X_n^*(t_k)\bigr)
 =
 \max_k\frac{\E[G_n((t_k,t_{k+1}])]}{\sqrt{|W_n|}}
 \le Ca_n\sqrt{|W_n|}
 =C|W_n|^{-1/6}\longrightarrow0.
\end{aligned}
\]
This is deterministic and hence \(o_{\P}(1)\). All hypotheses of
Davydov's theorem are satisfied, and therefore
\(
 X_n\Rightarrow Z(\,\cdot\,)-Z(a)
 \text{ in }D(J).
\)
This proves Proposition~\ref{pr:dav}.

\begin{lemma}[Expectation of level increments]
\label{lem:exp}
Let \(I=(s,t]\subseteq J\). Then, for every \(n\ge1\),
\[
 \E[G_n(I)]
 =
 \int_{W_n\times(s,t]}
 \P\bigl(x\in\EE(\PP+\delta_x)\bigr)\,\Q(dx)
 \le |W_n|\mathbb F((s,t])
 \le |W_n|\|f\|_\infty|I|.
\]
\end{lemma}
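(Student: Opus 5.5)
The identity follows from the Mecke formula applied to the sum defining \(G_n(I)\). The inequalities then follow by bounding a probability by one and using the density bound on \(J\).

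First, \(G_n(I)=\sum_{X_i\in\PP}\one\{X_i\in W_n\times(s,t]\}\one\{X_i\in\EE(\PP)\}\). The score \(\one\{x\in\EE(\PP+\delta_x)\}\) is a measurable function of \((x,\PP+\delta_x)\). The univariate Mecke formula therefore gives
\[
\E[G_n(I)]=\int_{W_n\times(s,t]}\P\bigl(x\in\EE(\PP+\delta_x)\bigr)\,\Q(dx).
\]
This is the same computation already used for the diagonal term in Section~\ref{ss:cov}, restricted to the marks in \((s,t]\). As noted in Section~\ref{sec:mod}, the integrand equals \(\exp(-\Q(h))\) for \(x=(v,h)\), although this explicit form is not needed here.

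For the first inequality, bound the probability by \(1\). Since \(\Q=\lambda\otimes\mathbb F\), this gives \(\E[G_n(I)]\le\Q(W_n\times(s,t])=|W_n|\,\mathbb F((s,t])\).

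For the second inequality, note that \((s,t]\subseteq J\) and that \(\mathbb F\) has density \(f\) on \(J\) with \(\|f\|_\infty<\infty\). Hence \(\mathbb F((s,t])=\int_s^t f(h)\,dh\le\|f\|_\infty|I|\). No step is a real obstacle. The only point to check is that the Mecke formula applies to this nonnegative score, which it does by Tonelli's theorem regardless of integrability. Finiteness of the resulting bound is then automatic, since \(|W_n|<\infty\).
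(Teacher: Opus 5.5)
Your proposal is correct and follows essentially the same route as the paper, which likewise obtains the identity from the Mecke formula and the first inequality by bounding the probability by one. Your extra remarks — Tonelli for the nonnegative score, and \(\mathbb F((s,t])=\int_s^t f(h)\,dh\le\|f\|_\infty|I|\) from the density on \(J\) — just spell out steps the paper leaves implicit.
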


\begin{proof}
This follows from the Mecke formula and the bound of the displayed
probability by one.
\end{proof}

\subsubsection{Variance estimate}
\label{ss:var}
The covariance expansion from Section~\ref{ss:cov} applies with both marks
restricted to the increment interval. Its diagonal contribution is bounded
by the expected number of candidate generators, and its direct-interaction
corrections are nonpositive. The remaining term is controlled by the
integrable overlap kernel from Section~\ref{sssec:t1}.

\begin{proof}[Proof of Proposition~\ref{pr:var}]
Fix \(I=(s,t]\subseteq J=[a,b]\), and recall the kernel \(K_b\) defined
in Section~\ref{sssec:t1}. The moment assumption in the proposition implies
\(\E[H_-^{3d/2}]<\infty\), so the covariance bounds from that section
apply. Restrict both mark integrals in the Mecke expansion of
Section~\ref{ss:cov} to \(I\), bound the diagonal indicator by one, and
discard the two nonpositive direct-interaction terms. By
\eqref{eq:extremality-overlap-bound},
\[
 \Var(G_n(I))
 \le |W_n|\mathbb F(I)
   +\mathbb F(I)^2\int_{W_n}\int_{W_n}K_b(v'-v)\,dv'\,dv.
\]
Equation~\eqref{eq:overlap-kernel-moments}, with \(\bar z=b\), gives
\(\int_{\mathbb R^d}K_b(u)\,du
=\kappa_d^2\E[(b-H)_+^d]<\infty\).
Enlarging the second spatial integration to \(\mathbb R^d\) therefore
yields
\(\Var(G_n(I))\le C|W_n|(\mathbb F(I)+\mathbb F(I)^2)
\le C|W_n||I|\), since \(\mathbb F(I)\le1\) and
\(\mathbb F(I)\le\|f\|_\infty|I|\). Centering leaves the variance
unchanged, which proves the proposition.
\end{proof}

\subsubsection{Cumulant bound}
\label{ss:cum}
We prove Proposition~\ref{pr:cum}. Let \(I=(s,t]\subseteq J\). Let \(Q_1,\ldots,Q_{k_n}\) be disjoint unit
cubes with centers \(z_1,\ldots,z_{k_n}\in\mathbb Z^d\) such that
\(W_n\subseteq\bigcup_iQ_i\) and \(k_n\le C_W|W_n|\). Put 
\(G_n(I)=\sum_{i=1}^{k_n}\Delta_i(I)\) where
\[
 \Delta_i(I):=
 \sum_{X_j\in\PP|_{(Q_i\cap W_n)\times I}}
 \one\{X_j\in\EE(\PP)\}.
\]
For brevity, write \(\Delta_i:=\Delta_i(I)\).
The cumulant proof uses a separation radius for labeled quadruples, a lattice counting estimate, and an algebraic decomposition of the fourth cumulant. Given lattice centers
\(z_1,\ldots,z_k\in\mathbb Z^d\) and a labeled quadruple
\(\mathbf i=(i_1,i_2,i_3,i_4)\in\{1,\ldots,k\}^4\), define
\[
\rho(\mathbf i):=
\max_{\varnothing\neq S\subsetneq\{1,2,3,4\}}
\operatorname{dist}\big(
 \{z_{i_a}:a\in S\},
 \{z_{i_b}:b\in S^{\complement}\}
\big).
\]

\begin{lemma}[Lattice shell bound]
\label{lem:lattice-shell-estimate}
Let \(z\in\mathbb Z^d\). Then,
\(
 \#\{z'\in\mathbb Z^d:m\le\|z-z'\|<m+1\}
	\in O(m^{d-1}).
\)
\end{lemma}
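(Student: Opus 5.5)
The plan is a volume comparison between the shell and a slightly thickened spherical annulus. Put \(S_m:=\{z'\in\mathbb Z^d:m\le\|z-z'\|<m+1\}\). By translation invariance of \(\mathbb Z^d\) we may take \(z=0\). For each \(z'\in S_m\), let \(Q(z'):=z'+[-\tfrac12,\tfrac12)^d\) be the half-open unit cube centred at \(z'\). These cubes are pairwise disjoint and each has volume one. Hence \(\#S_m=\lambda\big(\bigcup_{z'\in S_m}Q(z')\big)\).

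Next we locate the union of the cubes. Every point of \(Q(z')\) lies within distance \(\sqrt d/2\) of \(z'\). By the triangle inequality, the union is therefore contained in the annulus
\[
A_m:=\{u\in\R^d:\ (m-\sqrt d/2)_+\le\|u\|<m+1+\sqrt d/2\}.
\]
It follows that
\[
\#S_m\le\lambda(A_m)=\kappa_d\Big[(m+1+\sqrt d/2)^d-\big((m-\sqrt d/2)_+\big)^d\Big].
\]

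It remains to bound this difference of \(d\)-th powers. Apply the mean value theorem to \(r\mapsto r^d\) on an interval of length at most \(1+\sqrt d\). This gives
\[
\lambda(A_m)\le\kappa_d\, d\,(1+\sqrt d)\,(m+1+\sqrt d/2)^{d-1}.
\]
For \(m\ge1\) the right-hand side is at most \(C_d\,m^{d-1}\). For \(m<1\) the shell lies inside a bounded ball, so it contains only finitely many lattice points, uniformly in \(m\). (Where \(O(m^{d-1})\) is read with \(m\ge1\), this case is immaterial.) The constants depend only on \(d\) and not on \(z\).

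None of the steps is a real obstacle. The only point requiring care is the small-\(m\) regime, where the lower radius \(m-\sqrt d/2\) can be negative. The positive part in the definition of \(A_m\) handles this, together with the separate treatment of \(m<1\).
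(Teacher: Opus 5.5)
Your proof is correct and is essentially the paper's argument: disjoint unit cubes centred at the shell points are packed into an annulus of bounded thickness, whose volume is \(O((1+m)^{d-1})\). Your explicit radii, the mean-value estimate, and the remark on \(m<1\) fill in details the paper leaves implicit; for \(m<1\) the literal bound \(O(m^{d-1})\) must be read as \(O((1+m)^{d-1})\), which is how the paper's proof states it.
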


\begin{proof}
Place the unit cube centered at each lattice point \(z'\) satisfying
\(m\le\|z-z'\|<m+1\). These cubes are disjoint and lie in an annulus of
uniformly bounded thickness. Its volume is at most \(C(1+m)^{d-1}\).
\end{proof}

\begin{lemma}[Shell counting for labeled quadruples]
\label{lem:labelled-quadruple-shell-count}
Let \(z_1,\ldots,z_k\in\mathbb Z^d\) with \(k\le C_W|W_n|\). For every
integer \(m\ge0\),
\[
 \#\{\mathbf i\in\{1,\ldots,k\}^4:
 \rho(\mathbf i)\in[m,m+1)\}
 \le C|W_n|(1+m)^{3d-1}.
\]
\end{lemma}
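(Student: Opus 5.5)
We fix the first index \(i_1\), which has at most \(k\le C_W|W_n|\) choices, and count the remaining labels \((i_2,i_3,i_4)\) relative to \(z_{i_1}\). The basic observation is that the separation radius controls the spread of the quadruple. Indeed, let \(\rho(\mathbf i)<m+1\). Take any nontrivial split \(S\). Its two parts are at distance at most \(\rho(\mathbf i)\), since \(\rho\) is the maximum over splits. Now build a chain: start from \(S_1=\{a:i_a=i_1\}\), or simply from \(\{1\}\), and repeatedly merge. The split \(\{1\}\) versus the rest has distance \(\le m+1\), so some second point lies within \(m+1\) of \(z_{i_1}\). Next, the split (those two) versus (the other two) gives a third point within \(m+1\) of one of the first two. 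Finally, the remaining point is within \(m+1\) of one of the first three. Consequently, the four centers form a connected configuration in which each new point lies within \(m+1\) of an already chosen one. In particular, all of them lie within \(3(m+1)\) of \(z_{i_1}\).

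A crude count therefore gives at most \(C(1+m)^{3d}\) choices for \((i_2,i_3,i_4)\), because each center lies in a ball of radius \(3(m+1)\) containing \(O((1+m)^d)\) lattice points, and the labels \(i\mapsto z_i\) are injective since the cubes are distinct. This is one power too many. To gain the missing factor we use that \(\rho(\mathbf i)\ge m\): there is a split \(S\) whose two parts are at distance in \([m,m+1)\). Fix this split (at most \(14\) choices), and let \(a\in S\), \(b\in S^\complement\) be the pair attaining the minimal distance. There are \(O(1)\) choices of the labels \((a,b)\). We then count as follows.

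Place the points of \(S\) and of \(S^\complement\) each by the chaining above, with all of them within \(3(m+1)\) of \(z_{i_1}\). The point \(z_{i_b}\) must then satisfy \(m\le \|z_{i_b}-z_{i_a}\|<m+1\). By Lemma~\ref{lem:lattice-shell-estimate}, this leaves only \(O(m^{d-1})\) choices for \(z_{i_b}\) once \(z_{i_a}\) is known, instead of \(O((1+m)^d)\). More precisely, we order the choice so that \(z_{i_a}\) is fixed before \(z_{i_b}\): either \(a=1\), or \(a\) is reached from \(i_1\) within its part in a ball of size \(O((1+m)^d)\). We choose \(z_{i_b}\) in the shell, and the remaining free points in balls of radius \(3(m+1)\). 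In all cases, three points are chosen beyond \(z_{i_1}\); one of them comes from a shell, with \(O((1+m)^{d-1})\) choices, and the other two from balls, with \(O((1+m)^d)\) choices each. Repeated labels only reduce the count.

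Multiplying these counts and summing over the finitely many choices of \(S,a,b\) gives \(C|W_n|(1+m)^{3d-1}\). The only delicate step is the bookkeeping that guarantees exactly one shell factor, using the minimal cross pair for the split realizing \(\rho\ge m\). For \(m=0\) the trivial ball bound \(C|W_n|\) suffices.
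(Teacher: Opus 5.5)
Your argument is correct and essentially the paper's proof. Your chaining step is the paper's observation that \(\rho(\mathbf i)<m+1\) makes the threshold-\((m+1)\) graph on the four labels connected, and your minimal cross pair for the split realizing \(\rho(\mathbf i)\) is the paper's distinguished edge of length in \([m,m+1)\); both yield an \(O(k)\) anchor, one shell factor \(O((1+m)^{d-1})\) and two ball factors \(O((1+m)^d)\). The differences are cosmetic: you anchor at \(z_{i_1}\) with balls of radius \(3(m+1)\) rather than growing a spanning tree, and when \(b=1\) you should swap the roles of \(S\) and \(S^\complement\) so that the shell is taken around \(z_{i_1}\), since otherwise the shell factor is lost.
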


\begin{proof}
Let \(m\ge1\), and let \(\mathbf i=(i_1,i_2,i_3,i_4)\) satisfy
\(\rho(\mathbf i)\in[m,m+1)\). Consider the graph on
\(\{1,2,3,4\}\) in which \(a\) and \(b\) are adjacent when
\(
 \|z_{i_a}-z_{i_b}\|<m+1.
\)
The inequality \(\rho(\mathbf i)<m+1\) is equivalent to connectivity of this
graph: every nontrivial cut has a crossing edge shorter than \(m+1\).
Choose a cut \(G,H\) attaining \(\rho(\mathbf i)\ge m\). Now, every crossing edge has length at least \(m\),
while connectivity supplies one of length less than \(m+1\). Any edge of a
connected graph can be extended to a spanning tree. Thus, the
quadruple admits a labeled spanning tree with one distinguished edge of
length in \([m,m+1)\) and its other two edges shorter than \(m+1\).

There are only finitely many labeled trees and choices of distinguished
edge. Choose one endpoint of the distinguished edge in \(O(k)\) ways, its
other endpoint in \(O((1+m)^{d-1})\) ways by
Lemma~\ref{lem:lattice-shell-estimate}, and each remaining labeled vertex in
\(O((1+m)^d)\) ways. Hence,
\[
 \#\{\mathbf i:\rho(\mathbf i)\in[m,m+1)\}
 \le Ck(1+m)^{d-1}(1+m)^{2d}
 \le C|W_n|(1+m)^{3d-1}.
\]
For \(m=0\), connectivity at threshold \(1\) forces all four lattice
centers to coincide, giving \(O(k)=O(|W_n|)\) labeled quadruples.
\end{proof}

\begin{lemma}[Cumulant decomposition across a partition]
\label{lem:fourth-semicluster-split}
Let \(K=\{1,2,3,4\}\), let \(G,H\) be a nontrivial partition of \(K\),
and put
\(
 X_A:=\prod_{a\in A}X_a.
\)
Then \(c_4(X_1,X_2,X_3,X_4)\) is a finite linear combination, with
coefficients depending only on \(K\), of terms
\(
 \Cov(X_S,X_T)
 \prod_{\ell=1}^{r}\E[X_{K_\ell}],
\)
where \(\varnothing\neq S\subseteq G\),
\(\varnothing\neq T\subseteq H\), and
\(K_1,\ldots,K_r\) are pairwise disjoint subsets of
\(K\setminus(S\cup T)\).
\end{lemma}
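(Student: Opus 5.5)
The plan is to start from the moment–cumulant formula and reorganize it by conditioning on the partition $\{G,H\}$. Write $\mu_A:=\E[X_A]$ for $A\subseteq K$. The moment–cumulant inversion gives
\[
c_4(X_1,X_2,X_3,X_4)=\sum_{\pi}(|\pi|-1)!\,(-1)^{|\pi|-1}\prod_{B\in\pi}\mu_B ,
\]
where $\pi$ runs over the set partitions of $K$. Each block $B$ of $\pi$ splits as $B=(B\cap G)\cup(B\cap H)$. The key algebraic input is that $c_4$ vanishes whenever the family $(X_a)_{a\in G}$ is independent of $(X_b)_{b\in H}$. This is the standard fact that a joint cumulant with independent subfamilies is zero. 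It can also be checked directly from the formula, since under independence $\mu_B=\mu_{B\cap G}\,\mu_{B\cap H}$.

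The next step replaces each moment $\mu_B$ with $B\cap G\neq\varnothing\neq B\cap H$ by
\[
\mu_B=\Cov(X_{B\cap G},X_{B\cap H})+\mu_{B\cap G}\,\mu_{B\cap H}.
\]
Substituting this into every block of every partition and expanding the product yields two kinds of terms. The first kind uses the ``independent'' choice $\mu_{B\cap G}\mu_{B\cap H}$ in every block. Summed over $\pi$, these terms reproduce exactly the formula for $c_4$ evaluated under the hypothesis that the $G$- and $H$-families are independent with the same marginal joint moments. That sum vanishes identically as a polynomial identity in the $\mu$'s, by the fact recalled above.

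Every remaining term contains at least one factor $\Cov(X_S,X_T)$ with $\varnothing\neq S\subseteq G$ and $\varnothing\neq T\subseteq H$. Any further factors are either moments $\mu_{K_\ell}$ or further covariances. A further covariance $\Cov(X_{S'},X_{T'})=\mu_{S'\cup T'}-\mu_{S'}\mu_{T'}$ can be re-expanded into moments of disjoint sets. Keeping exactly one covariance factor and expanding all the others as moments then gives terms of the claimed form
\[
\Cov(X_S,X_T)\prod_{\ell=1}^{r}\E[X_{K_\ell}],
\]
with $K_\ell$ pairwise disjoint subsets of $K\setminus(S\cup T)$. Disjointness holds because all the sets involved come from distinct blocks of $\pi$, or from the two halves $B\cap G$, $B\cap H$ of a single block.

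The coefficients are integers built from $(|\pi|-1)!$ and signs, so they depend only on $K$ (and on the fixed partition $\{G,H\}$). The expression remains a finite linear combination.

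The one point needing care is the justification of the vanishing identity at the polynomial level, not just for actual random variables. I would handle it by noting that any admissible moment array for the $G$-family and for the $H$-family is realized by independent random vectors, for example finitely supported ones. The identity $c_4=0$ therefore holds on a Zariski-dense set of moment values, hence as a polynomial identity. Alternatively, one can verify it directly via the multiplicativity of the exponential generating function, $\log\E e^{\langle s,X_G\rangle+\langle t,X_H\rangle}=\log\E e^{\langle s,X_G\rangle}+\log\E e^{\langle t,X_H\rangle}$, which has no mixed coefficients.
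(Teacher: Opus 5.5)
Your proposal is correct and follows the paper's proof essentially step for step. Both arguments start from the moment--cumulant formula, split each block moment meeting both $G$ and $H$ as a covariance plus a product of moments, and cancel the terms without a covariance factor using the vanishing of the cumulant under independence; any surplus covariances are then re-expanded as moments. Your added justification that this vanishing holds as a polynomial identity, via realizable moment arrays or additivity of the log-generating function, fills in a step the paper leaves implicit.
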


\begin{proof}
Start from the moment-cumulant formula
\[
 c_4(X_1,X_2,X_3,X_4)
 =
 \sum_{\pi\in\Pi(K)}
 (-1)^{|\pi|-1}(|\pi|-1)!
 \prod_{B\in\pi}\E[X_B].
\]
For each \(B\) meeting \(G\) and \(H\), write
\(
 \E[X_B]
 =
 \Cov(X_{B\cap G},X_{B\cap H})
 +
 \E[X_{B\cap G}]\E[X_{B\cap H}].
\)
Expand the finite sum. Terms with no covariance factor cancel because the
joint cumulant vanishes under factorization across the two nonempty groups.
Every remaining term contains a covariance factor across the split; expanding
any additional covariance factors as differences of moments yields the
stated representation.
\end{proof}

\begin{lemma}[Covariance bound for products of block scores]
\label{covBoundLem}
Assume \(\E[H_-^{3d}]<\infty\). Let
\(\alpha=(i_1,\ldots,i_r)\) and \(\beta=(j_1,\ldots,j_s)\), where
\(1\le r,s\le3\), and put
\(X_\alpha:=\prod_{a=1}^r\Delta_{i_a},\) and \( X_\beta:=\prod_{b=1}^s\Delta_{j_b}.\)
Uniformly in \(n\), \(I\subseteq J\), and the multi-indices,
\[
 |\Cov(X_\alpha,X_\beta)|
 \in O\bigl(\dist(\alpha,\beta)^{-3d-1}\bigr),
\]
where
\(
 \dist(\alpha,\beta):=
 \operatorname{dist}\bigl(
 \{z_{i_a}:1\le a\le r\},
 \{z_{j_b}:1\le b\le s\}
 \bigr).
\)
\end{lemma}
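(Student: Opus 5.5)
We plan to prove the bound by resampling a deterministic region that separates the two groups of blocks. Throughout, write \(\Delta_i=\sum_{x\in\PP\cap((Q_i\cap W_n)\times I)}\one\{\PP(D(x))=0\}\). The key observation is that the score of a point \(x=(v,h)\) with \(h\le b\) depends only on \(\PP|_{D(x)}\). Moreover, \(D(x)\) lies spatially within distance \(\sqrt{b-h'}\) of \(v\) at height \(h'\). Let \(\varrho:=\dist(\alpha,\beta)\), and assume \(\varrho\ge2\sqrt d+2\); otherwise the claim follows from uniform moment bounds. Consider the region
\[
B:=\{(w,s): s\le b,\ \dist(w,\{z_{j_1},\dots,z_{j_s}\})\le \sqrt d+\sqrt{b-s}\}.
\]
This region contains every \(D(y)\) for \(y\) in the \(\beta\)-blocks. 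Hence \(X_\beta\) is \(\sigma(\PP|_B)\)-measurable.

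The covariance is then controlled by the event that \(X_\alpha\) depends on \(\PP|_B\). Let \(\PP'\) be an independent copy, set \(\tilde\PP:=\PP|_{B^\complement}+\PP'|_B\), and let \(\tilde X_\alpha\) be \(X_\alpha\) computed from \(\tilde\PP\). Then \(\tilde X_\alpha\) is independent of \(X_\beta\) and has the law of \(X_\alpha\). This gives
\[
|\Cov(X_\alpha,X_\beta)|\le \E\big[|X_\alpha-\tilde X_\alpha|\,X_\beta\big].
\]
The difference \(X_\alpha-\tilde X_\alpha\) vanishes unless some point \(x\) of \(\PP\) or \(\tilde\PP\) in an \(\alpha\)-block has \(D(x)\cap B\neq\varnothing\) together with \(D(x)\) empty in the corresponding configuration. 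For such \(x=(v,h)\), the two downward paraboloids can only meet at heights \(s\) where \(\sqrt{h-s}+\sqrt{b-s}+\sqrt d\ge \varrho-\sqrt d\). This forces \(s\le b-c\varrho^2\) for some constant \(c>0\), so that \(D(x)\cap B\) contains a mark \(s\le b-c\varrho^2\).

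We apply Hölder's inequality. The block products \(X_\alpha,X_\beta,\tilde X_\alpha\) have all moments bounded uniformly, since \(\Delta_i\) is dominated by a Poisson count of mean at most 1. It therefore suffices to bound the probability of the bad event above. Using Mecke's formula, we bound it by
\[
\sum_{a}\int_{Q_{i_a}\times I}\P\big(\PP(D(x))=0,\ D(x)\cap B\neq\varnothing\big)\,\Q(dx)+(\text{same for }\tilde\PP).
\]
The crude alternatives are the following. Either \(h\ge b-c\varrho^2/2\) does not hold, which requires \(H\le b-c\varrho^2/2\) for a point within the relevant spatial range. Or \(D(x)\) reaches down to such marks while being empty, which has probability at most \(e^{-\Q(h)}\). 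The cleaner route is this: the set of marks \(\le b-c\varrho^2\) that lie spatially within \(O(\sqrt{b-s})\) of the \(\beta\)-blocks and also inside some \(D(x)\) has \(\Q\)-mass at most \(C\,\E[(b-H)_+^{d/2}\one\{b-H\ge c\varrho^2\}]\) times a factor counting the \(\beta\)-blocks. This quantity is bounded by \(C\varrho^{-(2m-d)}\E[H_-^m]\) for any \(m\le3d\).

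Take \(m=3d\) and use the Markov-type bound \(\E[(b-H)_+^{d/2}\one\{b-H\ge c\varrho^2\}]\le C\varrho^{-5d}\E[(b-H)_+^{3d}]\). After taking a square root in Hölder's inequality, this still gives decay \(\varrho^{-5d/2}\), which exceeds \(\varrho^{-3d-1}\) only if \(5d/2\ge3d+1\). That inequality fails. \textbf{This exponent accounting is the main obstacle.} We would avoid the square-root loss by not using Cauchy–Schwarz. Instead, we bound \(\E[|X_\alpha-\tilde X_\alpha|X_\beta]\) directly via multivariate Mecke over the at most six points involved, which keeps the bad-event probability linear. This yields the decay \(\varrho^{-(6d-d)}=\varrho^{-5d}\le\varrho^{-3d-1}\) for \(d\ge1\). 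The linear bound is then \(C\varrho^{-5d}\) uniformly in \(n\), \(I\), and the multi-indices.
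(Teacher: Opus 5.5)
Your proposal is correct in substance, and its skeleton is the paper's. In both, one product is made measurable with respect to a deterministic paraboloid region, the process is resampled, and the covariance is reduced to the $\Q$-mass of a deep overlap, which is $O(\varrho^{-5d})$ under $\E[H_-^{3d}]<\infty$. The differences lie in the final estimate.

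The paper resamples only $A_\alpha\cap A_\beta$, the overlap of the extremality regions of Lemma~\ref{lem:det-local-extreme}, and perturbs $X_\beta$ rather than $X_\alpha$. It bounds $\P(X_\beta\neq X_\beta')\le 2\Q(A_\alpha\cap A_\beta)$ by Lemma~\ref{lem:det-ext-overlap}. It then applies H\"older with exponents $p,p,p/(p-2)$, which gives $\varrho^{-5d(1-2/p)}\le\varrho^{-3d-1}$ once $p\ge 10d/(2d-1)$.

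The ``main obstacle'' you flag is therefore only an artifact of using Cauchy--Schwarz. Every moment of the block products is uniformly bounded, so the exponent loss can be made arbitrarily small. Your workaround is also valid and gives the sharper $\varrho^{-5d}$: telescope $|X_\alpha-\tilde X_\alpha|$, bound the remaining extremality indicators by one, and apply Mecke with one extra witness point. The paper's H\"older template is shorter, and it is the one that carries over to the volume-weighted blocks. There the scores are unbounded and the witness regions are random, which makes a linear Mecke expansion awkward.

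Two points need tightening.

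\emph{The bad event.} As sets, $D(x)\cap B$ is never empty, since both regions are unbounded below and widen with depth. So the Mecke display with the event $\{D(x)\cap B\neq\varnothing\}$ does not decay as written. What you actually need is
\[
\bigl|\one\{\PP(D(x))=0\}-\one\{\tilde\PP(D(x))=0\}\bigr|\le\PP(D(x)\cap B)+\PP'(D(x)\cap B).
\]
This is what produces the factor $\Q(D(x)\cap B)\le\Q(B\cap\{s\le b-c\varrho^2\})$. In the Mecke computation, the witness point should be inserted as an additional Mecke point, so that the block counts contribute only a bounded factor.

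\emph{The threshold on $\varrho$.} It must depend on $b-a$, not only on $d$. You need the $\alpha$-blocks $Q_{i_a}\times I$ to lie outside $B$, so that $\PP$ and $\tilde\PP$ have the same candidate generators. With your threshold $2\sqrt d+2$ this fails when $J$ is long. The remaining bounded range of $\varrho$ is then covered by the uniform moment bounds.
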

\begin{proof}
See Appendix~\ref{apx:func}.
\end{proof}

\begin{proof}[Proof of Proposition~\ref{pr:cum}]
By multilinearity, we have 
\(
 c_4(G_n(I))
 =\sum_{i_1,i_2,i_3,i_4=1}^{k_n}
 c_4(\Delta_{i_1},\Delta_{i_2},\Delta_{i_3},\Delta_{i_4}).
\)
This ordered sum over block indices includes all diagonal patterns automatically.
Tuples with one, two, three, or four distinct block indices are all present.
After expanding the block sums into sums over Poisson points, the equality partitions have types
\(
 4,3+1,2+2,2+1+1,1+1+1+1,
\)
with respectively \(1,4,3,6,1\) set partitions. Thus, all terms in which some Poisson points coincide are already contained in the moments and cumulants of the block counts.

Fix \(\mathbf i=(i_1,i_2,i_3,i_4)\), choose a nonempty proper subset \(G\subsetneq K:=\{1,2,3,4\}\) attaining \(\rho(\mathbf i)\), and put
\(H:=G^{\complement}\). Lemma~\ref{lem:fourth-semicluster-split} expresses the joint
cumulant as a finite sum of covariance factors across this split multiplied
by moment factors. All moment factors are uniformly bounded, and
Lemma~\ref{covBoundLem} gives
\[
 \bigl|c_4(\Delta_{i_1},\Delta_{i_2},\Delta_{i_3},\Delta_{i_4})\bigr|
 \le C(1+\rho(\mathbf i))^{-3d-1}.
\]
The covariance bound is uniform in \(I\subseteq J\). The only moment assumption on the negative part of the mark used here is
\(\E[(b-H)_+^{3d}]<\infty\), which follows from
\(\E[H_-^{3d}]<\infty\).
For \(m\ge1\), Lemma~\ref{lem:labelled-quadruple-shell-count} gives
\[
\begin{aligned}
&\sum_{\mathbf i:\rho(\mathbf i)\in[m,m+1)}
 \bigl|c_4(\Delta_{i_1},\Delta_{i_2},\Delta_{i_3},\Delta_{i_4})\bigr|
\le
 C|W_n|(1+m)^{3d-1}(1+m)^{-3d-1}
 =C|W_n|(1+m)^{-2}.
\end{aligned}
\]
The \(m=0\) contribution is \(O(|W_n|)\), and
\(\sum_{m\ge1}(1+m)^{-2}<\infty\). Hence,
\(
 \sup_{I \subseteq J}|c_4(G_n(I))|\in O (|W_n|).
\)
\end{proof}

\begin{proof}[Completion of the proof of Theorem~\ref{thm:func}(1)]
By Theorem~\ref{thm:mult}, the finite-dimensional distributions of
\(\hat G_{n,*}\) converge to those of the centered Gaussian process \(Z\)
with covariance kernel \(k\). Proposition~\ref{pr:dav} gives
\[
 \frac{\tilde G_n(t)-\tilde G_n(a)}{\sqrt{|W_n|}}
 \Rightarrow Z(t)-Z(a)
 \qquad\text{in }D(J).
\]
The scalar sequence \(\hat G_{n,*}(a)\) is tight by its one-dimensional
convergence. Hence, the corresponding pairs are tight in
\(\mathbb R\times D(J)\), and their finite-dimensional limits identify every
subsequential limit as \((Z(a),Z(\,\cdot\,)-Z(a))\). Since addition is
continuous from \(\mathbb R\times D(J)\) to \(D(J)\) in the \(J_1\)-topology,
\(
 \hat G_{n,*}\Rightarrow Z\qquad\text{in }D(J).
\)
For \(s<t\) in \(J\), covariance convergence and
Proposition~\ref{pr:var} give
\[
 \E[(Z(t)-Z(s))^2]
 =\lim_{n\to\infty}
 \frac{\Var(G_n((s,t]))}{|W_n|}
 \le C|t-s|.
\]
Since \(Z\) is Gaussian,
\(
 \E[(Z(t)-Z(s))^4]\le C|t-s|^2.
\)
Kolmogorov's continuity criterion gives a continuous modification of \(Z\).
After the affine reparametrization, the endpoint increment satisfies
\(Y(1)-Y(1-\delta)\to0\) in \(L^2\). This is Billingsley's endpoint
condition.
\end{proof}

\subsection{Tightness for the volume-weighted empirical process}
\label{sec:tight_V}

In this section, we prove Theorem~\ref{thm:func}(2). Let
\(J=[a,b]\subset\mathbb R\) be the compact parameter interval from the
theorem. The increasing affine map from \(J\) to \([0,1]\) is used only when
applying the criterion for tightness in the path space. All geometric estimates for the Laguerre tessellation, together with the localization, variance, and cumulant estimates, are formulated on the original mark interval \(J\).
For  \(I:=(s,t]\subseteq J\), put
\(
 \mathbb F(I)=\mathbb F((s,t]),\) and \(W_n^I:=W_n\times I.
\)
Define
\[
 G_n^{\ms V}(I)
 :=
 G_n^{\ms V}(t)-G_n^{\ms V}(s)
 =
 \sum_{X_i\in\PP|_{W_n^I}}A(X_i;\PP),
\]
where we recall
\(
 A(x;\mu)
 :=
 \one\{x\in\EE(\mu+\delta_x)\}|C(x;\mu)|,
 \) and \(
 A_x:=A(x;\PP).
\)
The cell and the score are computed in the full configuration. Moreover,
\(
 \widetilde G_n^{\ms V}(I)
 :=
 G_n^{\ms V}(I)-\E[G_n^{\ms V}(I)].
\)
Since \(\mathbb F\) has a bounded density on \(J\),
\(
 \mathbb F(I)\le \|f\|_\infty |I|.
\)
The target estimate is
\begin{equation}
\label{eq:chent_gnV}
 \E\big[\widetilde G_n^{\ms V}(I)^4\big]
 \le C|W_n|^2|I|^{5/4},
 \qquad
 |I|\ge |W_n|^{-2/3}.
\end{equation}
The remaining intervals are handled by Proposition~\ref{pr:davydovV}. The
fourth-moment identity is
\[
 \E\big[\widetilde G_n^{\ms V}(I)^4\big]
 =
 3\Var(G_n^{\ms V}(I))^2+c_4(G_n^{\ms V}(I)).
\]
For \(x=(v,h)\) and \(T\in\mathbb R\), recall
\[
 D^{\ms V}(x,T)
 :=
 \left\{
 (u,g)\in\X:
 g\le T,\
 \|u-v\|\le
 \sqrt{(T-h)_+}+\sqrt{(T-g)_+}
 \right\}.
\]

The proof follows the argument used in the unweighted case, but two additional
points require attention. The score is unbounded, and its value is not known
until the Laguerre cell is determined. Proposition~\ref{pr:varV} obtains
the increment variance bound from the covariance kernel already estimated
in Section~\ref{ss:covv}. Proposition~\ref{pr:cumV} controls the fourth
cumulant by approximating products of block scores with functionals of the
Poisson process in bounded spatial regions. Proposition~\ref{pr:momentV} then
combines these estimates, while Proposition~\ref{pr:davydovV} handles
intervals below the cutoff.

Assume throughout this subsection that \(\mathbb F\) has a bounded density on \(J\)
and that \(\E[H_-^{m_0}]<\infty\) for some \(m_0>11d/2\). Choose
\[
Q_*\ge\frac{10d}{2d-1},\qquad q_*:=3Q_*,\qquad p:=Q_*,\qquad
 a_0:=\frac{Q_*}{Q_*-1},\qquad b_0:=Q_*.
\]
Then \(5d(1/a_0-1/p)=5d(1-2/Q_*)\ge3d+1\). Choose
\(\gamma\) with \(3d<\gamma<m_0\). Corollary~\ref{lem:cover-time-moment-integrability}
provides the required moment of order \(\gamma\) for a point inserted through
the Mecke formula, uniformly for marks in \(J\). The largest moment required
of the negative part of an auxiliary mark is \(3d\), which is available under
the standing assumption. Constants may depend on \(J,d,W,\mathbb F\) and on these
fixed exponents, but not on \(n\) or on \(I\subseteq J\).

The argument is reduced to the following four propositions.

\begin{proposition}[Weighted Davydov reduction]
\label{pr:davydovV}
To prove tightness in \(D(J)\), it suffices to establish the fourth-moment bound for intervals \(I=(s,t]\subseteq J\) with \(|I|\ge|W_n|^{-2/3}\).
\end{proposition}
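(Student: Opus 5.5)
We follow the proof of Proposition~\ref{pr:dav}, applying \cite[Theorem~2]{davydov} with \(\alpha_D=4\), \(\beta_D=5/4\) and cutoff \(a_n=|W_n|^{-2/3}\). For \(t\in J\) set
\[
 N_n^{\ms V}(t):=G_n^{\ms V}((a,t]),\qquad
 X_n^{\ms V}(t):=\frac{N_n^{\ms V}(t)-\E[N_n^{\ms V}(t)]}{\sqrt{|W_n|}}.
\]
By Theorem~\ref{thm:mult}(2), the finite-dimensional distributions of \(X_n^{\ms V}\) converge to those of \(Z^{\ms V}(\cdot)-Z^{\ms V}(a)\). Davydov's hypotheses are then checked one at a time. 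The increment bound for \(|t-u|\ge a_n\) is exactly \eqref{eq:chent_gnV}. The other two hypotheses, a uniform fourth moment at fixed parameters and asymptotically negligible jumps of a monotone compensator on the \(a_n\)-grid, reduce as before to monotonicity together with an expectation bound. The key structural fact survives the weighting: the scores \(A_{X_i}\) are nonnegative, so \(N_n^{\ms V}(t)\) and \(\E[N_n^{\ms V}(t)]\) are both nondecreasing in \(t\). This gives the decomposition \(X_n^{\ms V}=X_n^{\circ}-X_n^{*}\) into two nondecreasing parts.

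The replacement for Lemma~\ref{lem:exp} is a weighted expectation bound. By the Mecke formula,
\[
\E[G_n^{\ms V}(I)]=\int_{W_n\times I}\E[A_x]\,\Q(dx).
\]
Lemma~\ref{lem:a2}, together with Lyapunov's inequality, gives \(\E[A_{(v,h)}]\le C(1+(b-h)_+^{d/2})\le C_J\) for \(h\in J\). Hence
\[
\E[G_n^{\ms V}(I)]\le C_J|W_n|\mathbb F(I)\le C|W_n||I|.
\]
This bound has two consequences.
\begin{enumerate}
\item On the grid \(t_k=(a+ka_n)\wedge b\), the maximal increment of \(X_n^*\) is at most \(Ca_n\sqrt{|W_n|}=C|W_n|^{-1/6}\to0\). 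This is deterministic.
\item For \(t-a<a_n\), monotonicity gives \(|N_n^{\ms V}(t)-\E N_n^{\ms V}(t)|\le N_n^{\ms V}(t')+\E N_n^{\ms V}(t')\) with \(t'=(a+a_n)\wedge b\). Setting \(\mu_n^{\ms V}:=\E[N_n^{\ms V}(t')]\), this yields
\[
\E|X_n^{\ms V}(t)|^4\le C\,\E|X_n^{\ms V}(t')|^4+C|W_n|^{-2}(\mu_n^{\ms V})^4
\le C a_n^{5/4}+C|W_n|^2a_n^4\le C_J .
\]
For \(t-a\ge a_n\), \eqref{eq:chent_gnV} directly gives \(\E|X_n^{\ms V}(t)|^4\le C\).
\end{enumerate}

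Once Davydov's theorem gives \(X_n^{\ms V}\Rightarrow Z^{\ms V}(\cdot)-Z^{\ms V}(a)\) in \(D(J)\), we add back \(\hat G^{\ms V}_{n,*}(a)\). This scalar sequence is tight by its one-dimensional convergence, and addition is continuous, exactly as in the completion of Theorem~\ref{thm:func}(1). The endpoint condition and a continuous modification of \(Z^{\ms V}\) follow from \(\E[(Z^{\ms V}(t)-Z^{\ms V}(s))^2]\le C|t-s|\). This bound is obtained as the limit of \(|W_n|^{-1}\Var(G_n^{\ms V}((s,t]))\), using the increment variance bound of Proposition~\ref{pr:varV} combined with Theorem~\ref{thm:var}(2), and then Gaussianity upgrades it to the fourth moment.

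I expect the main obstacle to be the uniform-in-\(I\) first-moment bound on \(\E[A_x]\) for marks in \(J\). The score is unbounded, but Lemma~\ref{lem:a2} is uniform for \(h\le b\), which controls it. A secondary point is that the fourth-moment bound at small \(t-a\) has to go through monotonicity rather than \eqref{eq:chent_gnV}; the nonnegativity of volumes makes this step identical to the unweighted one.
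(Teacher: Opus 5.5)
Your proposal is correct and follows the paper's proof step for step. It uses the same application of \cite[Theorem~2]{davydov} with cutoff $a_n=|W_n|^{-2/3}$, the same monotone decomposition based on nonnegativity of the scores, and the same weighted expectation bound from the Mecke formula, Lemma~\ref{lem:a2} and Lyapunov's inequality, which is exactly Lemma~\ref{lem:expV}. The one detail the paper states and you omit is that the finitely many $n$ with $a_n>|J|$, for which \eqref{eq:chent_gnV} cannot be applied on $(a,t']$, are absorbed into the constant.
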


\begin{proposition}[Variance bound for weighted increments]
\label{pr:varV}
Under the standing assumptions of Section~\ref{sec:tight_V}, we have
\(
\sup_{n\ge1}\sup_{I=(s,t]\subseteq J}
\Var(G_n^{\ms V}(I))/(|W_n||I|)< \infty.
\)
\end{proposition}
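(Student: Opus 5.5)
The plan is to reuse the covariance decomposition from Section~\ref{ss:covv}, now with both mark integrals restricted to the increment interval $I=(s,t]$. By the Mecke formula,
\[
 \Var(G_n^{\ms V}(I))=\int_{W_n^I}\E[A_x^2]\,\Q(dx)+\iint_{W_n^I\times W_n^I}\Delta^{\ms V}(x,x')\,\Q(dx')\,\Q(dx),
\]
and we bound the two terms separately.

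\emph{Diagonal term.} For $h\in I\subseteq J=[a,b]$, Lemma~\ref{lem:a2} with $z=b$ and Lyapunov's inequality give $\E[A_x^2]\le C(1+(b-h)_+^{d})$. On the compact interval $J$ this is at most a constant $C_J$. Hence the diagonal term is at most $C_J|W_n|\,\mathbb F(I)\le C|W_n|\,\|f\|_\infty|I|$.

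\emph{Off-diagonal term.} Apply Proposition~\ref{lem:weighted-lower-tail-boundary} with $z=z'=b$. Since $h,h'\in I\subseteq J$, the mark factors $M_b(h)M_b(h')$ are bounded by a constant depending only on $J$. This yields
\[
 |\Delta^{\ms V}(x,x')|\le C(1+\|v-v'\|)^{-\rho},\qquad \rho>d+1 .
\]
Enlarging the inner spatial integral to $\R^d$ then bounds the off-diagonal term by
\[
 C\,\mathbb F(I)^2\,|W_n|\int_{\R^d}(1+\|u\|)^{-\rho}\,du\le C|W_n|\,\mathbb F(I)\le C|W_n|\,|I| .
\]
Here I use $\mathbb F(I)\le1$ to discard one factor of $\mathbb F(I)$, and the bounded density for the last step.

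Adding the two bounds gives $\Var(G_n^{\ms V}(I))\le C|W_n||I|$ uniformly in $n$ and $I\subseteq J$. The standing assumption $m_0>11d/2$ is exactly the hypothesis of Proposition~\ref{lem:weighted-lower-tail-boundary}.

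\emph{Main check.} The step needing care is uniformity of the constant in Proposition~\ref{lem:weighted-lower-tail-boundary}. That proposition is stated for fixed $z,z'$ with $h\le z$, $h'\le z'$. Taking $z=z'=b$ covers all marks in $J$ simultaneously, so the constant depends only on $b$ and not on $I$. The absolute integrability of the restricted kernel also follows from the same pointwise bound.
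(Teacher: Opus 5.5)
Your proposal is correct and follows the paper's proof essentially line by line. It uses the same Mecke expansion restricted to $I$, a uniform second-moment bound on $J$ for the diagonal, and Proposition~\ref{lem:weighted-lower-tail-boundary} with both levels equal to $b$ plus enlargement of the spatial integral for the off-diagonal term. For the diagonal, the paper cites \eqref{eq:cell-at-cover-time} with Corollary~\ref{lem:cover-time-moment-integrability}, while you use Lemma~\ref{lem:a2} with Lyapunov; these are equivalent. The one thing the paper adds is the preliminary remark that if $\mathbb F((-\infty,b])=0$ then all increments vanish almost surely. You should include it, because Lemma~\ref{lem:a2} and Proposition~\ref{lem:weighted-lower-tail-boundary} with $z=z'=b$ require $\mathbb F((-\infty,b])>0$.
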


\begin{proposition}[Fourth cumulant bound for weighted increments]
\label{pr:cumV}
Under the standing assumptions of Section~\ref{sec:tight_V}, we have
\(
\sup_{I=(s,t]\subseteq J} |c_4(G_n^{\ms V}(I))|\in O(|W_n|).
\)
\end{proposition}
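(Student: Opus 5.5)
The plan follows the proof of Proposition~\ref{pr:cum}. Decompose $G_n^{\ms V}(I)=\sum_{i=1}^{k_n}\Delta_i^{\ms V}$, where $\Delta_i^{\ms V}:=\sum_{X_j\in\PP|_{(Q_i\cap W_n)\times I}}A(X_j;\PP)$ over the unit cubes $Q_i$. By multilinearity,
\[
c_4(G_n^{\ms V}(I))=\sum_{\mathbf i}c_4(\Delta^{\ms V}_{i_1},\ldots,\Delta^{\ms V}_{i_4}).
\]
For each labeled quadruple $\mathbf i$, choose the split $G,H$ attaining $\rho(\mathbf i)$ and apply Lemma~\ref{lem:fourth-semicluster-split}. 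This writes the cumulant as a finite combination of terms $\Cov(X_S,X_T)\prod_\ell\E[X_{K_\ell}]$, with $X_S$ a product of at most three block scores from $G$ and $X_T$ a product from $H$. Two inputs then finish the argument, exactly as before.

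First, the moment factors must be bounded uniformly. Each $\Delta_i^{\ms V}$ should have $\E[(\Delta_i^{\ms V})^{4}]\le C$ uniformly in $i,n,I\subseteq J$. To get this, expand the fourth power by the multivariate Mecke formula into integrals over at most four inserted points in $Q_i\times J$. Since insertion only shrinks cells, each integrand is bounded by Hölder's inequality and Lemma~\ref{lem:a2}, using $\E[|C|^{4+p}]\le C(1+(b-h)_+^{(4+p)d/2})$ with marks in $J$ bounded. Generalized Hölder then bounds all products of up to four blocks.

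Second, and this is the main obstacle, we need a weighted analogue of Lemma~\ref{covBoundLem}:
\[
|\Cov(X_\alpha,X_\beta)|\le C(1+\dist(\alpha,\beta))^{-3d-1}.
\]
I would prove it by localization and resampling, as in the proof of Proposition~\ref{lem:weighted-lower-tail-boundary}. Set $r:=\dist(\alpha,\beta)$ and $t_r:=b+r^2/256$. Replace every score $A_x$ with $v$ in a block of $\alpha$ by its truncation $A_x^{[t_r]}$. By Lemma~\ref{lem:weighted-deterministic-resampling}(i), this truncation is measurable with respect to $\PP$ restricted to $B_\alpha:=\bigcup D^{\ms V}(x,t_r)$, a set of spatial radius of order $r/8$ around the $\alpha$-cubes.

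The truncation error in $L^{a_0}$ is estimated by \eqref{eq:deterministic-truncation-error} with a large power $a$ of $(t_r-h)^{-1}$, combined with Hölder's inequality with exponents $(a_0,b_0)$ against $L^{b_0}$-bounds on the remaining factors. The Mecke formula, applied to at most $3$ inserted points per side, keeps these bounds uniform. The choice $5d(1/a_0-1/p)\ge 3d+1$ is designed to turn the cover-time tail into a decay $r^{-3d-1}$.

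For the truncated product, resample $\PP$ on $B_\alpha$ as in Lemma~\ref{lem:weighted-deterministic-resampling}(iii). The covariance then reduces to the probability that some $\beta$-score changes. By \eqref{eq:deterministic-resampling-change-probability} together with a summed version of \eqref{eq:deterministic-overlap-final}, this probability is $O(r^{-2\rho'})$ with $\rho'$ large. For this step the exponent $\gamma>3d$ and the moment $m_0$ are spent through Corollary~\ref{lem:cover-time-moment-integrability}. The delicate point is ensuring the overlap $\Q(B_\alpha\cap D^{\ms V}_{x'})$ decays fast enough in $r$ when only moments of order below $m_0$ of $H_-$ are available, since negative marks in $B_\alpha$ reach far. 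The extra assumed moment should absorb this.

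Finally, sum over quadruples with Lemma~\ref{lem:labelled-quadruple-shell-count}. This gives
\[
\sum_m C|W_n|(1+m)^{3d-1}(1+m)^{-3d-1}=O(|W_n|),
\]
uniformly in $I\subseteq J$. The $m=0$ term is handled by the uniform moment bound.
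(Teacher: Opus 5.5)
Your outer skeleton is the paper's: unit-cube blocks, the split attaining $\rho(\mathbf i)$ in Lemma~\ref{lem:fourth-semicluster-split}, uniformly bounded moment factors, decay $(1+\rho(\mathbf i))^{-3d-1}$ of the covariance factors, and the shell count of Lemma~\ref{lem:labelled-quadruple-shell-count}. The difference is how you prove the key covariance decay, Lemma~\ref{lem:weighted-block-cov-decay}, and your route works.

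The paper approximates \emph{both} products by conditional expectations on the vertical cylinders $S_r(\mathcal A)$ and $S_r(\mathcal B)$ with $r=m/4$. These cylinders are disjoint, so independence is exact. The approximation error is controlled through the failure event $\mathsf B_{\mathcal A}(r;\PP)$, using the comparison Lemma~\ref{lem:weighted-deterministic-comparison} and the envelope estimates behind Lemma~\ref{lem:weighted-bad-event}. H\"older with $(a_0,p)$ then turns $O(r^{-5d})$ into $r^{-3d-1}$.

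You instead truncate only the $\alpha$-side at the deterministic time $t_r=b+r^2/256$. You then resample on a finite-measure, non-cylindrical region $B_\alpha$, transplanting the proof of Proposition~\ref{lem:weighted-lower-tail-boundary}. Three adjustments are needed.

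\textbf{The region $B_\alpha$.} It must be the deterministic union of $D^{\ms V}(x,t_r)$ over \emph{all} $x\in Q_i\times J$ with $i\in\alpha$, which is essentially $\widehat D^{\ms V}_{z_i,J}(t_r)$. So what you need is not literally a sum of \eqref{eq:deterministic-overlap-final} but its envelope analogue. Its proof carries over verbatim: on $\{T_{x'}-h'<r^2/256\}$ the overlap lives at marks below $b-cr^2$, and this gives $O(r^{d-2q})$ for $q\le m_0$. This is combined with the Mecke formula over the points of $Q_j^I$, which lie outside $B_\alpha$ for large $r$. You also use Lemma~\ref{lem:weighted-deterministic-resampling}(iii), whose proof only needs $\Q(B)<\infty$. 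Note that $B_\alpha$ has radius of order $r/8$ only at marks above roughly $b$; below that it widens like $\sqrt{t_r-s}$.

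\textbf{What H\"older must convert.} Marks lie in the compact $J$, so \eqref{eq:deterministic-truncation-error} holds for every $a$ with a uniform constant. Hence your truncation error is smaller than any power of $r$. The condition $5d(1/a_0-1/p)\ge 3d+1$ is not what handles it. The quantity H\"older must convert is $\P(X_\beta\neq X_\beta')=O(r^{d-2q})$, which leaves ample room under $m_0>11d/2$.

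\textbf{Higher moments.} These H\"older steps need $L^u$-bounds on products of up to three blocks for $u$ well above $4$, so the fourth-moment bound you state is not enough. Lemma~\ref{lem:a2} supplies every order uniformly for marks in $J$.

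What your route buys is reuse of the resampling and overlap machinery of Section~\ref{sec:var}, without the configuration-comparison argument. What the paper's route buys is a symmetric, stand-alone $L^{a_0}$ localization of each block product on a cylinder. There, independence comes for free from spatial disjointness rather than from a one-sided sensitivity estimate.
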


\begin{proposition}[Fourth-moment bound for weighted increments]
\label{pr:momentV}
Under the standing assumptions of Section~\ref{sec:tight_V}, there is a constant \(C<\infty\) such that, for every \(n\ge1\) and every
interval \(I=(s,t]\subseteq J\) with \(|I|\ge |W_n|^{-2/3}\), we have
\(
\E\big[\widetilde G_n^{\ms V}(I)^4\big]\le C|W_n|^2|I|^{5/4}.
\)
\end{proposition}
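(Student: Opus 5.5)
The plan is to mirror the proof of \eqref{eq:chent_gn} in the unweighted case, taking Propositions~\ref{pr:varV} and~\ref{pr:cumV} as given. The starting point is the fourth-moment identity
\[
 \E\big[\widetilde G_n^{\ms V}(I)^4\big]
 =3\Var(G_n^{\ms V}(I))^2+c_4(G_n^{\ms V}(I)),
\]
which is valid because $G_n^{\ms V}(I)$ has finite fourth moment. This finiteness follows from Lemma~\ref{lem:a2} with $p=1$ (or any $p>0$), combined with Mecke's formula and the moment assumption $\E[H_-^{m_0}]<\infty$, $m_0>11d/2$. The marks lie in the compact interval $J$, so Lemma~\ref{lem:a2} gives uniform $L^5$ bounds on the scores. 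We then bound $\E[\widetilde G_n^{\ms V}(I)^4]\le 3\Var(G_n^{\ms V}(I))^2+|c_4(G_n^{\ms V}(I))|$ and estimate the two terms separately.

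\emph{Variance term.} Proposition~\ref{pr:varV} gives $\Var(G_n^{\ms V}(I))\le C|W_n||I|$ uniformly in $n$ and $I\subseteq J$. Hence
\[
 3\Var(G_n^{\ms V}(I))^2\le C|W_n|^2|I|^2\le C|J|^{3/4}|W_n|^2|I|^{5/4},
\]
using $|I|\le|J|$.

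\emph{Cumulant term.} Proposition~\ref{pr:cumV} gives $|c_4(G_n^{\ms V}(I))|\le C|W_n|$ uniformly in $I\subseteq J$. The cutoff $|I|\ge|W_n|^{-2/3}$ yields $|W_n|^2|I|^{5/4}\ge|W_n|^{2-5/6}=|W_n|^{7/6}\ge|W_n|$ as soon as $|W_n|\ge1$. For the finitely many $n$ with $|W_n|<1$, if any exist, $|W_n|$ is bounded below by $|W|>0$, so the inequality holds after enlarging the constant.

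Adding the two bounds gives the claim with a constant independent of $n$ and $I$. Given the two preceding propositions, no step is a genuine obstacle. The only point requiring care is to check that the constants in Propositions~\ref{pr:varV} and~\ref{pr:cumV} are uniform over all subintervals $I\subseteq J$, which they state explicitly, and that the cumulant-to-moment identity is justified by finiteness of the fourth moment.
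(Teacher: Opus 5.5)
Your proposal is correct and matches the paper's proof: both use the identity $\E[\widetilde G_n^{\ms V}(I)^4]=3\Var(G_n^{\ms V}(I))^2+c_4(G_n^{\ms V}(I))$, bound the variance term through Proposition~\ref{pr:varV} together with $|I|^2\le|J|^{3/4}|I|^{5/4}$, and absorb the $O(|W_n|)$ cumulant term from Proposition~\ref{pr:cumV} using $|W_n|^2|I|^{5/4}\ge|W_n|^{7/6}$. Your handling of small $|W_n|$ via $|W_n|\ge|W|$ is valid. Your remark that the fourth moment is finite is an extra justification the paper leaves implicit.
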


We first derive the fourth-moment estimate and the functional convergence from the three central bounds. Their proofs occupy the remainder of the section.

\begin{proof}[Proof of Proposition~\ref{pr:momentV}]
For \(I=(s,t]\subseteq J\), we have
\(
 \E[\widetilde G_n^{\ms V}(I)^4]
 =
 3\Var(G_n^{\ms V}(I))^2+c_4(G_n^{\ms V}(I)).
\)
Hence, by Propositions~\ref{pr:varV} and~\ref{pr:cumV},
\(
 \E[\widetilde G_n^{\ms V}(I)^4]
 \le
 C|W_n|^2|I|^2+C|W_n|.
\)
Since \(I\subseteq J\), we have
\(
 |I|^2\le |J|^{3/4}|I|^{5/4}.
\)
Moreover, if \(|I|\ge |W_n|^{-2/3}\), then
\(
 |W_n|^2|I|^{5/4}
 \ge
 |W_n|^{7/6}
 \ge
 |W_n|
\)
for all sufficiently large \(n\); the finitely many remaining \(n\)'s are
absorbed into the constant. Therefore,
\(
 \E[\widetilde G_n^{\ms V}(I)^4]
 \le
 C_J|W_n|^2|I|^{5/4}.
\)
\end{proof}

\begin{proof}[Proof of Theorem~\ref{thm:func}(2)]
Theorem~\ref{thm:mult} gives convergence of the finite-dimensional
distributions. Tightness of the centered weighted process in \(D(J)\)
follows from Propositions~\ref{pr:davydovV} and~\ref{pr:momentV}.
Let \(Z^{\ms V}\) be the centered Gaussian process with covariance kernel \(k^{\ms V}\).
For \(s<t\) in \(J\),
\[
 \E[(Z^{\ms V}(t)-Z^{\ms V}(s))^2]
 =
 \lim_{n\to\infty}
 |W_n|^{-1}\Var(G_n^{\ms V}((s,t]))
 \le C|t-s|.
\]
Since \(Z^{\ms V}\) is Gaussian,
\(
 \E[(Z^{\ms V}(t)-Z^{\ms V}(s))^4]\le C|t-s|^2.
\)
Kolmogorov's continuity criterion therefore gives a continuous modification
of \(Z^{\ms V}\). Tightness and finite-dimensional convergence imply
\(
 \widehat G_{n,*}^{\ms V}\Rightarrow Z^{\ms V}
 \text{in }D(J)
\)
with the Skorokhod \(J_1\)-topology. No positivity or nondegeneracy assumption is needed.
\end{proof}

The remainder of the section proves the three central propositions. The
variance proof restricts the existing covariance expansion to the increment
interval and uses the uniform cell moment bound on \(J\). The cumulant proof uses cube
envelopes and the cover times of contributing points to control covariance
between separated products of block scores.

\subsection{Proof of Proposition~\ref{pr:davydovV}: the weighted Davydov reduction}
\label{ss:davV}

The condition for intervals below the cutoff follows from the same argument based on the first moment as in the unweighted case, provided that the typical cell volume has a uniformly bounded first moment.
\bel[Expectation of weighted level increments]
\label{lem:expV}
Let \(I=(s,t]\subseteq J\). Then, for every \(n\ge1\),
\[
 \E[G_n^{\ms V}(I)]
 =
 |W_n|\int_I\E[A_{(0,h)}]\,\mathbb F(dh)
 \le
 C|W_n||I|.
\]
\enl

\begin{proof}
By Mecke's formula,
\(
 \E[G_n^{\ms V}(I)]
 =
 \int_{W_n\times I}
 \E[A_{(v,h)}]\,dv\,\mathbb F(dh)
 =
 |W_n|\int_I\E[A_{(0,h)}]\,\mathbb F(dh).
\)
Lemma~\ref{lem:a2} and Lyapunov's inequality give
\(
 \sup_{h\in J}\E[A_{(0,h)}]<\infty.
\)
Therefore,
\(
 \E[G_n^{\ms V}(I)]
 \le
 C|W_n|\mathbb F(I)
 \le
 C\|f\|_\infty|W_n||I|.
\)
\end{proof}

\begin{proof}[Proof of Proposition~\ref{pr:davydovV}]
Put \(a_n:=|W_n|^{-2/3}\) and \(N_n^{\ms V}(t):=G_n^{\ms V}(t)-G_n^{\ms V}(a)\), and
\[
 X_n^{\ms V}(t)
 :=
 \frac{N_n^{\ms V}(t)-\E[N_n^{\ms V}(t)]}{\sqrt{|W_n|}},
 \qquad t\in J.
\]
Both \(N_n^{\ms V}\) and \(t\mapsto\E[N_n^{\ms V}(t)]\) are nondecreasing.

For increments of length at least \(a_n\),
Proposition~\ref{pr:momentV} gives
\(
 \E[|X_n^{\ms V}(t)-X_n^{\ms V}(s)|^4]
 \le C|t-s|^{5/4}.
\)
If \(t-a\ge a_n\), the same bound applied to \((a,t]\) gives a uniform bound on the fourth moment. If \(t-a<a_n\), put
\(u_n:=(a+a_n)\wedge b\). Monotonicity gives
\(
 |N_n^{\ms V}(t)-\E N_n^{\ms V}(t)|
 \le
 N_n^{\ms V}(u_n)+\E N_n^{\ms V}(u_n).
\)
Proposition~\ref{pr:momentV} and Lemma~\ref{lem:expV}, with the finitely many
values for which \(a_n>|J|\) absorbed into the constant, then give
\(
 \sup_{n,t\in J}\E|X_n^{\ms V}(t)|^4<\infty.
\)
On the grid \(t_k:=(a+ka_n)\wedge b\), Lemma~\ref{lem:expV} yields
\[
	{{|W_n|^{-1/2}}}
 \max_k
 {\E[G_n^{\ms V}((t_k,t_{k+1}])]}
 \le
 Ca_n\sqrt{|W_n|}
 =
 C|W_n|^{-1/6}\longrightarrow0.
\]
Thus, all hypotheses of Davydov's theorem \cite[Theorem~2]{davydov} are satisfied, and
tightness reduces to the moment estimate on \(|I|\ge a_n\).
\end{proof}

\subsection{Proof of Proposition~\ref{pr:varV}: variance bound}
\label{ss:var_inc_vol}

We use the covariance expansion from Section~\ref{ss:covv} with both marks
restricted to the increment interval. On the compact interval \(J\), the
cell moment bound controls the diagonal term uniformly. The mark factors
in Proposition~\ref{lem:weighted-lower-tail-boundary} are also uniformly bounded,
so its spatial kernel controls the off-diagonal term with a constant
independent of \(n\) and \(I\).

\begin{proof}[Proof of Proposition~\ref{pr:varV}]
If \(\mathbb F((-\infty,b])=0\), all increments vanish almost surely, so assume
\(\mathbb F((-\infty,b])>0\). The standing condition \(m_0>11d/2\) implies the moment
hypothesis of
Proposition~\ref{lem:weighted-lower-tail-boundary}. With both upper levels set
equal to \(b\), its factors \(M_b(h)\) are uniformly bounded for
\(h\in J=[a,b]\). Moreover, \eqref{eq:cell-at-cover-time} and
Corollary~\ref{lem:cover-time-moment-integrability}, applied with exponent
\(d\), imply a uniform second-moment bound. Thus, for
\(x=(v,h)\), \(x'=(v',h')\), with \(h,h'\in J\), and the
\(\rho>d+1\) from that lemma, \(|\Delta^{\ms V}(x,x')|\le C(1+\|v-v'\|)^{-\rho}\) and 
\(
 \sup_{h\in J}\E[A_{(0,h)}^2]\le C,
\)
For \(I=(s,t]\subseteq J\), restricting both mark integrals in the
Mecke expansion of Section~\ref{ss:covv} to \(I\) gives
\[
\begin{aligned}
 \Var(G_n^{\ms V}(I))
 &=|W_n|\int_I\E[A_{(0,h)}^2]\,\mathbb F(dh)
   +\iint_{(W_n\times I)^2}\Delta^{\ms V}(x,x')\,\Q(dx')\,\Q(dx)\\
 &\le C|W_n|\mathbb F(I)
   +C\mathbb F(I)^2\int_{W_n}\int_{W_n}
        (1+\|v-v'\|)^{-\rho}\,dv'\,dv.
\end{aligned}
\]
The spatial kernel is integrable because \(\rho>d\). Enlarging the
second spatial integration to \(\mathbb R^d\) therefore gives
\(\Var(G_n^{\ms V}(I))\le C|W_n|(\mathbb F(I)+\mathbb F(I)^2)
\le C|W_n||I|\). The last inequality uses
\(\mathbb F(I)\le1\) and \(\mathbb F(I)\le\|f\|_\infty|I|\),
and all constants are uniform in \(n\) and \(I\).
\end{proof}

\subsection{Cube envelopes and cover times of contributing points}
\label{subsec:cube_envelopes}

The cumulant proof only requires localization for points that actually
contribute to the volume-weighted statistic. We therefore build all cube
envelopes from extreme points. This is important because the score
\(A_x=\one\{x\in\EE(\PP+\delta_x)\}|C(x;\PP)|\) vanishes for non-extreme
points.
\begin{definition}[Cube envelopes]
Let \(Q_z:=z+[-1/2,1/2]^d\), \(z\in\mathbb Z^d\). For \(T\in\mathbb R\),
define
\[
 \widehat D_{z,J}^{\ms V}(T)
 :=
 \left\{
 (v',h')\in\X:
 h'\le T,\
 \sqrt{(T-a)_+}+\sqrt{(T-h')_+}
 \ge
 (\|v'-z\|-\sqrt d/2)_+
 \right\}.
\]
\end{definition}

\begin{lemma}[Envelope domination]
\label{lem:env}
Let \(z\in\mathbb Z^d\), \(x=(v,h)\in Q_z\times J\), and
\(T\in\mathbb R\). Then
\(
 D^{\ms V}(x,T)\subseteq\widehat D_{z,J}^{\ms V}(T).
\)
Thus, for any locally finite
\(\mathcal X\subseteq Q_z\times J\), we have
\(
 \bigcup_{x\in\mathcal X}D^{\ms V}(x,T_x)
 \subseteq
 \widehat D_{z,J}^{\ms V}\!\big(\sup_{x\in\mathcal X}T_x\big).
\)
\end{lemma}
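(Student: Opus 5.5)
The inclusion is pointwise, so I will fix \((u,g)\in D^{\ms V}(x,T)\) and check the three defining conditions of \(\widehat D_{z,J}^{\ms V}(T)\). By \eqref{eq:dvxt}, membership gives \(g\le T\) and
\[
\|u-v\|\le\sqrt{(T-h)_+}+\sqrt{(T-g)_+}.
\]
The height condition \(g\le T\) is then immediate, and only the spatial inequality remains to be verified.

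I will compare with the cube centre \(z\) in two steps. First, since \(v\in Q_z=z+[-1/2,1/2]^d\), we have \(\|v-z\|\le\sqrt d/2\). The triangle inequality then yields
\[
\|u-z\|\le\|u-v\|+\sqrt d/2,
\qquad\text{hence}\qquad
(\|u-z\|-\sqrt d/2)_+\le\|u-v\|,
\]
where taking positive parts is harmless because the right-hand side is nonnegative. Second, since \(h\in J=[a,b]\), we have \(h\ge a\), so \((T-h)_+\le(T-a)_+\) by monotonicity of \(y\mapsto y_+\). Combining these gives
\[
(\|u-z\|-\sqrt d/2)_+\le\sqrt{(T-a)_+}+\sqrt{(T-g)_+},
\]
which is exactly the spatial condition defining \(\widehat D_{z,J}^{\ms V}(T)\).

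For the union statement, set \(T^*:=\sup_{x\in\mathcal X}T_x\), which may equal \(+\infty\). The set \(D^{\ms V}(x,T)\) is monotone in \(T\): both the constraint \(g\le T\) and the radius \(\sqrt{(T-h)_+}+\sqrt{(T-g)_+}\) are nondecreasing in \(T\). Hence
\[
D^{\ms V}(x,T_x)\subseteq D^{\ms V}(x,T^*)\subseteq\widehat D_{z,J}^{\ms V}(T^*),
\]
where the second inclusion is the first part of the lemma applied with \(T=T^*\). Taking the union over \(x\in\mathcal X\) gives the claim.

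The only point needing care is the degenerate values \(T_x=\pm\infty\) (empty cell, or unbounded cell). I would adopt the natural convention that \(D^{\ms V}(x,-\infty)=\varnothing\) and that the definition at \(T=+\infty\) yields the whole space \(\X\). With this convention both inclusions hold trivially at these values, so there is no genuine obstacle; the argument is a direct geometric verification.
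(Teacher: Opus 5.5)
Your proposal is correct and follows the paper's argument: you use $(T-h)_+\le(T-a)_+$ from $h\ge a$, together with $(\|u-z\|-\sqrt d/2)_+\le\|u-v\|$ from $\|v-z\|\le\sqrt d/2$. Your treatment of the union statement, via monotonicity of $D^{\ms V}(x,\cdot)$ in $T$ and the conventions at $T=\pm\infty$, spells out a step the paper leaves implicit.
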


\begin{proof}
If \((v',h')\in D^{\ms V}(x,T)\), then
\(
 \|v'-v\|
 \le
 \sqrt{(T-h)_+}+\sqrt{(T-h')_+}
 \le
 \sqrt{(T-a)_+}+\sqrt{(T-h')_+}.
\)
Since \(\|v-z\|\le\sqrt d/2\), we have
\(
 (\|v'-z\|-\sqrt d/2)_+\le\|v'-v\|,
\)
which proves the inclusion.
\end{proof}
\begin{definition}[Maximum cover time among contributing points]
For \(z\in\mathbb Z^d\), define
\[
 T_z^e
 :=
 \sup\left\{
 T(x;\PP):
 x\in\PP|_{Q_z\times J},\
 x\in\EE(\PP)
 \right\},
\]
with \(\sup\varnothing:=-\infty\).
\end{definition}

\begin{lemma}
\label{lem:extreme-cube-time-moment}
Assume \(\E[H_-^p]<\infty\) for some \(p>0\), and let \(q<p\).
Then
\(
 \sup_{z\in\mathbb Z^d}
 \E[(T_z^e-a)_+^q]<\infty.
\)
\end{lemma}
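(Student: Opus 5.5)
By stationarity the law of $T_z^e$ does not depend on $z$, so it suffices to treat $z=0$. I would write the $q$-th moment through its tail,
\[
\E[(T_0^e-a)_+^q]=q\int_0^\infty s^{q-1}\P(T_0^e>a+s)\,\md s ,
\]
and control the tail by a union bound over the contributing points. The event $\{T_0^e>a+s\}$ says that some $x\in\PP|_{Q_0\times J}$ is extreme and has $T(x;\PP)>a+s$. By the Mecke formula,
\[
\P(T_0^e>a+s)\le \E\sum_{x\in\PP|_{Q_0\times J}}\one\{x\in\EE(\PP),\,T(x;\PP)>a+s\}
=\int_{Q_0\times J}\P\big(x\in\EE(\PP+\delta_x),\,T(x;\PP)>a+s\big)\,\Q(\md x).
\]
Here I use that adding $x$ to $\PP$ does not change $T(x;\cdot)$, by the convention $C(x;\vp+\delta_x)=C(x;\vp)$.

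Next I bound the integrand with Proposition~\ref{prop:CoverTimetail}(iii). For $h\in J=[a,b]$ and $t=a+s$ with $s>b-a$, we have $t>h$, which gives
\[
\P\big(x\in\EE(\PP+\delta_x),\,T>a+s\big)\le C\exp\Big(-2^{-d}\Q\big(\tfrac{h+a+s}{2}\big)\Big)\le C\exp\big(-2^{-d}\Q(a+s/2)\big),
\]
since $\Q$ is increasing and $h\ge a$. Two cases remain.

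If $\mathbb F((-\infty,b])=0$, there are no points with marks in $J$, so $T_0^e=-\infty$ almost surely and the claim is trivial.

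Otherwise I pick $x_0$ with $\mathbb F((-\infty,x_0])>0$. I may assume $x_0\ge b$, since enlarging $x_0$ keeps this mass positive. Remark~\ref{rem:Qlb} then gives $\Q(a+s/2)\ge c\,(a+s/2-x_0)_+^{d/2}$. Hence for $s\ge s_0:=2(x_0-a)+2$ the tail is at most $C\exp(-c' s^{d/2})$, using that $\Q(Q_0\times J)\le1$. For $s<s_0$ I bound the probability by one.

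Combining the two ranges,
\[
\E[(T_0^e-a)_+^q]\le s_0^q+Cq\int_{s_0}^\infty s^{q-1}e^{-c's^{d/2}}\,\md s<\infty .
\]

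The main point to check is that the union bound is legitimate even though $\PP|_{Q_0\times J}$ could contain many points. It is: the Mecke formula converts the first moment of the count into the integral above, and the Poisson count in $Q_0\times J$ has mean at most one.

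This argument is stronger than the statement requires. It gives finiteness for every $q$, with no use of the hypothesis $\E[H_-^p]<\infty$ beyond the standing assumption $\E[H_-^{d/2}]<\infty$ that makes $\Q$ finite. That is consistent with the lemma, since $J$ is compact and the marks are bounded above by $b$. The only delicate step is the monotone replacement of $h$ by $a$ inside $\Q$, which holds because $\Q$ is nondecreasing.
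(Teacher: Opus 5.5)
Your argument is correct and follows the paper's route. Like you, the paper bounds the tail of \(T_z^e\) by a union bound and the Mecke formula (Lemma~\ref{lem:cube_time_tail_under_E}) and then integrates the tail.

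The only difference is how the Palm probability is controlled. The paper uses Markov's inequality with the moment bound of Corollary~\ref{lem:cover-time-moment-integrability}, which gives a polynomial tail \(Ct^{-p}\) and hence the restriction \(q<p\). You instead insert the exponential tail of Proposition~\ref{prop:CoverTimetail}(iii) together with Remark~\ref{rem:Qlb} directly. This gives a stretched-exponential tail and finiteness of all moments, and it correctly shows that the hypothesis \(\E[H_-^p]<\infty\) is not actually needed for marks in the compact interval \(J\).
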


\begin{proof}
Apply Lemma~\ref{lem:cube_time_tail_under_E} with \(I=J\). It gives
\(
 \sup_{z\in\mathbb Z^d}
 \P(T_z^e-a\ge t)
 \le Ct^{-p},
 \qquad t\ge1.
\)
Integrating the tail then gives
\(
 \E[(T_z^e-a)_+^q]
 =
 q\int_0^\infty
 t^{q-1}\P(T_z^e-a\ge t)\,dt
 \le C_q.
\)
\end{proof}

\subsection{Proof of Proposition~\ref{pr:cumV}: fourth cumulant bound}
\label{ss:cumV}

We prove the bound for the fourth cumulant by decomposing the statistic into unit
cubes. Cover \(W_n\) by disjoint cubes \(Q_1,\ldots,Q_{k_n}\), with centers
\(z_1,\ldots,z_{k_n}\in\mathbb Z^d\), such that \(k_n\le C_W|W_n|\). For
\(Q_i^I:=(Q_i\cap W_n)\times I\), define
\(
\Delta_i^{\ms V}(I):=\sum_j\one\{X_j\in Q_i^I\}A_{X_j}.
\)
Then \(G_n^{\ms V}(I)=\sum_{i=1}^{k_n}\Delta_i^{\ms V}(I)\). The proof has two inputs.
First, products of at most three centered block scores have moments of every
fixed order. Second, conditioning on the Poisson process in a bounded
neighborhood of the relevant cubes gives an approximation whose error decays
polynomially with the radius. The latter estimate turns separation of the
cubes into decay of covariance.

\medskip

We first record the required moment bound.
\begin{lemma}[Moments of weighted block scores]
\label{lem:weighted-block-product-moments}
For every fixed \(q<\infty\), we have
\(
 \sup_{n,i,I}\|\Delta_i^{\ms V}(I)\|_q<\infty,
\)
where the supremum is over \(I=(s,t]\subseteq J\). Consequently, products
of at most three centered weighted block variables have uniformly bounded
\(L^q\)-norms for every fixed \(q<\infty\).
\end{lemma}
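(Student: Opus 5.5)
We have to bound $\|\Delta_i^{\ms V}(I)\|_q$ uniformly, where $\Delta_i^{\ms V}(I)$ sums the cell volumes $|C(x;\PP)|$ of extreme generators $x\in\PP$ with $x\in Q_i\times I$.

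\textbf{Step 1: reduce to extreme points in $Q_i\times J$.} Since $I\subseteq J$ and the scores are nonnegative, we have
\[
0\le\Delta_i^{\ms V}(I)\le \sum_{x\in\PP|_{Q_i\times J}}\one\{x\in\EE(\PP)\}\,|C(x;\PP)|.
\]
Hence it suffices to bound the $q$-th moment of the right-hand side uniformly in $i$. By stationarity this amounts to bounding it for $Q_0$, and the supremum over $n$ and $I$ then becomes trivial.

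\textbf{Step 2: separate the number of contributing points from the volumes.} The key observation is that the Laguerre cells of distinct generators have disjoint interiors. By \eqref{eq:cell-at-cover-time}, every extreme $x\in Q_0\times J$ has cell contained in $B_{\sqrt{T_0^e-a}}(v)$. All these balls lie in the common ball $B_{\sqrt d/2+\sqrt{(T_0^e-a)_+}}(0)$. Summing volumes of disjoint cells inside this ball gives
\[
\Delta_0^{\ms V}(I)\le \kappa_d\bigl(\sqrt d/2+\sqrt{(T_0^e-a)_+}\bigr)^d\le C\bigl(1+(T_0^e-a)_+^{d/2}\bigr).
\]
This avoids counting points altogether. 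Lemma~\ref{lem:extreme-cube-time-moment} then gives $\E[(T_0^e-a)_+^{qd/2}]<\infty$, provided $qd/2<p$ and $\E[H_-^p]<\infty$.

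\textbf{Step 3: the moment-order issue (main obstacle).} Step 2 only covers $q<2p/d\le 2m_0/d$, that is, $q<11$ or so, not every $q<\infty$. I would therefore check the proof of Lemma~\ref{lem:cube_time_tail_under_E}. If its tail for $T_0^e$ decays faster than polynomially, as in Proposition~\ref{prop:CoverTimetail}(iii) for a single point, then all moments follow. The reason is that $(0,h)$ extreme with $h\in J$ already has a stretched-exponential cover-time tail, uniformly in $h\in J$, since $(z-h)$ is bounded there.

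Concretely, I would bound the tail by a union argument. The event $T_0^e>a+t$ requires an empty paraboloid $D(w,a+t)$ with $w$ near $Q_0$, at distance of order $\sqrt t$. This gives $\P(T_0^e-a>t)\le C t^{(d-1)/2}\exp(-c\,\Q(a+t/2))\le C\exp(-c't^{d/2})$, with $\mathbb F((-\infty,a])>0$ assumed via Remark~\ref{rem:Qlb}. If $\mathbb F((-\infty,a])=0$, shift to $b$ instead, or note the statistic is trivial in that case.

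Such a bound requires no mark moments beyond existence of the tessellation, so every $q$ is allowed. The product statement then follows from Hölder's inequality together with $\|\Delta-\E\Delta\|_q\le 2\|\Delta\|_q$.
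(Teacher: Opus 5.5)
Your argument is correct, but it takes a different route from the paper.

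\textbf{The paper's route.} The paper bounds $|\Delta_i^{\ms V}(I)|^q\le N_i(I)^{q-1}\sum_{x}A_x^q$, where $N_i(I)$ is the Poisson count in $Q_i^I$. It then applies the Mecke formula and separates the count from the score by Cauchy--Schwarz. What remains is $\sup_{h\in J}\E[A_{(0,h)}^{2q}]$. Corollary~\ref{lem:cover-time-moment-integrability} bounds this for every order without any moment assumption on $H_-$, because $(b-h)_+$ is bounded on $J$.

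\textbf{Your route.} You use a pathwise packing bound. The cells of extreme points in $Q_0\times J$ have pairwise null intersections and all lie in $B_{\sqrt d/2+\sqrt{(T_0^e-a)_+}}(0)$. Hence the number of contributing points never enters, and $\Delta_0^{\ms V}(I)\le C\bigl(1+(T_0^e-a)_+^{d/2}\bigr)$. The price is that you need moments of every order of the maximal cover time $T_0^e$. You are right that Lemma~\ref{lem:extreme-cube-time-moment}, as stated, ties the order to $\E[H_-^p]$ and would cap $q$ near $2m_0/d$.

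\textbf{Step~3.} Your fix works, and it can be done more simply than with a net on the annulus. The Mecke union bound and Proposition~\ref{prop:CoverTimetail}(iii) give $\P(T_0^e-a>t)\le\int_J\P\bigl((0,h)\in\EE(\PP+\delta_{(0,h)}),\,T((0,h);\PP)>a+t\bigr)\,\mathbb F(dh)\le C\exp\bigl(-2^{-d}\Q(a+t/2)\bigr)$. Remark~\ref{rem:Qlb} then turns this into $C\exp(-ct^{d/2})$. It suffices to use any $x_0$ with $\mathbb F((-\infty,x_0])>0$, and such an $x_0$ always exists for a probability measure. So no case distinction on $\mathbb F((-\infty,a])$ is needed.

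The proof of Lemma~\ref{lem:cube_time_tail_under_E} actually already yields every polynomial order, because the corollary it invokes is moment-free for $h\in J$. The moment hypothesis in that lemma's statement is therefore not what limits it.

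\textbf{Comparison.} Both routes rest on the same single-point stretched-exponential cover-time tail, and both need one Mecke step. The paper's version is shorter. Yours gives a deterministic domination of the block score by the cube time $T_z^e$, which fits naturally with the later localization arguments built on the envelopes $\widehat D^{\ms V}_{z,J}$.
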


\begin{proof}
Let
\(
 N_i(I):=\PP((Q_i\cap W_n)\times I).
\)
For \(q\ge1\), we have
\(
 |\Delta_i^{\ms V}(I)|^q
 \le
 N_i(I)^{q-1}
 \sum_{x\in\PP|_{Q_i^I}}A_x^q.
\)
Mecke's formula gives
\[
\begin{aligned}
 \E|\Delta_i^{\ms V}(I)|^q
 &\le
 \int_{Q_i^I}
 \E\Big[
 (N_i(I)+1)^{q-1}
 A(x;\PP+\delta_x)^q
 \Big]\,\Q(dx)
 \le
 C_q\int_I
 \E[A_{(0,h)}^{2q}]^{1/2}\,\mathbb F(dh).
\end{aligned}
\]
The Poisson count has moments of every order uniformly in \(i,n,I\), and
Corollary~\ref{lem:cover-time-moment-integrability} gives
\(
 \sup_{h\in J}\E[A_{(0,h)}^r]<\infty
 \text{for every fixed }r.
\)
This proves the moment bound for products of block scores. Centering changes the norm by at most a
factor two, and H\"older's inequality gives the product bounds.
\end{proof}

We next prove that separation of the cubes yields decay of covariance.
\begin{lemma}[Covariance decay for products of weighted block scores]
\label{lem:weighted-block-cov-decay}
Let \(X_{\mathcal A}\) and \(X_{\mathcal B}\) be centered products of at most three centered weighted block variables, associated with sets of cube indices \(\mathcal A\) and \(\mathcal B\), each of cardinality at most three. If \(m:=\operatorname{dist}(\{z_i:i\in\mathcal A\},\{z_j:j\in\mathcal B\})\), then
\(
	|\Cov(X_{\mathcal A},X_{\mathcal B})|\in O\big(m^{-3d-1}\big).
\)
\end{lemma}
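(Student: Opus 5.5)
The plan is the standard localization-and-resampling argument of Lemma~\ref{lem:weighted-deterministic-resampling}, now applied at the level of cubes. Put $\rho:=m/4$. For a cube index $i$, define the truncated block score $\Delta_i^{\ms V,[\rho]}(I):=\sum_{x\in\PP|_{Q_i^I}}A_x^{[a+\rho^2/c]}$, with the local score $A_x^{[t]}$ from \eqref{eq:deterministic-local-score} and a small absolute constant $c$ (for instance $c=16$). By Lemma~\ref{lem:env}, $A_x^{[t]}$ for $x\in Q_i\times J$ is measurable with respect to $\PP$ restricted to $\widehat D^{\ms V}_{z_i,J}(t)$.

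\emph{Step 1 (truncation).} Let $X_{\mathcal A}^{[\rho]}$ and $X_{\mathcal B}^{[\rho]}$ be the centred products built from the truncated block variables. We have $A_x=A_x^{[t]}$ on $\{T_x\le t\}$, and for $x$ in $Q_i\times J$ this event contains $\{T^e_{z_i}\le t\}$. Hence $\Delta_i^{\ms V}\ne\Delta_i^{\ms V,[\rho]}$ only if $T^e_{z_i}-a>\rho^2/c$. H\"older's inequality, the uniform moment bound of Lemma~\ref{lem:weighted-block-product-moments}, and the tail bound $\P(T^e_z-a\ge t)\le Ct^{-p}$ from Lemma~\ref{lem:extreme-cube-time-moment} then give
\[
\|X_{\mathcal A}-X_{\mathcal A}^{[\rho]}\|_{a_0}\le C\,\P\big(\max_{i\in\mathcal A}T^e_{z_i}-a>\rho^2/c\big)^{1/a_0-1/p}\le C\rho^{-2p(1/a_0-1/p)}.
\]
With $p$ of order $m_0/2$ or larger and the exponents fixed in the standing choices ($5d(1/a_0-1/p)\ge3d+1$), this yields decay $O(m^{-3d-1})$ after pairing with $\|X_{\mathcal B}\|_{b_0}$ via H\"older with $1/a_0+1/b_0=1$. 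The same applies with the roles of $\mathcal A$ and $\mathcal B$ swapped.

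\emph{Step 2 (decoupled part).} Next I would show that $\Cov(X_{\mathcal A}^{[\rho]},X_{\mathcal B}^{[\rho]})$ is small. The regions $\widehat D_{z_i,J}^{\ms V}(t)$ and $\widehat D_{z_j,J}^{\ms V}(t)$ for $i\in\mathcal A$, $j\in\mathcal B$ have horizontal sections that are balls around $z_i$ and $z_j$ of radius $\sqrt{t-a}+\sqrt{t-h'}+\sqrt d/2$. With $t-a=\rho^2/c$, these balls are disjoint at heights $h'\ge t-\rho^2/c'$. Overlap can only occur at very negative heights $h'$, which requires a mark with $(t-h')^{1/2}\gtrsim m$. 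The two truncated products are therefore independent, except on the event that $\PP$ charges the overlap region $O$. By Poisson independence, the covariance is bounded by the $L^2$-norms times $\P(\PP(O)>0)^{1/2}\le \Q(O)^{1/2}$. This follows by resampling the overlap region, as in \eqref{eq:deterministic-resampling-identity}. We have $\Q(O)\le C m^{d}\,\P(H<a-c''m^2)\cdot m^{d}\lesssim m^{2d}\E[H_-^{m_0}]m^{-2m_0}$ up to the volume factor of the balls at level $h'$, which is of order $\E[(t-H)^{d}\one\{(t-H)\gtrsim m^2\}]\le C m^{2d-2m_0}$. Since $m_0>11d/2$, this gives $\Q(O)^{1/2}=O(m^{-3d-1})$ comfortably.

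\emph{Main obstacle.} I expect Step 1 to be the delicate part, for two reasons. First, the exponent bookkeeping: the tail of $T^e_z$ must be strong enough, after the H\"older loss $1/a_0-1/p$, to reach $m^{-3d-1}$. Second, the supremum in $T_z^e$ ranges over all extreme points of a cube, so Lemma~\ref{lem:extreme-cube-time-moment} (that is, Lemma~\ref{lem:cube_time_tail_under_E}) must be used in its uniform-in-$z$ form. I would first try the truncation level $t-a=\rho^2$ with $p$ close to $m_0$. If the exponents fall short, I would choose the cut $\rho=m/4$ differently, splitting the overlap bound and the truncation bound at a common power of $m$. Centering constants are handled by expanding the centred products and treating each cross term in the same way.
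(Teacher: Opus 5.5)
Your argument is correct, but it takes a different route from the paper.

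\textbf{The paper's route.} The paper works with the full spatial cylinders $S_r(\mathcal A)=N_r(\mathcal A)\times\mathbb R$, $r=m/4$. The conditional expectations $X_{\mathcal A}^{(r)},X_{\mathcal B}^{(r)}$ are then exactly independent, and all the work goes into the $L^{a_0}$ approximation of Lemma~\ref{lem:weighted-spatial-truncation}. That lemma resamples outside the cylinder and controls the bad event $\mathsf B_{\mathcal A}(r;\PP)$. It needs the deterministic comparison Lemma~\ref{lem:weighted-deterministic-comparison}, a first-exit argument for the random cells, and the envelope tail Lemma~\ref{lem:weighted-envelope-tail} at the random level $b\vee T$.

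\textbf{Your route.} You truncate cover times at the deterministic level $t=a+m^2/256$. By Lemma~\ref{lem:weighted-deterministic-resampling}(i) and Lemma~\ref{lem:env}, each truncated block is then a function of $\PP$ on the fixed envelope $\widehat D^{\ms V}_{z_i,J}(t)$. You decouple exactly as in the unweighted Lemma~\ref{covBoundLem}, by resampling on the overlap $O$ of the envelopes.

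This avoids the comparison lemma and all random regions, and it cleanly separates the two error sources: large cover times and low-mark points in $O$. The price is that the overlap enters through $\Q(O)^{1/2}$. Since $\Q(O)\lesssim \E[(1+H_-^{d/2})\one\{H_-\ge cm^2\}]\lesssim m^{-2(q-d/2)}$, you need about $q\ge 7d/2+1$ mark moments rather than the $3d$ used in the paper's localization lemmas. That is harmless under $m_0>11d/2$.

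\textbf{Corrections to your bookkeeping.}
\begin{itemize}
\item Step~1 is not the delicate step. For marks in $J$, Proposition~\ref{prop:CoverTimetail}(iii), Remark~\ref{rem:Qlb} and the Mecke union bound give a super-polynomial tail for $T_z^e-a$, so any polynomial rate comes for free.
\item The real constraint is in Step~2. The envelope sections have radius $\sqrt d/2+\sqrt{t-a}+\sqrt{t-h'}$, so envelopes of cubes at distance $\ge m$ meet only at marks $h'\le a-cm^2$, and only provided $4\sqrt{t-a}\le(1-\epsilon)m$.
\item Your choice $t-a=m^2/256$ satisfies this. The fallback $t-a=\rho^2=m^2/16$ from your last paragraph does not: there the envelopes already overlap at marks near $a$, and $\Q(O)$ does not decay.
\item The resampling bound should read $\P(X^{[\rho]}_{\mathcal B}\ne X^{[\rho]\prime}_{\mathcal B})\le2\Q(O)$, with H\"older exponents $(4,4,2)$. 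The extra factor $m^{2d}$ in your estimate of $\Q(O)$ is unnecessary but harmless.
\end{itemize}
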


We can now prove the bound for the fourth cumulant.

\begin{proof}[Proof of Proposition~\ref{pr:cumV}]
By multilinearity of cumulants,
\(
c_4(G_n^{\ms V}(I))
=
\sum_{i_1,i_2,i_3,i_4=1}^{k_n}
c_4(\Delta_{i_1}^{\ms V},\Delta_{i_2}^{\ms V},\Delta_{i_3}^{\ms V},\Delta_{i_4}^{\ms V}).
\)
Fix \(\mathbf i=(i_1,i_2,i_3,i_4)\), and put
\(Y_a:=\Delta_{i_a}^{\ms V}-\E[\Delta_{i_a}^{\ms V}]\). Since cumulants of order at
least two are invariant under centering,
\(
c_4(\Delta_{i_1}^{\ms V},\Delta_{i_2}^{\ms V},\Delta_{i_3}^{\ms V},\Delta_{i_4}^{\ms V})
=
c_4(Y_1,Y_2,Y_3,Y_4).
\)
Choose a nonempty proper subset \(G\subsetneq K:=\{1,2,3,4\}\) attaining the
maximum in the definition of \(\rho(\mathbf i)\), and put \(H:=G^{\complement}\). Applying
Lemma~\ref{lem:fourth-semicluster-split} to \(X_a=Y_a\), we write the fourth
cumulant as a finite sum of terms
\[
\Cov
\Big(
\prod_{a\in S}Y_a,
\prod_{b\in T}Y_b
\Big)
\prod_{\ell=1}^{r}
\E\Big[\prod_{c\in K_\ell}Y_c\Big],
\]
where \(\varnothing\neq S\subseteq G\), \(\varnothing\neq T\subseteq H\), and
\(S\cup T,K_1,\ldots,K_r\) refines a partition of \(K\). All moment factors are
uniformly bounded by Lemma~\ref{lem:weighted-block-product-moments}. Moreover,
the two products in the covariance factor involve at most three centered block
variables each, and their sets of cube indices are separated by at least
\(\rho(\mathbf i)\). Lemma~\ref{lem:weighted-block-cov-decay} therefore gives
\(
\big|
c_4(\Delta_{i_1}^{\ms V},\Delta_{i_2}^{\ms V},\Delta_{i_3}^{\ms V},\Delta_{i_4}^{\ms V})
\big|
\le
C(1+\rho(\mathbf i))^{-3d-1}.
\)
By Lemma~\ref{lem:labelled-quadruple-shell-count},
\[
|c_4(G_n^{\ms V}(I))|
\le
C|W_n|+
C|W_n|\sum_{m\ge1}(1+m)^{3d-1}(1+m)^{-3d-1}
	\in O(|W_n|).
\]
\end{proof}

For the localization estimate, use the fixed exponents
\[
 p:=Q_*,
 \qquad
 a_0:=\frac{Q_*}{Q_*-1},
 \qquad
 b_0:=Q_*.
\]
Then \(1<a_0<p\), \(b_0=p\), and
\(
 5d(\frac1{a_0}-\frac1p)
 =
 5d(1-\frac2{Q_*})
 \ge3d+1.
\)
Lemma~\ref{lem:weighted-block-product-moments} gives uniform \(L^p\)- and
\(L^{b_0}\)-bounds for every centered product of at most three weighted block
variables.
For a finite set \(\mathcal A\) of cube indices, write
\(Q_{\mathcal A}:=\bigcup_{i\in\mathcal A}Q_i\) and
\(S_r(\mathcal A):=N_r(\mathcal A)\times\mathbb R\) where
\(
N_r(\mathcal A):=
\{v\in\mathbb R^d:\operatorname{dist}(v,Q_{\mathcal A})\le r\},
\)
Also put \(Q_{\mathcal A}^I:=(Q_{\mathcal A}\cap W_n)\times I\), and let
\(A_i^{\mathrm{ext}}\) be the region from
Lemma~\ref{lem:det-local-extreme}, translated to the cube \(Q_i\).

For a locally finite configuration \(\mu\), define
\[
\begin{aligned}
 \mathsf B_{\mathcal A}(r;\mu)
 :=
 &\Big\{
 \mu\Big(
 \bigcup_{i\in\mathcal A}A_i^{\mathrm{ext}}
 \setminus S_r(\mathcal A)
 \Big)>0
 \Big\}
 \cup
 \left\{
 \exists x\in
 \EE(\mu)\cap Q_{\mathcal A}^I:
 \mu\big(D^{\ms V}(x,T(x;\mu))\setminus S_r(\mathcal A)\big)>0
 \right\}.
\end{aligned}
\]

Appendix~\ref{apx:func} shows that the probability of this event decays polynomially in $r$.

\begin{lemma}[Probability of failure of localization]
\label{lem:weighted-bad-event}
Assume \(\E[H_-^{3d}]<\infty\), and let \(\gamma>3d\) be as in
Lemma~\ref{lem:weighted-envelope-tail}. Then,
\(
 \sup_{|\mathcal A|\le3}\sup_{I=(s,t]\subseteq J}
 \P\bigl(\mathsf B_{\mathcal A}(r;\PP)\bigr)
 \in O(r^{-5d}).
\)
\end{lemma}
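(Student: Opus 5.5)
The event $\mathsf B_{\mathcal A}(r;\PP)$ is a union of two pieces, and I will bound each separately. A union bound over the at most three cubes in $\mathcal A$ then reduces everything to a single cube $Q_i$, and the constants will not depend on $i$, $n$ or $I\subseteq J$.

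\emph{First piece.} The region $A_i^{\mathrm{ext}}$ comes from Lemma~\ref{lem:det-local-extreme}. I expect it to be, up to translation, the set of potential points whose paraboloid $D(\cdot)$ can affect extremality of points in $Q_i\times J$. That is, it consists of points $(w,s)$ with $s\le b$ and $\|w-z_i\|\le \sqrt{(b-s)_+}+C$. Points of this set lying outside $S_r(\mathcal A)$ satisfy $\sqrt{(b-s)_+}\ge r-C$, so $s\le b-(r-C)^2$. By Mecke's formula, the probability that $\PP$ charges this part is at most its $\Q$-measure. That measure is bounded by
\[
C\int \bigl(1+(b-s)_+^{d/2}\bigr)\one\{(b-s)_+\ge (r-C)^2\}\,\mathbb F(\md s)
\le C r^{-2(\gamma-d/2)}\E\big[(b-H)_+^{\gamma}\big],
\]
using Markov's inequality on the mark tail. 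Taking $\gamma$ close to $3d$ gives decay $r^{-(2\gamma-d)}$, which is at most $r^{-5d}$ once $\gamma\ge3d$. The moment $\E[(b-H)_+^{3d}]<\infty$ follows from $\E[H_-^{3d}]<\infty$.

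\emph{Second piece.} By Lemma~\ref{lem:env}, every $D^{\ms V}(x,T(x;\PP))$ with $x$ extreme in $Q_i^I$ is contained in the envelope $\widehat D^{\ms V}_{z_i,J}(T^e_{z_i})$. I split on the value of $T^e_{z_i}$.
\begin{itemize}
\item On $\{T^e_{z_i}-a\ge r^2/16\}$, Lemma~\ref{lem:weighted-envelope-tail} (equivalently Lemma~\ref{lem:cube_time_tail_under_E}) gives a tail bound $O(r^{-2\gamma})$. This is $O(r^{-5d})$ for $\gamma>3d$.
\item On the complementary event, the envelope at time $T:=a+r^2/16$ is deterministic. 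Its part outside $S_r$ consists of points $(v',h')$ with $\sqrt{(T-h')_+}\ge r-r/4-\sqrt d/2$, which forces $h'\le T-cr^2$.
\end{itemize}
For the deterministic region $R_r:=\widehat D^{\ms V}_{z_i,J}(T)\setminus S_r$, I bound $\P(\PP(R_r)>0)\le\Q(R_r)$. The same mark-tail computation as in the first piece gives $\Q(R_r)\le C r^{-(2\gamma-d)}\E[(T-H)_+^{\gamma}]$. This is legitimate because $\PP(R_r)>0$ together with $T^e\le T$ contains the bad event. Monotonicity of the envelope in $T$ is what makes this inclusion valid.

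The main obstacle is the cover-time tail for the cube maximum $T^e_{z_i}$ at polynomial order $\gamma>3d$. The per-point bound of Proposition~\ref{prop:CoverTimetail} is uniform only over points extreme in a unit cube, and it must be summed over a Poisson number of points. I would handle this via Mecke's formula: $\P(T^e\ge t)\le\int_{Q_i\times J}\P(x\in\EE,\,T_x\ge t)\,\Q(\md x)$. This is where Lemma~\ref{lem:cube_time_tail_under_E} is invoked, and I would rely on it as stated. The remaining work is bookkeeping the exponents so that every term is $O(r^{-5d})$ uniformly in $|\mathcal A|\le3$ and in $I$.
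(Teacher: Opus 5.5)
Your argument works in outline and takes a genuinely different route from the paper for the volume part of $\mathsf B_{\mathcal A}(r;\PP)$, but one displayed estimate is wrong as written.

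\textbf{Comparison with the paper.} The paper takes a union bound over ordered pairs $(x,y)$ and applies the twofold Mecke formula. This needs the observation that inserting the exterior point $y$ cannot create extremality of $x$ and cannot increase its cover time. The region can then be enlarged to $\widehat D^{\ms V}_{i,J}(b\vee T(x;\PP+\delta_x))$. Lemma~\ref{lem:weighted-envelope-tail}, with weight from Lemma~\ref{lem:weighted-inserted-palm-moment}, then splits inside the Palm expectation into ``large cover time'' or ``very low mark''.

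You instead truncate the cube maximum $T^e_{z_i}$ at the deterministic level $T=a+r^2/16$.
\begin{itemize}
\item The event $\{T^e_{z_i}-a\ge r^2/16\}$ is handled by Lemma~\ref{lem:cube_time_tail_under_E}. With $p=3d$ this already gives $O(r^{-6d})$, so no moment beyond $3d$ is needed. Lemma~\ref{lem:weighted-envelope-tail} is not a cover-time tail bound, so your ``equivalently'' is misplaced.
\item On the complement, Lemma~\ref{lem:env} and monotonicity of the envelope in $T$ reduce the bad event to $\{\PP(R_r)>0\}$ for a fixed set $R_r$. Its probability is at most $\Q(R_r)$.
\end{itemize}
This avoids the twofold Mecke formula and the insertion-monotonicity argument altogether. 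The paper's route instead packages both ingredients into one reusable lemma that keeps a general random weight inside the expectation.

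\textbf{The estimate to fix.} You wrote $\Q(R_r)\le Cr^{-(2\gamma-d)}\E[(T-H)_+^\gamma]$. Since $T=a+r^2/16$ depends on $r$, the factor $\E[(T-H)_+^\gamma]$ is of order $r^{2\gamma}$, so this bound gives no decay.

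Use the mark restriction you already derived. For large $r$, every $(v',h')\in R_r$ has $h'\le a-3r^2/16$. Hence $T-h'\le\frac43(a-h')$ and $r^2\le\frac{16}{3}(a-h')$. The spatial section of $\widehat D^{\ms V}_{z_i,J}(T)$ at mark $h'$ is a ball of radius $\sqrt d/2+r/4+\sqrt{T-h'}$. Its volume is therefore at most $C(a-h')^{d/2}$ on this range, and
\[
\Q(R_r)\le C\,\E\bigl[(a-H)_+^{d/2}\one\{(a-H)_+\ge 3r^2/16\}\bigr]\le Cr^{-5d}\,\E\bigl[(a-H)_+^{3d}\bigr].
\]
The last expectation is finite under $\E[H_-^{3d}]<\infty$.

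In the first piece the exponent must likewise be exactly $3d$, not ``close to'' it; this yields exactly $r^{-5d}$. With this correction your proof is complete, uniformly in $i$, $n$ and $I\subseteq J$.
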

\begin{proof}
See Appendix~\ref{apx:func}.
\end{proof}

Appendix~\ref{apx:func} then derives an approximation in \(L^{a_0}\) by conditioning on a bounded region.
\begin{lemma}[Approximation by conditioning on a bounded region]
\label{lem:weighted-spatial-truncation}
Let \(\mathcal A\) be a set of at most three cube indices, and let \(X_{\mathcal A}\) be a centered product of at most three centered variables of the form \(\Delta_i^{\ms V}(I)-\E[\Delta_i^{\ms V}(I)]\), with \(i\in\mathcal A\). Let \(\mathcal F_{\mathcal A}(r):=\sigma(\PP|_{S_r(\mathcal A)})\) and \(X_{\mathcal A}^{(r)}:=\E[X_{\mathcal A}\mid \mathcal F_{\mathcal A}(r)]\). Then,
\[
 \sup_{n,I,\mathcal A}
 \|X_{\mathcal A}-X_{\mathcal A}^{(r)}\|_{a_0}
 \in O\big((1+r)^{-3d-1}\big).
\]
\end{lemma}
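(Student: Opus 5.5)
The plan is to show that, off the bad event $\mathsf B_{\mathcal A}(r;\PP)$, the product $X_{\mathcal A}$ coincides with a functional of $\PP|_{S_r(\mathcal A)}$ alone. The $L^{a_0}$ error then follows from H\"older's inequality combined with Lemma~\ref{lem:weighted-bad-event}. Write $X_{\mathcal A}=\prod_{i\in\mathcal A}(\Delta_i^{\ms V}-\E\Delta_i^{\ms V})$. First I define a localized version
\[
\Delta_i^{\ms V,(r)}:=\sum_{x\in\PP|_{Q_i^I}}A\big(x;\PP|_{S_r(\mathcal A)}\big),
\]
and set $Y:=\prod_{i\in\mathcal A}(\Delta_i^{\ms V,(r)}-\E\Delta_i^{\ms V})$. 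Note that the deterministic centering constants are kept, so $Y$ is $\mathcal F_{\mathcal A}(r)$-measurable.

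The key geometric claim is that $X_{\mathcal A}=Y$ on $\mathsf B_{\mathcal A}(r;\PP)^\complement$. For a point $x\in Q_{\mathcal A}^I$ that is extreme in $\PP$, the second part of the complement of the bad event gives $D^{\ms V}(x,T_x)\subseteq S_r(\mathcal A)$ up to null sets. By Lemma~\ref{lem:weighted-deterministic-resampling}(ii), applied point by point to remove the points outside $S_r(\mathcal A)$, extremality, cell and cover time are unchanged, so $A(x;\PP)=A(x;\PP|_{S_r(\mathcal A)})$. Conversely, a point $x$ that is not extreme in $\PP$ must not become extreme after restriction. Its witness in $D(x)$ lies in the region $A_i^{\mathrm{ext}}$ from Lemma~\ref{lem:det-local-extreme}, which I take to be designed exactly so that, if $\PP(A_i^{\mathrm{ext}}\setminus S_r(\mathcal A))=0$, non-extremality is certified inside $S_r(\mathcal A)$. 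The first part of the complement of the bad event then covers this case.

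Next, since conditional expectation is an $L^{a_0}$-contraction and $Y$ is $\mathcal F_{\mathcal A}(r)$-measurable,
\[
\|X_{\mathcal A}-X_{\mathcal A}^{(r)}\|_{a_0}\le \|X_{\mathcal A}-Y\|_{a_0}+\|\E[Y-X_{\mathcal A}\mid\mathcal F_{\mathcal A}(r)]\|_{a_0}\le 2\|(X_{\mathcal A}-Y)\one\{\mathsf B_{\mathcal A}(r;\PP)\}\|_{a_0}.
\]
H\"older's inequality with exponents $p/a_0$ and its conjugate gives
\[
\|X_{\mathcal A}-Y\|_{a_0}\le \big(\|X_{\mathcal A}\|_p+\|Y\|_p\big)\,\P(\mathsf B_{\mathcal A}(r;\PP))^{1/a_0-1/p}.
\]
For the moment factor, $\|X_{\mathcal A}\|_p$ is uniformly bounded by Lemma~\ref{lem:weighted-block-product-moments}. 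For $\|Y\|_p$, restriction can only enlarge cells and make more points extreme, so I would not use monotonicity. Instead I would bound $A(x;\PP|_{S_r})$ directly via Mecke's formula, exactly as in Lemma~\ref{lem:weighted-block-product-moments}: the scores of points in a bounded cube of a Poisson process restricted to a half-space-type region $S_r(\mathcal A)\supseteq Q_{\mathcal A}\times\R$ still have cover-time tails governed by Proposition~\ref{prop:CoverTimetail}, with the nets taken inside $N_r(\mathcal A)$. Alternatively, and more simply, replace $Y$ by $Y\one\{\text{good event}\}$, which is not measurable, so the first route is the one I would take. By Lemma~\ref{lem:weighted-bad-event} the probability factor is $O(r^{-5d(1/a_0-1/p)})\subseteq O(r^{-3d-1})$, by the choice $5d(1/a_0-1/p)\ge 3d+1$. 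For $r\le1$ the bound is trivial from the moment bounds.

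The main obstacle is the uniform $L^p$ bound for the localized scores $\Delta_i^{\ms V,(r)}$ near the boundary of $S_r(\mathcal A)$, where points of $Q_{\mathcal A}$ might lose their neighbours and acquire large cells. I would handle it by noting that $Q_{\mathcal A}$ lies at distance at least $r\ge 1$ from $\partial N_r(\mathcal A)$ in all directions except those pointing into $N_r(\mathcal A)$. The cover-time tail argument of Proposition~\ref{prop:CoverTimetail} only needs empty-paraboloid events $\PP(K(a,u))=0$, and their intersections with $S_r(\mathcal A)$ still have $\Q$-mass growing like $(t-h)^{d/2}$ up to constants, since $N_r(\mathcal A)$ contains a ball of radius $r\ge1$ around the cube. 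Should this fail for very large $t$, which it should not, because one only needs a cone of directions, I would alternatively truncate $Y$ at level $r^{K}$ and absorb the resulting error using the polynomial tail of $T_z^e$ from Lemma~\ref{lem:extreme-cube-time-moment}.
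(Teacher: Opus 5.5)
There is a genuine gap in your moment step. Your localized product $Y$ is built from the restricted process $\PP|_{S_r(\mathcal A)}$. However, $S_r(\mathcal A)=N_r(\mathcal A)\times\R$ is a spatially bounded cylinder, not a half-space-type region, so $\PP|_{S_r(\mathcal A)}$ is a.s.\ a finite configuration with $\Q(S_r(\mathcal A))=|N_r(\mathcal A)|<\infty$.

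In a finite Laguerre configuration, cells are unbounded with positive probability. For instance, with probability $e^{-|N_r(\mathcal A)|}\Q(Q_i^I)>0$ the restricted process consists of a single point in $Q_i^I$, and that point's cell is all of $\R^d$. Hence $\Delta_i^{\ms V,(r)}=\infty$ with positive probability and $Y\notin L^{a_0}$. This breaks both your contraction inequality, which needs $X_{\mathcal A}-Y\in L^{a_0}$, and your H\"older bound involving $\|Y\|_p$.

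Your proposed repair via Proposition~\ref{prop:CoverTimetail} cannot work either. Since $\Q(S_r(\mathcal A))=|N_r(\mathcal A)|$, every empty-region probability for the restricted process is at least $e^{-|N_r(\mathcal A)|}$, so there is no tail decay in $t$. A cone of directions does not help.

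The identity $X_{\mathcal A}=Y$ on $\mathsf B_{\mathcal A}(r;\PP)^\complement$ is itself correct. Its proof, though, is the exit-point argument of Lemma~\ref{lem:weighted-deterministic-comparison}, not Lemma~\ref{lem:weighted-deterministic-resampling}(ii). That lemma controls insertion of a point outside $\mathscr D_x$ of the smaller configuration, so it cannot be iterated over the infinitely many deleted points. The failure is only in what $Y$ does on the bad event.

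The paper avoids this by resampling instead of deleting. With an independent copy $\PP'$, it sets $\PP^*:=\PP|_{S_r(\mathcal A)}+\PP'|_{S_r(\mathcal A)^\complement}$. Then $\PP^*\stackrel{d}{=}\PP$, so $X_{\mathcal A}^*$ inherits the uniform moment bounds and a.s.\ bounded cells. Moreover, $X_{\mathcal A}^{(r)}=\E[X_{\mathcal A}^*\mid\PP]$, so Jensen gives $\|X_{\mathcal A}-X_{\mathcal A}^{(r)}\|_{a_0}\le\|X_{\mathcal A}-X_{\mathcal A}^*\|_{a_0}$. The two variables agree off $\mathsf B_{\mathcal A}(r;\PP)\cup\mathsf B_{\mathcal A}(r;\PP^*)$, and H\"older's inequality together with Lemma~\ref{lem:weighted-bad-event}, applied to both equally distributed processes, finishes the proof.

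Your truncation fallback can also be made rigorous, but not as sketched:
\begin{enumerate}
\item Clip $Y$ at $\pm r^K$. The clipped variable is $\mathcal F_{\mathcal A}(r)$-measurable and equals $X_{\mathcal A}$ on $G:=\mathsf B_{\mathcal A}(r;\PP)^\complement\cap\{|X_{\mathcal A}|\le r^K\}$.
\item Bound the two pieces separately: $\|X_{\mathcal A}\one_{G^\complement}\|_{a_0}\le\|X_{\mathcal A}\|_p\P(G^\complement)^{1/a_0-1/p}$, and the clipped part by $r^K\P(G^\complement)^{1/a_0}$. The cruder bound $r^K\P(G^\complement)^{1/a_0-1/p}$ loses the rate when $Q_*=10d/(2d-1)$.
\item This forces $0<K\le 2d-1-5d/Q_*$.
\item Use high moments of $X_{\mathcal A}$ from Lemma~\ref{lem:weighted-block-product-moments} to make $\P(|X_{\mathcal A}|>r^K)$ negligible.
\end{enumerate}
The resampling trick achieves the same result without any truncation bookkeeping.
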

\begin{proof}
	See Appendix~\ref{apx:func}.
\end{proof}

We now derive the covariance estimate.

\begin{proof}[Proof of Lemma~\ref{lem:weighted-block-cov-decay}]
For bounded \(m\), the result follows from the uniform moment bound for products of block scores. Assume therefore that \(m\) is large, and put \(r:=m/4\). Then, the
sigma-fields \(\mathcal F_{\mathcal A}(r)\) and \(\mathcal F_{\mathcal B}(r)\)
are independent. Hence, with
\(X_{\mathcal A}^{(r)}:=\E[X_{\mathcal A}\mid\mathcal F_{\mathcal A}(r)]\)
and
\(X_{\mathcal B}^{(r)}:=\E[X_{\mathcal B}\mid\mathcal F_{\mathcal B}(r)]\),
one has \(\Cov(X_{\mathcal A}^{(r)},X_{\mathcal B}^{(r)})=0\), and therefore
\[
\Cov(X_{\mathcal A},X_{\mathcal B})
=
\Cov(X_{\mathcal A}-X_{\mathcal A}^{(r)},X_{\mathcal B})
+
\Cov(X_{\mathcal A}^{(r)},X_{\mathcal B}-X_{\mathcal B}^{(r)}).
\]
By H\"older with the conjugate exponents \(a_0\) and \(b_0\), the
contraction property of conditional expectation, and the uniform \(L^{b_0}\)-bound
from Lemma~\ref{lem:weighted-block-product-moments}, we have
\(
|\Cov(X_{\mathcal A},X_{\mathcal B})|
\le
C\|X_{\mathcal A}-X_{\mathcal A}^{(r)}\|_{a_0}
+
C\|X_{\mathcal B}-X_{\mathcal B}^{(r)}\|_{a_0}.
\)
	Lemma~\ref{lem:weighted-spatial-truncation} gives \(O\big((1+r)^{-3d-1}\big)\). Since
\(r=m/4\), this gives \(O\big((1+m)^{-3d-1}\big)\).
\end{proof}
\bigskip

\noindent {\bf Acknowledgements.}
OpenAI's ChatGPT was used during the preparation of this manuscript. All mathematical statements, proofs, computations, references, and conclusions were independently checked and verified by the authors, who take full responsibility for the content of the manuscript.
CH was supported by a research grant (VIL69126) from VILLUM FONDEN. AG was supported by the Deutsche Forschungsgemeinschaft (DFG, German Research Foundation) under Germany’s Excellence Strategy (EXC 2044/2 – 390685587, Mathematics Münster: Dynamics–Geometry–Structure) and through SPP 2265 Random Geometric Systems.
\appendix

\bibliographystyle{abbrv}
\bibliography{lit}

\clearpage

\section{Proofs for Section~\ref{sec:varlb}}
\label{apx:varlb}

Now, we give the proof of Lemma~\ref{lem:local_volume_configuration}, where the most delicate item is  part (c). Here, we proceed by conditioning on the central height.
This leaves the central spatial location as the source of variation,
even when the mark distribution is purely atomic.
The required deterministic estimate must be uniform in the central
height and in the surrounding generators.

Let $\nu$ be the uniform probability measure on $B_{1}(0)$, and let
$V$ has law $\nu$.
Fix $q_0\in(0,0.01)$ small enough that the balls $B_{q_0}(u_i)$ are
pairwise disjoint.
For $q\in(0,q_0]$ and $\varepsilon\in(0,1]$, let
$\mathcal A_{q,\varepsilon}$ consist of pairs
$(\mathbf p,\mathbf t)\in(\mathbb R^d)^k\times\mathbb R^k$ satisfying
\(
	\max_{i \le k}\|p_i-u_i\|\le q,\) and \(\mathbf t \in  [26-\varepsilon, 26]^k\).
For $\|v\|\le1$, $h\in[-1,0]$, and
$(\mathbf p,\mathbf t)\in\mathcal A_{q_0,1}$, define
\(
f(v,h;\mathbf p,\mathbf t)
	:=|C((v,h);\sum_{i=1}^k\delta_{(p_i,t_i)})|, \; \text{ and } \;
m(h;\mathbf p,\mathbf t)
:=\operatorname{Var}_{V\sim\nu}(f(V,h;\mathbf p,\mathbf t)).
\)
Write $\mathbf u:=(u_1,\ldots,u_k)$ and
$\mathbf t_*:=(26,\ldots,26)$ for the reference surrounding
configuration, and put
$m_*:=\inf_{h\in[-1,0]}m(h;\mathbf u,\mathbf t_*)$.
The following lemma gives positive spatial variance at this
configuration and a uniform lower bound under small perturbations.

\begin{lemma}\label{lem:central_spatial_variance}
The constant $m_*$ is positive.
There exist $q\in(0,q_0]$ and $\varepsilon\in(0,1]$ such that
\[
m(h;\mathbf p,\mathbf t)\ge\frac12m_*>0
\qquad\text{for every }h\in[-1,0]
\text{ and }(\mathbf p,\mathbf t)\in\mathcal A_{q,\varepsilon}.
\]
\end{lemma}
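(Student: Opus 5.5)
The plan is to split the lemma into a deterministic non-degeneracy statement at the reference configuration $(\mathbf u,\mathbf t_*)$ and a continuity/compactness argument for the perturbation. Throughout, work in the rescaled picture: the central generator is $(v,h)$ with $\|v\|\le1$ and $h\in[-1,0]$, and the competitors are $(p_i,t_i)$ with $p_i$ close to $u_i\in\partial B_{3.5}(0)$ and $t_i$ close to $26$. Writing $u=v+w$, the central cell is the polytope
\[
C((v,h);\textstyle\sum_i\delta_{(p_i,t_i)})-v=\Big\{w\in\R^d:\ 2w\cdot(p_i-v)\le \|p_i-v\|^2+t_i-h,\ 1\le i\le k\Big\}.
\]
Its support distance in direction $e_i(v):=(p_i-v)/\|p_i-v\|$ is
\[
s_i=\frac{\|p_i-v\|^2+t_i-h}{2\|p_i-v\|}\in(0,\infty).
\]

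\emph{Step 1 (boundedness and continuity).} Using the covering property \eqref{eq:Condition1} (in rescaled form), I first check that for all parameters in $\mathcal A_{q_0,1}$ the polytope is bounded and contained in a fixed ball. Every direction $\theta$ then has some $i$ with $\theta\cdot e_i>0$ that cuts the ray. This gives the radial formula
\[
f=\frac1d\int_{S^{d-1}}\rho(\theta)^d\,\mathcal H^{d-1}(d\theta),\qquad \rho(\theta)=\min_{i:\,\theta\cdot e_i>0}\frac{s_i}{\theta\cdot e_i}.
\]
The radial function $\rho$ depends continuously on $(v,h,\mathbf p,\mathbf t)$ and is uniformly bounded. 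Dominated convergence then shows that $f$ is jointly continuous and bounded on the compact parameter set $\overline{B_1(0)}\times[-1,0]\times\mathcal A_{q_0,1}$. Consequently $m(h;\mathbf p,\mathbf t)=\int f^2\,d\nu-(\int f\,d\nu)^2$ is jointly continuous, hence uniformly continuous there.

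\emph{Step 2 (positivity of $m_*$).} By continuity in $h$ and compactness of $[-1,0]$, the infimum $m_*$ is attained. It therefore suffices to show $m(h;\mathbf u,\mathbf t_*)>0$ for each fixed $h$. Since $f(\cdot,h;\mathbf u,\mathbf t_*)$ is continuous, zero variance would force it to be constant on $B_1(0)$. I would exclude this by a first-variation computation. On the open dense set of $v$ where the combinatorial type of the polytope is locally constant, $f$ is smooth, and for a unit direction $e$
\[
\partial_e f(v)=\sum_i \mathcal H^{d-1}(F_i)\,\partial_e\big(\text{signed normal displacement of facet }i\big).
\]
Here $F_i$ is the facet with outer normal $e_i$. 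Because $s_i$ is the support distance measured from the moving point $v$, the displacement in absolute coordinates is the derivative of $s_i+v\cdot e_i$. I would evaluate this at a convenient point, for instance $v$ near $\partial B_1$ in a direction $\pm u_1/\|u_1\|$ pointing toward or away from the competitor $p_1$. The aim is to show that the facet toward $p_1$ moves by an amount of definite sign (order $1-\tfrac{26-h}{2\cdot 3.5^2}$ and similar explicit constants) that the remaining facets cannot cancel.

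As a fallback, I would compare $f$ directly at two explicit points $v=\pm\tfrac12 u_1/\|u_1\|$. The choice of $u_i$ in the covering is flexible, so one may additionally arrange the $u_i$ to be asymmetric enough that these two cells have different volumes. A third option is to use that $f$ is piecewise real-analytic in $v$, so local constancy on one cell type would propagate to a contradiction with the degenerate behaviour as $h\to$ large. I expect this step, the genuine non-constancy of the volume as a function of the central location, to be the main obstacle. It is the only non-soft ingredient, and it must hold for every $h\in[-1,0]$, including when $\mathbb F$ is atomic.

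\emph{Step 3 (uniform perturbation).} The function $m$ is uniformly continuous on $[-1,0]\times\mathcal A_{q_0,1}$, and $\mathcal A_{q,\varepsilon}$ shrinks to $\{(\mathbf u,\mathbf t_*)\}$ as $q,\varepsilon\downarrow0$. So choose $q\le q_0$ and $\varepsilon\le1$ such that
\[
|m(h;\mathbf p,\mathbf t)-m(h;\mathbf u,\mathbf t_*)|\le \tfrac12 m_*
\]
for all $h\in[-1,0]$ and $(\mathbf p,\mathbf t)\in\mathcal A_{q,\varepsilon}$. This yields $m\ge m_*-\tfrac12m_*=\tfrac12 m_*>0$, as claimed.
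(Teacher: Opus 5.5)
Your Steps~1 and~3 are sound and agree with the paper's soft argument: dominated convergence gives joint continuity of $f$ and $m$ on the compact set $[-1,0]\times\mathcal A_{q_0,1}$, $m_*$ is attained, and uniform continuity picks $q,\varepsilon$. The gap is Step~2, which is the only substantive part of the lemma. You never show that $v\mapsto f(v,h;\mathbf u,\mathbf t_*)$ is nonconstant on $B_1(0)$ for every $h\in[-1,0]$, and none of the routes you sketch delivers this.

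Moving $v$ essentially drags the whole cell along. The gain at the facet facing $p_1$ can therefore be offset to first order by losses on the opposite side, so your claim that the remaining facets ``cannot cancel'' is unsupported and false in general. Your first-variation formula also treats facets as translating rigidly, whereas the normals $e_i(v)$ rotate, so the normal speed varies along each facet. Concretely, at $v=0$ the gradient of $f_h$ is a positive multiple of $\int_{S^{d-1}}s(\theta)^{-d-1}\theta\,\sigma(d\theta)$, with $s(\theta):=\max_i u_i\cdot\theta$. This vanishes whenever $\{u_1,\dots,u_k\}$ is centrally symmetric, and the construction does not exclude that. In that case also $f_h(v)=f_h(-v)$, so your comparison at $\pm\frac12u_1/\|u_1\|$ gives equality. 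Re-choosing the $u_i$ asymmetrically would still require a computation you do not supply, and the analyticity option is not an argument.

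The missing idea is to compute the volume radially from the fixed centre (the origin after rescaling) rather than from $v$. At the reference configuration every competitor has $\|u_i\|=3.5$ and height $26$. Each Laguerre constraint then reads $2(u_i-v)\cdot u\le\beta_h-\|v\|^2$ with $\beta_h:=3.5^2+26-h$, the same right-hand side for every $i$. Along $u=r\theta$ the binding constraint is therefore $\max_i(u_i-v)\cdot\theta=s(\theta)-v\cdot\theta$. Since the covering gives $1<s(\theta)\le3.5$, you obtain, for all $\|v\|\le1$ and with no dependence on combinatorial type,
\[
f_h(v)=\frac{(\beta_h-\|v\|^2)^d}{d\,2^d}\int_{S^{d-1}}\big(s(\theta)-v\cdot\theta\big)^{-d}\,\sigma(d\theta).
\]
Differentiating twice under the integral gives
\[
\Delta_vf_h(0)=f_h(0)\Big(d(d+1)\frac{\int s^{-d-2}\,d\sigma}{\int s^{-d}\,d\sigma}-\frac{2d^2}{\beta_h}\Big)\ge d\,f_h(0)\Big(\frac{d+1}{3.5^2}-\frac{2d}{\beta_h}\Big)>0,
\]
using $s\le3.5$ and $\beta_h>2\cdot3.5^2$. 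This second-order sign does not depend on any symmetry of the $u_i$ and holds for every $h\in[-1,0]$. A function with nonzero Laplacian is not constant, so continuity and full support of $\nu$ give $m(h;\mathbf u,\mathbf t_*)>0$. Your Steps~1 and~3 then complete the proof.
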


\begin{proof}[Proof of Lemma~\ref{lem:local_volume_configuration}]


 \textit{Proof of (a)}: First we show that $(V_0,H_0)\in \EE(\PP)\cap W_n^z$ with probability one. 
This follows since for any $(v,h)\in K_0(y)$ we have $D(v,h)\su K(y)\setminus \bigcup_{i=1}^k K_i(y)$. Indeed for any $(v',h')$ with $\|v-v'\|\leq \sqrt{h-h'}$ it holds that $h'\leq a<a+25\delta^2$ and
\[
\|v'-y\|\leq \|v'-v\|+\|v-y\|\leq \sqrt{h-h'}+\delta< \sqrt{a+48\delta^2 - h'}+10\delta.
\]
At the same time $(V_i,H_i)\not\in \EE(\PP)\cap W_n^z$ for all $1\le  i\le  k$ with probability one, since for any $(v,h)\in K_i(y)$ and any $1\le  i\le  k$ we have $K_0(y)\su D(v,h)$. Indeed let $v'\in B_{\delta}(y)$ and $a-\delta^2\le  h'\le  a$, then
\[
\|v'-v\|\le  \|v'-y\|+\|y-v\|\le  \delta +(3.5+q)\delta <  5\delta\le  \sqrt{h-h'}.
\]

\textit{Proof of (b):} Our configuration ensures that $C((V_0,H_0); \PP)\subset B_{8\delta}(y)$ and the only possible neighbors of $C((V_0,H_0); \PP)$ are $C((V_i,H_i); \PP)$, $1\leq i\leq k$ (see Figure \ref{fig:Construction4}). In order to show this we need to ensure that $T((V_0,H_0);\PP)< a+48\delta^2$. This follows since $C_{a+48\delta^2}((V_0,H_0);\PP)=C((V_0,H_0);\PP)\cap B_{\sqrt{a+48\delta^2 - H_0}}(V_0)$ and $C_{a+48\delta^2}((V_i,H_i);\PP)=C((V_i,H_i);\PP)\cap B_{\sqrt{a+48\delta^2 - H_i}}(V_i)$ for $1\le  i\le  k$. Noting that  
 \begin{equation}\label{eq:3_3_26_1}
 \partial B_{\sqrt{a+48\delta^2 - H_0}}(V_0)\su L
 \end{equation}
 and since $q<0.01$ and $\|V_i-w_i'\|\leq 0.01\delta+q\delta<0.02\delta$ that
 \[
 B_{4.65\delta}(w_i')\su B_{4.67\delta}(V_i)\su B_{\sqrt{a+48\delta^2 - H_i}}(V_i),
 \]
we conclude by \eqref{eq:Condition1} that
\(
\partial B_{\sqrt{a+48\delta^2 - H_0}}(V_0)\su \bigcup_{i=1}^kB^\circ_{\sqrt{a+48\delta^2 - H_i}}(V_i).
\)
Hence, for any $w\in \partial B_{\sqrt{a+48\delta^2 - H_0}}(V_0)$ there is $1\le  i\le  k$, such that
\[
L((V_0,H_0),w)=\|w-V_0\|^2+H_0=a+48\delta^2>\|w-V_i\|^2+H_i=L((V_i,H_i),w), 
\]
implying by \eqref{eq:3_3_26_1} that $C((V_0,H_0);\PP)\subset B_{8\delta}(y)$. Moreover, there are no other points $(V',H')\in\PP$ with 
\[
L((V',H'),w)\leq \min_{0\leq i\leq k}L((V_i,H_i),w)\leq a+48\delta^2
\]
for some $w\in B_{8\delta}(y)$, since it would imply $H'\leq a+48\delta^2$ and 
\[
\|V'-y\|\leq \|V'-w\|+\|w-y\|\leq \sqrt{a+48\delta^2-H'}+8\delta,
\]
meaning $(V',H')\in K(y)$, contradicting $\mathcal{A}_y$. Hence, $C((V_0,H_0);\PP)$ is determined by its own generator and $(V_i,H_i)$, $1\leq i\leq k$. The bounds in (a) and (b) hold for every central generator in $K_0(y)$ with the other generators fixed. 

\emph{Proof of (c).}
Choose $q$ and $\varepsilon$ as in
Lemma~\ref{lem:central_spatial_variance}, and work on $\mathcal A_y$.
Define the normalized central and surrounding generators by
\(
	\widetilde V_0:=(V_0-y)/\delta,\) \(
\widetilde H_0:=(H_0-a)/\delta^2,\)
	\(\mathbf p:=((V_i-y)/\delta)_{i=1}^k,\) and \(
\mathbf t:=((H_i-a)/\delta^2)_{i=1}^k.
\)
The vectors $\mathbf p$ and $\mathbf t$ are $\mathcal G$-measurable,
and $(\mathbf p,\mathbf t)\in\mathcal A_{q,\varepsilon}$.
Conditionally on the central-box counts and the configuration outside
those boxes, their contents are independent.
The events $\mathcal A_y$ are measurable with respect to this
conditioning, so additionally revealing the contents of failed boxes
leaves the conditional law inside each successful box unchanged.
Thus, conditionally on $\mathcal G$, the variable $\widetilde V_0$
has law $\nu$ and is independent of $H_0$, whose law is $\mathbb F$
restricted and normalized to $[a-\delta^2,a]$.

Part (b) ensures that the central cell is determined by its own
and the surrounding generators.
After rescaling, the conditional variance decomposition and
Lemma~\ref{lem:central_spatial_variance} give
	for \(X_y=\delta^d f(\widetilde V_0,\widetilde H_0;\mathbf p,\mathbf t)\) that
\[
\begin{aligned}
\Var(X_y\mid\mathcal G)
\ge\E[\Var(X_y\mid\mathcal G,H_0)\mid\mathcal G]
=\delta^{2d}\E[m(\widetilde H_0;\mathbf p,\mathbf t)\mid\mathcal G]
\ge\frac12\delta^{2d}m_*.
\end{aligned}
\]
Taking $c_2(\mathbb F,z):=\delta^{2d}m_*/2$ proves (c).
The constant is positive and independent of $n$ and $y$.

\textit{Proof of (d):} Finally, we show that on $\mathcal{A}_{y}\cap \mathcal{A}_{y'}$ the random variables $X_{y}$ and $X_{y'}$ are independent conditionally on $\mathcal{G}$. Indeed, in this case the only randomness comes from the points $(V_0(\widetilde y), H_0(\widetilde y))$ with $\widetilde y\in \Lambda_n$ such that $\one\{\mathcal{A}_{\widetilde y}\}=1$. Conditionally on $\mathcal G$, these central generators are independent, as justified in (c). Moreover, as shown above for every such $\widetilde y$ we have $C((V_0(\widetilde y),H_0(\widetilde y)); \PP)\subset B_{8\delta}(\widetilde y)$. By construction of $\Lambda_n$ the balls $B_{8\delta}(\widetilde y)$ are disjoint and, hence, all cells $C((V_0(\widetilde y),H_0(\widetilde y));\PP)$ are disjoint almost surely. By (b), each $X_{\widetilde y}$ is a function of its own central generator and the fixed surrounding generators. Thus, $X_y$ and $X_{y'}$ are conditionally independent for distinct successful blocks; if either block fails, its variable is zero and the same conclusion holds.
\end{proof}


\begin{proof}[Proof of Lemma~\ref{lem:central_spatial_variance}]
We will first show by using a radial formula, that at the reference surrounding configuration the cell volume varies with the central location $v$ for every
fixed central height $h$.
Using continuity in the height and surrounding generators will allow to extend the statement to the 
perturbations of the reference surrounding configuration, while compactness will then give the uniform variance bound.

Let
$s(\theta):=\max_{1\le i\le k}u_i\cdot\theta$ for any $\theta \in S^{d-1}$.
The covering property gives
$s(\theta)>(350/349)(8-4.65)>1$. Indeed, since $B_8(0)$ is covered there is index $1\leq i\leq k$, such that $\|8\theta- 349u_i/350\|<4.65$. Then using Cauchy-Schwarz inequality we get 
\[
(8\theta- 349u_i/350)\cdot \theta=8-349/350 (u_i\cdot\theta)\leq \|8\theta- 349u_i/350\|<4.65
\]
and the bound follows. At the same time 
$s(\theta)\le 3.5$.

For $h\in[-1,0]$, write $\beta_h:=(3.5)^2+26-h$ and
$f_h(v):=f(v,h;\mathbf u,\mathbf t_*)$.
The cell volume can be computed by integrating along rays from the
origin. The cell can be described as follows
\[
	C((v,h);\sum_{i=1}^k\delta_{(p_i,t_i)})=\{u\in \R^d\colon 2\max_{i \le k}(u_i-v)\cdot u \le \beta_h-\|v\|^2\}.
\]
Since $\beta_h>\|v\|^2$, the origin belongs to the interior of the cell. Fix $\theta\in S^{d-1}$ and write $u=r\theta$, where $r\ge0$.
The inequalities in definition of $C(v,h)$ hold simultaneously precisely when
$2r\max_{1\le i\le k}(u_i-v)\cdot\theta
\le\beta_h-\|v\|^2$.
By the definition of $s$,
$\max_{1\le i\le k}(u_i-v)\cdot\theta
=s(\theta)-v\cdot\theta>0$.
Thus, the intersection of the cell with this ray is the segment
$0\le r\le\rho_h(v,\theta)$, where
\[
\rho_h(v,\theta)
:=
\frac{\beta_h-\|v\|^2}
     {2(s(\theta)-v\cdot\theta)}.
\]
Integration in polar coordinates, with $\sigma$ denoting surface
measure on $S^{d-1}$, therefore gives
	for \( \|v\|\le1\),
\[
\begin{aligned}
f_h(v)
=
\int_{S^{d-1}}\int_0^{\rho_h(v,\theta)}
r^{d-1}\,dr\,\sigma(d\theta)
=
\frac1d\int_{S^{d-1}}\rho_h(v,\theta)^d\,\sigma(d\theta)
=
\frac{(\beta_h-\|v\|^2)^d}{d\,2^d}
\int_{S^{d-1}}
(s(\theta)-v\cdot\theta)^{-d}\,\sigma(d\theta),
\end{aligned}
\]
For $d=1$, the same calculation uses counting measure on
$S^0=\{-1,1\}$ and adds the lengths of the two segments.
The denominators are uniformly bounded away from zero, which
justifies differentiation under the integral near $v=0$.
With $\Delta_v$ denoting the Laplacian in $v$, we obtain
\[
\begin{aligned}
\Delta_v f_h(0)
=f_h(0)\Big(
d(d+1)
\frac{\int_{ S^{d-1}}s(\theta)^{-d-2}\,\sigma(d\theta)}
     {\int_{ S^{d-1}}s(\theta)^{-d}\,\sigma(d\theta)}
-\frac{2d^2}{\beta_h}\Big)
\ge d f_h(0)\Big(\frac{d+1}{(3.5)^2}-\frac{2d}{\beta_h}\Big)>0,
\end{aligned}
\]
since $\beta_h\ge (3.5)^2+26>2(3.5)^2$.
Thus, $f_h$ is nonconstant on $B_{1}(0)$.
Its continuity and the positive $\nu$-measure of every nonempty
open subset of the ball imply
$m(h;\mathbf u,\mathbf t_*)>0$ for every $h\in[-1,0]$.

To make this positivity uniform, consider $\|v\|\le1$,
$h\in[-1,0]$, and
$(\mathbf p,\mathbf t)\in\mathcal A_{q_0,1}$.
The deterministic estimates used in part (b) of
Lemma~\ref{lem:local_volume_configuration} apply throughout this
parameter set and, together with $\|p_i\|\ge r-q_0$, give
\[
C((v,h);\sum_{i=1}^k\delta_{(p_i,t_i)})\subseteq B_{8}(0),
\qquad\|p_i-v\|\ge r-q_0-1>0.
\]
Every defining boundary is therefore a proper affine hyperplane.
For a convergent sequence of parameters, the cell indicators
converge outside the union of the limiting constraint hyperplanes,
which has Lebesgue measure zero.
Since each cell indicator is bounded by the indicator function
of $B_8(0)$, the dominated convergence theorem implies joint
continuity of $f(v,h;\mathbf p,\mathbf t)$.
Since the volume of the cell is uniformly bounded by $|B_{8}(0)|$,
integration of its first and second powers against $\nu$ also
proves continuity of $m(h;\mathbf p,\mathbf t)$.
In particular, $h\mapsto m(h;\mathbf u,\mathbf t_*)$ is continuous
and positive on $[-1,0]$, so
$m_*=\min_{h\in[-1,0]}m(h;\mathbf u,\mathbf t_*)>0$.
Uniform continuity of $m$ on the compact set
$[-1,0]\times\mathcal A_{q_0,1}$ provides
$q\in(0,q_0]$ and $\varepsilon\in(0,1]$ such that
$m(h;\mathbf p,\mathbf t)\ge m_*/2$ throughout
$[-1,0]\times\mathcal A_{q,\varepsilon}$.
\end{proof}

\section{Proofs for Section~\ref{sec:var}}
\label{apx:out}
This appendix proves Lemmas~\ref{lem:weighted-deterministic-resampling} and~\ref{lem:weighted-deterministic-overlap}.

\begin{proof}[Proof of Lemma~\ref{lem:weighted-deterministic-resampling}]
We first prove part~(i). Put \(B=D^V(x,t)\).  Since
\(t\ge h\), the downward paraboloid \(D(x)\) is contained in \(B\).  Indeed,
if \(y=(w,s)\in D(x)\), then \(s\le h\le t\) and
\(
 \|w-v\|\le\sqrt{h-s}
 \le
 \sqrt{t-h}+\sqrt{t-s}.
\)
It remains to check that the truncated cell is also determined by the points
of \(\PP\) in \(B\).  Let \(u\) satisfy \(L(x,u)\le t\), and suppose
that a point \(y=(w,s)\) can beat \(x\) at \(u\).  Then \(s\le t\) and we have
\(y\in B\) because
\[
 \|w-v\|
 \le
 \|w-u\|+\|u-v\|
 \le
 \sqrt{t-s}+\sqrt{t-h}.
\]
\eqref{eq:deterministic-local-score} and the truncated cell are measurable
with respect to \(\PP|_B\).

If \(x\) is not extreme, both \(A_x\) and \(A_x^{[t]}\) vanish.  On the
extremality event, Lemma~\ref{lem:truncated-cell-geometry} gives
\(C_t(x;\PP)\subseteq C(x;\PP)\), and equality holds when
\(T_x\le t\).  Hence,
\(
 0\le A_x^{[t]}\le A_x,
	\)
	and
	\(
 A_x-A_x^{[t]}=0\quad\text{on }\{T_x\le t\}.
\)
Put \(S_x:=(T_x-h)_+\).  By \eqref{eq:cell-at-cover-time}, we have
\(
 |A_x-A_x^{[t]}|^p
 \le
 \kappa_d^p\one\{A_x>0\}S_x^{pd/2}\one\{S_x>t-h\}.
\)
For every \(a>0\), we have
\(
 S_x^{pd/2}\one\{S_x>t-h\}
 \le
 (t-h)^{-a}S_x^{pd/2+a}.
\)
The estimate \eqref{eq:deterministic-truncation-error} now follows from
Corollary~\ref{lem:cover-time-moment-integrability}.

We next prove part~(ii).  Adding a point can only shrink the cell
and can only destroy the extremality of \(x\).  Thus, if
\(A(x;\mu)=0\), the assertion is immediate.  Suppose
\(A(x;\mu)>0\), write \(T=T(x;\mu)\), and let
\(y=(w,s)\notin D^V(x,T)\).  Every \(u\in C(x;\mu)\) satisfies
\(\|u-v\|\le\sqrt{T-h}\).  If \(s>T\), then
\(L(y,u)>T\ge L(x,u)\).  If \(s\le T\), then
\(
 \|w-v\|>\sqrt{T-h}+\sqrt{T-s},
\)
and therefore
\(
 \|w-u\|
 \ge
 \|w-v\|-\|u-v\|
 >
 \sqrt{T-s}.
\)
Again \(L(y,u)>T\ge L(x,u)\).  Hence, \(y\) loses against \(x\) throughout
the original cell, so neither the cell nor the extremality indicator changes.
This proves (ii).

It remains to prove~(iii).  First, note that
\(\PP_x^{[t]}=\PP|_{B^{\complement}}+\PP'|_B\) has the same law as
\(\PP\), and that \(\PP|_B\) is independent of
\(\PP_x^{[t]}\).  Since \(A_x^{[t]}\) is measurable with respect to
\(\PP|_B\), we have
\(
 \E\Big[
 (A_x^{[t]}-\E[A_x^{[t]}])A_{x'}^{(x,t)}
 \Big]=0.
\)
Subtracting this identity from
\(
 \Cov(A_x^{[t]},A_{x'})
 =\E[(A_x^{[t]}-\E[A_x^{[t]}])A_{x'}]
\)
proves \eqref{eq:deterministic-resampling-identity}.

It remains to prove \eqref{eq:deterministic-resampling-change-probability}.
Write
\(
 R:=\PP|_{B^{\complement}},
 \)
 \(
 U:=\PP|_B,
 \)
 and
 \(
 U':=\PP'|_B,
\)
so that the original and resampled configurations are
\(R+U\) and \(R+U'\), respectively.  The measure \(\Q(B)\) is finite,
so \(U\) and \(U'\) contain only finitely many points almost surely.  Let
\[
 \mathscr D_0:=\mathscr D_{x'}(R+U),
 \qquad
 \mathscr D_1:=\mathscr D_{x'}(R+U').
\]
Suppose first that \(U'(B\cap\mathscr D_0)=0\). Enumerate the finitely many
points of \(U'\) as \(y_1,\ldots,y_k\). Each \(y_j\) lies outside
\(\mathscr D_0\). By part~(ii), insertion of \(y_1\) leaves the score, the
cell, and the cover time unchanged. Hence, the support region after the first
insertion is still \(\mathscr D_0\). Repeating this argument inductively shows
that insertion of all points of \(U'\) leaves the score unchanged.
The same argument, starting from \(R+U'\), shows that
\(U(B\cap\mathscr D_1)=0\) implies that insertion of all points of \(U\)
leaves the score unchanged. Both insertion procedures lead to the common
configuration \(R+U+U'\). Therefore,
\[
 \{A_{x'}\ne A_{x'}^{(x,t)}\}
 \subseteq
 \{U'(B\cap\mathscr D_0)\ge1\}
 \cup
 \{U(B\cap\mathscr D_1)\ge1\}.
\]
Conditionally on \(\PP\), the first count is Poisson with mean
\(\Q(B\cap\mathscr D_0)\).  Hence, its probability is bounded by that
mean.  For the second term, observe that \(U\) is independent of
\(R+U'=\PP_x^{[t]}\).  Conditioning on \(R+U'\) gives the same bound
with \(\mathscr D_1\).  Since \(R+U'\) has the same law as \(\PP\),
the two expectations coincide and yield
\(
 \P(A_{x'}\ne A_{x'}^{(x,t)})
 \le
 2\E\big[
 \one\{A_{x'}>0\}\Q(B\cap D_{x'}^V)
 \big].
\)
This completes the proof.
\end{proof}

\begin{proof}[Proof of Lemma~\ref{lem:weighted-deterministic-overlap}]
On
\(\EE_{x'}\), put
\(
 S_{x'}:=T_{x'}-h'\ge0,
	\)
	and
	\(
 a:=h\vee h'.
\)
Fix \((u,s)\in B_r\cap D_{x'}^V\), and suppose that
\(S_{x'}<r^2/256\). If \(s>a-r^2/16\), membership in \(B_r\) and
\(D_{x'}^V\) gives \(\|u-v'\|\le \sqrt{S_{x'}}+\sqrt{h'+S_{x'}-s}\) and
\[
 \|u-v\|\le \frac r{16}+\sqrt{h+\frac{r^2}{256}-s}.
\]
Since \(h-a\le0\), \(h'-a\le0\), and \(s>a-r^2/16\), both quantities
under the last two square roots are smaller than \(17r^2/256\). Therefore,
\(
 r\le\|u-v\|+\|u-v'\|
 <\frac{2+2\sqrt{17}}{16}r<r,
\)
a contradiction. The point \((u,s)\) was arbitrary. Hence, on
\(\{S_{x'}<r^2/256\}\), we have
\(
 B_r\cap D_{x'}^V
 \subseteq
 \Big\{(u,s):s\le a-{r^2}/{16}\Big\}.
\)
We next bound \(\Q(B_r)\). The section of
\(B_r\) at mark \(s\le h+r^2/256\) is a ball of radius
\(
 r/{16}
 +
 \sqrt{h+{r^2}/{256}-s},
\)
and therefore has \(d\)-dimensional volume at most
\(
 C\big(
 r^d+
 \big(h+{r^2}/{256}-s\big)_+^{d/2}
 \big).
\)
It follows that
\begin{align}
 \Q(B_r)
 \le
 C r^d
 +
 C\int_{-\infty}^{h+r^2/256}
 \big(h+\frac{r^2}{256}-s\big)^{d/2}\mathbb F(ds)
 \le
 C r^d
 +
 C\,\E\big[
 \big(z+\frac{r^2}{256}-H\big)_+^{d/2}
 \big]
 \le
 C(1+r^d),
 \label{eq:deterministic-region-measure}
\end{align}
uniformly in \(h\le z\). Here we used
\(\E[H_-^{d/2}]<\infty\), which follows from
\(\E[H_-^q]<\infty\).
Therefore,
\begin{align}
&\E\Big[
 \one\{\EE_{x'}\}\,
 \Q(B_r\cap D_{x'}^V)
 \Big]
\le
 \Q(B_r)\,
 \P\big(
 \EE_{x'},
 S_{x'}\ge{r^2}/{256}
 \big)
 +
 \Q\Big(
 B_r\cap
 \left\{(u,s):s\le a-{r^2}/{16}\right\}
 \Big).
 \label{eq:overlap-decomposition}
\end{align}
For the first term, Corollary~\ref{lem:cover-time-moment-integrability} and
Markov yield
\(
 \P\big(
 \EE_{x'},
 S_{x'}\ge{r^2}/{256}
 \big)
 \le
 C\bigl(1+(z'-h')_+^q\bigr)r^{-2q}.
\)
Combining this estimate with \eqref{eq:deterministic-region-measure} gives
\begin{equation}\label{eq:overlap-large-cover-time}
 \Q(B_r)\,
 \P\left(
 \EE_{x'},
 S_{x'}\ge{r^2}/{256}
 \right)
 \le
 C\bigl(1+(z'-h')_+^q\bigr)
 r^{-2(q-d/2)}.
\end{equation}
For the second term in \eqref{eq:overlap-decomposition}, the section bound
above gives
\begin{align}
&\Q\Big(
 B_r\cap
 \Big\{(u,s):s\le a-\frac{r^2}{16}\Big\}
 \Big)
\le
 C\int_{-\infty}^{a-r^2/16}
 \Big[
 r^d+
 \big(h+\frac{r^2}{256}-s\big)_+^{d/2}
 \Big]\mathbb F(ds).
 \label{eq:overlap-lower-mark-integral}
\end{align}
Since \(a\le z\vee z'\), there exist \(r_0<\infty\) and \(c>0\), such that for \(r\ge r_0\),
\(
 s\le a-{r^2}/{16}
 \quad\Longrightarrow\quad
 s_-\ge c r^2.
\)
On this integration range, using \(h\le z\), we have
\(
 \Big(h+{r^2}/{256}-s\Big)_+^{d/2}
 \le
 C\Big(r^d+s_-^{d/2}\Big).
\)
Therefore,
\begin{align*}
&\int_{-\infty}^{a-r^2/16}
 \Big[
 r^d+
 \big(h+\frac{r^2}{256}-s\big)_+^{d/2}
 \Big]\mathbb F(ds)
\le
 C r^d\P(H_-\ge c r^2)
 +
 C\E\left[
 H_-^{d/2}\one\{H_-\ge c r^2\}
 \right]
\le
 C r^{-2(q-d/2)}.
\end{align*}
For \(1\le r<r_0\), the same estimate, with a larger constant and with
\((1+r)^{-2\rho}\) on the right-hand side, follows from
\eqref{eq:deterministic-region-measure}. Since
\(q>\rho+d/2\), we have \(2(q-d/2)>2\rho\). Combining the last estimate with
\eqref{eq:overlap-large-cover-time} proves
\eqref{eq:deterministic-overlap-final}.
It remains to prove \eqref{eq:point-hit-final}. On the event
\(\{A_x>0,\ x'\in D_x^V\}\), put
\(
 S_x:=T_x-h\ge0,
\) and \(
 \Delta:=(h-h')_+.
\)
The definition of \(D_x^V\) implies
\(
 r
 \le
 \sqrt{S_x}+\sqrt{h+S_x-h'}
 \le
 \sqrt{S_x}+\sqrt{S_x+\Delta}
 \le
 2\sqrt{S_x}+\sqrt\Delta.
\)

Suppose first that \(\sqrt\Delta<r/2\). Then
\(
 2\sqrt{S_x}
 \ge
 r-\sqrt\Delta
 >
 \frac r2,
\)
and hence \(S_x>r^2/16\). Corollary~\ref{lem:cover-time-moment-integrability}
and Markov's inequality give
\begin{align}
&\P\left(
 A_x>0,\ x'\in D_x^V,\ \sqrt\Delta<\frac r2
 \right)
\le
 \P\left(
 A_x>0,\ S_x>\frac{r^2}{16}
 \right)
 \le
 C\bigl(1+(z-h)_+^m\bigr)r^{-2m}.
 \label{eq:point-hit-large-cover-time}
\end{align}
Since \(m>\rho\) and \(r\ge1\), this is bounded by
\(
 C\bigl(1+(z-h)_+^m\bigr)(1+r)^{-2\rho}.
\)
Suppose finally that \(\sqrt\Delta\ge r/2\). Then
\(
 h-h'\ge\frac{r^2}{4}.
\)
Since \(h\le z\) and \(h'\le z'\), we have
\(
 h-h'
 \le
 z-h'
 \le
 (z'-h')_+ +(z-z')_+.
\)
Thus, there exist \(r_1<\infty\) and \(c_1>0\), depending only on \(z,z'\),
such that for \(r\ge r_1\),
\(
 (z'-h')_+\ge c_1r^2.
\)
Because \(m>\rho\), we have
\(
 1
 \le
 C\bigl(1+(z'-h')_+^m\bigr)(1+r)^{-2\rho}
\)
on this event. For \(1\le r<r_1\), the same inequality follows after
increasing \(C\), because the mark factors are at least one. Combining this estimate
with \eqref{eq:point-hit-large-cover-time} proves
\eqref{eq:point-hit-final}. Interchanging
\((x,z)\) and \((x',z')\) proves the symmetric estimate.
\end{proof}

\section{Appendix for Section \ref{sec:univ_proof}}\label{apx:univ}

\subsection{Proof of Lemma \ref{lem:fmombd}}\label{apx:univprf}
%
%
We first prove the bounds for $f_\alpha^{(i)}$, $i=1,2$ and then prove the bounds for the remaining integrals.
Define 
$u(h):=\int_{h}^z (t - h)^{d/2} \, F(dt),$ $h \le z$. 
For fixed $z\in\R$, bounding the integrand using $t\le z$
gives $u(h)\in O(|h|^{d/2})$ as $|h|\to\infty$.
Then, for $x_1\in W_n^z$,
\begin{align*}
g^{(z)}(x_1) &\le \int_{\R^{d+1}} \one \big\{v_2 \in W_n, h_2 \le z, x_1 \in D(x_2)\big\} \exp\big(-\zeta {\Q}(h_2)\big) \mathbb{Q}(dx_2) \\
&\le \kappa_d\int_{h_1}^z (h_2 - h_1)^{d/2} \exp\big(-\zeta \Q(h_2)\big) \, \mathbb \mathbb F(dh_2) \leq \kappa_d u(h_1) \in O(|h_1|^{d/2}). 
\end{align*}

\smallskip

\noindent \underline{\em Moment bounds on $f_\alpha^{(i)}$, $i=1,2$.} 
\label{ssec:moment_bounds}
Noting that $S^{(z)}(x_1) \le C( 1+ g^{(z)}(x_1)^5) \in O (|h_1|^{5d/2})$, with $f_\alpha^{(1)}$ as in \eqref{eq:fal}, we obtain
\begin{align}\label{eq:f1bd1}
	f_\alpha^{(1)}(x_2) 
    & \lesssim \int_{\R^{d+1}} \one \{x_1 \in W_n^z, x_2 \in D(x_1)\} (1+|h_1|^{5d/2}) \exp\big(-\alpha {\Q}(h_1)\big) \mathbb{Q}(dx_1).
\end{align}
Note that when $v_2 \in W_{2n}$, we can simply integrate over $v_1 \in B(v_2, \sqrt{h_1 - h_2})$. On the other hand, {since $W_n = n W$ for a compact and convex $W$ with $0 \in \operatorname{int}(W)$,} for $v_1 \in W_n, v_2 \in W_{2n}^c$, we have $(v_2,h_2) \in D(v_1,h_1) \implies h_1-h_2 \ge \|v_1 - v_2\|^2 \ge c \|v_2\|^2$ for some $c>0$. Thus, from \eqref{eq:f1bd1}, noting that $u(h)$ is decreasing  in $h$ and since $\E\big[H_{-}^{3d}\big]<\infty$, we can further bound
\begin{align}\label{eq:f1bd}
	f_\alpha^{(1)}(x_2) & \le C  \one\{v_2 \in W_{2n}\} \one\{h_2 \le z\}  \int_{h_2}^z (h_1 - h_2)^{d/2} (1+|h_1|^{5d/2}) \, \mathbb F(dh_1)\nonumber \\
	&\qquad + C\one\{v_2 \in W_{2n}^c\} \one\{h_2 \le z-c\|v_2\|^2\} |W_n| \int_{h_2}^z (1+|h_1|^{5d/2})\, \mathbb F(dh_1)\nonumber\\
	& \le C \one\{x_2 \in W_{2n}^z\}  (1+|h_2|^{d/2}) + C\one\{v_2 \in W_{2n}^c\} \one\{h_2 \le z-c\|v_2\|^2\}  |W_n|.
\end{align}
Then, again using $\E\big[ H_{-}^{d}\big]<\infty$, there exists a constant $C_1$ depending on $z$ so that for $i= 1,2$,
\begin{equation}\label{eq:f12}
\mathbb{Q}(f_{\alpha}^{(1)} )^i \le C_1 |W_{2n}| + C_1|W_n|^i \int_n^\infty F(z-cr^2) r^{d-1} dr  \lesssim n^d =O (|W_n|).
\end{equation}
The final inequality above is obtained by noting that for $s\leq 0$ we have by Markov's inequality that
\[
F(s)=\P(H\leq s)=\P(-H\ge -s)=\P(H_-^d\ge (-s)^d)\leq \E[H_-^d](-s)^{-d}.
\]
This indeed yields for $n>\sqrt{(z+1)/c}$ (and hence $z-cr^2\leq -1$) that 
\begin{align}\label{eq:intFbd}
    \int_n^\infty F(z-cr^2) r^{d-1} dr\leq \E[H_-^d]\int_n^\infty (cr^2-z)^{-d}r^{d-1}dr \leq C n^{-d} = O(|W_n|).
\end{align}
Similarly, for any $\alpha>0$, we have using $\E[H_-^{3d}]<\infty$ that
\begin{align*}
	f_\alpha^{(2)}(x_2) &\le C\one \{x_2 \in W_n^z\} \int_{D(x_2)}  (1+u(h_1)^5) \exp\big(-\alpha \Q(h_2) \big) \mathbb{Q}(dx_1)\\
	& \le C \one \{x_2 \in W_n^z\} \int_{-\infty}^{h_2} (h_2-h_1)^{d/2} (1+|h_1|^{5d/2})\, \mathbb F(dh_1) \le C \one \{x_2 \in W_n^z\} (1+|h_2|^{d/2}).
\end{align*}
Since we assume $\E\big[H_-^{d}\big]<\infty$, noting that $h_2 \le z$, one then obtains
  \begin{equation}\label{eq:f22}
      \mathbb{Q}f_{\alpha}^{(2)} , \mathbb{Q}(f_{\alpha}^{(2)} )^2 \in O(|W_n|).
  \end{equation}

\smallskip

%
%
\noindent \underline{\em Moment bounds on $f_\alpha^{(3)}$ and remaining terms.}

Finally, we turn to $f_\alpha^{(3)}$. Note that $q(x_1,x_2) = 0$ if $h_1 \vee h_2>z$ while for $h_1\vee h_2 \le z$, we have 
\begin{align*}
	q(x_1,x_2) &= \int_{W_n^z}  \one \left\{ \{x_1,x_2\} \subset D(x_3) \right\} e^{-\Q(h_3)} \mathbb{Q}(dx_3)\\
	&\le C\int_{W_n^z}  \one \left\{ \|v_1 - v_3\|^2 \le h_3 - h_1 \right\} \one \left\{ \|v_2 - v_3\|^2 \le h_3 - h_2 \right\} \mathbb{Q}(dx_3).
\end{align*}
For the indicator regarding $x_2$ to be non-zero, since $v_3 \in W_n$, we must have 
$$
d(v_2, W_n)^2:= \min_{v \in W_n} \|v_2-v\|^2 \le C\|v_2 - v_3\|^2 \le C( z-h_2).
$$
Furthermore, for $x_1, x_2 \in D(x_3)$, the triangle inequality yields 
$$\|v_1 - v_2\|^2 \le 2 \|v_1 - v_3\|^2 + 2\|v_2 - v_3\|^2 \le 2(h_3 -h_1) + 2(h_3 - h_2) \le 4(z - h_1 \wedge h_2).$$ Hence, using $\Q(h_1 \vee h_2) \le \Q(h_3)$ for $h_1 \vee h_2 \le h_3$, we have
\begin{align*}
	q(x_1,x_2) &\le C \one\{\|v_1 - v_2\|^2 \le 4(z - h_1 \wedge h_2)\} \one\{d(v_2, W_n)^2 \le z - h_2\} \\
	&\qquad \times \int_{h_1 \vee h_2}^z \big(\int_{\R^d} \one\{v_3 \in B_{\sqrt{h_3-h_1}}(v_1) \cap B_{\sqrt{h_3-h_2}}(v_2)\}\; dv_3 \big) \, \mathbb F(dh_3) \\
	&\le \one\{\|v_1 - v_2\|^2 \le 4(z - h_1 \wedge h_2)\} \one\{d(v_2, W_n)^2 \le z - h_2\} \int_{h_1 \vee h_2}^z (h_3-(h_1 \vee h_2))^{d/2} \, \mathbb F(dh_3) \\
	&= \one\{\|v_1 - v_2\|^2 \le 4(z - h_1 \wedge h_2)\} \one\{d(v_2, W_n)^2 \le z - h_2\} \, u(h_1 \vee h_2).
\end{align*}
Thus, $f_\alpha^{(3)}(x_2)=0$ for $h_2>z$ while for $h_2 \le z$, integrating w.r.t.\ $v_1$ in the first step, for $\alpha \in (0,1)$ we have
\begin{align*}
	f_\alpha^{(3)}(x_2) 
	&\lesssim \one\{d(v_2, W_n)^2 \le z - h_2\}\int_{-\infty}^z (z-h_1 \wedge h_2)^{d/2} u(h_1 \vee h_2)^{\alpha} (1+u(h_1)^5) \mathbb F(dh_1)\\
	& \le\one\{d(v_2, W_n)^2 \le z - h_2\}\Big[u(h_2)^{\alpha} \int_{-\infty}^{h_2} (z-h_1)^{d/2} (1+u(h_1)^5) \mathbb F(dh_1) \\
    & \qquad \qquad + (z-h_2)^{d/2} \int_{h_2}^z u(h_1)^{\alpha} (1+u(h_1)^5) \mathbb F(dh_1)\Big]\\
	& \le C\one\{d(v_2, W_n)^2 \le z - h_2\} \Big[(1+|h_2|^{\alpha d/2}) \int_{-\infty}^{h_2} (z-h_1)^{d/2} (1+|h_1|^{5d/2}) \mathbb F(dh_1)\\
    & \qquad \qquad + C(z-h_2)^{d/2}\int_{h_2}^z(1+|h_1|^{(5+\alpha)d/2}) \mathbb F(dh_1)\Big]\\
	&\le C  \one\{d(v_2, W_n)^2 \le z - h_2\} \Big[(1+|h_2|^{\alpha d/2}) +  (z-h_2)^{d/2}\Big],
\end{align*}
where the final step is due to our assumption that $\E\big[ H_-^{3d}\big]<\infty$. Now an integration over $v_2 \in W_{2n}$ dropping the indicator $\one\{d(v_2, W_n)^2 \le z - h_2\}$ yields
$
\Q f^{(3)}_\alpha \one\{v_2 \in W_{2n}\},\Q (f^{(3)}_\alpha)^2 \one\{v_2 \in W_{2n}\} \in O( |W_n|),
$
which is again due to the assumption $\E\big[ H_-^{d}\big]<\infty$.

\noindent Finally, let $v_2 \in W_{2n}^c$. Here, $\|v_2\| \lesssim d(v_2,W_n)$, and $\one\{d(v_2, W_n)^2 \le z - h_2\}$ forces $ \|v_2\| \le C \sqrt{z-h_2}$. 
 Thus, using the $f_\alpha^{(3)}(x_2)$-bound above and integrating w.r.t.\ $v_2$ first, noting $\E\big[ H_-^{d}\big]<\infty$, we obtain
\begin{align*}
	\Q f^{(3)}_\alpha \one\{v_2 \in W_{2n}^c\}
    &\le C \int_{-\infty}^z (z-h_2)^{d/2} \left[ (1+|h_2|^{\alpha d/2})  +  (z-h_2)^{d/2} \right] \mathbb F(dh_2) <\infty,
\end{align*}
while since $\E\big[ H_-^{3d/2}\big]<\infty$, we have
\begin{align*}
	\Q (f^{(3)}_\alpha )^2 \one\{v_2 \in W_{2n}^c\} \le C \int_{\R} (z-h_2)^{d/2} [ (1+|h_2|^{\alpha d})  +  (z-h_2)^{d}] \mathbb F(dh_2) <\infty.
\end{align*}
Combining the two bounds above yields for $\alpha \in (0,1)$ that
\begin{equation}\label{eq:f32}
    \mathbb{Q}f_{\alpha}^{(3)} , \mathbb{Q}(f_{\alpha}^{(3)} )^2 \in O(|W_n|).
\end{equation}
For the final terms in the bound in Theorem \ref{thm:KolBd}, note that 
$
\kappa(x) = \one\big\{x \in W_n^z \big\} \; \exp(-\Q(x)).
$
Hence, using $\E \big[H_-^{5d/2}\big]<\infty$, we have
\begin{equation}\label{eq:kappabd}
    \Q \kappa^\alpha S \le C |W_n| \int_{-\infty}^z (1+u(h)^5)e^{-\alpha \Q(h)} F (dh) \in O ( |W_n|).
\end{equation}
On the other hand, noting that $g \le f_\zeta^{(1)}$, one can bound
$\Q g^\alpha S$ for $\alpha \in (0,1)$ using \eqref{eq:f1bd}, similar to $\Q (f_\zeta^{(1)})^2$. In particular, one obtains from \eqref{eq:f1bd} and using $\E H_-^{3d}<\infty$ that
\begin{align}\label{eq:gbd}
\Q g^\alpha S &\le C \Q (f_\zeta^{(1)})^\alpha S \nonumber\\
&\le C |W_{2n}| +  C|W_n|^\alpha \int \one\{v_2 \in W_{2n}^c\} \one\{h_2 \le z-c\|v_2\|^2\} (1+ |h_2|^{5d/2}) \, \Q (dx_2)\nonumber\\
& \le C  |W_{2n}| +C  |W_n|^\alpha  \int_{-\infty}^z (1+ |h_2|^{5d/2}) \, F (dh_2)\int_n^\infty\one\{r \le \sqrt{(z-h_2)/c}\}  \, r^{d-1} dr \nonumber\\
& \le C  |W_{2n}| + C |W_n|^\alpha \int_{-\infty}^z (1+ |h_2|^{5d/2}) (1+|h_2|^{d/2})\, F (dh_2)\nonumber\\
& \le C |W_{2n}| + C|W_n|^\alpha \in O ( |W_n|),
\end{align}
where the penultimate step uses $\E \big[H_-^{3d}\big]<\infty$.
\qed

\subsection{Proofs of Lemmas \ref{lem:Ia} and \ref{lem:fmombdV}}\label{apx:univprfV}
Next we consider the volume weighted case.

\begin{proof}[Proof of Lemma \ref{lem:Ia}] In view of Remark \ref{rem:Qlb}, we split $I_\alpha$ as
\begin{align*}
	I_\alpha(x_2) &\le C \int_{\R^{d+1}}  (1+h_{1,-}^t) \one\left\{a\|v_1 - v_2\|^2+ bh_1+ch_2 \ge x_0\right\}\exp\big(-C \big(a\|v_1 - v_2\|^2+ bh_1+ch_2-x_0\big)^{d/2}\big) \mathbb{Q}(dx_1) \\
	& + \int_{\R^{d+1}} (1+h_{1,-}^t)\one\left\{0 \le a\|v_1 - v_2\|^2 \le x_0 - bh_1 - c h_2 \right\} \mathbb{Q}(dx_1) =: I_1 + I_2.
\end{align*}
	Then using $\E H_-^{t+d/2}<\infty$ and the change of variables $s := a\|v_1 - v_2\|^2$,
\begin{align*}
	I_1 & \le C \int_{-\infty}^\infty \int_{4x_0}^\infty  (1+h_{1,-}^t) (s-bh_1-ch_2 + x_0)_+^{\frac{d}{2} -1} \exp\big(-C s^{d/2}\big) \, ds \mathbb F(dh_1)\\
	& \le C \int_{-\infty}^\infty \int_{4x_0}^\infty  (1+h_{1,-}^t) (1+ |s|^{\frac{d}{2} -1} +h_{1,-}^{\frac{d}{2} -1} + h_{2,-}^{\frac{d}{2} -1}) \exp\big(-C s^{d/2}\big) \, ds \mathbb F(dh_1) \in  O(h_{2,-}^{d/2-1}),
\end{align*}
while
$
I_2 \le C \int_{-\infty}^\infty  (1+h_{1,-}^t) \big(x_0 - bh_1 - ch_2\big)_+^{d/2} \, \mathbb F(dh_1) \in O(h_{2,-}^{d/2}). 
$
Combining these bounds, we obtain
$I_\alpha(x_2)\in O(h_{2,-}^{d/2})$
for every $t\in[0,(m-1/2)d]$ and $\alpha>0$.
\end{proof}

\subsubsection{Proof of Lemma \ref{lem:fmombdV}}
    For bounding moments of $f_\alpha^{(3)}$, it will be important to maximize our choice of $p$. In the rest of this section we take $m=5.5+\varepsilon$ for some $\varepsilon>0$, that is, we assume $\E H_-^{(5.5+\varepsilon)d}<\infty$, as in the statement of Theorem \ref{thm:univ}. Consequently, we will take $p=12$ satisfying the above constraint.

\smallskip

\noindent \underline{\em Moment bounds on $f_\alpha^{(i)}$, $i=1,2$.}
It is straightforward to see that $S^{(z, \ms V)}(x_1) \lesssim 1+ h_{1,-}^{2d} + h_{1,-}^{5d/2} \in O (h_{1,-}^{5d/2})$.

By Lemma \ref{lem:Ia}, for any $\alpha>0$, using that $\E[H_-^{3d}]<\infty$ we have
\begin{align*}
	f_\alpha^{(2)}(x_2) & \lesssim \int_{\R^{d+1}} \one \{x_2 \in W_n^z\} (1+h_{1,-}^{5d/2}) \exp\big(-\frac{\alpha}{2^d} \Q\big(\frac{\|v_1 - v_2\|^2}{16}+ \frac{3h_1}{4}+\frac{h_2}{4}\big)\big) \mathbb{Q}(dx_1)\nonumber\\
	& \lesssim \one \{x_2 \in W_n^z\} (1+ h_{2,-}^{d/2}).
\end{align*}
Since $\E\big[H_-^{d}\big]<\infty$, upon integrating w.r.t.\ $(v_2,h_2)$, one obtains for $i \in \{1,2\}$ that
\begin{equation}\label{eq:f22V}
    \mathbb{Q}(f_{\alpha}^{(2)})^i \in O( |W_n|).
\end{equation}
Next, we bound the integral of $f_\alpha^{(1)}$. Again by Lemma \ref{lem:Ia}, we can bound
\begin{align}\label{eq:f1bd1V}
	&f_\alpha^{(1)}(x_2) \lesssim \int_{W_n^z} (1+h_{1,-}^{5d/2}) \exp\bigg(-\frac{\alpha}{2^d} \Q\bigg(\frac{\|v_1 - v_2\|^2}{16}+ \frac{3h_1}{4}+\frac{h_2}{4}\bigg)\bigg) \mathbb{Q}(dx_1)
	 \in O\big((1+ h_{2,-}^{d/2})\big).
\end{align}
\noindent \textbf{Case 1 ($v_2 \in W_{2n}$):} For $i \in \{1,2\}$, integrating over $W_{2n} \times \R$, we obtain 
$
\Q \big((f_\alpha^{(1)}(x_2))^i \one\{v_2 \in W_{2n}\}\big) \in O( |W_n|).
$

\noindent \textbf{Case 2 ($v_2 \in W_{2n}^c, h_2$ very negative):} Now let $v_2 \in W_{2n}^c$ so that $\|v_1 - v_2\| \ge c_1 \|v_2\|$ for some $c_1>0$. Fix $\eta=c_1^2/32$ and consider the region $h_{2} \le - \eta \|v_2\|^2$. Using the pointwise bound in \eqref{eq:f1bd1V} and our assumption $\mathbb{E}[H_-^{3d/2}] < \infty$ yields for $i \in \{1,2\}$
	\begin{align*}
		&\Q \big((f_\alpha^{(1)}(x_2))^i  \one\{v_2 \in W_{2n}^c, h_2\le -\eta \|v_2\|^2\} \big)
		\le C \int_{W_{2n}^c} dv_2 \int_{-\infty}^{-\eta \|v_2\|^2} \big(1 + h_{2,-}^d\big) \,\mathbb F(dh_2) \\
		&=  \int_{-\infty}^0 \mathbb F(dh_2) \big(1 + h_{2,-}^d\big) \int_{\{\|v_2\|^2 \le h_{2,-}/\eta\}} dv_2 \\
		& \le C\int_{-\infty}^0 \big(1 + h_{2,-}^d\big) h_{2,-}^{d/2} \mathbb F(dh_2) \in O(1).
	\end{align*}
	
	\noindent \textbf{Case 3(a) ($v_2 \in W_{2n}^c, h_{2} > -\eta \|v_2\|^2, h_{1} \le -\eta \|v_2\|^2$):} By our assumption $\E[H_-^{3.5 d}]<\infty$, 
    we drop the exponential factor to bound
	\begin{align*}
		f_\alpha^{(1)}(x_2)=&\int_{W_n^{-\eta \|v_2\|^2}} (1+h_{1,-}^{5d/2}) \exp\big(-\frac{\alpha}{2^d} \Q\big(\frac{\|v_1 - v_2\|^2}{16}+ \frac{3h_1}{4}+\frac{h_2}{4}\big)\big) \mathbb{Q}(dx_1)\\
		&\le \int_{W_n} dv_1 \int_{h_{1,-} \ge \eta \|v_2\|^2} \big(1 + h_{1,-}^{5d/2}\big) \mathbb F(dh_1) \\
		&\le \int_{W_n} dv_1 \frac{1}{(\eta \|v_2\|^2)^{d}} \int_{-\infty}^0 h_{1,-}^{d} \big(1 + h_{1,-}^{5d/2}\big) \mathbb F(dh_1) \lesssim |W_n| \|v_2\|^{-2d}.
	\end{align*}
	
	\noindent\textbf{Case 3(b) ($v_2 \in W_{2n}^c, h_{1}\wedge h_{2} >-\eta \|v_2\|^2$):} When neither mark $h_1,h_2$ is extreme, we have
	\begin{align*}
		u := \frac{\|v_1-v_2\|^2}{16} + \frac{3h_1}{4} + \frac{h_2}{4} &\ge \frac{c_1^2 \|v_2\|^2}{16} - \frac{3}{4}h_{1,-} - \frac{1}{4}h_{2,-} \ge \|v_2\|^2 \big( \frac{c_1^2}{16} - \eta \big) = \frac{c_1^2}{32} \|v_2\|^2.
	\end{align*}
	Since $v_2 \in W_{2n}^c$, for $n$ large enough, $u \ge x_0$ so that $\mathbb{Q}(u) \ge C u^{d/2} \ge C'' \|v_2\|^d$. Thus, 
	\begin{align*}
	f_\alpha^{(1)}(x_2)=&\int_{\R^{d+1}} \one \{v_1 \in W_n, h_{1}\wedge h_{2}  \ge - \eta \|v_2\|^2\} (1+h_{1,-}^{5d/2}) \exp\big(-\frac{\alpha}{2^d} \Q\big(\frac{\|v_1 - v_2\|^2}{16}+ \frac{3h_1}{4}+\frac{h_2}{4}\big)\big) \mathbb{Q}(dx_1)\\ 
	&\le C \int_{W_n} dv_1 \int_{-\infty}^z \mathbb F(dh_1) \big(1 + h_{1,-}^{5d/2}\big) \exp\big(-(\alpha C''/2) \|v_2\|^d\big) \le C |W_n| \exp\big(-(\alpha C''/2) \|v_2\|^d\big).
	\end{align*}
Combining Cases 3(a) and 3(b) and integrating over the region $v_2 \in W_{2n}^c, h_{2} \ge - \eta \|v_2\|^2$ yields
	\begin{align*}
			&\Q \big((f_\alpha^{(1)}(x_2))^i  \one\{v_2 \in W_{2n}^c, h_{2,-} < \eta \|v_2\|^2\} \big)\\
			& \le C \int_{W_{2n}^c} \big(|W_n| \|v_2\|^{-2d} \big)^i dv_2 \int_{\mathbb{R}} \mathbb F(dh_2) +  \int_{W_{2n}^c} \big(|W_n| \exp(-(\alpha C''/2) \|v_2\|^d) \big)^i dv_2 \int_{\mathbb{R}} \mathbb F(dh_2)\\
			&= |W_n|^i \int_{\|v_2\| \ge c_1 n} \|v_2\|^{-2id} dv_2 + |W_n|^i \int_{\|v_2\| \ge c_1 n} \exp(-i(\alpha C''/2) \|v_2\|^d) dv_2 \in O(|W_n|).
	\end{align*}
	Combining this with Cases 1 and 2 now yields  
    \begin{equation}\label{eq:f12V}
        \Q (f_\alpha^{(1)})^i \in O( |W_n|)
	\qquad \text{for $i \in \{1,2\}.$}
    \end{equation}

\noindent \underline{\em Moment bounds for $g^{(z,\ms V)}$ and $\kappa$ terms.}
We can bound $\Q (g^{(z, \ms V)})^{2\tau} S^{(z, \ms V)} \equiv \Q g^{2\tau} S$ in a similar manner as for $f_\alpha^{(1)}$. Note that with $p\ge 4$, we have $2\tau=p/(16+2p) \ge 1/6$. Since $g \lesssim f_\zeta^{(1)}$, the bounds in Cases 1, 2 and 3(b) go through exactly the same way, even with the additional integrand $S \in O(h_-^{5d/2})$. In Case 3(a), considering $h_{2,-} < \eta \|v_2\|^2$, by our assumption $\E H_-^{3.5 d}<\infty$, we have
\begin{align*}
	&\int_{W_n^{-\eta \|v_2\|^2}} \exp\big(-\frac{\alpha}{2^d} \Q\big(\frac{\|v_1 - v_2\|^2}{16}+ \frac{3h_1}{4}+\frac{h_2}{4}\big)\big) \mathbb{Q}(dx_1)\\
	&\le \Q(W_n^{-\eta \|v_2\|^2})  \le \int_{W_n} dv_1 \frac{1}{(\eta \|v_2\|^2)^{3.5d}} \int_{-\infty}^0 h_{1,-}^{3.5 d} \mathbb F(dh_1) \lesssim |W_n| \|v_2\|^{-7d}.
\end{align*}
Arguing as for $f_\alpha^{(1)}$, combining the above bound and the Case 3(b) yields
\begin{align}\label{eq:gbdV}
	&\Q (g^{(z,\ms V)})^{2\tau}  S^{(z,\ms V)} \,  \one\{v_2 \in W_{2n}^c, h_{2} >- \eta \|v_2\|^2\}\nonumber\\
	& \lesssim \int_{W_{2n}^c} \big(|W_n| \|v_2\|^{-7d} \big)^{1/6} dv_2 \int_{\mathbb{R}} (1+ h_{2,-}^{5d/2})\, \mathbb F(dh_2) +  \int_{W_{2n}^c} \big(|W_n| \exp(-(\alpha C''/2) \|v_2\|^d) \big)^{1/6} dv_2 \int_{\mathbb{R}} \mathbb F(dh_2)\nonumber\\
	&= |W_n|^{1/6} \int_{\|v_2\| \ge c_1 n} \|v_2\|^{-(7/6)d} dv_2 + |W_n|^i \int_{\|v_2\| \ge c_1 n} \exp(-i(\alpha C''/2) \|v_2\|^d) dv_2 \in O(|W_n|).
\end{align}
Combining this with the bounds in Cases 1 and 2 now yields
$
    \Q( (g^{(z,\ms V)})^{2\tau} S)\in O( |W_n|).
	$

Also note that  in the volume weighted case, the function $\kappa^{(z,\ms V)}$ is exactly the same as the function $\kappa$ in the proof of Theorem \ref{thm:univ} for the unweighted case. Thus, the corresponding bound in Lemma \ref{lem:fmombd} also holds here, yielding
$\Q((\kappa^{(z,\ms V)})^{2\tau} S )\in O( |W_n|)$.

\smallskip

\noindent \underline{\em Moment bounds on $f_\alpha^{(3)}$.} We start by obtaining an appropriate upper bound on the function $q(x_1,x_2)$ defined at \eqref{eq:g2s}. {Without loss of generality, we now assume that $x_0 \ge 0$ with $x_0$ as in Remark \ref{rem:Qlb}.} For $x_1, x_2 \in \mathbb{R}^{d+1}$, define
	\begin{equation}\label{def:A}
		A(x_1, x_2) := \frac{1}{256}\|v_1-v_2\|^2 
		+ \frac{1}{128}d(v_2, W_n)^2 + \frac{h_1}{8} + \frac{h_2}{8},
	\end{equation}
where we recall $d(v, W_n) := \inf_{w \in W_n} \|v - w\|$. 

\begin{lemma}
	\label{lem:qbound}
	Assume $\E[H_-^{md}]<\infty$.	There exists a constant $c > 0$ such that $q(x_1, x_2)$ satisfies
	\begin{equation*}
		q(x_1, x_2) \lesssim \big( 1 + h_{1,-}^{d/2} + h_{2,-}^{d/2}\big)  \one\{A \le 2x_0\}  + [(1+A)^{-(m-1/2)d} + \exp(-c A^{d/2})] \one\{A >0\}.
	\end{equation*}
\end{lemma}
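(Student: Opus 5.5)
Recall that $q(x_1,x_2)=\int \P(\{x_1,x_2\}\subseteq R_n^{(z,\ms V)}(x_3;\PP+\delta_{x_3}))\,\Q(dx_3)$. We bound the integrand by the tail estimate \eqref{eq:tailbd}, applied simultaneously to $x_1$ and $x_2$. If both $x_1,x_2$ lie in the region, then $x_3\in W_n^z$, $x_3$ is extreme, and
\[
T(x_3)\ge \max_{i=1,2}\Big(\tfrac14\|v_i-v_3\|^2+h_i\wedge h_3\Big)\vee h_1\vee h_2 .
\]
Averaging the two lower bounds and using $\|v_1-v_3\|^2+\|v_2-v_3\|^2\ge \tfrac12\|v_1-v_2\|^2$ and $\|v_2-v_3\|\ge d(v_2,W_n)$ (since $v_3\in W_n$) gives
\[
T(x_3)-\tfrac{h_3}{2}\;\gtrsim\;\tfrac{1}{32}\|v_1-v_2\|^2+\tfrac1{16}d(v_2,W_n)^2+\tfrac{h_1}{4}+\tfrac{h_2}{4}+\tfrac14\|v_1-v_3\|^2,
\]
up to bookkeeping of the minima with $h_3$. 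We keep a share of $\|v_1-v_3\|^2$ to make the $v_3$-integral converge. Proposition~\ref{prop:CoverTimetail}(iii) then bounds the integrand by
\[
C\exp\Big(-2^{-d}\Q\Big(\tfrac{T_*+h_3}{2}\Big)\Big),\qquad T_*:=2A(x_1,x_2)+\tfrac{1}{64}\|v_1-v_3\|^2+\tfrac{h_3}{2},
\]
with the constants in \eqref{def:A} chosen precisely so that this works.

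We then split according to the sign of the effective argument relative to $x_0$ from Remark~\ref{rem:Qlb}. In the regime $A\le 2x_0$ we drop the exponential and bound $q$ by the $\Q$-measure of the set of admissible $x_3$. Spatially these satisfy $\|v_3-v_i\|^2\le 4(T(x_3)-h_i\wedge h_3)$. The effective argument is bounded above, so $h_3\lesssim 1+h_{1,-}+h_{2,-}$, and integrating $v_3$ over a ball of radius $\sqrt{h_3-h_1}$, with $\E[H_-^{d/2}]<\infty$ for the $h_3$-integral, gives $1+h_{1,-}^{d/2}+h_{2,-}^{d/2}$. This is the first term.

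In the regime $A>0$, exactly as in the proof of Lemma~\ref{lem:Ia}, we split the $x_3$-integral into two parts. The first part is where the argument exceeds $x_0$. There Remark~\ref{rem:Qlb} gives decay $\exp(-c(\cdot)^{d/2})$, and integrating out $v_3$ and $h_3$ leaves $\exp(-cA^{d/2})$.

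The second part is where the argument is at most $x_0$ but $A>0$. This forces $h_3$ very negative, namely $h_{3,-}\gtrsim A$. Its contribution is bounded by
\[
\int (h_{3,-}+A)^{d/2}\one\{h_{3,-}\gtrsim A\}\,\mathbb F(dh_3)\;\lesssim\;\E\big[H_-^{md}\big](1+A)^{-(m-1/2)d}
\]
by Markov's inequality. This produces the polynomial term.

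The main obstacle is this second part. The lower tail of the marks produces the polynomial rather than exponential decay, and one must handle simultaneously that the cover-time lower bound involves $h_i\wedge h_3$ and that a very negative $h_3$ makes the $v_3$-ball large. We first try to absorb the $v_3$-volume factor $(T-h_3)^{d/2}$ into one extra half power of $d$ in the moment, which is exactly the loss from $md$ to $(m-1/2)d$.
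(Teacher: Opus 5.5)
Your overall route is the paper's. You combine the two cover-time constraints into $T_3\ge\tfrac1{32}\|v_1-v_3\|^2+2A+\tfrac{h_3}{2}$, keeping a share of $\|v_1-v_3\|^2$. You use Proposition~\ref{prop:CoverTimetail}(iii) to reduce to $\exp(-2^{-d}\Q(\Phi))$ with $\Phi:=A+c_*\|v_1-v_3\|^2+\tfrac34h_3$, split the $x_3$-domain according to $\Phi\le x_0$ or $\Phi>x_0$, and lose $d/2$ of the $md$-moment to the spatial volume. However, the $x_3$-integration as you describe it does not close.

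On $\{\Phi>x_0\}$ you cannot integrate out $v_3$ and $h_3$ and be left with $\exp(-cA^{d/2})$. For marks with $\tfrac34h_{3,-}\ge A$, the quantity $K:=A+\tfrac34h_3$ is nonpositive. The $v_3$-integral of your bound over $\{\Phi>x_0\}$ is then of order $1+|K|^{(d-2)/2}$; for $d=2$ it is a positive constant independent of $K$. So these marks contribute at least a multiple of $\P(H_-\ge\tfrac43A)$, which is only polynomial in $A$ for heavy lower tails. The missing step is the paper's further split of the $h_3$-integral at $3h_{3,-}=2A$. On $\{3h_{3,-}\le2A\}$ one has $K\ge A/2$, giving $\exp(-c'A^{d/2})$. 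On the complement, Markov's inequality with the $md$-moment bounds the contribution, including the weight $1+|K|^{(d-2)/2}$, by $O((1+A)^{-(m-1/2)d})$. So this piece is absorbed by the polynomial term, not the exponential one. The same computation is needed for $A\le0$ on $\{\Phi>x_0\}$, which your case distinction leaves out; there it gives $O(1+h_{1,-}^{d/2}+h_{2,-}^{d/2})$.

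On $\{\Phi\le x_0\}$ you misidentify the localization of $v_3$. Since $T(x_3)$ is unbounded, the set of admissible $x_3$ is essentially all of $W_n^z$, so dropping the exponential on it would give a bound of order $|W_n|$. A ball of radius $\sqrt{h_3-h_1}$ is the geometry of the unweighted paraboloid $D(x_3)$, but in the weighted case $x_1$ need not lie in $D(x_3)$. The only deterministic localization is the condition $\Phi\le x_0$ itself, i.e.\ $c_*\|v_1-v_3\|^2\le(x_0-A-\tfrac34h_3)_+$. Its $\mathbb F(dh_3)$-integral is the paper's $I_{\mathrm{cut}}(A)$. With this correction your two computations go through. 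When $A\le2x_0$, the bound $-A\le\tfrac18(h_{1,-}+h_{2,-})$ gives the first term. When $A>2x_0$, the forced range $h_{3,-}\ge\tfrac23A$ together with Markov's inequality gives $(1+A)^{-(m-1/2)d}$.
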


\begin{proof}
	Recall that $q(x_1, x_2) = \int_{W_n^z} \mathbb{P}(\{x_1, x_2\} \subseteq D^{\ms V}(x_3, T_3)) \Q(d x_3)$, where we write $T_3 \equiv T(x_3, \mathcal{P}+ \delta_{x_3})$ for notational simplicity. As noted in \eqref{eq:tailbd}, for $x_1$ and $x_2$ to be covered, the cover time $T_3$ must satisfy
	\begin{equation*}
		T_3 \ge \frac{1}{8}\|v_1-v_3\|^2 + ({h_1 + h_3})/{2} \quad \text{and} \quad T_3 \ge \frac{1}{8}\|v_2-v_3\|^2 + (h_2 + h_3)/{2}.
	\end{equation*}
	Since $v_3 \in W_n$, we have $\|v_i-v_3\|^2 \ge d(v_i, W_n)^2$ for $i \in \{1, 2\}$. Decompose for $i \in \{1,2\}$,
	\begin{align*}
		\frac{1}{8}\|v_i-v_3\|^2 &\ge \frac{1}{16}\|v_i-v_3\|^2 + \frac{1}{32}d(v_i, W_n)^2 + \frac{1}{32}\|v_i-v_3\|^2.
	\end{align*}
	Substituting these into the bounds for $T_3$ and taking their arithmetic mean yields
	\begin{equation*}
		T_3 \ge \frac{1}{32}\|v_1-v_3\|^2 + \frac{1}{32}\|v_2-v_3\|^2 + \frac{1}{64}d(v_1, W_n)^2 + \frac{1}{64}d(v_2, W_n)^2 + \frac{1}{64}\|v_1-v_3\|^2 + \frac{1}{64}\|v_2-v_3\|^2 + \frac{h_1+h_2+2h_3}{4}.
	\end{equation*}
	Applying the geometric triangle inequality $\frac{1}{64}\|v_1-v_3\|^2 + \frac{1}{64}\|v_2-v_3\|^2 \ge \frac{1}{128}\|v_1-v_2\|^2$, and dropping $\frac{1}{32}\|v_2-v_3\|^2 + \frac{1}{64}d(v_1, W_n)^2\ge 0$, we obtain
	\begin{equation*}
		T_3 \ge \frac{1}{32}\|v_1-v_3\|^2 + \underbrace{ \frac{1}{128}\|v_1-v_2\|^2 
			+ \frac{1}{64}d(v_2, W_n)^2 + \frac{h_1+h_2}{4} }_{2 A(x_1, x_2)} + \frac{h_3}{2}.
	\end{equation*}
	Since the event $\{\{x_1, x_2\} \subseteq D^{\ms V}(x_3, T_3)\}$ also forces $T_3 >h_3$, using the exponential tail bound in Proposition \ref{prop:CoverTimetail}(iii), we obtain
	\begin{equation*}
		q(x_1, x_2) \le \int_{\mathbb{R}^{d, z}} \exp\big(-\frac{1}{2^d}\mathbb{Q}\big( \frac{1}{64}\|v_1-v_3\|^2 + A(x_1, x_2) + \frac{3h_3}{4} \big)\big) \Q(dx_3).
	\end{equation*}
	Further denote $K = A(x_1, x_2) + {3h_3}/{4}$. We first evaluate the inner spatial integral over $v_3 \in \mathbb{R}^d$:
	\begin{equation*}
		J(K) := \int_{\mathbb{R}^d} \exp\big(-\frac{1}{2^d}\mathbb{Q}\big(\frac{1}{64}\|v_1-v_3\|^2 + K\big)\big) \,dv_3.
	\end{equation*}
	Applying the substitution $y = (v_1-v_3)/\sqrt{32}$ (which implies $dv_3 \asymp dy$), we partition the integration domain into regions $U_1 = \{ \|y\|^2 + K \le x_0 \}$ and $U_2 = \{ \|y\|^2 + K > x_0 \}$. Integrating over $U_1$ yields
	\begin{equation*}
		\int_{U_1} \exp\big(-\frac{1}{2}\mathbb{Q}(\|y\|^2 + K)\big) \,dy \le \int_{\|y\|^2 \le (x_0 - K)_+} 1 \, dy = \kappa_d (x_0 - K)_+^{d/2}.
	\end{equation*}
	On $U_2$, the argument exceeds $x_0$, so that the polynomial lower bound $\mathbb{Q}(u) \ge \tilde{c} u^{d/2}$ applies by Remark \ref{rem:Qlb}.
	\begin{itemize}
		\item \textbf{Case 1 ($K \ge 0$):} In this case, we have $K = K_+$ and thus
		\begin{equation*}
			\int_{U_2} \exp\big(-\frac{\tilde{c}}{2}(\|y\|^2 + K)^{d/2}\big) \,dy \le \exp(-c K_+^{d/2}) \int_{\mathbb{R}^d} \exp(-c \|y\|^d) dy \lesssim \exp(-c K_+^{d/2}).
		\end{equation*}
		\item \textbf{Case 2 ($K < 0$):} Here we apply the substitution $u = \|y\|^2 + K$. The domain is $u > x_0$, and the radial measure transforms to $dy \propto (u + |K|)^{(d-2)/2} du$. Since $(u+|K|)^{(d-2)/2} \lesssim (u^{(d-2)/2} + |K|^{(d-2)/2})$, integrating against $\exp(-\frac{\tilde{c}}{2}u^{d/2})$ yields
		\(
			\int_{U_2} \exp\big(-\frac{\tilde{c}}{2}u^{d/2}\big) \, dy \lesssim 1+ |K|^{(d-2)/2} \lesssim (x_0 - K)_+^{d/2}.
		\)
	\end{itemize}
	Summing the bounds yields 
	\begin{equation}\label{eq:J}
		J(K) \le C \big( (x_0 - K)_+^{d/2} + \exp(-c K_+^{d/2}) \big).
	\end{equation}
	
	Finally, we integrate $J(K) =J(A(x_1, x_2) + \frac{3h_3}{4})$ over $h_3 \in (-\infty, z]$ with respect to $h_3$ to obtain a bound on $q(x_1,x_2)$. Writing $A\equiv A(x_1,x_2)$, for the first summand in \eqref{eq:J}, we must evaluate the integral
	\begin{equation*}
		I_{cut}(A) := \int_{-\infty}^z \big(x_0 - A - \frac{3h_3}{4}\big)_+^{d/2} \,\mathbb F(dh_3) \le \int_{-\infty}^z \big(x_0 - A + \frac{3}{4}h_{3,-}\big)_+^{d/2} \,\mathbb F(dh_3).
	\end{equation*}
	Note that the integrand is zero unless $\frac{3}{4}h_{3,-} > A - x_0$.
	
	\begin{itemize}
			\item \textbf{Case 1 ($A \le 2x_0$):} 
			Noting $-A \le \frac{1}{8}h_{1,-} + \frac{1}{8}h_{2,-}$ and using subadditivity, we bound the integral
			\begin{align*}
				I_{cut}(A) \one\{A \le 2x_0\} &\le \one\{A \le 2x_0\} \int_{-\infty}^z \big( x_0 + \frac{1}{8}h_{1,-} + \frac{1}{8}h_{2,-} + \frac{3}{4}h_{3,-} \big)^{d/2} \,\mathbb F(dh_3)\\
				&\le C \one\{A \le 2x_0\} \big( 1 + h_{1,-}^{d/2} + h_{2,-}^{d/2}\big).
			\end{align*}
		
		\item \textbf{Case 2 ($A > 2x_0$):} 
		Since $A > 2x_0$, we have $A - x_0 > {A}/{2}$. Thus, the integrand in $I_{cut}(A)$ is non-zero only if $\frac{3}{4}h_{3,-} > {A}/{2}$. On this restricted domain, dropping $x_0 - A \le 0$ term, and integrating yields
		\begin{equation*}
			I_{cut}(A) \one\{A > 2x_0\} \le \int_{-\infty}^{-\frac{2A}{3}} \big( \frac{3}{4}h_{3,-} \big)^{d/2} \mathbb F(dh_3) \le C \min\{\E H_-^{d/2},A^{-(m-1/2) d} \E H_-^{m d}\} \in O((1+A)^{-(m-1/2) d}).
		\end{equation*}
	\end{itemize}
	Thus,
	\begin{equation*}
		I_{cut}(A) \le C \big( 1 + h_{1,-}^{d/2} + h_{2,-}^{d/2}\big)\one\{A \le 2x_0\} + C(1+A)^{-(m-1/2) d} \one\{A > 2x_0\}.
	\end{equation*}
	
	Finally, we consider the exponential tail term in \eqref{eq:J}. If $A \le 0$, we  trivially bound the exponential by $1$. If $A > 0$, we partition the integration domain for $h_3$:
	\begin{itemize}
		\item \textbf{Case 1 $\big( 3h_{3,-} \le 2{A} \big)$:} Here, $A + \frac{3h_3}{4} \ge {A}/{2}$. The exponential is thus bounded by $\exp(-c (A/2)^{d/2})$. Integrating over $h_3$ thus yields a bound $\exp(-c' A^{d/2})$ for some $c'>0$.
		\item \textbf{Case 2 $\big( {3}h_{3,-} > 2{A} \big)$:} In this case, we bound the exponential term by $1$. By Markov's inequality and our assumed $md$ finite moments for $H_-$,
		\begin{equation*}
			\mathbb{P}\big( H_- > {2A}/{3} \big) \lesssim \min\{1,A^{-m d} \mathbb{E}[H_-^{m d}]\} \lesssim (1+A)^{-md}.
		\end{equation*}
	\end{itemize}
	
	Summing the different bounds and noting $x_0 \ge 0$, we obtain
	\begin{align*}
		q(x_1, x_2) &\le C \big( 1 + h_{1,-}^{d/2} + h_{2,-}^{d/2}\big)\one\{A \le 2x_0\} + (1+A)^{-(m-1/2)d} \one\{A > 2x_0\} \\
		& \qquad \qquad + \one\{A \le 0\} + [\exp(-c' A^{d/2}) + (1+A)^{-md}]\one\{A >0\} \\
		& \le C  \big( 1 + h_{1,-}^{d/2} + h_{2,-}^{d/2}\big)\one\{A \le 2x_0\}  + [(1+A)^{-(m-1/2) d} + \exp(-c' A^{d/2})] \one\{A >0\},
	\end{align*}
	completing the proof.
\end{proof}

\noindent \underline{\em Proof of \eqref{eq:f3bds}}
Recall that
$
	f_\alpha^{(3)}(x_2) \lesssim \int_{\mathbb{R}^d \times \R}S(x_1) q(x_1, x_2)^\alpha \, dv_1 \mathbb F(dh_1),
	$
where $S(x_1) \in O( (h_{1,-}^{5d/2}))$. For the application of Theorem \ref{thm:KolBd}, it is enough to prove
\[
\Q\big((f_\tau^{(3)})^2\big)\in O(|W_n|)
\qquad\text{and}\qquad
\Q f_{2\tau}^{(3)}\in O(|W_n|),
\]
where
$\tau=3/20$. For $x_i=(v_i,h_i)$, $i=1,2$, put
\(
B:=c_0\|v_1-v_2\|^2+D,
\)
\(c_0:=\frac1{256},
\) and \(
D:=\frac1{128}d(v_2,W_n)^2,
\)
and
\(
H:=\frac18h_{1,-}+\frac18h_{2,-}.
\)
By the definition \eqref{def:A},
\(
A(x_1,x_2)
=
B+\frac{h_1+h_2}{8}
\ge B-H.
\)

For $\alpha\in[\tau,2\tau]$, by Lemma \ref{lem:qbound} and using subadditivity of $x\mapsto x^\alpha$, we can bound
\begin{align}\label{eq:qbd}
	q(x_1, x_2)^\alpha &\le \left[ \big(1 + h_{1,-}^{\frac{d\alpha}{2}} + h_{2,-}^{\frac{d\alpha}{2}}\big)\one\{A \le 2x_0\}  + (1+A)^{-\lambda_\alpha} \one\{A > 0\} + \exp(-c\alpha A^{d/2}) \one\{A > 0\} \right],
\end{align}
where we write $\lambda_\alpha:=(m-\tfrac12)d\alpha$. Since $m>11/2$ and $\alpha\ge\tau=3/20$, note that
\begin{equation}\label{eq:lambda-lower-V}
	\lambda_\alpha
	\ge (m-\tfrac12)d\tau
	>5d\frac{3}{20}
	=\frac{3d}{4} >\frac{d}{2}.
\end{equation}

We now bound the contributions in \eqref{eq:qbd} separately. For the first term, note that since $A\ge B-H$, we have $A\le2x_0 \implies
B\le H+2x_0$.
Since $x_0\ge0$ is fixed, this implies
\(
\one\{A\le2x_0\}
\lesssim
{(1+H)^{\lambda_\alpha}}/
{(1+B)^{\lambda_\alpha}}.
\)
Consequently,
\begin{align}\label{eq:q-small-A-V}
	\big(
	1+h_{1,-}^{d\alpha/2}
	+h_{2,-}^{d\alpha/2}
	\big)\one\{A\le2x_0\}
	&\lesssim
	\frac{
		\big(
		1+h_{1,-}^{d\alpha/2}
		+h_{2,-}^{d\alpha/2}
		\big)
		(1+H)^{\lambda_\alpha}}
	{(1+B)^{\lambda_\alpha}} \nonumber\\
	&\lesssim
	\frac{
		1+h_{1,-}^{\lambda_\alpha+d\alpha/2}
		+h_{2,-}^{\lambda_\alpha+d\alpha/2}}
	{(1+B)^{\lambda_\alpha}} = \frac{
		1+h_{1,-}^{md\alpha}
		+h_{2,-}^{md\alpha}}
	{(1+B)^{\lambda_\alpha}}.
\end{align}

Next, on $\{A>0\}$, using $A\ge B-H$ gives
\(
1+A
\ge
1+(B-H)_+.
\)
Note the elementary inequality
\(
	\big(1+(t-s)_+\big)^{-\lambda}
	\le
	(1+s)^\lambda(1+t)^{-\lambda}\) for \(
	s,t\ge0, \lambda>0.
	\)
Indeed,
\(
1+t
\le
(1+s)\big(1+(t-s)_+\big),
\)
which immediately yields the above inequality.
Using this, we obtain
\begin{align}
	(1+A)^{-\lambda_\alpha}\one\{A>0\}
	&\le
	\big(1+(B-H)_+\big)^{-\lambda_\alpha}
\le
	\frac{(1+H)^{\lambda_\alpha}}
	{(1+B)^{\lambda_\alpha}} \lesssim
	\frac{
		1+h_{1,-}^{md\alpha}
		+h_{2,-}^{md\alpha}}
	{(1+B)^{\lambda_\alpha}}.
	\label{eq:q-poly-A-V}
\end{align}

Finally, since $\alpha\in[\tau,2\tau]$ is bounded away from zero and
$\lambda_\alpha$ ranges over a bounded interval, there exists
$C<\infty$, uniformly in $\alpha\in[\tau,2\tau]$, such that
\(
e^{-c\alpha u^{d/2}} 
\le
C(1+u)^{-\lambda_\alpha},
\)
for  \(u\ge0\).
Together with \eqref{eq:q-poly-A-V}, this yields
\begin{equation}\label{eq:q-exp-A-V}
	e^{-c\alpha A^{d/2}}\one\{A>0\} \lesssim (1+A)^{-\lambda_\alpha}\one\{A>0\} 
	\lesssim
	\frac{
		1+h_{1,-}^{md\alpha}
		+h_{2,-}^{md\alpha}}
	{(1+B)^{\lambda_\alpha}}.
\end{equation}

Combining \eqref{eq:q-small-A-V}, \eqref{eq:q-poly-A-V} and
\eqref{eq:q-exp-A-V} via \eqref{eq:qbd}, we arrive at the uniform bound
\begin{equation}\label{eq:qalphaV}
	q(x_1,x_2)^\alpha
	\lesssim
	\frac{
		1+h_{1,-}^{md\alpha}
		+h_{2,-}^{md\alpha}}
	{\big(
		1+c_0\|v_1-v_2\|^2+D
		\big)^{\lambda_\alpha}},
	\qquad
	\alpha\in[\tau,2\tau].
\end{equation}

We next integrate with respect to $v_1$. The change of variables
\(
v_1-v_2=\sqrt{1+D}\,y
\)
gives
\begin{align}
	&\int_{\R^d}
	\big(
	1+c_0\|v_1-v_2\|^2+D
	\big)^{-\lambda_\alpha}
	\,dv_1
	=
	(1+D)^{d/2-\lambda_\alpha}
	\int_{\R^d}
	\big(1+c_0\|y\|^2\big)^{-\lambda_\alpha}
	\,dy \lesssim
	(1+D)^{-(\lambda_\alpha-d/2)},
	\label{eq:spatial-qalpha-V}
\end{align}
where the final step follows upon noting that $\lambda_\alpha>d/2$ by \eqref{eq:lambda-lower-V}. Using \eqref{eq:qalphaV}, \eqref{eq:spatial-qalpha-V} and
$S(x_1)\lesssim1+h_{1,-}^{5d/2}$, we therefore obtain
\begin{align*}
	f_\alpha^{(3)}(x_2)
	&\lesssim
	\frac{1}{(1+D)^{\lambda_\alpha-d/2}}
	\int_{\R}
	(1+h_{1,-}^{5d/2})
	\big(
	1+h_{1,-}^{md\alpha}
	+h_{2,-}^{md\alpha}
	\big)
	\mathbb F(dh_1).
\end{align*}
Since
\(
\frac52+m\alpha
\le
\frac52+\frac{3m}{10}
<m,
\)
for $\alpha\in[\tau,2\tau]$, with the final inequality following from $m>11/2$, the assumed condition $\E H_-^{md}<\infty$ thus implies
\begin{equation}\label{eq:f3-pointwise-V}
	f_\alpha^{(3)}(x_2)
	\lesssim
	\frac{1+h_{2,-}^{md\alpha}}
	{(1+D)^{\lambda_\alpha-d/2}},
	\qquad
	\alpha\in[\tau,2\tau].
\end{equation}

Now let $\alpha=2\tau=3/10$. 
From \eqref{eq:f3-pointwise-V}, we have
\(
f_{2\tau}^{(3)}(x_2)
\lesssim
{1+h_{2,-}^{2md\tau}}/
{(1+D)^{\lambda_{2\tau}-d/2}}.
\)
For $v_2\in W_{2n}^c$, we have
\(
d(v_2,W_n)\ge c_W\|v_2\|
\)
for some $c_W>0$, and hence
\(
D\gtrsim\|v_2\|^2.
\)
Noting
\(
2m\tau=\frac{3m}{10}<m,
\)
so the $h_2$-integral is finite, we obtain
\begin{align*}
\Q f_{2\tau}^{(3)} &\lesssim \int_{\R}
(1+h_{2,-}^{2md\tau} )
\,\mathbb{F}(dh_2) \left(|W_{2n}|	+ \int_{W_{2n}^c}
	(1+D)^{-(\lambda_{2\tau}-d/2)}
	\,dv_2 \right)
	&\lesssim |W_n| +
	\int_{\R^d}
	(1+\|v\|^2)^{-(\lambda_{2\tau}-d/2)}
	\,dv.
\end{align*}
The integral above is finite since noting that $m>11/2$, we have
\(
\lambda_{2\tau}
=
(m-\tfrac12)d\frac{3}{10}
>
\frac{3d}{2}
>d.
\)
Therefore,
\(
\Q f_{2\tau}^{(3)}\in O(|W_n|)
\)
proving the first bound in \eqref{eq:f3bds}.

Finally for $\alpha=\tau=3/20$, we have
\(
\lambda_\tau
=
(m-\tfrac12)d\frac{3}{20}
>
\frac{3d}{4},
\)
because $m>11/2$. Squaring \eqref{eq:f3-pointwise-V} gives
\(
\big(f_\tau^{(3)}(x_2)\big)^2
\lesssim
{1+h_{2,-}^{2md\tau}}/
{(1+D)^{2(\lambda_\tau-d/2)}}.
\)
Since
\(
2m\tau
=
\frac{3m}{10}
<m\), arguing as above yields
\begin{align*}
	\Q\big((f_\tau^{(3)})^2\big) &\lesssim \int_{\R}
	(1+h_{2,-}^{2md\tau} )
	\,\mathbb{F}(dh_2) \left(|W_{2n}|	+ \int_{W_{2n}^c}
	(1+D)^{-2(\lambda_\tau-d/2)}
	\,dv_2 \right)\\
	&\lesssim |W_n| +
\int_{\R^d}
(1+\|v\|^2)^{-2(\lambda_\tau-d/2)}
\,dv.
\end{align*}
The latter integral is again finite since
\(
2(\lambda_\tau-d/2)>\frac d2,
\)
or equivalently
\(
\lambda_\tau>\frac{3d}{4},
\)
which was verified above. Hence,
\(
\Q\big((f_\tau^{(3)})^2\big)\in O(|W_n|).
\)
This yields the second bound in \eqref{eq:f3bds}, concluding the proof of Lemma \ref{lem:fmombdV}.

\subsection{Proof of Lemma \ref{lem:zetabds}}\label{apx:mult}
Finally, we provide a proof for Lemma \ref{lem:zetabds} bounding the terms $\zeta_1, \zeta_2^{(2)}, \zeta_3^{(2)}$ and $\zeta_4^{(2)}$ for both our models in the multivariate case. For this, we will use the following generalized version of \cite[Lemma 5.5]{BM22} for $p>0$.

\begin{lemma}[Extension of \cite{BM22}, Lemma 5.5]
\label{lem:BM55ext}
For $H$ as in Theorem \ref{thm:KolBd}, assume that conditions {\rm (A1)} and {\rm (A2)} hold. Set
\(
    r_p:=4+p/2
\) and \(
    k_p:=4+\lceil p/2\rceil
\).
Let $g$ and $h$ be defined as in \eqref{eq:g}. Then there exists a constant $C_p\in(0,\infty)$, depending
only on $p$, such that, for every $y\in\R^{d+1}$ and every
$\mu\in\mathbf N$ with $\mu(\R^{d+1})\le1$,
\begin{equation}
\label{eq:BM55ext}
    \E\big|D_yH(\PP+\mu)\big|^{r_p}
    \le
    C_p\left[
        M_p(y)^{r_p}
        +
        h(y)\big(1+g(y)^{k_p-1}\big)
    \right].
\end{equation}
In particular, with $S$ defined in \eqref{eq:g}, there exists a constant $C_p'\in(0,\infty)$, depending
only on $p$ such that
\begin{equation}
\label{eq:BM55extS}
    \E\big|D_yH(\PP+\mu)\big|^{r_p}
    \le C_p' S(y)^{r_p/4}.
\end{equation}
\end{lemma}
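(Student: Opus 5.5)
We follow the proof of \cite[Lemma 5.5]{BM22} line by line and change only the combinatorics of the moment expansion. Fix $y$ and $\mu$ with $\mu(\R^{d+1})\le1$, and write $\PP_\mu:=\PP+\mu$. The add-one cost splits as
\[
D_yH(\PP_\mu)=\xi(y;\PP_\mu+\delta_y)+\sum_{x\in\PP_\mu}\big(\xi(x;\PP_\mu+\delta_y)-\xi(x;\PP_\mu)\big).
\]
By (A1.3)--(A1.4) and the monotonicity (A1.2), the summand for $x$ vanishes unless $y\in R(x;\PP_\mu+\delta_y)$ or $y\in R(x;\PP_\mu)$. Hence
\[
|D_yH(\PP_\mu)|\le|\xi(y;\PP_\mu+\delta_y)|+\sum_{x\in\PP_\mu}\one\{y\in R(x;\PP_\mu)\}\big(|\xi(x;\PP_\mu+\delta_y)|+|\xi(x;\PP_\mu)|\big).
\]
The first term has $r_p$-th moment at most $M_p(y)^{r_p}$ by (A2), since $r_p<4+p$ and at most two extra points are inserted. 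The contribution of the point of $\mu$ (if any) is handled the same way, using (A2) with the remaining exponential factor bounded by one.

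For the Poisson sum $\Sigma$, we apply $(a+b)^{r}\le 2^{r-1}(a^r+b^r)$ and then bound $\E\Sigma^{r_p}$. Since $r_p$ is not an integer, we write $\Sigma^{r_p}\le\Sigma\cdot\Sigma^{k_p-1}$ on $\{\Sigma\ge1\}$ (on $\{\Sigma<1\}$ we use $\Sigma^{r_p}\le\Sigma$). We then expand $\Sigma^{k_p}$ into sums over $j\le k_p$ distinct points $x_1,\dots,x_j$ of $\PP$ and apply the multivariate Mecke formula. This is exactly where $k_p=4+\lceil p/2\rceil$ replaces $5$. Each term becomes
\[
\int\E\Big[\prod_{i\le j}|\xi(x_i;\PP+\delta_{x_1}+\cdots+\delta_{x_j}+\mu+\delta_y)|^{a_i}\one\{y\in R(x_i;\cdots)\}\Big]\,\Q^j(d\mathbf x),
\]
with $\sum_i a_i=r_p$. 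The configuration contains at most $k_p+2\le6+\lceil p/2\rceil$ added points, which is precisely the range allowed in (A2).

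The crucial step is to apply H\"older's inequality to each such term. We give one factor the exponent $(4+p)/r_p$ on the score and distribute the rest over the $j$ indicators. This yields
\[
\prod_i M_p(x_i)^{a_i}\,\P\big(y\in R(x_i;\PP+\delta_{x_i})\big)^{\zeta'}.
\]
Here (A1.2) lets us drop the extra points in the regions, and the exponent $\zeta'$ is at least $\zeta=p/((8+2p)k_p)$, because each of the $j\le k_p$ indicators receives a share $\ge 1/k_p$ of the H\"older slack $1-r_p/(4+p)=p/(8+2p)$. Using \eqref{eq:Rs}, each probability factor is at most $e^{-\zeta r(x_i;y)}$. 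We then bound $M_p(x_i)^{a_i}\le 1+M_p(x_i)^{r_p}$, integrate, and use Young's inequality over the $j$ points. One point carries the weight $M_p^{r_p}$, which produces $h(y)$; the others each produce a factor $g(y)$. Summing over $j\le k_p$ gives $h(y)(1+g(y)^{k_p-1})$, which proves \eqref{eq:BM55ext}.

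The bookkeeping of this H\"older/Young split for a non-integer $r_p$ is the main obstacle. We will carry it out by verifying that every $j$ and every multi-index $(a_i)$ gives an exponent $\ge\zeta$, and by absorbing all constants into $C_p$. Finally, \eqref{eq:BM55extS} follows by elementary inequalities. We have $M_p^{r_p}\le\max(M_p^2,M_p^4)^{r_p/4}$, and $h(1+g^{k_p-1})\le\big(\tilde h(1+g^4)\big)^{r_p/4}$ up to constants. The latter holds after interpolating between the cases $h\le1$ and $h>1$, and noting that $(k_p-1)\cdot 4/r_p\le4$ fails only for large $p$. In that case we instead use the bound $g^{k_p-1}\le C(1+g^4)^{r_p/4}$, which holds since $k_p-1\le r_p$.
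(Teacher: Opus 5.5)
There is a genuine gap at the step you yourself flag as the main obstacle: the reduction of the non-integer power $r_p$.

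Writing $\Sigma^{r_p}\le\Sigma\cdot\Sigma^{k_p-1}=\Sigma^{k_p}$ and expanding produces products of scores of total degree $k_p$, not $r_p$. So your next display, with $\sum_i a_i=r_p$, does not follow from it. When $p$ is an even integer, $r_p=k_p$ (as for $p=12$ used later) and there is no problem. Otherwise $k_p>r_p$, and the argument breaks in two places.

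First, H\"older against (A2) leaves only the slack $1-k_p/(4+p)=(p-\lceil p/2\rceil)/(4+p)$. This is $\le0$ for $p\le1$, so no decay factor survives; for $p<1$, (A2) does not even control the $k_p$-th moment. For every other non-even $p$ it is strictly smaller than $p/(8+2p)$. For tuples of $k_p$ distinct points the exponent per indicator then drops below $\zeta$, so the resulting integrals are not bounded by powers of $g(y)$.

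Second, the score weights become $M_p^{k_p}$ rather than $M_p^{r_p}$, so the integral you obtain is not $h(y)$. Also, $\Sigma$ is a weighted sum, so cutting at $\{\Sigma\ge1\}$ is not the natural device. The integer-valued object is the number of nonzero summands.

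The missing idea, implicit in the paper's reduction to the terms $I_i$ (taken over from \cite{BM22}), is to factor out exactly that count. Put $N:=\sum_{x\in\PP}\one\{D_y\xi(x;\PP+\mu)\neq0\}$. Since $N$ is a nonnegative integer and $r_p\le k_p$, convexity of $t\mapsto t^{r_p}$ gives
\[
\Big|\sum_{x\in\PP}D_y\xi(x;\PP+\mu)\Big|^{r_p}\le N^{r_p-1}\sum_{x\in\PP}|D_y\xi(x;\PP+\mu)|^{r_p}\le N^{k_p-1}\sum_{x\in\PP}|D_y\xi(x;\PP+\mu)|^{r_p}.
\]
Expanding the integer power $N^{k_p-1}$ and applying Mecke gives terms $I_i$, $i\le k_p$, indexed by $i$ distinct points. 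In each, exactly one score carries the exponent $r_p$ and the other $i-1$ factors are pure indicators. H\"older then leaves exactly the slack $1-r_p/(4+p)=p/(8+2p)$, split into $i\le k_p$ shares. Hence $I_i\le C\,h(y)g(y)^{i-1}$ follows at once, with no Young inequality. Your weighted AM--GM step is fine once $\sum_ia_i=r_p$ genuinely holds.

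Two smaller points. Bounding the summand of the point $u$ of $\mu$ via (A2) gives $C\,M_p(u)^{r_p}$. This depends on $u$ and is not absorbed into the right-hand side unless $M_p$ is bounded. The paper's sketch is equally terse there, but your sentence does not prove the uniform bound. In the final step, $k_p-1<r_p$ holds for all $p>0$, so no case analysis is needed: since $\tilde h\ge h^{4/r_p}$, one has $h(1+g^{k_p-1})\le C\,h(1+g^4)^{r_p/4}\le C\big(\tilde h(1+g^4)\big)^{r_p/4}\le C\,S^{r_p/4}$.
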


\begin{proof}
We only indicate the modifications to the proof of
\cite[Lemma 5.5]{BM22}. We first note that the analogue of \cite[Lemma 5.4]{BM22}
continues to hold. Now following the proof of \cite[Lemma 5.5]{BM22}, for $\mu=0$, Lemma 5.2 therein gives
\(
    D_yH(\PP)
    =
    \xi(y;\PP+\delta_y)
    +
    \sum_{x\in\PP}D_y\xi(x;\PP).
\)
By (A2) and monotonicity of $L^q$-norms,
\(
    \E\big|\xi(y;\PP+\delta_y)\big|^{r_p}
    \le M_p(y)^{r_p}
\). For the second term, arguing as in \cite[Lemma 5.5]{BM22}, one obtains 
\(
    \E\big|
        \sum_{x\in\PP}D_y\xi(x;\PP)
    \big|^{r_p}
    \le
    C_1\sum_{i=1}^{k_p} I_i,
\)
for some constant $C_1 \in (0,\infty)$ depending only on $p$, where
\[
    I_i
    :=
    \E
    \sum_{(x_1,\ldots,x_i)\in\PP_{\neq}^i}
    \one\left\{
        D_y\xi(x_j;\PP)\neq0,\ j=1,\ldots,i
    \right\}
    |D_y\xi(x_1;\PP)|^{r_p}.
\]
The only difference from \cite[Lemma 5.5]{BM22} is that the
expansion now contains terms indexed by $i=1,\ldots,k_p$. In the
H\"older step of the proof therein, with $\varepsilon=p/2$, the
probability exponent is
\[
    \frac{p-\varepsilon}{4i+pi}
    =
    \frac{p}{2i(4+p)}
    \ge
    \frac{p}{2k_p(4+p)}
    =
    \zeta .
\]
Hence, the same argument gives
\(
    I_i\le C_2 g(y)^{i-1}h(y),
    \;i\le k_p,
\)
and therefore
\(
    \sum_{i=1}^{k_p}I_i
    \le
    C_3 h(y)\bigl(1+g(y)^{k_p-1}\bigr)
\)
for some constants $C_2, C_3 \in (0,\infty)$ depending only on $p$.
This proves \eqref{eq:BM55ext} for $\mu=0$. If $\mu(\R^{d+1})=1$, the same argument applies with the
additional deterministic point inserted throughout.

It remains to obtain \eqref{eq:BM55extS}. Noting
\(
    k_p-1<r_p,
\)
by the definition of $S$ in \eqref{eq:g}, we have 
\[
    h(y)\big(1+g(y)^{k_p-1}\big)
    \le
    C_4
    \left[
        h(y)^{4/r_p}(1+g(y)^4)
    \right]^{r_p/4} \le C_4 S(y)^{r_p/4}
\]
for some constant $C_4 \in (0,\infty)$ depending only on $p$. Since
\(
    M_p(y)^4\le S(y),
\)
\eqref{eq:BM55extS} now follows from \eqref{eq:BM55ext}.
\end{proof}

\begin{proof}[Proof of Lemma \ref{lem:zetabds}]
    We first estimate \(\zeta_2^{(2)}\) and \(\zeta_3^{(2)}\). Fix \(i \in [m]\). We begin with the fourth moment of the iterated add-one cost operator. Write
\[
Y_{i,x,y} := D^{(2)}_{x,y}\hat G_{n,*}(z_i).
\]
Since \(4 < 4+p/2\), the elementary inequality \(\|Y\|_4 \le \|Y\|_{4+p/2} \P(Y \neq 0)^{1/4-1/(4+p/2)}\) yields
\begin{equation}\label{eq:Y_norm_step}
\|Y_{i,x,y}\|_4
\le
\|Y_{i,x,y}\|_{4+p/2}\,
\P\big(Y_{i,x,y} \neq 0\big)^{p/(32+4p)}.
\end{equation}
Moreover,
\(
D^{(2)}_{x,y}F(\mathcal P)
=
D_xF(\mathcal P+\delta_y)-D_xF(\mathcal P),
\)
and hence \(|D^{(2)}_{x,y}F(\mathcal P)| \le |D_xF(\mathcal P+\delta_y)| + |D_xF(\mathcal P)|\). Therefore,
\[
\|Y_{i,x,y}\|_{4+p/2}
\le
2 \max_{u \in \{0,y\}} \big\|D_x\hat G_{n,*}(z_i)(\mathcal P+\delta_u)\big\|_{4+p/2}
\le
2 \max_{u \in \R^{d+1}} \big\|D_x\hat G_{n,*}(z_i)(\mathcal P+\delta_u)\big\|_{4+p/2}.
\]
Combining this with \eqref{eq:Y_norm_step}, we obtain
\begin{equation}\label{eq:D2_basic_bound}
\|D^{(2)}_{x,y}\hat G_{n,*}(z_i)\|_4
\le C
\max_{u \in \R^{d+1}}
\big\|D_x\hat G_{n,*}(z_i)(\mathcal P+\delta_u)\big\|_{4+p/2}
\,
\P\big(D^{(2)}_{x,y}\hat G_{n,*}(z_i)\neq 0\big)^\tau,
\end{equation}
where \(\tau := p/(32+4p)\).
Arguing as in \cite[Lemma 5.6]{BM22}, one can bound
$$
\P \left(D_{x,y}^{(2)} \hat G_{n, *}(z_i) \neq 0\right) \lesssim e^{-r^{(z_i)}(x,y)} + e^{-r^{(z_i)}(y,x)} + q^{(z_i)}(x,y).
$$
On the other hand, by Lemma \ref{lem:BM55ext}, for $i \in [m]$ and $x=(v,h)$
\begin{align*}
&|W_n|^{\frac{4+p/2}{2}} \max_{u \in \R^{d+1}} \E \left|D_x \hat G_{n, *}(z_i) (\mathcal{P} + \delta_u)\right|^{4+p/2} \lesssim 
\left(S^{(z_i)}(x)\right)^{\frac{4+p/2}{4}}.
\end{align*}
Combining the bounds then yields
$$
\left(\mathbb{E}\left(D_{x,y}^{(2)} \hat G_{n, *}(z_i)\right)^4\right)^{\frac{1}{4}}  \lesssim \left(S^{(z_i)}\right)^{1/4}\left(e^{-r^{(z_i)}(x,y)} + e^{-r^{(z_i)}(y,x)} + q^{(z_i)}(x,y)\right)^\tau \, |W_n|^{-1/2}.
$$
A similar argument also immediately yields that 
\begin{equation}\label{eq:QD}
\left(\mathbb{E}\left(D_x \hat G_{n, *}(z_i)\right)^4\right)^{\frac{1}{4}} \le \left(\mathbb{E}\left|D_x \hat G_{n, *}(z_i)\right|^{4+p/2}\right)^{\frac{1}{4+p/2}} \lesssim \left(S^{(z_i)}\right)^{1/4} |W_n|^{-1/2}.
\end{equation}
Since in both our examples, $S^{(z_i)}(x) \asymp S(x) = 1+ |h|^{5d/2}$ for all $i \in [m]$ and $x=(v,h)$, we use the simple bound $2(S^{(z_i)})^{1/4}(S^{(z_j)})^{1/4} \asymp S^{1/2} \le S^{(z_i)} + S^{(z_j)} \le S$. Plugging these bounds into \eqref{eq:zetas_mult_2} and \eqref{eq:zetas_mult_3}, and dropping the probability factor in \eqref{eq:D2_basic_bound} in one of the integrands in $\zeta_2^{(2)}$, we thus obtain
	\begin{equation}\label{eq:zeta23bd}
		\max\{\zeta_2^{(2)}, \zeta_3^{(2)}\}^2 \lesssim m^3 \sum_{i=1}^m \frac{\Q f_{\tau,i}^2}{|W_n|^2}.
	\end{equation}
	
	Finally, we bound $\zeta_4^{(2)}$. As argued in the proof of \cite[Lemma 5.7]{BM22}, one has
	$$
	\P \left(D_{x} \hat G_{n, *}(z_i) \neq 0\right) \le \kappa^{(z_i)}(x) + g^{(z_i)}(x). 
	$$
	Thus, H\"{o}lder's inequality and \eqref{eq:QD} again yield
	\begin{align*}
		\mathbb{E}\left|D_x \hat G_{n, *}(z_i)\right|^3 & \lesssim \left(\mathbb{E}\left|D_x \hat G_{n, *}(z_i)\right|^{4+p/2}\right)^{\frac{3}{4+p/2}} \P \left(D_x\hat G_{n, *}(z_i) \neq 0\right)^{\frac{2+p}{8+p}}\\
		& \le \left(\mathbb{E}\left|D_x \hat G_{n, *}(z_i)\right|^{4+p/2}\right)^{\frac{3}{4+p/2}} \P \left(D_x\hat G_{n, *}(z_i) \neq 0\right)^{4\tau}\\
		& \lesssim |W_n|^{-3/2} \left(S^{(z_i)}\right)^{3/4} \left((\kappa^{(z_i)}(x))^{4\tau} + (g^{(z_i)}(x))^{4\tau}\right).
	\end{align*}
	Plugging this bound into \eqref{eq:zetas_mult_4} and dropping the probability factor above in one factor in the integrand in $\zeta_4^{(2)}$, we obtain
$$
\zeta_4^{(2)} \asymp m \sum_{i,j=1}^m \frac{\Q \left((\kappa^{(z_i)} + g^{(z_i)})^{4\tau} S\right)}{|W_n|^{3/2}}.
$$

A similar bound holds in the volume-weighted case, with the functions $f, \kappa, g$ modified accordingly. Next we bound the specific integrals appearing in the terms $\zeta_1, \zeta_2^{(2)}, \zeta_3^{(2)}$ and $\zeta_4^{(2)}$ separately for the unweighted and the volume-weighted cases.

\smallskip

\noindent \underline{\em Unweighted case.} We can now evaluate these terms as in Section \ref{sec:univ_proof}. First, for each $i \in [m]$, the integral $\Q f_{\tau,i}^2$ can be bounded in exactly the same manner as in the proof of Lemma \ref{lem:fmombd} in Appendix \ref{apx:univ} via \eqref{eq:f12}, \eqref{eq:f22} and \eqref{eq:f32}, by replacing $z$ by $z_i \in [a,b]$ therein, thus making all constants dependent only on $a,b$. Thus, assuming $\E H_-^{3d}<\infty$, and arguing exactly the same way as in Section \ref{sec:univ_proof}, we obtain
$$
\Q f_{\tau,i}^2 =O( |W_n|), \quad i \in [m],
$$
with the constant depending on $a,b$. This yields $\zeta_2^{(2)}, \zeta_3^{(2)} \lesssim m^2 |W_n|^{-1/2}$. Arguing similarly for $\zeta_4^{(2)}$ using \eqref{eq:kappabd} and \eqref{eq:gbd}, assuming $\E H_-^{3d}<\infty$, one obtains $\zeta_4^{(2)} \lesssim m^3 |W_n|^{-1/2}$. 

Finally, let \(\Sigma= \Sigma(\zz) =(k(z_i,z_j))_{i,j \in [m]}\), which is a symmetric positive semidefinite matrix. Then, by Theorem \ref{thm:var}, we immediately have
\begin{equation}\label{eq:zeta1_mult}
\zeta_1
=
\sum_{i,j=1}^m
\left|
k(z_i,z_j)
-
\Cov\big(\hat G_{n,*}(z_i),\hat G_{n,*}(z_j)\big)
\right|
=
\sum_{i,j=1}^m
\left|
k(z_i,z_j)
-
\frac{\Cov\big(G_n(z_i),G_n(z_j)\big)}{|W_n|}
\right| = O(|W_n|^{-1/d}).
\end{equation}

\noindent \underline{\em Volume-weighted case.}
By our assumption, $\E H_-^{m_0d}<\infty$ with $m_0 >5.5$. Also, we take $p=12$ and consequently $\tau=3/20$. Lemma \ref{lem:fmombdV} then yields $
\Q f_{\tau,i}^2 \lesssim |W_n|
$. Similarly, applying the bound in \eqref{eq:gbdV} and the subsequent estimate corresponding to $\kappa^{(z,\ms V)}$ (noting that the bounds also hold with the exponent $4\tau$), we obtain $\Q \left((\kappa^{(z_i,\ms V)} + g^{(z_i,\ms V)})^{4\tau} S\right) \lesssim |W_n|$. Thus, we have $\zeta_2^{(2)}, \zeta_3^{(2)} \lesssim m^2 |W_n|^{-1/2}$ while $\zeta_4^{(2)} \lesssim m^3 |W_n|^{-1/2}$.

Finally, let $\Sigma^{\ms V}=\Sigma^{\ms V}(\zz)
=(k^{\ms V}(z_i,z_j))_{i,j\in[m]}$.
Arguing as in \eqref{eq:zeta1_mult} and applying
Theorem \ref{thm:var}, we obtain
\begin{equation*}
\zeta_1
=
\sum_{i,j=1}^m
\left|
k^{\ms V}(z_i,z_j)
-
\Cov\big(\hat G^{\ms V}_{n,*}(z_i),\hat G^{\ms V}_{n,*}(z_j)\big)
\right|
=
\sum_{i,j=1}^m
\left|
k^{\ms V}(z_i,z_j)
-
\frac{\Cov\big(G^{\ms V}_n(z_i),G^{\ms V}_n(z_j)\big)}{|W_n|}
\right| = O(|W_n|^{-1/d}).
\end{equation*}
Since $m$ is fixed, this completes the proof.
\end{proof}

\section{Proofs for Section~\ref{sec:tight}}
\label{apx:func}
This appendix proves the localization estimates used in the cumulant
arguments of Section~\ref{sec:tight}. We first establish the covariance
bound in Lemma~\ref{covBoundLem} and then prove the corresponding inputs
for the weighted block scores. These estimates control the dependence
between products of scores associated with separated groups of cubes.

\begin{lemma}[Deterministic localization for extremality]
\label{lem:det-local-extreme}
Let \(J=[a,b]\subset\mathbb R\), put
\(
 R_0:=\frac{\sqrt d}{2},
	\)
	and
	\(
 K_J:=[-\tfrac12,\tfrac12]^d\times J,
\)
and define
\[
 A^{(b)}:=
 \left\{(v',h')\in\X:
 (\|v'\|-R_0)_+^2+h'\le b\right\}.
\]
For every locally finite configuration \(\mu\) and every
\(y=(v,h)\in\mu|_{K_J}\), membership of \(y\) in
\(\EE(\mu)\) is determined by \(\mu|_{A^{(b)}}\).
\end{lemma}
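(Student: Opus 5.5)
The plan is to use the characterization from Section~\ref{sec:mod}: for $y\in\mu$, we have $y\in\EE(\mu)$ if and only if $\mu(D(y))=0$. Here $D(y)=\{(v',h'):h-h'>\|v-v'\|^2\}$. Hence membership of $y$ in $\EE(\mu)$ is a function of $\mu|_{D(y)}$ alone. It therefore suffices to show the deterministic inclusion
\[
D(y)\subseteq A^{(b)}\qquad\text{for every } y=(v,h)\in K_J .
\]
Given this inclusion, $\mu|_{D(y)}=(\mu|_{A^{(b)}})|_{D(y)}$. Two configurations that agree on $A^{(b)}$ then agree on $D(y)$, and so they give the same extremality verdict for $y$.

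To prove the inclusion, fix $(v',h')\in D(y)$. Then $\|v'-v\|^2<h-h'$. Since $v\in[-\tfrac12,\tfrac12]^d$, we have $\|v\|\le\sqrt d/2=R_0$. The triangle inequality gives $\|v'\|\le\|v'-v\|+R_0$, and therefore $(\|v'\|-R_0)_+\le\|v'-v\|$. Squaring this, we obtain
\[
(\|v'\|-R_0)_+^2+h'\le\|v'-v\|^2+h'<h\le b,
\]
where the last step uses $h\in J=[a,b]$. This is exactly the defining inequality of $A^{(b)}$, so $(v',h')\in A^{(b)}$.

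I do not expect a genuine obstacle here. The only point requiring care is the boundary convention. $D(y)$ is defined with a strict inequality, while $A^{(b)}$ uses a nonstrict one; the containment shown above is strict and so holds regardless. If one prefers the nonstrict version of $D$, the same chain gives $\le b$, and the inclusion still holds. I would also note that the point $y$ itself is not in $D(y)$, so whether $y$ lies in $A^{(b)}$ plays no role. For the application to the translated regions $A_i^{\mathrm{ext}}$, the same argument works verbatim after shifting the cube center.
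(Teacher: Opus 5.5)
Your proposal is correct and follows the paper's proof essentially verbatim: the paper likewise shows that any competitor with $L(x',v)<L(y,v)=h$ (that is, any point of $D(y)$) satisfies $(\|v'\|-R_0)_+^2+h'\le\|v'-v\|^2+h'<h\le b$ via $(\|v'\|-R_0)_+\le\|v'-v\|$, and hence lies in $A^{(b)}$. Your explicit framing through $D(y)\subseteq A^{(b)}$ and the characterization $y\in\EE(\mu)\iff\mu(D(y))=0$ is the same argument.
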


\begin{proof}
Let \(y=(v,h)\in K_J\). If \(x'=(v',h')\) satisfies
\(L(x',v)<L(y,v)=h\), then
\(
 \|v'-v\|^2+h'<h\le b.
\)
Since \(\|v\|\le R_0\), we have
\(
 (\|v'\|-R_0)_+\le\|v'-v\|.
\)
Consequently,
\(
 (\|v'\|-R_0)_+^2+h'<b,
\)
so \(x'\in A^{(b)}\). Thus, every point capable of destroying the
extremality of \(y\) belongs to \(A^{(b)}\).
\end{proof}

\begin{lemma}[Overlap bound for deterministic extremality regions]
\label{lem:det-ext-overlap}
Assume \(\E[H_-^{3d}]<\infty\). Let \(A_i^{(b)}\) and
\(A_j^{(b)}\) be translates of \(A^{(b)}\) to the centers of unit cubes
\(z_i,z_j\). Then
\[
 \Q(A_i^{(b)}\cap A_j^{(b)})
 \in O(\|z_i-z_j\|^{-5d}).
\]
The constant may depend on \(J\), but not on \(i,j\).
\end{lemma}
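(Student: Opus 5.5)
Since both regions are translates of the same mark-indexed family of balls, I will compute the overlap slice by slice in the mark coordinate. For fixed mark \(h'\le b\), the section of \(A^{(b)}\) at height \(h'\) is the ball \(B_{R_0+\sqrt{b-h'}}(0)\), and it is empty for \(h'>b\). Hence the section of \(A_i^{(b)}\cap A_j^{(b)}\) at height \(h'\) is the intersection of two balls of common radius \(\varrho(h'):=R_0+\sqrt{b-h'}\) centred at \(z_i\) and \(z_j\). This intersection is empty unless \(2\varrho(h')\ge\|z_i-z_j\|=:\ell\), and in any case its volume is at most \(\kappa_d\varrho(h')^d\). Therefore
\[
 \Q(A_i^{(b)}\cap A_j^{(b)})\le \kappa_d\int_{-\infty}^{b}\varrho(h')^d\,\one\{2\varrho(h')\ge \ell\}\,\mathbb F(dh').
\]

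For \(\ell\) bounded, say \(\ell\le \ell_0\), I will bound the right-hand side by \(C\E[(1+(b-H)_+)^{d/2}]<\infty\), since \(\E[H_-^{d/2}]<\infty\). The claim then holds after enlarging the constant. For large \(\ell\), the indicator forces \(\sqrt{b-h'}\ge \ell/2-R_0\ge \ell/4\), that is, \(b-h'\ge \ell^2/16\). Once \(\ell^2/32\ge |b|\), this also gives \(h'_-\ge c\ell^2\) for some \(c=c(b)>0\). On this range \(\varrho(h')^d\le C(1+h'_-)^{d/2}\).

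The bound thus reduces to a tail estimate of the form
\[
 \E\big[(1+H_-)^{d/2}\one\{H_-\ge c\ell^2\}\big]\le (c\ell^2)^{-(3d-d/2)}\,\E[(1+H_-)^{3d}]=O(\ell^{-5d}),
\]
obtained by the Markov-type inequality \(x^{d/2}\one\{x\ge s\}\le s^{-5d/2}x^{3d}\). Here the moment assumption \(\E[H_-^{3d}]<\infty\) is exactly what gives the exponent \(2\cdot 5d/2=5d\).

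The constants depend only on \(d\), \(b\) (hence \(J\)) and \(\mathbb F\), not on \(i,j\), because everything depends only on the difference \(z_i-z_j\). The only delicate point is the bookkeeping that converts the geometric constraint \(2\varrho\ge\ell\) into \(H_-\gtrsim\ell^2\) uniformly. Handling the bounded range of \(\ell\) separately avoids any issue with \(R_0\) and the sign of \(b\).
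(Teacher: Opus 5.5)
Your proof is correct and follows essentially the same route as the paper: you slice by mark, observe that the two sections, balls of radius $R_0+\sqrt{(b-h)_+}$, can only meet when the mark lies below $b-c\ell^2$, and finish with a Markov-type bound using $\E[H_-^{3d}]<\infty$. The only difference is bookkeeping: the paper phrases the tail in terms of $U=(b-H)_+$ and $\tau=(\ell/2-R_0)_+$, whereas you work with $H_-$ and treat bounded $\ell$ separately.
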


\begin{proof}
By translation invariance, it is enough to consider
\(A^{(b)}\cap(A^{(b)}+(z,0))\). Put
\(
 r:=\|z\|,\) \( R_0:=\frac{\sqrt d}{2},\) \(
 \tau:=(r/2-R_0)_+,\) and \( U:=(b-H)_+.
\)
If \((v,h)\) belongs to the intersection, then
\(\max\{\|v\|,\|v-z\|\}\ge r/2\), and therefore
\(h\le b-\tau^2\), equivalently \(U\ge\tau^2\). For fixed \(h\), the
spatial section of the intersection is contained in a ball of radius
\(R_0+\sqrt{(b-h)_+}\). Hence,
\[
 \Q(A^{(b)}\cap(A^{(b)}+(z,0)))
 \le C\left(
 \P(U\ge\tau^2)
 +\E[U^{d/2}\one\{U\ge\tau^2\}]
 \right).
\]
Since
\(
 \E[U^{3d}]
 \le C_J\bigl(1+\E[H_-^{3d}]\bigr)<\infty,
\)
Markov's inequality gives, for \(\tau\ge1\) that
\(
 \P(U\ge\tau^2)\le C\tau^{-6d},
	\)
	and
	\(
 \E[U^{d/2}\one\{U\ge\tau^2\}]
 \le \tau^{-5d}\E[U^{3d}].
\)
The bounded range of \(r\) is absorbed into the constant.
\end{proof}

\begin{proof}[Proof of Lemma~\ref{covBoundLem}]
For the multi-indices in the lemma, put
\[
 \dist(\alpha,\beta):=
 \operatorname{dist}\bigl(
 \{z_{i_a}:1\le a\le r\},
 \{z_{j_b}:1\le b\le s\}
 \bigr).
\]
Let
\[
 A_i^{(b)}:=A^{(b)}+(z_i,0),
 \qquad
 A_\alpha:=\bigcup_{a=1}^rA_{i_a}^{(b)},
 \qquad
 A_\beta:=\bigcup_{b=1}^sA_{j_b}^{(b)}.
\]
Since \(Q_i\times J\subseteq A_i^{(b)}\),
Lemma~\ref{lem:det-local-extreme} shows that \(\Delta_i\) is measurable
with respect to \(\sigma(\PP|_{A_i^{(b)}})\). Thus, \(X_\alpha\) and
\(X_\beta\) are measurable with respect to the corresponding union regions.

Let \(\PP'\) be an independent copy of \(\PP\), and define
\(
 \PP^\beta
 :=
 \PP|_{(A_\alpha\cap A_\beta)^{\complement}}
 +\PP'|_{A_\alpha\cap A_\beta}.
\)
Let \(X_\beta'\) be the same functional as \(X_\beta\), evaluated at
\(\PP^\beta\). Then, \(X_\beta'\stackrel d=X_\beta\), and
\(X_\beta'\) is independent of \(X_\alpha\). Therefore,
\(
 \Cov(X_\alpha,X_\beta)
 =\E[X_\alpha(X_\beta-X_\beta')].
\)
For every \(u<\infty\), we have
\(
 \sup_{n,I,i}\|\Delta_i(I)\|_u<\infty,
\)
because
\(
 0\le\Delta_i(I)
 \le\PP((Q_i\cap W_n)\times I),
\)
and the Poisson variable on the right has mean at most \(\mathbb F(J)\le1\).
Consequently, Hölder's inequality gives
\(
 \|X_\alpha\|_p\le
 \prod_{a=1}^r\|\Delta_{i_a}\|_{pr}\le C_p,
 \) and \(
 \|X_\beta\|_p+\|X_\beta'\|_p\le C_p.
\)
Since the two point processes differ only on \(A_\alpha\cap A_\beta\), we have
\(
 \P(X_\beta\ne X_\beta')
 \le2\Q(A_\alpha\cap A_\beta).
\)
Moreover,
\[
\begin{aligned}
 \Q(A_\alpha\cap A_\beta)
 \le\sum_{a=1}^r\sum_{b=1}^s
 \Q(A_{i_a}^{(b)}\cap A_{j_b}^{(b)})
 \le C(1+\dist(\alpha,\beta))^{-5d}.
\end{aligned}
\]
Hölder's inequality  therefore yields
\(
 |\Cov(X_\alpha,X_\beta)|
 \in O_p((1+\dist(\alpha,\beta))^{-5d(1-2/p)}).
\)
Choose
\(
 p\ge{10d}/{2d-1}.
\)
Then, \(5d(1-2/p)\ge3d+1\), including when \(d=1\), and hence
\(
 |\Cov(X_\alpha,X_\beta)|
 \le C(1+\dist(\alpha,\beta))^{-3d-1}.
\)
\end{proof}

We next estimate intersections of cube envelopes, which contain the regions
determining the cell volumes in each cube. The estimate separates bounded
spatial separation from the contribution of sufficiently low marks. It
provides the geometric bound used in the weighted cumulant argument.

\bel[Measure of intersections of cube envelopes]
\label{lem:Q_intersection_cube_envelopes}
Assume \(\E[H_-^{d/2}]<\infty\). Then, there are
\(C,c_1,c_2\in(0,\infty)\), depending on \(d,\mathbb F,J\), such that for
\(z,z'\in\mathbb Z^d\), \(T,T'\in\mathbb R\), \(M:=T\vee T'\), and
\(r:=\|z-z'\|\),
\[
\begin{aligned}
 \Q\bigl(
 \widehat D_{z,J}^V(T)\cap\widehat D_{z',J}^V(T')
 \bigr)
 &\le
 C\bigl(1+(M-a)_+^{d/2}\bigr)
 \one\left\{
 r\le c_1\bigl(1+\sqrt{(M-a)_+}\bigr)
 \right\}\\
 &\quad+
 C\int_{-\infty}^{M-c_2r^2}
 \left(
 1+(M-a)_+^{d/2}+(1+u_-)^{d/2}
 \right)\mathbb F(du).
\end{aligned}
\]
\enl

\begin{proof}
If \((v,u)\) lies in both envelopes, then
\(
 r
 \le
 \sqrt d
 +\sqrt{(T-a)_+}+\sqrt{(T'-a)_+}
 +\sqrt{(T-u)_+}+\sqrt{(T'-u)_+}.
\)
The section of either envelope at mark \(u\) has volume at most
\(
 O\Big(
 (M-a)_+^{d/2}+(1+u_-)^{d/2}
 \Big).
\)
The case of bounded spatial separation follows by integration over \(u\le M\). In the
complementary case, the preceding distance inequality implies
\(u\le M-r^2/16\) after increasing \(c_1\), which gives the term arising from low marks.
\end{proof}

We also need a tail bound for the maximum cover time among contributing points in a cube.
\bel[Tail bound for the maximum cover time in a cube]
\label{lem:cube_time_tail_under_E}
Let \(I\subseteq J\), and assume \(\E[H_-^p]<\infty\).
For \(z\in\mathbb Z^d\), define
\[
 \widetilde T_z
 :=
 \sup\left\{
 T_x:
 x\in\PP|_{Q_z\times I},\
 x\in\EE(\PP)
 \right\}.
\]
Then
\(
 \sup_{z\in\mathbb Z^d}
 \P(\widetilde T_z-a\ge t)
 \le Ct^{-p},
\), \(t\ge1\).
\enl

\begin{proof}
By the union bound and Mecke's formula,
\(
 \P(\widetilde T_z-a\ge t)
 \le
 \int_I
 \P\bigl(
 (0,h)\in\EE(\PP+\delta_{(0,h)}),\
 T_{(0,h)}-a\ge t
 \bigr)\mathbb F(dh).
\)
Since \(h\in J\), we have
	\((T_{(0,h)}-h)_+\ge t-(b-a)\) whenever
\(
 T_{(0,h)}-a\ge t.
\)
For sufficiently large \(t\), Markov's inequality and
Corollary~\ref{lem:cover-time-moment-integrability} yield \(Ct^{-p}\), uniformly in
\(h\in J\). The bounded range of \(t\) is absorbed into the constant.
\end{proof}

The next lemma concerns a generator inserted through the Mecke formula. It is distinct from Lemma~\ref{lem:cube_time_tail_under_E}, which concerns Poisson points already present in a cube.

\begin{lemma}[Palm moment for an inserted generator]
\label{lem:weighted-inserted-palm-moment}
For every \(\gamma>0\),
\[
 \sup_{x=(v,h):\,h\in J}
 \E\left[
 \one\{x\in\EE(\PP+\delta_x)\}
 \bigl(1+((b\vee T(x;\PP+\delta_x))-a)_+\bigr)^\gamma
 \right]<\infty.
\]
\end{lemma}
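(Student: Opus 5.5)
The plan is to reduce the statement to the moment bound for the excess cover time in Corollary~\ref{lem:cover-time-moment-integrability}, applied with the upper level $z:=b$. First recall the convention that $T(x;\PP)$ means $T(x;\PP+\delta_x)$ and that $C(x;\PP+\delta_x)=C(x;\PP)$. By Lemma~\ref{lem:truncated-cell-geometry}, on the extremality event the cover time is at least $h$. We may assume $\mathbb F((-\infty,b])>0$. Otherwise $\mathbb F(J)=0$; then either the supremum is vacuous for all practically relevant $h$, or we may replace $b$ by any larger level with positive mass. The corollary only requires $\mathbb F((-\infty,z])>0$, so this replacement is harmless.

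Next, fix $x=(v,h)$ with $h\in J=[a,b]$. By stationarity we may take $v=0$. On the event $x\in\EE(\PP+\delta_x)$ we decompose
\[
 \bigl((b\vee T_x)-a\bigr)_+ \le (b-a) + (T_x-h)_+ ,
\]
since $h\ge a$ and $b\le b$. Then $(1+u+w)^\gamma\le C_\gamma\bigl(1+u^\gamma+w^\gamma\bigr)$ gives
\[
 \E\bigl[\one\{x\in\EE\}(1+((b\vee T_x)-a)_+)^\gamma\bigr]
 \le C_\gamma\bigl(1+(b-a)^\gamma\bigr)
 + C_\gamma\,\E\bigl[\one\{x\in\EE(\PP+\delta_x)\}(T_x-h)_+^\gamma\bigr].
\]

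The last expectation is handled by Corollary~\ref{lem:cover-time-moment-integrability} with $z=b$ and $p=\gamma$. For all $h\le b$, and in particular for $h\in J$, it is bounded by $C(1+(b-h)_+^\gamma)\le C(1+(b-a)^\gamma)$. The constant depends only on $d,\mathbb F,b,\gamma$, and the bound is uniform in $v$ by stationarity. This gives the claimed finite supremum.

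No mark moment assumption is needed, because the corollary's pointwise bound uses only the exponential tail in Proposition~\ref{prop:CoverTimetail}(iii). That tail is controlled by the lower bound $\Q(u)\ge\kappa_d\mathbb F((-\infty,b])(u-b)_+^{d/2}$ from Remark~\ref{rem:Qlb}. The only delicate point is the degenerate case $\mathbb F((-\infty,b])=0$. I would treat it by the enlargement of $b$ described above, or simply note that for $h\in J$ with $\mathbb F((-\infty,h])=0$ the cell is a.s.\ all of $\R^d$ only if no points exist, which cannot happen under the tessellation assumption. So the main obstacle is merely bookkeeping of this case; the core estimate is immediate from the corollary.
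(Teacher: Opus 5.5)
Your proposal is correct and matches the paper's proof. The paper likewise bounds $1+((b\vee T(x;\PP+\delta_x))-a)_+$ by $C_J\bigl(1+(T(x;\PP+\delta_x)-h)_+\bigr)$ on the extremality event, applies Corollary~\ref{lem:cover-time-moment-integrability} with upper level $b$, and uses stationarity for uniformity in $v$. Your treatment of the degenerate case $\mathbb F((-\infty,b])=0$ covers a point the paper leaves implicit: apply the corollary at some level $z\ge b$ with $\mathbb F((-\infty,z])>0$, which still gives a bound uniform in $h\in J$. The alternative justifications you offer for that case are inaccurate and unnecessary — in particular, the supremum is over all $h\in J$, so it is not vacuous.
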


\begin{proof}
On the extremality event, \(T(x;\PP+\delta_x)\ge h\), and, since
\(h\in[a,b]\),
\[
 1+((b\vee T(x;\PP+\delta_x))-a)_+
 \le C_J\bigl(1+(T(x;\PP+\delta_x)-h)_+\bigr).
\]
The assertion now follows from
Corollary~\ref{lem:cover-time-moment-integrability}, applied with upper level
\(b\). Spatial stationarity makes the bound independent of \(v\).
\end{proof}

The following lemma controls the part of a cube envelope that extends beyond a fixed spatial neighborhood while retaining the weight inside the expectation.

\begin{lemma}[Tail of a cube envelope outside a fixed neighborhood]
\label{lem:weighted-envelope-tail}
Let $\gamma > 3d$ and assume \(\E[H_-^{3d}]<\infty\). Let \(Z\ge0\) and \(S\ge a\) be random
variables defined on an auxiliary probability space. If
\[
 \E\left[Z\bigl(1+(S-a)_+\bigr)^\gamma\right]<\infty,
\]
then, uniformly in \(z\in\mathbb Z^d\),
\[
 \E\left[
 Z\,\Q\left(
 \widehat D_{z,J}^V(S)
 \cap\{(v,h):\operatorname{dist}(v,Q_z)>r\}
 \right)
 \right]
 \le
 C(1+r)^{-5d}
 \E\left[Z\bigl(1+(S-a)_+\bigr)^\gamma\right].
\]
\end{lemma}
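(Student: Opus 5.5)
The estimate will follow from a pointwise bound on the spatial measure of the envelope beyond distance $r$, given the level $S$; multiplying by $Z$ and taking expectations then gives the claim. By translation invariance it suffices to take $z=0$. Fix a deterministic $s\ge a$ and write $E_r(s):=\widehat D_{0,J}^V(s)\cap\{(v,h):\operatorname{dist}(v,Q_0)>r\}$. The goal is a deterministic bound
\[
\Q(E_r(s))\le C(1+r)^{-5d}\bigl(1+(s-a)_+\bigr)^{\gamma},
\]
with $C$ independent of $s$ and $z$.

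To obtain it, note that a point $(v,h)\in E_r(s)$ satisfies $h\le s$ and $\|v\|-\sqrt d/2\ge\operatorname{dist}(v,Q_0)>r$. It therefore also satisfies
\[
r<\sqrt{(s-a)_+}+\sqrt{(s-h)_+}.
\]
Split according to whether $\sqrt{s-a}\ge r/2$ or not.

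In the first case, $(s-a)\ge r^2/4$, so $1\le (4(s-a)/r^2)^{\gamma/2}$. For the measure itself, the section of the envelope at mark $h$ has volume at most $C\bigl((s-a)_+^{d/2}+(s-h)_+^{d/2}+1\bigr)$. Integrating over $h\le s$ gives
\[
\Q(\widehat D^V_{0,J}(s))\le C\bigl(1+(s-a)^{d/2}+\E[(s-H)_+^{d/2}]\bigr)\le C\bigl(1+(s-a)\bigr)^{d/2},
\]
using $\E[H_-^{d/2}]<\infty$ and $(s-H)_+\le (s-a)+(a-H)_+$. Hence this case contributes at most $C r^{-\gamma}(1+(s-a))^{\gamma/2+d/2}$. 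Since $\gamma>3d$, we have $\gamma/2+d/2<\gamma$ and $(\gamma-d)/2\ge \ldots$. If needed, I will rebalance by using the factor $(4(s-a)/r^2)^{\kappa}$ with $\kappa=5d/2$. This gives $r^{-5d}(1+(s-a))^{5d/2+d/2}=r^{-5d}(1+(s-a))^{3d}\le r^{-5d}(1+(s-a))^\gamma$.

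In the second case, $\sqrt{s-h}>r/2$, i.e.\ $h<s-r^2/4$. Then
\[
\Q(E_r(s))\le C\int_{-\infty}^{s-r^2/4}\bigl(1+(s-a)^{d/2}+(s-h)^{d/2}\bigr)\mathbb F(dh).
\]
If additionally $s-a\le r^2/8$, then $h<a-r^2/8$, so $H_-\ge cr^2$ for large $r$, and $s-h\le C H_-$. Markov's inequality with the $3d$ moment gives
\[
\E\bigl[(1+H_-^{d/2})\one\{H_-\ge cr^2\}\bigr]\le Cr^{-2(3d-d/2)}=Cr^{-5d}.
\]
The $(s-a)^{d/2}$ factor is absorbed into $(1+(s-a))^\gamma$. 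If instead $s-a>r^2/8$, the situation reduces to the first case via the same $(8(s-a)/r^2)^{5d/2}$ trick. Bounded $r$ is absorbed into the constant.

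Finally, replace $s$ by $S$. By Tonelli's theorem the pointwise bound holds conditionally on $S$, and multiplying by $Z\ge0$ and taking expectations yields the lemma. I expect the main obstacle to be the bookkeeping of exponents. The decay $r^{-5d}$ must come out with a weight exponent at most $\gamma$, and the low-mark tail needs exactly $\E[H_-^{3d}]$ to produce $r^{-5d}$ from the $d/2$-power section volume. That is precisely why $3d$ and $5d=2(3d)-d$ appear together.
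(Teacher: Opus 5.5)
Your proof is correct and is essentially the paper's argument. You bound each mark-section of the envelope by a ball of radius $\sqrt d/2+\sqrt{(s-a)_+}+\sqrt{(s-h)_+}$ and then split into two cases: the case $s-a\gtrsim r^2$, handled by the deterministic Markov factor (your exponent $5d/2$ plays the role of the paper's $\gamma-d/2$), and the low-mark case $H_-\gtrsim r^2$, handled by the $3d$-moment; your pointwise-in-$s$ formulation is just the paper's independent-mark argument written out via Tonelli.

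One small slip: since $0\in Q_0\subseteq B_{\sqrt d/2}(0)$, one has $\|v\|-\sqrt d/2\le\operatorname{dist}(v,Q_0)\le\|v\|$, not the reverse. You therefore only obtain $r-\sqrt d/2<\sqrt{(s-a)_+}+\sqrt{(s-h)_+}$, which for large $r$ just replaces your thresholds $r^2/4$ and $r^2/8$ by smaller multiples of $r^2$.
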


\begin{proof}
Put \(U:=S-a\) and \(R_0:=\sqrt d/2\), and let \(H\sim \mathbb F\) be independent
of \((Z,S)\). For fixed values of \(S\) and \(H\), the spatial section of
\(\widehat D_{z,J}^V(S)\) at mark \(H\) is contained in the ball with center
\(z\) and radius \(R_0+\sqrt U+\sqrt{(S-H)_+}\). Its volume is therefore at
most
\(
 C\bigl(1+U^{d/2}+H_-^{d/2}\bigr),
\)
where the constant depends only on \(J\) and \(d\). If this section contains a
point at distance more than \(r\) from \(Q_z\), then
\(r-R_0<\sqrt U+\sqrt{(S-H)_+}\). For all sufficiently large \(r\), this
implies
\(
 U\ge \frac{r^2}{16}
 \;\text{ or }\;
 H_-\ge c r^2
\)
for a constant \(c>0\). Indeed, if \(U<r^2/16\), then
\(\sqrt{(S-H)_+}>r/2\); since \(S=a+U<a+r^2/16\), this forces
\(H<a-3r^2/16\), and hence \(H_-\ge cr^2\) for large \(r\).
Representing the \(\Q\)-measure by integration over the independent mark
\(H\), the expectation in the statement is at most a constant times
\(
 \E\left[Z(1+U^{d/2}+H_-^{d/2})
 \one\left\{U\ge\frac{r^2}{16}\text{ or }H_-\ge cr^2\right\}\right].
\)
For the event involving \(U\), independence of \(H\) and Markov's inequality
give
\[
\begin{aligned}
 \E\big[Z(1+U^{d/2}+H_-^{d/2})
 \one\big\{U\ge\frac{r^2}{16}\big\}\big]
 \le
 C r^{-2(\gamma-d/2)}
 \E[Z(1+U)^\gamma]\E[1+H_-^{d/2}]
 \le C r^{-5d}\E[Z(1+U)^\gamma],
\end{aligned}
\]
because \(\gamma>3d\). For the event involving \(H_-\), independence gives
\[
\begin{aligned}
 \E\left[Z(1+U^{d/2}+H_-^{d/2})\one\{H_-\ge cr^2\}\right]
 &\le
 C\E[Z(1+U)^\gamma]
 \left(\P(H_-\ge cr^2)
 +\E[H_-^{d/2}\one\{H_-\ge cr^2\}]\right)\\
 &\le C r^{-5d}\E[Z(1+U)^\gamma].
\end{aligned}
\]
The last line follows from \(\E[H_-^{3d}]<\infty\): the two terms have orders
\(r^{-6d}\) and \(r^{-5d}\), respectively. 
\end{proof}
For a locally finite configuration \(\mu\), define
\[
\Delta_i^V(I;\mu):=
\sum_{x=(v,h)\in\mu}
\one\{v\in Q_i\cap W_n,\ h\in I\}A(x;\mu).
\]

The following deterministic lemma combines the local characterization of extremality with the region that determines the cell volume.
\begin{lemma}[Comparison of configurations that agree locally]
\label{lem:weighted-deterministic-comparison}
Let \(\mu\) and \(\nu\) be locally finite configurations that agree on
\(S_r(\mathcal A)\), where \(|\mathcal A|\le3\), and assume that each
configuration has finitely many points in every downward paraboloid
\(D(w,t)\). Assume that neither
configuration has a point in
\(\bigcup_{i\in\mathcal A}A_i^{\mathrm{ext}}\setminus S_r(\mathcal A)\).
Assume also that every point
\(x\in\operatorname{supp}(\mu)\cap\operatorname{supp}(\nu)\cap Q_{\mathcal A}^I\) that is extreme in either
configuration has a bounded cell, and hence a finite cover time, in that
configuration. Finally, for each such extreme point, assume that
\[
 \mu\big(D^V(x,T(x;\mu))\setminus S_r(\mathcal A)\big)=0
\]
when \(x\) is extreme in \(\mu\), and impose the analogous condition for
\(\nu\). Then
\(
 \Delta_i^V(I;\mu)=\Delta_i^V(I;\nu),\qquad i\in\mathcal A.
\)
\end{lemma}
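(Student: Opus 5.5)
The plan is to reduce both sides to the common restriction $\rho:=\mu|_{S_r(\mathcal A)}=\nu|_{S_r(\mathcal A)}$: for each $i\in\mathcal A$ we show, term by term, that the summands of $\Delta_i^V(I;\mu)$ and $\Delta_i^V(I;\rho)$ coincide, and likewise for $\nu$. First, the index sets of the two sums agree. Since $Q_{\mathcal A}\subseteq N_r(\mathcal A)$, every candidate point $x\in Q_{\mathcal A}^I$ lies in $S_r(\mathcal A)$. Hence $\mu|_{Q_{\mathcal A}^I}=\nu|_{Q_{\mathcal A}^I}=\rho|_{Q_{\mathcal A}^I}$, and it remains to compare $A(x;\mu)$, $A(x;\rho)$ and $A(x;\nu)$ for each such $x$.

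\emph{Extremality.} By Lemma~\ref{lem:det-local-extreme}, translated to the cube $Q_i$ containing $x$ (with $I\subseteq J$), whether $x\in\EE(\cdot)$ is determined by the restriction of the configuration to $A_i^{\mathrm{ext}}$. By hypothesis neither $\mu$ nor $\nu$ charges $A_i^{\mathrm{ext}}\setminus S_r(\mathcal A)$, so
\[
\mu|_{A_i^{\mathrm{ext}}}=\rho|_{A_i^{\mathrm{ext}}}=\nu|_{A_i^{\mathrm{ext}}}.
\]
Thus $x$ is extreme in $\mu$ iff it is extreme in $\rho$ iff it is extreme in $\nu$. If $x$ is not extreme, all three scores vanish.

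\emph{Cell equality.} Let $x=(v,h)$ be extreme and put $T:=T(x;\mu)<\infty$ and $B:=D^V(x,T)$. The hypothesis gives $\mu(B\setminus S_r(\mathcal A))=0$, hence $\mu|_B=\rho|_B$. The proof of Lemma~\ref{lem:weighted-deterministic-resampling}(i) shows that every point able to beat $x$ at some $u$ with $L(x,u)\le T$ lies in $B$. Therefore $C_T(x;\rho)=C_T(x;\mu)$, and $C_T(x;\mu)=C(x;\mu)$ by \eqref{eq:cell-at-cover-time}.

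To upgrade this to $C(x;\rho)=C(x;\mu)$, which is needed because $\rho$ has fewer points and its cell could a priori extend beyond radius $\sqrt{T-h}$, we use Lemma~\ref{lem:cover-time-boundary-characterization}. For $t>T$ slightly larger, the sphere $\partial B_{\sqrt{t-h}}(v)$ is covered by the competing truncated cells $C_t(y;\mu)$. These are determined by $\mu|_{D^V(x,t)}$. Letting $t\downarrow T$ and using that $C(x;\mu)$ lies in the ball of radius $\sqrt{T-h}$ and is convex, with $v$ in its interior, we get $C(x;\rho)\subseteq B_{\sqrt{T-h}}(v)$. Alternatively, more directly: any $u\in C(x;\rho)$ on the ray beyond $\partial B_{\sqrt{T-h}}(v)$ would give, by convexity, a point of $C_T(x;\rho)=C(x;\mu)$ on that sphere won uniquely by $x$. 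This contradicts $T(x;\mu)=T$ via \eqref{eq:cover-time-boundary-equivalence}. Hence $C(x;\rho)=C(x;\mu)$.

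The symmetric argument, with $T(x;\nu)$ and the hypothesis on $\nu$, gives $C(x;\rho)=C(x;\nu)$. The assumption that each paraboloid $D(w,t)$ contains finitely many points is used here: it ensures that the competing cells, and hence the tessellation, are well defined in each of $\mu,\nu,\rho$, so that Lemma~\ref{lem:cover-time-boundary-characterization} applies. Consequently $A(x;\mu)=A(x;\nu)$ for every candidate $x$, and summing over $x\in\mu|_{Q_i^I}=\nu|_{Q_i^I}$ yields $\Delta_i^V(I;\mu)=\Delta_i^V(I;\nu)$.

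The main obstacle is this cell-equality step. Removing points can only enlarge a cell, so agreement of the truncated cells at time $T$ does not immediately give agreement of the full cells. The argument has to exploit the fact that the cell in $\rho$ cannot escape the ball of radius $\sqrt{T-h}$, using the boundary characterization together with convexity. A secondary subtlety is that this step must be carried out for $\rho$, which need not itself generate a tessellation of all of $\mathbb R^d$; only the local structure near $x$ is used, and the finiteness assumption on paraboloids is what justifies this.
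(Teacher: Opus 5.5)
Your argument is correct in substance, but it takes a different route from the paper's.

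\emph{The paper's route.} The paper never passes through the common restriction $\rho=\mu|_{S_r(\mathcal A)}$. After matching extremality status exactly as you do, it proves $C(x;\mu)\subseteq C(x;\nu)$ directly by a first-exit argument. If $u\in C(x;\mu)\setminus C(x;\nu)$, the segment from $v$ to $u$ leaves $C(x;\nu)$ at a point $w$. There some competitor $y\in\nu$ is active, so $L(y,w)=L(x,w)\le T(x;\nu)$ and $y$ wins strictly just beyond $w$. If $y\in S_r(\mathcal A)$, then $y\in\mu$, which contradicts convexity of $C(x;\mu)$. Otherwise $y\in D^V(x,T(x;\nu))\setminus S_r(\mathcal A)$, which contradicts the hypothesis for $\nu$. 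Symmetry then gives equality.

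\emph{Your route and its cost.} You show $C(x;\mu)=C(x;\rho)=C(x;\nu)$ using locality of the truncated cell (from the proof of Lemma~\ref{lem:weighted-deterministic-resampling}(i)) together with the boundary characterization of the cover time. This reuses existing lemmas and makes the symmetry transparent. However, it inherits the hypotheses of Lemma~\ref{lem:cover-time-boundary-characterization}. To place $w$ in $\operatorname{int}C(x;\rho)$ you need $v\in\operatorname{int}C(x;\mu)$, and hence $v\in\operatorname{int}C(x;\rho)$ since $\rho\le\mu$. This is not a hypothesis of the lemma. It does hold almost surely when the lemma is applied to $\PP$ and $\PP^*$. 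The paper's first-exit argument only needs $v\in C(x;\mu)\cap C(x;\nu)$, so it proves the deterministic statement as written.

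\emph{Two smaller points.} First, drop your initial sketch with $t\downarrow T$: the hypothesis controls $D^V(x,T)$ only, not $D^V(x,t)$ for $t>T$. Second, in the direct version, say explicitly why $w$ is won uniquely in $\mu$ and not merely in $\rho$. Since $L(x,w)=T$, any $y\in\mu$ with $L(y,w)\le T$ lies in $D^V(x,T)$. By hypothesis it then lies in $S_r(\mathcal A)$, hence in $\rho$. There it loses strictly, because $w\in\operatorname{int}C(x;\rho)$.
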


\begin{proof}
Put \(S:=S_r(\mathcal A)\). Since \(Q_{\mathcal A}^I\subseteq S\), the two
configurations have the same candidate generators in the blocks under
consideration. Fix one of them, say \(x=(v,h)\in Q_i^I\). If \(x\) is not extreme in \(\mu\), Lemma~\ref{lem:det-local-extreme} identifies a point
of \(\mu|_{A_i^{\mathrm{ext}}}\) that beats \(x\) at \(v\). By assumption,
this point lies in \(S\), and therefore also belongs to \(\nu\). The converse
argument is identical, so \(x\) has the same extremality status in both
configurations.

Suppose now that \(x\) is extreme in both configurations, and write
\(C_\mu:=C(x;\mu)\), \(C_\nu:=C(x;\nu)\), and
\(T_\mu:=T(x;\mu)\), \(T_\nu:=T(x;\nu)\). These cells are bounded by
assumption. We prove \(C_\mu\subseteq C_\nu\). If this inclusion fails, choose
\(u\in C_\mu\setminus C_\nu\). Since extremality implies
\(v\in C_\mu\cap C_\nu\), the segment from \(v\) to \(u\) has a first exit
point \(w\) from the closed convex set \(C_\nu\). At that point, one of the
Laguerre halfspaces defining \(C_\nu\) becomes active. Thus, there is a
competitor \(y\in\operatorname{supp}(\nu)\setminus\{x\}\) such that
\(
 L(y,w)=L(x,w)\le T_\nu,
\)
and immediately beyond \(w\) toward \(u\), one has \(L(y,\cdot)<L(x,\cdot)\).
If \(y\in S\), then \(y\in\mu\). Points of the segment immediately
beyond \(w\) belong to \(C_\mu\) by convexity, but they are beaten by
\(y\), a contradiction. Hence, \(y\notin S\). Writing \(y=(w_y,s_y)\), the equality at
\(w\) and the bound \(L(x,w)\le T_\nu\) give
\[
 s_y\le T_\nu,
	\text{ and }
 \|w_y-v\|\le
 \sqrt{(T_\nu-s_y)_+}+\sqrt{(T_\nu-h)_+}.
\]
Therefore, \(y\in D^V(x,T_\nu)\setminus S\), contrary to the hypothesis for
\(\nu\). This proves \(C_\mu\subseteq C_\nu\). Interchanging \(\mu\) and
\(\nu\) gives equality. Hence, the cell volumes and all block contributions
coincide.
\end{proof}

We next bound \(\P(\mathsf B_{\mathcal A}(r;\PP))\), the probability that localization fails.
\begin{proof}[Proof of Lemma~\ref{lem:weighted-bad-event}]
Put \(S:=S_r(\mathcal A)\). We first consider the deterministic extremality
regions. If a point belongs to
\(A_i^{\mathrm{ext}}\setminus S\), then its spatial coordinate is at distance
more than \(r\) from \(Q_i\). The definition of
\(A_i^{\mathrm{ext}}\) therefore forces its mark to be at most
\(b-c r^2\), after changing \(c>0\) by a constant depending only on \(d\).
As in the proof of Lemma~\ref{lem:det-ext-overlap}, we have
\(
 \Q(A_i^{\mathrm{ext}}\setminus S)
 \le C\int_{-\infty}^{b-cr^2}
 \bigl(1+(b-h)_+^{d/2}\bigr)\mathbb F(dh)
 \le C(1+r)^{-5d}.
\)
The last estimate uses \(\E[H_-^{3d}]<\infty\). Since
\(|\mathcal A|\le3\), the first part of the event on which localization fails has probability of the same order.

For the volume part, the union bound by the number of ordered pairs of Poisson
points, followed by the twofold Mecke formula, gives
\[
\begin{aligned}
 &\P\left(
 \exists x\in\EE(\PP)\cap Q_{\mathcal A}^I:
 \PP(D^V(x,T(x;\PP))\setminus S)>0
 \right)\\
 &\quad\le
 \sum_{i\in\mathcal A}\int_{Q_i^I}
 \E\left[
 \one\{x\in\EE(\PP+\delta_x)\}
 \Q\left(
 \widehat D_{i,J}^V(b\vee T(x;\PP+\delta_x))\setminus S
 \right)
 \right]\Q(dx).
\end{aligned}
\]
Indeed, after inserting both \(x\) and a possible exterior point \(y\),
insertion of \(y\) cannot create extremality of \(x\), and it can only shrink
the cell of \(x\). By the radius representation of the cover time,
\(
 T(x;\PP+\delta_x+\delta_y)\le T(x;\PP+\delta_x).
\)
Lemma~\ref{lem:env} then places the corresponding region that determines the score inside the
envelope in the last display.
For fixed \(x\in Q_i^I\), apply
Lemma~\ref{lem:weighted-envelope-tail} with
\(
 Z:=\one\{x\in\EE(\PP+\delta_x)\},
 \) and \(
 S_x:=b\vee T(x;\PP+\delta_x).
\)
The required weighted moment is supplied by
Lemma~\ref{lem:weighted-inserted-palm-moment}. Since a point outside
\(S_r(\mathcal A)\) is at distance more than \(r\) from \(Q_i\), the lemma
gives, uniformly in \(x\in Q_i^I\), that
\(
 \E\left[
 Z\,\Q\left(\widehat D_{i,J}^V(S_x)\setminus S\right)
 \right]
 \le C(1+r)^{-5d}.
\)
Finally, \(\Q(Q_i^I)\le\mathbb F(I)\le1\). Combining the extremality and volume estimates proves
the claim.
\end{proof}

We can now prove the approximation by conditional expectation.
\begin{proof}[Proof of Lemma~\ref{lem:weighted-spatial-truncation}]
Put \(S:=S_r(\mathcal A)\) and
\(\mathcal F:=\sigma(\PP|_S)\). Let \(\PP'\) be an independent copy of
\(\PP\), and define
\(
 \PP^*:=\PP|_S+\PP'|_{S^{\complement}}.
\)
Conditionally on \(\mathcal F\), the restrictions of \(\PP\) and \(\PP^*\)
to \(S^{\complement}\) are independent Poisson processes with the same
intensity. Thus, if \(X_{\mathcal A}^*\) denotes the same centered block
product evaluated at \(\PP^*\), then
\(\E[X_{\mathcal A}^*\mid\mathcal F]
 =\E[X_{\mathcal A}\mid\mathcal F]=X_{\mathcal A}^{(r)}\). Conditioning
first on \(\PP\) and applying Jensen's inequality gives
\(
 \|X_{\mathcal A}-X_{\mathcal A}^{(r)}\|_{a_0}
 \le \|X_{\mathcal A}-X_{\mathcal A}^*\|_{a_0}.
\)
The processes \(\PP\) and \(\PP^*\) agree on \(S\), and both have bounded
Laguerre cells almost surely because \(\PP^*\) has the same law as \(\PP\).
On the complement of \(\mathsf B_{\mathcal A}(r;\PP)\cup
\mathsf B_{\mathcal A}(r;\PP^*)\), the remaining hypotheses of
Lemma~\ref{lem:weighted-deterministic-comparison} hold, and hence
\(X_{\mathcal A}=X_{\mathcal A}^*\). Put
\(
 E_r:=\mathsf B_{\mathcal A}(r;\PP)
 \cup\mathsf B_{\mathcal A}(r;\PP^*).
\)
For the fixed exponents \(1<a_0<p\), H\"older's inequality gives
\(
 \|X_{\mathcal A}-X_{\mathcal A}^*\|_{a_0}
 \le
 \|X_{\mathcal A}-X_{\mathcal A}^*\|_p
 \P(E_r)^{1/a_0-1/p}
 \le C\P(E_r)^{1/a_0-1/p}.
\)
The last inequality follows from
Lemma~\ref{lem:weighted-block-product-moments} and the fact that
\(\PP^*\) has the same law as \(\PP\). Lemma~\ref{lem:weighted-bad-event}
therefore yields
\(
 \|X_{\mathcal A}-X_{\mathcal A}^{(r)}\|_{a_0}
 \le C(1+r)^{-5d(1/a_0-1/p)}
	\in O(r^{-3d-1}).
\)
\end{proof}

\end{document}